\newcommand{\addresseshere}{%
  \enddoc@text\let\enddoc@text\relax
}
\numberwithin{equation}{section}
\numberwithin{figure}{section}
\numberwithin{table}{section}
\newcommand{\tprod}[1][]{\textstyle\prod#1\displaystyle}
\newcommand{\ali}[1]{\begin{align*} #1 \end{align*}}
		\newcounter{romi}
\newenvironment{romanize}{\begin{list}{\addtocounter{romi}{1}\hspace{-3em}\textbf{(\roman{romi})}}{}}{\setcounter{romi}{0}\end{list}}
		\newcounter{alphi}
\newcommand{\qand}{\quad\text{and}\quad}												
\theoremstyle{definition}
\newtheorem{thm}{Theorem}[section]
\newtheorem{prop}[thm]{Proposition}
\newtheorem{lem}[thm]{Lemma}
\newtheorem{cor}[thm]{Corollary}
\newtheorem{df}[thm]{Definition}
\newtheorem{rem}[thm]{Remark}
\newtheorem{exa}[thm]{Example}
\newtheorem{algo}[thm]{Algorithm}
\newtheorem{conj}[thm]{Conjecture}
\newenvironment{ex}{\begin{exa}}{\hfill$\lozenge$\end{exa}}	
\newcommand{\Nzero}{\mathbb{N}_0}			
\newcommand{\Z}{\mathbb{Z}}
\newcommand{\C}{\mathbb{C}}
\newcommand{\lan}{\langle}
\newcommand{\ran}{\rangle}
\newcommand{\llan}{\lan\!\lan}
\newcommand{\rran}{\ran\!\ran}
\newcommand{\Id}{\mathrm{Id}}
\newcommand{\pdif}{\partial}
\newcommand{\op}{\oplus}
\newcommand{\cA}{\mathcal{A}}
\newcommand{\cE}{\mathcal{E}}
\newcommand{\cF}{\mathcal{F}}
\newcommand{\fg}{\mathfrak{g}}
\newcommand{\fgch}{\mathfrak{g}^\vee}
\newcommand{\Gch}{G^\vee}
\newcommand{\cI}{\mathcal{I}}
\newcommand{\bm}{\mathbf{m}}
\newcommand{\bn}{\mathbf{n}}
\newcommand{\bP}{\mathbb{P}}
\newcommand{\Pch}{P^\vee}
\newcommand{\cR}{\mathcal{R}}
\newcommand{\ft}{\mathfrak{t}}
\newcommand{\bT}{\mathbb{T}}
\newcommand{\fu}{\mathfrak{u}}
\newcommand{\cU}{\mathcal{U}}
\newcommand{\cW}{\mathcal{W}}
\newcommand{\bx}{\mathbf{x}}
\newcommand{\bX}{\mathbb{X}}
\newcommand{\cX}{\mathcal{X}}
\newcommand{\cZ}{\mathcal{Z}}
\newcommand{\al}{\alpha}
\newcommand{\be}{\beta}
\newcommand{\Ga}{\Gamma}
\newcommand{\de}{\delta}
\newcommand{\la}{\lambda}
\newcommand{\si}{\sigma}
\newcommand{\Si}{\Sigma}
\newcommand{\vp}{\varpi}
\newcommand{\LGT}[1]{#1}												
\newcommand{\LGA}{\LGT{A}}											
\newcommand{\LGB}{\LGT{B}}											
\newcommand{\LGC}{\LGT{C}}											
\newcommand{\LGD}{\LGT{D}}										
\newcommand{\LGE}{\LGT{E}}											
\newcommand{\PSO}{\mathrm{PSO}}						
\newcommand{\Spin}{\mathrm{Spin}}					
\newcommand{\Sp}{\mathrm{Sp}}								
\newcommand{\Exc}{\LGE}													
\newcommand{\SC}{\mathrm{sc}}								
\newcommand{\AD}{\mathrm{ad}}								
\newcommand{\fsl}{\mathfrak{sl}}								
\newcommand{\PS}{\bP}													
\newcommand{\Gr}{\mathrm{Gr}}									
\newcommand{\OG}{\mathrm{OG}}								
\newcommand{\ECHS}[1][n]{\Exc_{#1}^{\SC}/\P_{#1}}		
\newcommand{\cayley}{\mathbb{OP}^2}				
\newcommand{\freudenthal}{\Exc_7^\SC/\P_7}	
\newcommand{\X}{X}																	
\newcommand{\G}{G}																	
\renewcommand{\P}{P}																
\newcommand{\torus}{T}														
\newcommand{\unip}{U_+}													
\newcommand{\unim}{U_-}													
\newcommand{\borelp}{B_+}											
\newcommand{\borelm}{B_-}											
\newcommand{\g}{\fg}																
\newcommand{\up}{\fu_+}													
\newcommand{\um}{\fu_-}													
\newcommand{\cartan}{\ft}													
\newcommand{\parab}{\mathfrak{p}}					
\newcommand{\Che}{e}															
\newcommand{\Chf}{f}																
\newcommand{\Chh}{h}															
\newcommand{\ParabolicIndices}{I_P}					
\newcommand{\IP}{I^P}															
\newcommand{\dG}{\Gch}													
\newcommand{\dP}{\Pch}														
\newcommand{\dtorus}{T^\vee}									
\newcommand{\invdtorus}{T^P}									
\newcommand{\dunip}{U^\vee_+}								
\newcommand{\dunim}{U^\vee_-}								
\newcommand{\dborelp}{B^\vee_+}						
\newcommand{\dborelm}{B^\vee_-}						
\newcommand{\dg}{\fgch}														
\newcommand{\dup}{\fu^\vee_+}									
\newcommand{\dum}{\fu^\vee_-}									
\newcommand{\dcartan}{\ft^\vee}								
\newcommand{\dparab}{\mathfrak{p}^\vee}	
\newcommand{\dChe}{e^\vee}											
\newcommand{\dChf}{f^\vee}												
\newcommand{\dChh}{h^\vee}											
\newcommand{\dx}{x^\vee}													
\newcommand{\dy}{y^\vee}													
\newcommand{\dudrt}[1][\drt]{\fu^\vee_{#1}}			
\newcommand{\wPdChe}{\dChe_{\wP}}							
\newcommand{\udG}{\widetilde{G}^\vee}						
\newcommand{\udP}{\widetilde{P}^\vee}							
\newcommand{\udtorus}{\widetilde{T}^\vee}				
\newcommand{\udborelp}{\widetilde{B}^\vee_+}	
\newcommand{\udborelm}{\widetilde{B}^\vee_-}	
\newcommand{\dgdual}{(\dg)^*}												
\newcommand{\dChedual}[1][_i]{(\dChe#1)^*}			
\newcommand{\dChfdual}[1][_i]{(\dChf#1)^*}				
\newcommand{\dUEAp}{\cU^\vee_+}									
\newcommand{\dCUEAp}{\widehat{\cU}^\vee_+}	
\newcommand{\dUEAm}{\cU^\vee_-}									
\newcommand{\dCUEAm}{\widehat{\cU}^\vee_-}	
\newcommand{\dCUEApDual}{\bigl(\dCUEAp\bigr)^*_\mathrm{gr}} 
\newcommand{\roots}{\Phi}												
\newcommand{\proots}{\Pi}									
\newcommand{\nroots}{\Pi_-}										
\newcommand{\sroots}{\Delta}										
\newcommand{\rt}{\al}																
\newcommand{\sr}{\al}																
\newcommand{\fwt}[1][i]{{\vp_{#1}}}						
\newcommand{\droots}{\Phi^\vee}							
\newcommand{\dfwt}[1][i]{{\vp^\vee_{#1}}}	
\newcommand{\pdroots}{\Pi^\vee}				
\newcommand{\ndroots}{\Pi^\vee_-}					
\newcommand{\sdroots}{\Delta\!^\vee}			
\newcommand{\pdrootsP}{\pdroots_{\P}}		
\newcommand{\wPpdroots}{\pdroots_{\wP}}	
\newcommand{\sdr}{\al^\vee}										
\newcommand{\drt}{\al^\vee}											
\newcommand{\drtb}{\be^\vee}											
\newcommand{\drtzero}{\drt_0}									
\newcommand{\wPdrt}{{\be^\vee_{P}}}				
\newcommand{\weyl}{W}																
\newcommand{\wo}{w_0}																
\newcommand{\weylp}{W_P}														
\newcommand{\wop}{w_P}															
\newcommand{\cosets}{W^P}													
\newcommand{\wP}{w^P}																
\newcommand{\wPinv}{(\wP)^{-1}}										
\newcommand{\wPrime}{w'}														
\newcommand{\wPPrime}{w''}													
\newcommand{\ellwo}{{\ell_0}}										
\newcommand{\ellwop}{{\ell_P}}									
\newcommand{\ellwP}{\ell}															
\newcommand{\ellwPrime}{{\ell'}}										
\newcommand{\ds}{\dot{s}}												
\newcommand{\bs}{\bar{s}}												
\newcommand{\dw}{\dot{w}}											
\newcommand{\bw}{\bar{w}}											
\newcommand{\bwo}{\bw_0}											
\newcommand{\bwop}{\bw_P}										
\newcommand{\dwP}{\dw^P}											
\newcommand{\dwPinv}{(\dwP)^{-1}}			
\newcommand{\mX}{X^\vee}												
\newcommand{\cmX}{\bX^\vee}										
\newcommand{\dRichard}{\cR^\vee}							
\newcommand{\Jac}[1]{\lan\pdif\pot_{#1}\ran}					
\newcommand{\decomps}{\cZ^\vee_P}					
\newcommand{\opendecomps}{\cZ^\circ_P}	
\newcommand{\RisoZ}{\Psi}													
\newcommand{\dunimP}{U_-^P}										
\newcommand{\dunipP}{U_+^P}										
\newcommand{\opendunim}{U^\circ_-}					
\newcommand{\opendunip}{U^\circ_+}					
\newcommand{\pot}{\cW}														
\newcommand{\Lie}{\mathrm{Lie}}									
\newcommand{\can}{\mathrm{can}}								
\newcommand{\SumOfEs}{\cE^*}
\newcommand{\SumOfFs}{\cF^*}
\newcommand{\potZ}{\pot_{\decomps}}						
\newcommand{\potZo}{\pot_{\opendecomps}}		
\newcommand{\wPrimeSubExp}{\cI}									
\newcommand{\dfwtrep}[1][i]{V(\dfwt[#1])}					
\newcommand{\dfwtrepdual}[1][i]{\dfwtrep[#1]^*}		
\newcommand{\Pdfwtrepdual}[1][i]{\PS\bigl(\dfwtrepdual[#1]\bigr)}
\newcommand{\PluckerRep}[1][\DynkinSymmetry(k)]{V(\dfwt[#1])}
\newcommand{\PluckerRepAlt}[1][k]{V(-\wo\cdot\dfwt[#1])}
\newcommand{\PluckerDualRep}[1][k]{\PluckerRep^*}
\newcommand{\PPluckerDualRep}[1][k]{\PS\bigl(\PluckerDualRep\bigr)}
\newcommand{\hwt}[1][i]{v_{\dfwt[#1]}^+}					
\newcommand{\minwt}{\la^\vee}											
\newcommand{\minrep}{V(\minwt)}										
\newcommand{\minrepwts}{M(\minwt)}							
\newcommand{\wt}{\mu^\vee}													
\newcommand{\wtu}[1][u]{\mu^\vee_{#1}}							
\newcommand{\wtv}[1]{v_{#1}}													
\newcommand{\wtvmu}{v_{\mu^\vee}}	              
\newcommand{\ACS}{\cA}										
\newcommand{\XCS}{\cX}										
\newcommand{\seed}{\Si}										
\newcommand{\ACVars}{S_{\!\ACS}}					
\newcommand{\CTorusSeed}[1][\seed]{\bT_{\!#1}}	
\newcommand{\CTorus}{\bT}									
\newcommand{\scs}{\sigma}										
\newcommand{\loc}{\mathrm{loc}}							
\newcommand{\rk}{\mathop{\mathrm{rk}}\nolimits}	
\newcommand{\height}{\mathrm{ht}}						
\newcommand{\ad}{\mathop{\mathrm{ad}}\nolimits}  
\newcommand{\DynkinSymmetry}{\si}				
\newcommand{\p}{p}																			
\newcommand{\pGLS}{p^\text{GLS}}						
\newcommand{\phiGLS}{\phi}						
\newcommand{\woExch}{e(\wo)}					
\newcommand{\ClusterQuiver}{\Ga_{\!\wo}}			
\newcommand{\minor}{\Delta}									
\newcommand{\T}{T}																	
 \newcommand{\marginbox}[1]{}
\begin{document}
\title[Towards Landau-Ginzburg models for cominuscule $\G/\P$ via cominuscule $\ECHS$]{Towards Landau-Ginzburg models for cominuscule Spaces via the exceptional cominuscule family}
\author{Peter Spacek}
\address{Technische Universit\"at Chemnitz, Germany}
\email{peter.spacek@math.tu-chemnitz.de}

\author{Charles Wang}
\address{Harvard University, USA}
\email{cmwang@math.harvard.edu}
\date{\today}

\begin{abstract}
We present projective Landau-Ginzburg models for the exceptional cominuscule homogeneous spaces $\cayley=\ECHS[6]$ and $\ECHS[7]$, known respectively as the Cayley plane and Freudenthal variety. These models are defined on the complement $\mX_\can$ of an anti-canonical divisor of the ``Langlands dual homogeneous spaces'' $\cmX=\dP\backslash\dG$ in terms of generalized Pl\"ucker coordinates, analogous to the canonical models defined for Grassmannians, quadrics and Lagrangian Grassmannians in \cite{Marsh_Rietsch_Grassmannians,Pech_Rietsch_Williams_Quadrics,Pech_Rietsch_Lagrangian_Grassmannians}. We prove that these models for the exceptional family are isomorphic to the Lie-theoretic mirror models defined in \cite{Rietsch_Mirror_Construction} using a restriction to an algebraic torus, also known as the \emph{Lusztig torus}, as proven in \cite{Spacek_LP_LG_models}. We also give a cluster structure on $\C[\mX]$, prove that the Pl\"ucker coordinates form a Khovankii basis for a valuation defined using the Lusztig torus, and compute the Newton-Okounkov body associated to this valuation. Although we present our methods for the exceptional types, they generalize immediately to the members of other cominuscule families.
%
\end{abstract}

\maketitle
\vspace{-1cm}
\setcounter{tocdepth}{2}
\tableofcontents
\section{Introduction}\label{sec:Introduction}
Let $\X$ be a projective homogeneous space $\X=\G/\P$, for a simple and simply-connected $\G$ and parabolic subgroup $\P$. The (small) quantum cohomology of such $\X$ has a long history of research, starting initially with the consideration of Grassmannians by Witten in \cite{Witten_The_Verlinde_algebra_and_the_cohomology_of_Gr}---see \cite{Fulton_on_qH_of_hom_spaces} for a survey. In this article, we will be considering mirror symmetry statements regarding the (small) quantum cohomology ring of $\X$ via a \emph{Landau-Ginzburg model}. In this context, a Landau-Ginzburg model consists of a mirror space $\mX$ and a superpotential $\pot_q$ which is a regular function on $\mX$ such that the \emph{Jacobi ring} of $\pot_q$ is isomorphic to the (small) quantum cohomology ring of $\X$ with localized quantum parameters, i.e.\vspace{-.25em}
\[
qH^*(\X)_{\loc} \cong \C[\mX]/\Jac{q}.
\vspace{-.25em}
\]
In \cite{Rietsch_Mirror_Construction}, a Lie-theoretic construction of a Landau-Ginzburg model is given and shown to be isomorphic to Peterson's presentation of the small quantum cohomology of general projective homogeneous spaces as given in \cite{Peterson}. The superpotential of this model is defined completely intrinsically, using natural maps on the Langlands dual Lie group $\dG$, namely the dual maps associated to a set of Chevalley generators on the Lie algebra $\dg$ of $\dG$.

In practice it is more convenient to have a Landau-Ginzburg model whose superpotential is expressed in terms of coordinates on the mirror space. For example, such LG-models are used in \cite{Pech_Rietsch_Williams_Quadrics} and \cite{Castronovo_Fukaya_category_of_Grassmannians} to prove mirror conjectures for certain homogeneous spaces on the level of $D$-modules and homological algebra, respectively, and in \cite{Rietsch_Williams_NO_bodies_cluster_duality_and_mirror_symmetry_for_Grassmannians} to construct a family of polytopes characterizing toric degenerations of Grassmannians; we will remark more on this towards the end of the section. This is especially the case for homogeneous spaces that have natural coordinates in the form of (generalized) Pl\"ucker coordinates. Hence, the results in \cite{Rietsch_Mirror_Construction} were considered for a number of families of homogeneous spaces that were particularly accessible, namely Grassmannians \cite{Marsh_Rietsch_Grassmannians}, quadrics \cite{Pech_Rietsch_Williams_Quadrics} and Lagrangian Grassmannians \cite{Pech_Rietsch_Lagrangian_Grassmannians}. These families are all \emph{cominuscule} homogeneous spaces, which implies that the Pl\"ucker coordinates on the Langlands dual homogeneous space $\cmX=\dP\backslash\dG$ can be defined in terms of projections on a \emph{minuscule} representation. The structure of these representations is particularly well-understood---see e.g.~\cite{Green_Reps_from_polytopes} or \cite{Geck_Minuscule_weights_and_Chevalley_groups}---and for the families mentioned above the representations are classical matrix representations.

In this paper, we consider the exceptional cominuscule family: the Cayley plane, $\ECHS[6]$ (often written as $\cayley$), and the Freudenthal variety, $\ECHS[7]$, where $\Exc_n^\SC$ denotes the simply-connected Lie group of type $\LGE$ and rank $n$. Although less studied than the classical homogeneous spaces, the geometry of the exceptional cominuscule family is still well-known---see e.g.~\cite{Iliev_Manivel_Chow_ring_of_the_Cayley_plane} for an overview and calculation of the Chow ring of the Cayley plane. Moreover, the ring structure of the quantum cohomologies of the exceptional family has been described in detail in \cite{CMP_Quantum_cohomology_of_minuscule_homogeneous_spaces}; there the quantum cohomologies of all cominuscule homogeneous spaces are presented in a uniform way. However, to our knowledge, the only known Landau-Ginzburg models available until now are Rietsch's Lie-theoretic model and the first author's Laurent polynomial expression for the model on an algebraic torus, sometimes called the \emph{Lusztig torus}, as given in \cite{Spacek_LP_LG_models}, neither of which give expression in terms of coordinates on the mirror space. 

We present Pl\"ucker coordinate Landau-Ginzburg models $(\mX_\can,\pot_\can)$ for the exceptional cominuscule family. The superpotentials are defined as\vspace{-.2em}
\ali{
\pot_\can &= \frac{\p_1}{\p_0} + \frac{\p_9'}{\p_8} + \frac{q_{13}}{q_{12}} + \frac{q_{17}}{q_{16}}+ \frac{q_{21}}{q_{20}}+ \frac{q_{25}}{q_{24}}+ q\frac{\p_5''}{\p_{16}}, 
\\
\pot_\can &= \frac{\p_1}{\p_0} + \frac{q_{19}}{q_{18}} + \frac{q_{28}}{q_{27}} + \frac{q_{37}}{q_{36}} +\frac{q_{37}'}{q_{36}'} + \frac{q_{46}}{q_{45}} + \frac{q_{55}}{q_{54}} + q\frac{p_{10}}{p_{27}}\vspace{-.3em}
}
for $\ECHS[6]$ and $\ECHS[7]$ respectively, where the $\p_i$ are Pl\"ucker coordinates and the $q_i$ are certain polynomials in the Pl\"ucker coordinates given in Sections \ref{sec:cayley-potential} and \ref{sec:freudenthal-potential}. The superpotentials are regular functions on $\cmX$ outside of an anticanonical divisor $D$ which defines the mirror space as $\mX_\can=\cmX\setminus D$; we follow the convention of \cite{Pech_Rietsch_Williams_Quadrics} in calling such models \emph{canonical Landau-Ginzburg models}. Our formulas give an interesting quantum Schubert calculus interpretation of our superpotentials above (see Remark \ref{rem:quantum-pieri} for more details): for a given term of a superpotential, its numerator is equal to the quantum product of $\sigma_1$ with its denominator, multiplied by the degree of its denominator as a polynomial in the Pl\"ucker coordinates. Because the sum of the degrees of the denominators is equal to the index $I$ in each case---$12$ for the Cayley plane and $18$ for the Freudenthal variety---we see that $\pot_\can$ is equal in some sense to $I$ copies of the hyperplane class, and thus represents the anticanonical class, e.g.~in the Jacobi ring. Similar observations were made in type $\LGA$ in \cite{Marsh_Rietsch_Grassmannians} and in type $\LGC$ in \cite{Pech_Rietsch_Lagrangian_Grassmannians}. We are working on a generalization of this observation that we believe holds for all cominuscule spaces, and this result will appear in a future article.

Our main tool for producing these formulas is the expression of generalized minors in terms of the coordinates of the Lusztig torus, which we summarize in Algorithms \ref{alg:torus-expansion} and \ref{alg:Plucker_torus_expansion}. One of our main results can be summarized as follows:

\begin{thm}[Theorems {\ref{thm:ExcFam_CanAndLie_Vars}} and {\ref{thm:ExcFam_CanAndLie_LGmodels}}]
For cominuscule $\ECHS$,  the canonical Landau-Ginzburg model is isomorphic to Rietsch's Lie-theoretic Landau-Ginzburg model.
\end{thm}
\noindent We will introduce the Lie-theoretic Landau-Ginzburg model in Section \ref{sec:Lie_theoretic_Mirror}.
%

Including our results, canonical models are now available for four out of the five cominuscule families: Grassmannians, Lagrangian Grassmannians, quadrics, and the exceptional family. The remaining family, consisting of the orthogonal Grassmannians $\OG(n,2n)$ of maximal isotropic subspaces of $\C^{2n}$ with respect to the standard inner product, is expected to have a model that is mostly similar to that of the Lagrangian Grassmannians. All canonical models, including for the exceptional case, have superpotentials consisting of the sum of quotients of polynomials in the Pl\"ucker coordinates. For Grassmannians these polynomials were linear, whereas for quadrics, Lagrangian Grassmannians, and (conjecturally) orthogonal Grassmannians these polynomials are quadratic. In contrast, the canonical model for the Cayley plane has a term with cubic polynomials, and the canonical model for the Freudenthal variety has a term with quartic polynomials as well as two terms with cubic polynomials.

In fact, our methods apply to all cominuscule homogeneous spaces, with completely analogous proofs. The models obtained in this way will coincide with the canonical models for the Grassmannians, Lagrangian Grassmannians, and quadrics, and our methods will provide similar models for the orthogonal family. In the present form, our approach only handles one member of a family at the time; however, with general representation theoretic knowledge and the associated combinatorial tools, it should be possible to perform our methods either inductively or in generality. This is in contrast with some of the previous approaches using properties of the particular representations. 

Specifically, the methods in \cite{Pech_Rietsch_Williams_Quadrics, Pech_Rietsch_Lagrangian_Grassmannians,Marsh_Rietsch_Grassmannians} use specific details about the fundamental representations of the corresponding Lie groups to express their generalized minors as literal minors of matrix representations obtained from the associated standard representations. This dependence makes generalizing these approaches impractical, especially since the representations of $\LGE_6$ and $\LGE_7$ cannot be given as wedge products of natural representations. On the other hand, our approach is similar in spirit to that of \cite{Pech_Rietsch_Odd_Quadrics}, which compares both the Lie-theoretic and canonical LG-models with an intermediate obtained from the Lusztig torus, although the details in our situation are significantly more complicated. For example, the torus expansions of Pl\"ucker coordinates obtained in Lemma 7.3 of \cite{Pech_Rietsch_Odd_Quadrics} contain up to two terms, while the example computed in Appendix \ref{sec:expansion-appendix} here has $45$ terms. Furthermore, the numerators and denominators of some of the terms of our potentials are of degree $3$ or $4$ in Pl\"ucker coordinates, complicating matters further.

In \cite{Pech_Rietsch_Williams_Quadrics}, it is noted that the results of \cite{GLS_partial_flag_varieties_and_preprojective_algebras} imply that the coordinate rings of $\mX_\can$ and $\cmX$ carry a cluster structure. In particular, we view $\mX_\can$ as an open cluster variety and $\cmX$ as a compactification of $\mX_\can$ given by frozen variables, whose divisors form an anticanonical divisor. On the level of coordinate rings, this means that we obtain $\mathbb{C}[\mX_\can]$ from $\mathbb{C}[\cmX]$ by inverting frozen variables. In Section \ref{sec:cluster}, we give a Pl\"ucker coordinate description for the exceptional cominuscule family of the cluster structures given in \cite{GLS_partial_flag_varieties_and_preprojective_algebras}. The cluster variables are given in Lemmas \ref{lem:plucker-expressions-E6} and \ref{lem:plucker-expressions-E7}, and the quivers are given in Figures \ref{fig:E6-quiver} and \ref{fig:E7-quiver}. In order to compute the Pl\"ucker coordinate expressions for the cluster variables, we formulated Algorithm \ref{alg:subduction-for-gen-minors} for expressing generalized minors in terms of Pl\"ucker coordinates. This algorithm is reminiscent of the \emph{subduction algorithm} for \emph{Khovanskii bases}, and we were able to use it to define a \emph{valuation} $\nu$ on the coordinate ring $\mathbb{C}[\cmX]$. With this, we proved the following theorems:

\begin{thm}[Lemma {\ref{lem:one-dim-leaves}} and Proposition {\ref{prop:nobody}}]
  For minuscule $\cmX=(\ECHS)^\vee$, the valuation $\nu$ defined on $\C[\cmX]$ above has \emph{one-dimensional leaves}, and the associated \emph{Newton-Okounkov body} $\Delta(\mathbb{C}[\cmX],\nu)$ is equal to the convex hull of the valuations of the Pl\"ucker coordinates on $\cmX$.
\end{thm}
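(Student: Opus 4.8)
The plan is to prove the two assertions in turn, deducing the Newton--Okounkov body computation from the one-dimensional leaves property. Recall that $\nu$ arises from Algorithm~\ref{alg:subduction-for-gen-minors}, which rewrites any element of $\C[\cmX]$ --- a polynomial in the generalized Pl\"ucker coordinates $p_1,\dots,p_N$ of the minuscule embedding --- as a $\C$-linear combination of \emph{standard monomials}, by repeatedly straightening non-standard products and peeling off the leading term. The first step is to record that this procedure terminates and yields a \emph{unique} standard-monomial expansion; together with the classical algebra-with-straightening-law structure on the homogeneous coordinate ring of a minuscule flag variety, this shows that the standard monomials form a $\C$-basis of $\C[\cmX]$ and that the leading exponent vector selected by the algorithm depends only on the input, so that $\nu$ is well defined.

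Next I would check that $\nu$ is genuinely a valuation and that $p_1,\dots,p_N$ form a Khovanskii basis for it. The key point is local: straightening a non-standard product $p_ip_j$ produces a combination of standard monomials in which the unique $\nu$-minimal term is precisely ``$p_ip_j$ read as a standard monomial'', so that $\nu(p_ip_j)=\nu(p_i)+\nu(p_j)$. Propagating this through the subduction in Algorithm~\ref{alg:subduction-for-gen-minors} (and through Algorithms~\ref{alg:torus-expansion} and~\ref{alg:Plucker_torus_expansion}) gives $\nu(fg)=\nu(f)+\nu(g)$ and $\nu(f+g)\ge\min(\nu(f),\nu(g))$, and shows that the initial algebra $\mathrm{gr}_\nu\C[\cmX]$ is generated by the initial forms $\mathrm{in}_\nu(p_i)$; equivalently, the initial straightening ideal is a prime binomial ideal and $\mathrm{gr}_\nu\C[\cmX]$ is the affine semigroup algebra $\C[S]$ of the value semigroup $S$.

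With this in hand, the one-dimensional leaves property follows by a counting argument. Since every element of $\C[\cmX]$ is uniquely a linear combination of standard monomials, and since $\nu(p_{i_1}\cdots p_{i_m})=\nu(p_{i_1})+\cdots+\nu(p_{i_m})$ for any monomial, it suffices to prove that $\nu$ is \emph{injective} on standard monomials: then each graded piece of $\mathrm{gr}_\nu\C[\cmX]$ is spanned by a single standard monomial, hence at most one-dimensional. This injectivity is exactly Lemma~\ref{lem:one-dim-leaves}, and I would establish it by showing, using the explicit Pl\"ucker valuations of Lemmas~\ref{lem:plucker-expressions-E6} and~\ref{lem:plucker-expressions-E7}, that the combinatorial data of a standard monomial (the multiplicities with which the $p_i$ occur, organized along a fixed linear extension of the minuscule poset) can be read off from its value vector. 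Once one-dimensional leaves are known, Proposition~\ref{prop:nobody} is immediate: because the $p_i$ have degree one and form a Khovanskii basis with one-dimensional leaves, the graded value semigroup $\widehat S=\{(\nu(f),\deg f)\}$ is generated as a semigroup by $\{(\nu(p_i),1)\}_{i=1}^N$, so its degree-$m$ part is $\{\textstyle\sum_i a_i\nu(p_i)\mid a_i\in\Nzero,\ \sum_i a_i=m\}$; taking $\bigcup_m \frac1m(\text{degree }m\text{ part})$ and closing up yields exactly $\operatorname{conv}\{\nu(p_1),\dots,\nu(p_N)\}$.

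I expect the main obstacle to be the combination of the second and third steps: proving that the subduction algorithm really does return the ``expected'' leading term at every stage --- equivalently, that the chosen term order turns the straightening relations into a SAGBI/Gr\"obner basis --- and then that the resulting valuation separates standard monomials. For the exceptional types this appears to require the explicit Pl\"ucker-valuation tables together with a hands-on verification that $S$ is generated in degree one and has the expected rank (so that $\Delta(\C[\cmX],\nu)$ is full-dimensional), which is why the argument, though uniform in outline, is carried out case-by-case for $\ECHS[6]$ and $\ECHS[7]$.
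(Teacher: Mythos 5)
Your proof sketch misidentifies how $\nu$ is defined, and this derails the whole argument. In the paper, $\nu_n$ is \emph{not} produced by Algorithm \ref{alg:subduction-for-gen-minors}: it is the pullback under the restriction map $\C[\cmX]\hookrightarrow\C[a_1,\dots,a_\ell]$ (given by expanding in the coordinates of $\opendunip$ via Algorithm \ref{alg:torus-expansion}) of the degree-lexicographic minimal-term valuation on a polynomial ring. Since restriction to a dense open set is an injective \emph{ring} homomorphism, the axioms $\nu(fg)=\nu(f)+\nu(g)$ and $\nu(f+g)\succeq\min(\nu(f),\nu(g))$ are automatic, and Lemma \ref{lem:one-dim-leaves} is a two-line generality about subalgebras of polynomial rings: two elements with the same minimal exponent have a scalar multiple that cancels it. Your route — building a standard-monomial basis from an ASL structure, propagating straightening relations, and then proving $\nu$ is injective on standard monomials — is not only far heavier than necessary, it hinges on the subduction Algorithm \ref{alg:subduction-for-gen-minors} behaving well for all inputs, something the paper explicitly declines to claim (``While it is not clear whether step (5) is always possible, we were able to perform it in every case that we computed,'' and again in the remark after Proposition \ref{prop:khovanskii}).

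More seriously, your derivation of Proposition \ref{prop:nobody} runs the logical dependence backwards. You propose: Khovanskii basis $\Rightarrow$ value semigroup generated in degree one $\Rightarrow$ $\Delta=\mathrm{conv}\{\nu(p_i)\}$. The implication itself is fine, but it requires first \emph{proving} that the Pl\"ucker coordinates form a Khovanskii basis, and you offer no mechanism for that beyond the speculative ``local straightening'' step. The paper proves things the other way around. One-dimensional leaves plus Proposition \ref{prop:no-volume} (Kaveh--Manon) give $\mathrm{Vol}(\Delta(A,\nu_n))=\deg(\cmX)$; the containment $\mathrm{conv}\{\nu(p_i)\}\subseteq\Delta(A,\nu_n)$ is automatic from the definition of $\Delta$; and a direct Polymake computation shows $\mathrm{conv}\{\nu(p_i)\}$ already has the correct volume ($78$ for $\cayley$, $13110$ for the Freudenthal variety), forcing equality. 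Only \emph{then} does the paper deduce Proposition \ref{prop:khovanskii}, using the converse direction of Proposition \ref{prop:no-volume} together with projective normality of the minimal embedding and the absence of interior lattice points in $\Delta$. In short: the polytope equality is established by a volume computation, and the Khovanskii-basis statement is a downstream corollary, not an input. Your sketch has the hard part (proving the Khovanskii basis property from scratch) doing work that the paper deliberately avoids.
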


\begin{thm}[Proposition {\ref{prop:khovanskii}}]
  For minuscule $\cmX=(\ECHS)^\vee$, the Pl\"ucker coordinates form a Khovanskii basis for $\mathbb{C}[\cmX]$ with respect to the valuation $\nu$ defined on $\C[\cmX]$ above. 
\end{thm}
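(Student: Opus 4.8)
**Proof proposal for Proposition \ref{prop:khovanskii} (Plücker coordinates form a Khovanskii basis for $\C[\cmX]$ w.r.t.\ $\nu$).**

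The plan is to deduce the Khovanskii property directly from the structural facts established earlier: namely that $\nu$ has one-dimensional leaves (Lemma \ref{lem:one-dim-leaves}) and that the Newton--Okounkov body $\Delta(\C[\cmX],\nu)$ is the convex hull of the finitely many Plücker-coordinate valuations (Proposition \ref{prop:nobody}). Recall that, by definition, the Plücker coordinates form a Khovanskii basis for $\C[\cmX]$ with respect to $\nu$ precisely when the associated graded algebra $\operatorname{gr}_\nu \C[\cmX]$ is generated as a $\C$-algebra by the images $\overline{\p_i}$, $\overline{q_j}$ of the Plücker coordinates (and their polynomial analogues appearing in the torus description). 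So the whole task reduces to showing that these leading-term images generate $\operatorname{gr}_\nu \C[\cmX]$.

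First I would set up the valuation semigroup $S = \nu\bigl(\C[\cmX]\setminus\{0\}\bigr)$ and record that, because $\nu$ has one-dimensional leaves, $\operatorname{gr}_\nu\C[\cmX]$ is isomorphic (as a graded algebra) to the semigroup algebra $\C[S]$; this is the standard consequence of the one-dimensional-leaves hypothesis (see the Kaveh--Manon framework that Lemma \ref{lem:one-dim-leaves} is phrased in). Next I would let $S'\subseteq S$ be the subsemigroup generated by the valuations $\nu(\p_i)$, $\nu(q_j)$ of the Plücker coordinates, and I would show $S' = S$. The containment $S'\subseteq S$ is immediate. For the reverse containment, the key input is that Algorithm \ref{alg:subduction-for-gen-minors}, which is modeled on the subduction algorithm, terminates: given any $f\in\C[\cmX]$, one finds a monomial $m$ in the Plücker coordinates with $\nu(m) = \nu(f)$, subtracts a scalar multiple $c\,m$ to strictly increase the valuation, and iterates. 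The iteration must terminate because the valuation takes values in a well-ordered (or at least finitely-generated, via a monomial order) value group and because the Newton--Okounkov body is a polytope — so the rays along which $\nu$ can grow are controlled, and by Proposition \ref{prop:nobody} every element of $S$ lies in the cone over the finite vertex set. This gives that every $\nu(f)$ is a nonnegative integer combination of the $\nu(\p_i)$, i.e.\ $S = S'$, hence $\operatorname{gr}_\nu\C[\cmX] = \C[S] = \C[S']$ is generated by the leading terms of the Plücker coordinates.

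The main obstacle I anticipate is proving rigorously that the subduction process in Algorithm \ref{alg:subduction-for-gen-minors} \emph{terminates} — equivalently, that one never gets stuck with a valuation value that is not expressible via Plücker monomials, and that the "remainder" valuations form a strictly increasing sequence with no accumulation. This is where one genuinely needs the two preceding theorems working together: one-dimensional leaves guarantees that matching leading valuation forces cancellation of the entire leading form (so the remainder's valuation strictly increases rather than merely the leading coefficient changing within a higher-dimensional leaf), while the polytopality of $\Delta(\C[\cmX],\nu)$ together with the explicit generators of the value group (coming from the Lusztig-torus coordinates) bounds the combinatorial complexity enough to rule out infinite descent. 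A secondary, more bookkeeping-type obstacle is checking that the finitely many polynomial Plücker expressions $q_j$ used in the superpotentials — the cubics and quartics for $\ECHS[6]$ and $\ECHS[7]$ — are actually among the generators whose leading terms are needed, or that their leading terms are already generated by the genuine Plücker coordinates $\p_i$; this should follow from the explicit Plücker expansions of the cluster variables in Lemmas \ref{lem:plucker-expressions-E6} and \ref{lem:plucker-expressions-E7}, but it requires a finite check per type. Once termination is in hand, the identification $\operatorname{gr}_\nu\C[\cmX]=\C[S']$ and hence the Khovanskii property are formal.
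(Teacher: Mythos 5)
Your proposed route is genuinely different from the paper's, and it has a gap you yourself flag but do not close: you reduce the Khovanskii property to the termination of a subduction process (equivalently, to showing that the value semigroup $S$ equals the subsemigroup $S'$ generated by the Plücker-coordinate valuations), and then treat termination as the thing to be proved. But as you set it up the argument is circular --- termination of the subduction step requires that each leading valuation encountered already lie in $S'$, which is the very statement you are trying to establish. You gesture at polytopality and a monomial well-order to ``control'' the iteration, but those control \emph{termination given solvability of each step}, not solvability itself. In fact the paper explicitly warns, in the remark immediately following Proposition~\ref{prop:khovanskii}, that the Khovanskii basis property is \emph{not} sufficient to guarantee Algorithm~\ref{alg:subduction-for-gen-minors} terminates on all inputs (this would require $\nu_n$ to be \emph{subductive} in the sense of Kaveh--Manon), and it is likewise left open whether step~(5) of that algorithm can always be carried out. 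So your ``main obstacle'' is a real one, and your sketch does not surmount it.

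The paper's actual argument avoids subduction entirely. It uses the volume statement of Proposition~\ref{prop:no-volume}: since $\nu_n$ has one-dimensional leaves (Lemma~\ref{lem:one-dim-leaves}), $\mathrm{Vol}(\Delta(A,\nu_n)) = \deg(\cmX)$; then Proposition~\ref{prop:nobody} shows that the convex hull $\delta(B,\nu_n)$ of the Plücker-coordinate valuations already achieves this volume, forcing $\Delta(A,\nu_n) = \delta(B,\nu_n)$. Finally, to pass from ``the Newton--Okounkov body is the convex hull of generator valuations'' to ``the generators form a Khovanskii basis,'' the paper invokes the converse direction of Proposition~\ref{prop:no-volume} --- citing Proposition~17.4 of Rietsch--Williams --- and verifies its hypotheses: projective normality of the minimal Plücker embedding of $\cmX$ into $\PS(V(\dfwt[n]))$, and the (computed) fact that the Newton--Okounkov polytope contains no lattice points other than its vertices. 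That lattice-point check is essential: with extra lattice points, the semigroup generated by the vertex valuations could fail to be saturated and the Khovanskii property could fail even with matching volume. Your sketch never addresses lattice saturation or projective normality, which is precisely the input that makes the argument go through. To repair your approach you would need either (i) a direct proof that each minimal term arising in the subduction can always be matched by a Plücker monomial (which the authors themselves leave open), or (ii) to switch to the paper's polytopal route and supply the lattice-point and normality hypotheses.
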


Although we only gave the statements above for $\cmX=\ECHS$, our proofs do not depend on the exceptional types in a meaningful way, and we propose in Conjecture \ref{conj:nobody} that the Pl\"ucker coordinates form a Khovanskii basis for a family of valuations depending on a choice of reduced expression for a particular Weyl group element. In particular, we obtain from this a family of Newton-Okounkov bodies as the convex hulls of the valuations of Pl\"ucker coordinates. While our conjecture makes sense for any homogeneous space, 
we are unsure what additional conditions---if any---are required to obtain Khovanskii bases and Newton-Okounkov bodies.


We expect that our main results---these Pl\"ucker coordinate canonical models, the Pl\"ucker coordinate cluster structures, and the Khovanskii bases and Newton-Okounkov bodies---can be formulated type-independently for cominuscule and possibly more general homogeneous spaces. This stems from the fact that many of the intermediate results are type-independent themselves: the Lie-theoretic model of \cite{Rietsch_Mirror_Construction} holds for general homogeneous spaces; the Laurent polynomial expression for this model in \cite{Spacek_LP_LG_models} holds for all cominuscule spaces and is currently being extended to adjoint spaces by the first author; the Lie-theoretic and canonical models are compared using results by \cite{GLS_Kac_Moody_groups_and_cluster_algebras} which hold on the coordinate rings of general unipotent cells; and finally the cluster structures given by \cite{GLS_partial_flag_varieties_and_preprojective_algebras} are also valid on general unipotent cells. In fact, in Proposition \ref{prop:Criterion_Plucker_coordinates_coincide_with_GLS} we give a criterion expressing the generators of the coordinate rings given in \cite{GLS_Kac_Moody_groups_and_cluster_algebras} in terms of Pl\"ucker coordinates that holds for general cominuscule homogeneous spaces. Thus, the only obstruction to presenting canonical models and cluster structures type-independently is expressing the generalized minors appearing in \cite{GLS_partial_flag_varieties_and_preprojective_algebras} and \cite{GLS_Kac_Moody_groups_and_cluster_algebras} in terms of Pl\"ucker coordinates. 


In \cite{Rietsch_Williams_NO_bodies_cluster_duality_and_mirror_symmetry_for_Grassmannians}, the authors study a combinatorial version of mirror symmetry in the form of an equality of polytopes defined using the auxiliary data of \emph{cluster structures}. There, cluster structures are used to construct a family of Newton-Okounkov associated with $\X$ and a family of polytopes associated with the superpotential on $\mX_\can$. These families are indexed by \emph{seeds} in the cluster structures, and it is shown that these two polytopes coincide for any choice of seed. While we do not pursue the story here, we provide many of the necessary ingredients, and it would be interesting to explore the combinatorics and see whether there is any relation between our Newton-Okounkov bodies and those coming from the cluster structure.

Apart from being of interest on their own, the canonical models for homogeneous spaces have appeared in a number of other contexts as well. Firstly, it was conjectured in \cite{Rietsch_Mirror_Construction} that solutions to the quantum differential equations of homogeneous spaces can be obtained from oscillatory integrals associated to the Landau-Ginzburg models. In \cite{Lam_Templier_The_mirror_conjecture_for_minuscule_flag_varieties} this conjecture was proven for \emph{minuscule} homogeneous spaces using the language of $D$-modules and the Langlands correspondence. However, as noted there, the isomorphism given in this proof is not constructed explicitly. In \cite{Pech_Rietsch_Williams_Quadrics}, an explicit injective morphism between the $D$-modules in question is given using the canonical mirror model for quadrics; for odd-dimensional quadrics (that are cominuscule but \emph{not} minuscule) an implicit isomorphism was known before so that their morphism gave an isomorphism; for even-dimensional quadrics (that are both cominuscule \emph{and} minuscule) the results of \cite{Lam_Templier_The_mirror_conjecture_for_minuscule_flag_varieties} implied that their injective morphism is in fact an isomorphism. We believe that the canonical model presented for the exceptional cominuscule family will lead to an analogous explicit morphism of $D$-modules, which will then be an isomorphism by \cite{Lam_Templier_The_mirror_conjecture_for_minuscule_flag_varieties}, but we do not verify this here.

Finally, in \cite{Castronovo_Fukaya_category_of_Grassmannians}, the canonical model for Grassmannians was modified to prove \emph{homological mirror symmetry} for Grassmannians $\Gr(k,n)$ with $n$ prime. This is an isomorphism of triangulated categories between the Fukaya category of a variety and the (bounded) derived category of coherent sheaves on its mirror. With canonical models available for four out of the five cominuscule families, these results form promising first steps in homological mirror symmetry for homogeneous spaces.

The outline of this article is as follows. In Section $2$, we set up our conventions and notation, and recall background on Rietsch's Lie-theoretic Langdau-Ginzburg models, the Cayley plane, and the Freudenthal variety. In Section $3$, we define our canonical models, give their Pl\"ucker coordinate expressions, and prove the isomorphism with Rietsch's Lie-theoretic model. In Section $4$, we review background on cluster structures and give Pl\"ucker coordinate cluster structures for the Cayley plane and Freudenthal variety, and in Section $5$, we define our Newton-Okounkov body and show that the Pl\"ucker coordinates give Khovanskii bases in both cases. In the appendices, we provide extra data and examples from our computations in Sections $3$ and $5$. 

\subsubsection*{Acknowledgements}
We implemented Algorithms \ref{alg:torus-expansion} and \ref{alg:subduction-for-gen-minors} in Sage \cite{sagemath} and Macaulay2 \cite{M2}, and used Polymake \cite{polymake-2000,polymake-2017} for Newton-Okounkov body computations. Our code is available at \url{https://github.com/foxflo/ExceptionalCominusculeSpaces}. The authors are grateful to Cl\'elia Pech and Lauren Williams, as well as to the anonymous referees, for many helpful comments and suggestions.
\newpage

\section{Preliminaries}

In the first two subsections (Subsections \ref{sec:notation} and \ref{sec:Lie_theoretic_Mirror}), we will consider general complete homogeneous spaces $\X$ for a simple and simply-connected complex algebraic group $G$ of rank $n$. These spaces are also often called \emph{generalized flag varieties}. We will work with these spaces in their (Pl\"ucker) embeddings as the closed orbits of the highest weight vectors in the projectivizations of the corresponding irreducible representations. Afterwards, beginning from Subsection \ref{sec:local-laurent}, we will restrict to the cases when $X$ is \emph{cominuscule}. 
\subsection{Conventions and notation}\label{sec:notation}
%
We write $\g$ for the Lie algebra of $G$ and choose a set $(\Che_1,\Chf_1,\Chh_1,\ldots,\Che_n,\Chf_n,\Chh_n)$ of Chevalley generators, i.e.~triples $(\Che_i,\Chf_i,\Chh_i=[\Che_i,\Chf_i])$ that generate Lie subalgebras isomorphic to $\fsl_2$. These generators give a Cartan (or root space) decomposition of the Lie algebra as $\g=\up\op\cartan\op\um$ with $\up$ and $\um$ generated by $\{\Che_i\}$ and $\{\Chf_i\}$ respectively, and with $\cartan$ spanned by $\{\Chh_i\}$. 

The Cartan decomposition of $\g$ gives rise to subgroups of $\G$: let $\unip$ and $\unim$ be the (upper and lower) unipotent subgroups of $\G$ with $\up$ and $\um$ as Lie algebras, and let $\torus$ be the maximal torus of $\G$ with $\cartan$ as Lie algebra. 
We obtain a Borel subgroup $\borelp=\torus\unip$ and its opposite $\borelm=\torus\unim$; they have $\cartan\op\up$ and $\cartan\op\um$ as Lie algebras. The Borel subgroup defines aa unique parabolic subgroup $P$ containing $\borelp$ such that $\X=\G/\P$. Its Lie algebra $\parab$ satisfies $\up\op\cartan \subset\parab\subset\up\op\cartan\op\um=\g$; we will denote by $\ParabolicIndices\subset\{1,\ldots,n\}$ the set of indices such that
\begin{equation}
\parab = \lan \Che_i,\Chh_i,\Chf_j~|~i\in\{1,\ldots,n\},~j\in\ParabolicIndices\ran.
\label{eq:Parabolic_Lie_algebra}
\end{equation}
In this case we say that $\P$ is associated to the set $\IP=\{1,\ldots,n\}\setminus\ParabolicIndices$.

Denote by $\roots$ the set of roots and let $\sroots=\{\sr_1,\ldots,\sr_n\}$ be the base of simple roots determined by the Chevalley generators. The sets of positive and negative roots determined by the base are denoted by $\proots$ and $\nroots$ respectively. Writing $\droots$ for the coroots of $\G$, there is a unique pair $(\dG,\dtorus)$ of a Lie group $\dG$ and maximal torus $\dtorus\subset\dG$ that have the coroots $\droots$ of $\G$ as roots and the cocharacter lattice of $\G$ as character lattice; this is called the \emph{Langlands dual pair} and we will refer to $\dG$ as the \emph{Langlands dual group}.
%
\begin{rem}\label{rem:dG-is-adjoint}
As $\G$ is assumed to be simply-connected, $\dG$ will be adjoint.
\end{rem}
$\sdroots=\{\sdr_1,\ldots,\sdr_n\}$ forms a base of simple roots for $\dG$ and determines both a decomposition $\dg=\dum\op\dcartan\op\dup$ of the Lie algebra of $\dG$ and positive $\pdroots$ and negative roots $\ndroots$ for $\dG$. We obtain the Langlands dual groups $\dunip$, $\dunim$, $\dborelp$, $\dborelm$ and $\dP$ analogously to above. The base $\sdroots$ also determines Chevalley generators $(\dChe_1,\dChf_1,\dChh_1,\ldots,\dChe_n,\dChf_n,\dChh_n)$ for $\dg$ and with these we can write the Lie algebra $\dparab$ of $\dP$ as
\begin{equation}
\dparab = \lan \dChe_i,\dChh_i,\dChf_j~|~i\in\{1,\ldots,n\},~j\in\ParabolicIndices\ran.
\label{eq:Lie_algebra_of_dual_parabolic_subgroup}
\end{equation}
In other words, the set $\ParabolicIndices\subset\{1,\ldots,n\}$ determines $\dP$ as well as $\P$; hence $\dP$ is associated to the complement $\IP$ as well.
\begin{rem}\label{rem:LanglandsDuality}
Note that Langlands duality reverses arrows in the Dynkin diagram of the type of $\G$; hence, for $\G=\LGE_n^\SC$ we have $\dG=\LGE_n^\AD$ the adjoint Lie group of type $\LGE_n$.
\end{rem}

We will also need the universal enveloping algebras of $\dup$ and $\dum$, which we will denote by $\dUEAp$ and $\dUEAm$, and their completions $\dCUEAp$ and $\dCUEAm$. Note that we can consider $\dunip\subset\dCUEAp$ and $\dunim\subset\dCUEAm$ by identifying the one-parameter subgroups
\begin{equation}
\dx_i(a) = \exp(a\, \dChe_i) \qand \dy_i(a) = \exp(a\,\dChf_i)
\label{eq:dual_one_parameter_subgroups}
\end{equation}
for $i\in\{1,\ldots,n\}$ and $a\in\C$. Here 
$\exp(a\,\dChe_i)=1+a\,\dChe_i+\frac12a^2(\dChe_i)^2+\ldots\in\dCUEAp$. 


We will write $\dChedual,\dChfdual\in\dgdual$ for the dual maps corresponding to the Chevalley generators $(\dChe_1,\dChf_1,\dChh_1,\ldots,\dChe_n,\dChf_n,\dChh_n)$ for $\dg$; they are given by
\begin{equation}
\dChedual(\dChe_j) = \de_{ij} = \dChfdual(\dChf_j) \qand \dChedual(\dChf_j)=0=\dChfdual(\dChe_j)
\label{eq:dualdualChevalleyGenerators}
\end{equation}
and are identically zero on the remaining root spaces and the Cartan algebra. We extend these maps to $\dunip$ and $\dunim$ using the the identification of the one-parameter subgroups $\dx_i(a)\in\dunip$ and $\dy_i(a)\in\dunim$ with $\exp(a\,\dChe_i)\in\dCUEAp$ and $\exp(a\,\dChf_i)\in\dCUEAm$ in combination with the inclusions of $\dup$ and $\dum$ into their completed universal algebras $\dCUEAp$ and $\dCUEAm$. Equivalently, $\dChedual$ and $\dChfdual$ are defined as the unique group homomorphisms $\dunip\to\C$ and $\dunim\to\C$ such that
\begin{equation}
\dChedual(\dx_j(a)) = a\de_{ij} = \dChfdual(\dy_j(a)). 
\label{eq:e*_and_f*}
\end{equation}

As $\dG$ is in general not simply-connected, we will need to consider the universal cover $\udG$ of $\dG$. As before, we define the universal covers $\udP$, $\udtorus$, 
$\udborelp$ and $\udborelm$. 
Note that the cover of $\dunip$ is in fact isomorphic to $\dunip$ and the same holds for $\dunim$, so we simply identify them.
\begin{rem}
Note that for Lie groups with simply-laced Dynkin diagrams Remark \ref{rem:LanglandsDuality} tells us that $\udG\cong\G$ as they are both simply-connected and of the same type.
\end{rem}

Denote by $\weyl$ the Weyl group associated to (the Coxeter diagram of) $\dG$; $\G$ and $\dG$ share the same Coxeter diagram and hence give rise to the same Weyl group. We write $s_i=s_{\sdr_i}\in\weyl$ for the simple reflections, $\sdr_i\in\sdroots$, which form a set of generators. A decomposition for $w\in\weyl$ as $w=s_{i_1}\cdots s_{i_j}$ is called a \emph{reduced expression} if $j$ is minimal and $\ell(w)=j$ is called the length of $w$. There is a unique element of greatest length denoted by $\wo\in\weyl$ and we abbreviate its length by $\ellwo=\ell(\wo)$. There is also a Weyl group associated to $\dP$, given by the subgroup $\weylp\subset\weyl$ generated by the simple reflections $\{s_i~|~i\in\ParabolicIndices\}$, compare equation \eqref{eq:Lie_algebra_of_dual_parabolic_subgroup}. Note that $\weylp$ is a Weyl group in its own right, associated to the Dynkin diagram of $\dG$ with the vertices marked by $\IP$ removed. We write $\wop$ for the longest element of $\weylp$ and $\ellwop=\ell(\wop)$ for its length. 

We are particularly interested in the quotient $\weyl/\weylp$. We fix a set $\cosets\subset\weyl$ of minimal coset representatives of this quotient (i.e.~for every coset the representative of minimal length), and we write the minimal representative of $\wo\weylp$ as $\wP$. Thus $\wo=\wP\wop$, and we fix reduced expressions 
\begin{equation}
\wP=s_{r_1}\cdots s_{r_\ellwP} \qand \wop=s_{r_{\ellwP+1}}\cdots s_{r_{\ellwP+\ellwop}},
\label{eq:Fixed_reduced_expression_for_wP_and_wop}
\end{equation}
so we obtain a reduced expression $\wo=s_{r_1}\cdots s_{r_\ellwP}s_{r_{\ellwP+1}}\cdots s_{r_{\ellwo}}$ (observe that $\ellwo=\ellwP+\ellwop$). However, it will be more convenient to use the reversed reduced expression for $\wo$
\begin{equation}
\wo=s_{r_{\ellwo}}\cdots s_{r_{\ellwP+1}}s_{r_{\ellwP}}\cdots s_{r_{1}},
\label{eq:Fixed_reduced_expression_for_wo}
\end{equation}
obtained from $\wo=\wo^{-1}$.

Finally, we associate two elements in $\dG$ to each $s_i\in\weyl$:
\begin{equation}
\ds_i = \dx_i(1)\dy_i(-1)\dx_i(1) \qand \bs_i = \dx_i(-1)\dy_i(1)\dx_i(-1)=\ds_i^{-1}.
\label{eq:def_ds_and_bs}
\end{equation}
For an arbitrary $w\in\weyl$ with reduced expression $w=s_{i_1}\cdots s_{i_d}$ we find an associated element in $\dG$ by setting $\dw=\ds_{i_1}\cdots\ds_{i_d}$ and $\bw =\bs_{i_1}\cdots\bs_{i_d}$. Note that $\bar w$ and $\dot w^{-1}$ have the reverse product of simple reflections, i.e.~$\dot w^{-1} = \bs_{i_d}\cdots\bs_{i_1}$. We write
\begin{equation}
\invdtorus=(\dtorus)^{\weylp}\subset\dtorus
\label{eq:df_invdtorus}
\end{equation} 
for the part of $\dtorus$ that is invariant under the action $\weylp\times\dtorus\to\dtorus$ given by $(w,t)\mapsto \dw t\dw^{-1}$. Clearly, $\invdtorus$ has dimension $\#\IP$.

\subsection{Rietsch's Lie-theoretic mirror model and a local Laurent polynomial expression}\label{sec:Lie_theoretic_Mirror}

The mirror models we construct will be shown to be isomorphic to the Landau-Ginzburg models that have been constructed for general homogeneous spaces in \cite{Rietsch_Mirror_Construction}. In this subsection we will briefly discuss the construction given in \cite{Rietsch_Mirror_Construction} and the local Laurent polynomial expression for its superpotential given by the first author in \cite{Spacek_LP_LG_models}.

\subsubsection{The Lie-theoretic mirror model}
In this subsection $\X=\G/\P$ denotes a general homogeneous spaces with $\P$ an arbitrary parabolic subgroup. The mirror model consists of a subvariety of the open Richardson variety associated to $(\wop,\wo)$. This open Richardson variety is given by:
\begin{equation}
\mX_\Lie = \dRichard_{\wop,\wo} = \bigl(\dborelp\wop\dborelm\cap\dborelm\wo\dborelm\bigr)/\dborelm~\subset~\dG/\dborelm.
\label{eq:Richardson_var}
\end{equation}
There exists an isomorphism
\begin{equation}
\RisoZ:\mX_\Lie\times\invdtorus\overset\sim\longrightarrow\decomps
\label{eq:Decomposition_Isomorphism}
\end{equation}
to the variety
\begin{equation}
\decomps = \dborelm\bwo^{-1}\cap\dunip\invdtorus\bwop\dunim~\subset~\dG.
\label{eq:decomps}
\end{equation}
(See \cite{Rietsch_Mirror_Construction}, Section 4.)
\begin{rem}\label{rem:Decomposition_Isomorphism}
Compared to \cite{Rietsch_Mirror_Construction}, we have translated the definition of $\decomps$ by right multiplication by $\bwo^{-1}$. For a detailed description of $\RisoZ$, see Remark 3.1 of \cite{Spacek_LP_LG_models}.
\end{rem}
In \cite{Peterson}, the quantum cohomology of a generalized flag variety $\G/\P$ is presented as the coordinate ring of what is now known as the \emph{Peterson variety} (see e.g.~\cite{Rietsch_Mirror_Construction}, paragraph 3.2). Inside this (non-reduced) variety there is an open stratum whose coordinate ring is isomorphic to the quantum cohomology localised at the quantum parameters (see e.g.~\cite{Rietsch_Mirror_Construction}, equation (3.2)). In \cite{Rietsch_Mirror_Construction}, a function on $\decomps$ is defined whose critical locus is isomorphic to this open stratum. The isomorphism $\RisoZ$ of equation \eqref{eq:Decomposition_Isomorphism} then yields a subvariety of $\mX_\Lie\times\invdtorus$ whose coordinate ring is isomorphic to the localized quantum cohomology of $\X$.

These results have been reformulated in \cite{Marsh_Rietsch_Grassmannians}, Theorem 6.5; see also \cite{Pech_Rietsch_Lagrangian_Grassmannians}, Section 4. There, a Jacobi ring of $\mX_\Lie\times\invdtorus$ with respect to a superpotential replaces the critical locus in the original formulation. That is, the localised quantum cohomology of $\X=\G/\P$ is isomorphic to the coordinate ring of $\mX_\Lie\times\invdtorus$ modulo the ideal generated by derivatives of the superpotential:

\begin{thm}[{\cite{Rietsch_Mirror_Construction}, Theorem 4.1}]\label{thm:Lie-theoretic_LG_model}
Let $\X=\G/\P$ be a complete homogeneous space with $\G$ a simple, simply-connected algebraic group over $\C$ and with $\P$ a parabolic subgroup. There exists a superpotential $\pot_\Lie:\mX_\Lie\times\invdtorus\to\C$ (given in Definition \ref{df:potential}) such that
\[
qH^*(\X)_\loc \cong \C[\mX_\Lie\times\invdtorus]/\Jac{\Lie},
\]
where $qH^*(\X)_\loc$ is the (small) quantum cohomology of $\X$ with all quantum parameters inverted and where $\Jac{\Lie}$ is the ideal generated by the derivatives of $\pot_\Lie$ along $\mX_\Lie$.
\end{thm}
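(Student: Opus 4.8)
The plan is to reduce Theorem \ref{thm:Lie-theoretic_LG_model} to the original statement proven in \cite{Rietsch_Mirror_Construction}, since the content here is essentially a repackaging of Rietsch's critical-locus description into the language of Jacobi rings (following \cite{Marsh_Rietsch_Grassmannians}, Theorem 6.5). First I would recall from \cite{Rietsch_Mirror_Construction}, Section 3.2, the Peterson variety presentation of $qH^*(\X)$ and, in particular, the isomorphism between $qH^*(\X)_\loc$ and the coordinate ring of the open stratum $\cY$ in the Peterson variety. Then, using the function $\cF$ defined on $\decomps$ in \cite{Rietsch_Mirror_Construction} whose critical locus maps isomorphically to $\cY$, and pulling back along the isomorphism $\RisoZ$ of equation \eqref{eq:Decomposition_Isomorphism}, I would obtain a function $\pot_\Lie$ on $\mX_\Lie\times\invdtorus$ whose critical locus along the $\mX_\Lie$-directions is isomorphic to $\cY$. (This $\pot_\Lie$ is the one to be spelled out in Definition \ref{df:potential}.)

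Next I would translate "critical locus" into "Jacobi ring." The key point is that for a regular function $\pot_\Lie$ on a smooth affine variety $Z = \mX_\Lie\times\invdtorus$, the scheme-theoretic critical locus (with respect to the $\mX_\Lie$-directions, i.e.\ the relative critical locus over $\invdtorus$) is exactly $\Spec$ of the Jacobi ring $\C[Z]/\Jac{\Lie}$, where $\Jac{\Lie}$ is generated by the derivatives of $\pot_\Lie$ along $\mX_\Lie$. Combining this with the isomorphism $\cO(\cY)\cong qH^*(\X)_\loc$ from the previous paragraph yields the claimed ring isomorphism $qH^*(\X)_\loc \cong \C[\mX_\Lie\times\invdtorus]/\Jac{\Lie}$. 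One has to check that $\RisoZ$ intertwines the two notions of critical locus — that is, that differentiating $\cF$ on $\decomps$ corresponds under $\RisoZ$ to differentiating $\pot_\Lie$ along the $\mX_\Lie$-factor — which follows because $\RisoZ$ is an isomorphism of varieties and the $\invdtorus$-factor plays the role of the (localized) quantum parameters, held fixed in both settings.

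The main obstacle, and the only place requiring genuine care, is the passage from the critical \emph{locus} (a statement about points, or at best about a reduced subscheme) to the critical \emph{scheme} whose ring is the Jacobi ring — in other words, verifying that the relevant ideal is radical, or more precisely that the ring isomorphism of \cite{Rietsch_Mirror_Construction} upgrades to one of (possibly non-reduced) schemes matching the Jacobi-ring structure. This is handled in \cite{Marsh_Rietsch_Grassmannians}, Theorem 6.5 and \cite{Pech_Rietsch_Lagrangian_Grassmannians}, Section 4, so I would cite those reformulations directly rather than reprove them; the honest statement of Theorem \ref{thm:Lie-theoretic_LG_model} is simply the combination of \cite{Rietsch_Mirror_Construction}, Theorem 4.1 with these later reinterpretations, and no new argument is needed beyond assembling them and recording that the superpotential in question is the one given in Definition \ref{df:potential}.
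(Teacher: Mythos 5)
Your proposal is correct and takes essentially the same route as the paper: the paper does not prove this statement but cites it directly as \cite{Rietsch_Mirror_Construction}, Theorem 4.1, noting in the surrounding text that the Jacobi-ring phrasing comes from the reformulations in \cite{Marsh_Rietsch_Grassmannians}, Theorem 6.5 and \cite{Pech_Rietsch_Lagrangian_Grassmannians}, Section 4. You have correctly identified that the content is an assembly of Rietsch's critical-locus description with those later reinterpretations, and your flagging of the critical-locus-to-Jacobi-ring upgrade as the only delicate step is precisely the point those references handle.
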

The superpotential $\pot_\Lie$ is presented in \cite{Pech_Rietsch_Lagrangian_Grassmannians} as the pull-back of a superpotential defined on $\decomps$ along the isomorphism $\RisoZ:\mX_\Lie\times\invdtorus\overset\sim\longrightarrow\decomps$ from equation \eqref{eq:Decomposition_Isomorphism}. To state this superpotential, we introduce to the following subset of $\dunim$:
\begin{equation}
\dunimP = \dunim\cap \dborelp\bwop\bwo\dborelp = \dunim\cap \dborelp\dwPinv\dborelp ~\subset~\dunim.
\label{eq:unique_decomps_unipotents}
\end{equation}
(Note that $\wop\wo=\wPinv$, so that $\bwop\bwo$ and $\dwPinv$ differ by a torus element and the two definitions of $\dunimP$ coincide.) This set has the following property:
\begin{lem}[{\cite{Pech_Rietsch_Lagrangian_Grassmannians}, Section 4}]
  \label{lem:z_has_unique_decomposition}
Every $z\in\decomps$ has a \emph{unique} decomposition $z=u_+t\bwop u_-$ with $u_+\in\dunip$, $t\in\invdtorus$ and $u_-\in\dunimP$.
\end{lem}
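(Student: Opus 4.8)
I would prove existence and uniqueness together, by playing off the two membership conditions in the definition \eqref{eq:decomps} of $\decomps$ against each other. The starting point is the chain
\[
\decomps\ \subseteq\ \dunip\invdtorus\bwop\dunim\ \subseteq\ \dunip\dtorus\bwop\dunim\ =\ \dborelp\bwop\dborelm ,
\]
so every $z\in\decomps$ has at least one factorisation $z=u_+t\bwop u$ with $u_+\in\dunip$, $t\in\dtorus$, $u\in\dunim$. I would first establish the group-theoretic normal form inside $\dborelp\bwop\dborelm$: the torus factor $t$ is uniquely determined by $z$, and if $L:=\dunim\cap\bwop^{-1}\dunip\bwop$ (the intersection of $\dunim$ with the Levi subgroup of $\dP$), then the collection of all such factorisations of a fixed $z$ forms a single $L$-orbit --- replacing $u$ by $\ell^{-1}u$ for $\ell\in L$ and correcting $u_+$ by the $\dunip$-element $t(\bwop\ell\bwop^{-1})t^{-1}$ leaves $z$ and $t$ unchanged, while the component of $u$ in the unipotent radical $\dunim^{\mathrm{nil}}$ of the opposite parabolic $\dP^-$ (for the splitting $\dunim=L\,\dunim^{\mathrm{nil}}$) is the same for all of them. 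Since $z\in\dunip\invdtorus\bwop\dunim$, at least one of these factorisations has torus factor in $\invdtorus$, hence so does the canonical $t$.

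It then remains to show that exactly one point of this $L$-orbit has its $u$-component in $\dunimP=\dunim\cap\dborelp\bwop\bwo\dborelp$, which yields both existence and uniqueness. For this I would invoke the second condition $z\bwo\in\dborelm$. Write $\bwo=\bwop\sigma$ with $\sigma$ a lift of $\wop\wo$; here one records the identity $\wop\wo=\wPinv$, so that $\ellwo=\ellwP+\ellwop$ exhibits $\bwo=\bwop\sigma$ as a length-additive factorisation. Substituting $z=u_+t\bwop u$ gives $z\bwo=u_+\,t\,(\bwop u\bwop^{-1})\,\bwop^{2}\,\sigma$; splitting $\bwop u\bwop^{-1}$ into its Levi-part (which carries the $L$-ambiguity) and its $\dunim^{\mathrm{nil}}$-part (the rigid part of $u$) and absorbing the torus element $\bwop^{2}$ together with the upper-unipotent factors, the condition $z\bwo\in\dborelm$ turns into the requirement that $n\,t'\,\sigma$ lie in the big cell $\dunip\dtorus\dunim$ for the rigid part $n\in\dunim^{\mathrm{nil}}$ and a suitable $t'\in\dtorus$. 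Using the length-additivity of $\bwop\sigma$ and keeping track of the precise $\dborelm$-element $z\bwo$, this pins down both the rigid part of $u$ and the $L$-parameter; one then checks the resulting $u$ actually lies in the double coset $\dborelp\bwop\bwo\dborelp$, i.e.\ in $\dunimP$, the nonvanishings needed for this being exactly those forced by $z\in\dborelm\bwo^{-1}$.

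The step I expect to be the main obstacle is this last one --- matching the Bruhat double coset $\dborelp\bwop\bwo\dborelp$ defining $\dunimP$ against the locus that $z\bwo\in\dborelm$ carves out inside the $L$-orbit, i.e.\ the Bruhat-theoretic bookkeeping showing that the orbit point selected by the opposite-Borel condition does sit in the correct cell. A cleaner alternative I would also pursue is to read the lemma off the explicit construction of $\RisoZ\colon\mX_\Lie\times\invdtorus\overset{\sim}{\longrightarrow}\decomps$ from \eqref{eq:Decomposition_Isomorphism} (see \cite{Rietsch_Mirror_Construction}, Section~4, and \cite{Spacek_LP_LG_models}, Remark~3.1): that construction presents every $z\in\decomps$ already in the shape $u_+t\bwop u_-$ with $u_-\in\dunimP$, making existence immediate, and uniqueness then follows from the $L$-orbit description of the first paragraph together with the fact that $\dunimP$ meets each $L$-orbit in at most one point.
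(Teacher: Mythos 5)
The paper does not actually give a proof of this lemma: it cites \cite{Pech_Rietsch_Lagrangian_Grassmannians}, Section~4, and records in Remark~2.5 that the argument there ``can be carried over to the general case without any modification, so it will be omitted here.'' So there is no in-paper proof to match yours against; I can only assess the proposal on its own terms.

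Your structural setup in the first paragraph is correct and is the right starting point: if $z = u_+ t \bwop u$ with $u_+\in\dunip$, $t\in\dtorus$, $u\in\dunim$, and $z = u_+'t'\bwop u'$, then writing $u'=v u$ one finds $u_+t=u_+'t'(\bwop v\bwop^{-1})$, and since $\bwop$ sends negative Levi roots to positive ones while preserving the negative non-Levi roots, $\bwop v\bwop^{-1}\in\dborelp$ forces $v\in L:=\dunim\cap L_P$. So the fibre over $z$ is a single $L$-orbit, with $t$ fixed along it. All of that is fine.

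The gap is in the second paragraph, and you have in fact correctly identified it as the hard step. Two things are left unproved, and neither is a formality. (i)~\emph{Existence:} you need that the condition $z\bwo\in\dborelm$ forces the $L$-orbit $\{\ell^{-1}u:\ell\in L\}$ to meet $\dunimP$. This genuinely requires the second defining condition of $\decomps$ --- already in $G=\SL_3$, $P=P_1$, one computes that $\dunimP=\{(x,y,z):y=0,\,x\neq0,\,z\neq0\}$ inside $\dunim\cong\mathbb{A}^3$ while $L\cdot\dunimP=\{x\neq0,\,xz\neq y\}$ is a proper open subset, so the orbit of a general $u\in\dunim$ simply misses $\dunimP$. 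Your reduction of $z\bwo\in\dborelm$ to ``$nt'\sigma$ lies in the big cell'' is not quite right as stated ($z\bwo\in\dborelm$ is strictly stronger than membership in the big cell $\dborelp\dborelm$, and the cleanest way to phrase the reduction is that $z\bwo\in\dborelm$ is equivalent to $(\bwop n\bwop^{-1})\bwop^{2}\sigma\in\dborelp\dborelm$), and even with the right formulation the bookkeeping that makes the selected $u$ land in the double coset $\dborelp\bwop\bwo\dborelp$ is exactly the content of the lemma; asserting that ``the nonvanishings needed for this are exactly those forced by $z\in\dborelm\bwo^{-1}$'' is a restatement, not a proof. (ii)~\emph{Uniqueness:} your ``cleaner alternative'' invokes the assertion that $\dunimP$ meets each $L$-orbit in at most one point, but gives no argument. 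This fact is true --- it is the injectivity of the multiplication map $L\times\dunimP\to\dunim$, which follows from the length-additive factorisation $\wo=\wop\cdot(\wop\wo)$ together with the standard multiplicativity of unipotent Bruhat cells ($L=\dunim\cap\dborelp\bwop\dborelp$, $\dunimP=\dunim\cap\dborelp\bwop\bwo\dborelp$, and $\ell(\wop)+\ell(\wop\wo)=\ellwo$) --- but it is not self-evident and it is essentially the uniqueness half of the lemma in disguise, so it must be proved or cited rather than invoked. As it stands the argument is a plausible sketch, and I do not think it is on the wrong track, but both of its load-bearing steps are currently unsupported.
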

\begin{rem}
The proof of this result 
in \cite{Pech_Rietsch_Lagrangian_Grassmannians} can be carried over to the general case without any modification, so it will be omitted here.
\end{rem}
The superpotential on $\decomps$ is now defined as follows:
\begin{df}\label{df:potential}
Define the superpotential $\potZ:\decomps\to\C$ as the map:
\[
\potZ:\quad z=u_+t\bwop u_- \longmapsto \SumOfEs(u_+^{-1}) + \SumOfFs(u_-),
\]
where $\SumOfEs = \sum_{i=1}^n\dChedual$ and $\SumOfFs=\sum_{i=1}^n\dChfdual$, and where the decomposition of $z=u_+t\bwop u_-$ is the unique decomposition with $u_-\in\dunimP$ as stated in Lemma \ref{lem:z_has_unique_decomposition}. Moreover, the superpotential $\pot_\Lie:\mX_\Lie\times\invdtorus\to\C$ mentioned in Theorem \ref{thm:Lie-theoretic_LG_model} is given by $\pot_\Lie=\potZ\circ\RisoZ$ with $\RisoZ$ given in equation \eqref{eq:Decomposition_Isomorphism}.
\end{df}

\subsubsection{A local Laurent polynomial expression for the superpotential}\label{sec:local-laurent}
From this point forward, we will restrict our attention to \emph{cominuscule} homogeneous spaces. Recall that parabolic subgroups containing a given Borel subgroup can be associated to a subsets of vertices of the Dynkin diagram, with the Borel subgroup corresponding to the full diagram and a maximal parabolic corresponding to a single vertex. We will write $\P=\P_k$ if $\P$ is maximal parabolic and associated to the $k$th vertex. The homogeneous space $\X=\G/\P_k$ is called cominuscule if the fundamental coweight $\dfwt[k]$ is \emph{minuscule}, i.e. it satisfies
\begin{equation}
\llan\dfwt[k],\sr\rran\in\{-1,0,+1\},\quad\forall\sr\in\roots,
\label{eq:minuscule_coweight}
\end{equation}
where $\llan\cdot,\cdot\rran$ denotes the dual pairing between the character and the cocharacter lattice.

In \cite{Spacek_LP_LG_models} a Laurent polynomial expression is given for $\potZ$ restricted to the following open, dense subvariety:
\[
\opendecomps = \dborelm\bwo^{-1}\cap\dunip\invdtorus\bwop\opendunim ~\subset~ \decomps,
\]
where $\opendunim\subset\dunimP$ is the open, dense algebraic torus defined by
\begin{equation}
\opendunim = \{\dy_{r_\ell}(a_\ell)\cdots\dy_{r_1}(a_1)~|~a_i\in\C^*\}.
\label{eq:df_opendunim}
\end{equation}
Recall that $(r_1,\ldots,r_\ell)$ is the sequence of indices of our fixed reduced expression for $\wP$ (see equation \ref{eq:Fixed_reduced_expression_for_wP_and_wop}). The fact that $\opendunim\subset\dunimP$ is open and dense is well-known, see for example Lemma 4.8(ii) of \cite{Pech_Rietsch_Lagrangian_Grassmannians} or Lemma 5.2 of \cite{Spacek_LP_LG_models}. The superpotential is parametrized by a certain Weyl group element: let $\wPPrime$ be the minimal representative of the coset $\wop s_k\weylp$ and set $\wPrime=\wP(\wPPrime)^{-1}$, writing its length as $\ell(\wPrime)=\ellwPrime$, then define $\wPrimeSubExp$ as the set of reduced subexpressions of $\wPrime$ in the fixed reduced expression of $\wP$.

The Laurent polynomial superpotential is now given as follows:
\begin{prop}[{\cite{Spacek_LP_LG_models}, Theorem 5.7}]\label{prop:LP_LG_model}
Let $\X=\G/\P$ be a cominuscule complete homogeneous space with $\G$ a simply-connected, simple, complex algebraic group and $\P=\P_k$ a (maximal) parabolic subgroup. The restriction $\potZo$ of $\potZ$ to $\opendecomps$ has the following Laurent polynomial expression:
\[
\potZo(z) = \sum_{i=1}^\ell a_i + q \frac{\sum_{(i_j)\in\wPrimeSubExp}\prod_{j=1}^{\ellwPrime} a_{i_j}}{\prod_{i=1}^\ell a_i}.
\]
Here $z\in\opendecomps$ is uniquely decomposed as $z=u_+t\bwop u_-$ with $u_-=\dy_{r_\ell}(a_\ell)\cdots\dy_{r_1}(a_1)\in\opendunim$. Also, $q\in\C^*$ is given by $q=\sdr_k(t)$ with $t\in\invdtorus$.
\end{prop}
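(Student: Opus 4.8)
The plan is to treat the two summands of the superpotential $\potZ$ of Definition~\ref{df:potential} separately. For $z\in\opendecomps$ written (uniquely, by Lemma~\ref{lem:z_has_unique_decomposition}) as $z=u_+t\bwop u_-$ with $u_-=\dy_{r_\ell}(a_\ell)\cdots\dy_{r_1}(a_1)\in\opendunim$ and $q=\sdr_k(t)$, we have $\potZo(z)=\SumOfEs(u_+^{-1})+\SumOfFs(u_-)$. The $f$-part is immediate: each $\dChfdual\colon\dunim\to\C$ is a group homomorphism vanishing on the commutator subgroup with $\dChfdual(\dy_j(a))=a\de_{ij}$, so
\[
\SumOfFs(u_-)=\sum_{i=1}^{n}\dChfdual\bigl(\dy_{r_\ell}(a_\ell)\cdots\dy_{r_1}(a_1)\bigr)=\sum_{m=1}^{\ell}a_m .
\]
It remains to identify $\SumOfEs(u_+^{-1})$ with the $q$-term.

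The first step for the $e$-part is to eliminate $u_+$ using the remaining defining equation $z\in\dborelm\bwo^{-1}$ of $\decomps$. From $z\bwo\in\dborelm$ we get that $g:=t\,\bwop\,u_-\,\bwo$ satisfies $u_+g\in\dborelm$ with $u_+\in\dunip$; hence $g$ lies in the big cell $\dunip\dtorus\dunim$, and in its Gaussian decomposition $g=g_+g_0g_-$ (with $g_+\in\dunip$, $g_0\in\dtorus$, $g_-\in\dunim$) one has $g_+=u_+^{-1}$. For each $i$ one can then express $\dChedual(u_+^{-1})=\dChedual(g_+)$ through ratios of matrix coefficients of $g$ in the fundamental representation $V(\dfwt[i])$, via the standard identity recovering the unipotent factor of an element of the big cell (possibly up to the Dynkin-diagram involution $i\mapsto i^{*}$).

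The main step is to evaluate these matrix coefficients using $g=t\,\bwop\,\dy_{r_\ell}(a_\ell)\cdots\dy_{r_1}(a_1)\,\bwo$, and here the cominuscule hypothesis --- minusculeness of $\dfwt[k]$, equation~\eqref{eq:minuscule_coweight} --- is what makes the computation tractable. In a minuscule representation all weight spaces are one-dimensional; $\bwo$ carries the highest weight line to the lowest and $\bwop$ permutes weight lines; and each $\dy_{r_m}(a_m)$ acts on a weight vector either as the identity or as $1+a_m\dChf_{r_m}$ with $(\dChf_{r_m})^{2}=0$ there, since the relevant root strings have length at most $1$. Therefore $\dy_{r_\ell}(a_\ell)\cdots\dy_{r_1}(a_1)$ applied to a fixed weight vector expands as a sum over sub-words of the fixed reduced word for $\wP$, each appearing with a square-free monomial coefficient in the $a_m$ and landing in a single weight line. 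Matching weights, the denominator matrix coefficient collects the full word for $\wP$ and so contributes $\prod_{m=1}^{\ell}a_m$, while the numerator collects precisely the sub-words that are reduced subexpressions of $\wP(\wPPrime)^{-1}=\wPrime$, i.e.\ the index set $\wPrimeSubExp$, contributing $\sum_{(i_j)\in\wPrimeSubExp}\prod_{j=1}^{\ellwPrime}a_{i_j}$; and since the numerator's weight differs from the denominator's by exactly $\sdr_k$, pulling the torus element $t\in\invdtorus$ out of the matrix coefficient produces precisely the scalar $q=\sdr_k(t)$. The element $\wPPrime$ (the minimal representative of $\wop s_k\weylp$) enters exactly because the coweight direction $\sdr_k$, that is the quantum parameter, is reached only through the simple reflection $s_k$ at the cominuscule node.

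The step I expect to be the real obstacle is this last combinatorial matching: identifying, inside the fixed reduced word for $\wP$ from \eqref{eq:Fixed_reduced_expression_for_wP_and_wop}, exactly the sub-words surviving in the numerator with the reduced subexpressions of $\wPrime$ (keeping track of the reversal built into $u_-$), and checking that the scalars produced by the lifts $\bwop$, $\bwo$ and $\ds_i$ cancel so that the power of $q$ is exactly $1$ with no leftover torus factor. This forces one to coordinate the chosen reduced words for $\wP$ and $\wop$ with the minuscule weight combinatorics, which is precisely where the type-independence of the cominuscule family is used. Alternatively one can run the same computation inside an explicit matrix model of the minuscule representation and its one-parameter subgroups, as was done for Grassmannians, quadrics and Lagrangian Grassmannians in the cited works; the argument above is meant to make the uniformity manifest.
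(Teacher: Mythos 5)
This proposition is not proved in the present paper; it is recalled verbatim from the first author's earlier work \cite{Spacek_LP_LG_models}, Theorem 5.7, so there is no in-paper argument to compare you against. Assessing your sketch on its own terms: your $f$-part is correct --- since each $\dChfdual$ (and hence $\SumOfFs$) is a group homomorphism $\dunim\to(\C,+)$ annihilating all but the simple root directions, the decomposition $u_-=\dy_{r_\ell}(a_\ell)\cdots\dy_{r_1}(a_1)$ immediately yields $\SumOfFs(u_-)=\sum_m a_m$. Identifying $u_+^{-1}$ with the $\dunip$-factor of $g=t\bwop u_-\bwo$ in its Gaussian decomposition is also correct and a good starting point.

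The $e$-part, however, has a structural gap. You propose to extract $\dChedual(g_+)$ as a ratio of generalized minors of $g$ in $V(\dfwt[i])$, and then invoke minusculeness of $\dfwt[k]$ as the mechanism that makes the expansion of $u_-$ a square-free sum over subwords of the reduced word for $\wP$. But minusculeness is a property of the single representation $V(\dfwt[k])$ (and its Dynkin-dual $V(\dfwt[\DynkinSymmetry(k)])$); for $i\in\ParabolicIndices$ the fundamental representations $V(\dfwt[i])$ are not minuscule, their weight spaces can have dimension $\geq 2$, the relevant $\sdr_{r_m}$-strings can have length $\geq 2$, and $(\dChf_{r_m})^2$ no longer annihilates --- so the subword expansion you rely on fails outright in those representations. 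Your argument therefore only addresses the single summand $\dChedual[_k](u_+^{-1})$, whereas the proposition asserts a formula for the entire sum $\SumOfEs(u_+^{-1})=\sum_{i=1}^n\dChedual(u_+^{-1})$. To close the argument you must establish that $\dChedual(u_+^{-1})$ vanishes identically on $\opendecomps$ for all $i\neq k$ (equivalently, that $u_+^{-1}$ only has a nontrivial component in the $\dChe_k$-direction), and this is not automatic. You flag the ``real obstacle'' as the downstream combinatorial matching of subwords with $\wPrimeSubExp$; but that is a secondary difficulty --- the crux is upstream, namely the collapse of the sum over $i$ to the single $i=k$ term, and the non-minuscule representations one would a priori have to handle.

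A smaller inconsistency: the scalar you pull out of the minor ratio in $V(\dfwt[i])$ is $\sdr_i(t)$, not $\sdr_k(t)$. Since $t\in\invdtorus$ is $\weylp$-fixed, $\sdr_i(t)$ is trivial on the identity component of $\invdtorus$ precisely when $i\in\ParabolicIndices$; the quantum parameter $q=\sdr_k(t)$ enters only at $i=k$. This is of course consistent with the other terms contributing nothing, but by itself it is evidence, not proof, that they vanish.
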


In Corollary 8.12 of \cite{Spacek_LP_LG_models}, it is shown that the numerator of the quantum term can be enumerated using subsets of a quiver defined in \cite{CMP_Quantum_cohomology_of_minuscule_homogeneous_spaces} and that these subsets can be obtained from a given one using two elementary moves, which can be represented graphically in the same way: by replacing a vertex in the subset with the one below (resp.~above) it in the same column under the condition that all outgoing (resp.~incoming) arrows at the original vertex do not point to elements in the subset. (See Lemma 8.9 and Proposition 8.11 of \cite{Spacek_LP_LG_models}.) This quiver will appear again later, see Remark \ref{rem:GLS_and_CMP_quivers}. We would like to note that these combinatorial operations coincide with the ``excited moves'' defined on $d$-complete posets in \cite{Naruse_Okada_posets_equivariant_K_theory}, where equivariant $K$-theory is used to derive a skew hook formula.

\subsection{Generalized Pl\"ucker coordinates and the coordinate ring of \texorpdfstring{$(\dunimP)^\T$}{(U\_-\textasciicircum{}P)\textasciicircum{}T}}\label{sec:Plucker_coords_and_ring_of_dunimP}
For cominuscule homogeneous spaces $\X=\G/\P_k$, a consequence of the geometric Satake correspondence is that the Schubert classes that form a basis of the cohomology are in one-to-one correspondence with a set of projective coordinates called \emph{(generalized) Pl\"ucker coordinates} on the \emph{Langlands dual homogeneous space}
\begin{equation}
\cmX = \dP_k\backslash\dG=\udP_k\backslash\udG.
\label{eq:Langlands_dual_hom_space_ECHS}
\end{equation}
(Both expressions give the same homogeneous space, and we mostly use the second.) See \cite{Marsh_Rietsch_Grassmannians}, p.~3, for more references regarding the geometric Satake correspondence. Thus, if we reformulate the superpotential in Definition \ref{df:potential} in terms of Pl\"ucker coordinates, we obtain a description of the localized quantum cohomology in terms of Schubert classes and relations between them. Pl\"ucker coordinate expressions for Rietsch's superpotential have been given for Grassmannians \cite{Marsh_Rietsch_Grassmannians}, quadrics \cite{Pech_Rietsch_Williams_Quadrics}, and Lagrangian Grassmannians \cite{Pech_Rietsch_Lagrangian_Grassmannians}. In \cite{Pech_Rietsch_Williams_Quadrics}, this was obtained using a description of the coordinate ring of $\dunimP$ by \cite{GLS_Kac_Moody_groups_and_cluster_algebras}. 


\subsubsection{Minuscule representations and generalized Pl\"ucker coordinates.}
With the assumption that $\X=\G/\P$ is cominuscule, we will be able to define the (generalized) Pl\"ucker coordinates on $\cmX=\udP\backslash\udG$ using a minuscule representation. To do this, we first recall a few properties of such representations. Since we consider minuscule representations of $\udG$ and $\dg$ with corresponding roots $\droots$, we adapt our notation to this situation.

First, we will need a general statement concerning the structure of minuscule representations and the action of Chevalley generators; this result can be found in \cite{Green_Reps_from_polytopes} or \cite{Geck_Minuscule_weights_and_Chevalley_groups}. Fix a non-degenerate, symmetric, bilinear form $(\cdot,\cdot)$ on the weight lattice of $\dg$ (e.g.~the Killing form) and let $c_i(\wt)=2(\wt,\sdr_i)/(\sdr_i,\sdr_i)$ for $\wt$ an element of the weight lattice.
\begin{thm}\label{thm:Green_structure_minuscule_reps}
Let $\minrep$ be a minuscule representation, i.e.~a highest weight representation with as highest weight $\minwt$ satisfying \eqref{eq:minuscule_coweight}. Denote its set of weights by $\minrepwts$. Then the following holds:
\begin{romanize}
\item For all $\wt\in\minrepwts$ and $i\in\{1,\ldots,n\}$, $c_i(\wt)\in\{-1,0,+1\}$.
\item Each weight space is one-dimensional.
\item Given a highest weight vector $\wtv{\minwt}^+$, there exists a basis of weight vectors $\{\wtv{\wt}\}$ such that $\wtv{\minwt}=\wtv{\minwt}^+$ and such that
\ali{
\dChe_i\cdot\wtv{\wt} &= \left\{
\begin{array}{ll}
	\wtv{\wt+\sdr_i}, &\text{if $c_i(\wt)=-1$,} \\
	0, &\text{otherwise,}
\end{array}\right. \\
\dChf_i\cdot\wtv{\wt} &= \left\{
\begin{array}{ll}
	\wtv{\wt-\sdr_i}, &\text{if $c_i(\wt)=+1$,} \\
	0, &\text{otherwise,}
\end{array}
\right.\\
\dChh_i\cdot\wtv{\wt}&=c_i(\wt)\,\wtv{\wt}.
}
\item For any vector $v\in\minrep$ and $i,j\in\{1,\ldots,n\}$,
\ali{
(\dChe_i)^2\cdot v &= 0 = (\dChf_i)^2\cdot v, \\
\dChe_i\dChe_j\dChe_i\cdot v &= 0 = \dChf_i\dChf_j\dChf_i\cdot v \qquad\text{if $a_{ij}=c_i(\sdr_j)=-1$,} \\
\dChe_i\dChf_j\cdot v &= 0 = \dChf_j\dChe_i\cdot v ~\qquad\text{if $a_{ij}<0$.}
}
\end{romanize}
\end{thm}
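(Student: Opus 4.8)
The plan is to prove this standard structure theorem for minuscule representations (it can be extracted from \cite{Green_Reps_from_polytopes} or \cite{Geck_Minuscule_weights_and_Chevalley_groups}) in three stages: deduce (i) and (ii) from the fact that the weights of $\minrep$ form a single Weyl orbit; construct the basis in (iii) by propagating the highest weight vector through the weight diagram using the $\fsl_2$-triples $(\dChe_i,\dChf_i,\dChh_i)$; and obtain (iv) from (i) and (iii) by bookkeeping the integers $c_i$.

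For (i) and (ii): since $\minwt$ is minuscule, the only dominant weight $\leq\minwt$ in the dominance order is $\minwt$ itself (a classical characterization of minuscule weights), so every weight of $\minrep$, being $\weyl$-conjugate to a dominant weight $\leq\minwt$, in fact lies in $\weyl\cdot\minwt$; hence $\minrepwts=\weyl\cdot\minwt$. Writing an arbitrary weight as $w\cdot\minwt$, the integer $c_i(w\cdot\minwt)$ equals the pairing of $\minwt$ with $w^{-1}$ applied to a root of $\G$, and therefore lies in $\{-1,0,1\}$ by \eqref{eq:minuscule_coweight}; this is (i). Since the simple reflections permute the weight spaces by isomorphisms, $\dim\minrep_{w\cdot\minwt}=\dim\minrep_{\minwt}=1$, which is (ii).

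Stage (iii) carries the real content. Decomposing $\minrep$ under the $\fsl_2$-triple $(\dChe_i,\dChf_i,\dChh_i)$, part (i) forces the $\dChh_i$-eigenvalues occurring in any $\fsl_2$-string to lie in $\{-1,0,1\}$, so each string has length one or two; consequently $\dChe_i$ carries $\minrep_{\mu^\vee}$ isomorphically onto $\minrep_{\mu^\vee+\sdr_i}$ when $c_i(\mu^\vee)=-1$ and is zero otherwise, dually for $\dChf_i$, and $\dChh_i$ acts by $c_i(\mu^\vee)$ by definition of weight. To fix the $\wtv{\mu^\vee}$ compatibly, start from $\wtv{\minwt}^+$ and, for each other weight, choose a simple root along which it may be raised — such a root exists, else that weight vector would be a second highest weight vector, contradicting irreducibility — and propagate $\wtv{\minwt}^+$ down the resulting spanning tree of the weight diagram. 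The crux is that this is independent of all choices, so that the displayed formulas hold for \emph{every} $i$ simultaneously; this is true because any two directed paths between the same pair of weights differ by square moves, and each square commutes. Indeed, $c_i(\mu^\vee)=c_j(\mu^\vee)=1$ with $i\neq j$ forces $c_i(\mu^\vee-\sdr_j)=1-a_{ij}\in\{-1,0,1\}$, hence $a_{ij}=0$, hence $a_{ji}=0$ and $[\dChf_i,\dChf_j]=0$ in $\dg$, so the diamond $\mu^\vee,\ \mu^\vee-\sdr_i,\ \mu^\vee-\sdr_j,\ \mu^\vee-\sdr_i-\sdr_j$ closes up and commutes. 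With a compatible basis in hand, the identity $\dChe_i\wtv{\mu^\vee}=\wtv{\mu^\vee+\sdr_i}$ for $c_i(\mu^\vee)=-1$ follows from $\dChe_i\dChf_i\wtv{\nu^\vee}=\dChh_i\wtv{\nu^\vee}+\dChf_i\dChe_i\wtv{\nu^\vee}=\wtv{\nu^\vee}$ applied with $\nu^\vee=\mu^\vee+\sdr_i$, using that $\dChe_i$ annihilates the top vector of a two-element string.

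Finally, (iv) is pure weight bookkeeping given (i) and (iii), extended to arbitrary $v\in\minrep$ by linearity. For instance, $(\dChe_i)^2\wtv{\mu^\vee}=0$ because $c_i(\mu^\vee+\sdr_i)=c_i(\mu^\vee)+2=1$ whenever $c_i(\mu^\vee)=-1$; if $a_{ij}<0$ then $\dChf_j\wtv{\mu^\vee}\neq 0$ forces $c_j(\mu^\vee)=1$, whence $c_i(\mu^\vee-\sdr_j)=c_i(\mu^\vee)-a_{ij}\geq 0$ and thus $\dChe_i\dChf_j\wtv{\mu^\vee}=0$ (which equals $\dChf_j\dChe_i\wtv{\mu^\vee}$ since $i\neq j$); and if $a_{ij}=-1$ then $\dChf_i\dChf_j\dChf_i\wtv{\mu^\vee}\neq 0$ would force $c_i(\mu^\vee)=1$ from the first factor and $c_i(\mu^\vee-\sdr_i-\sdr_j)=1$ from the last, yet $c_i(\mu^\vee)=1$ already gives $c_i(\mu^\vee-\sdr_i-\sdr_j)=1-2-a_{ij}=0$. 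The one genuinely delicate step is the well-definedness of the basis in stage (iii); everything else is formal once that is settled.
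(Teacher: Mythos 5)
The paper does not prove this theorem; it only cites \cite{Green_Reps_from_polytopes} and \cite{Geck_Minuscule_weights_and_Chevalley_groups}, so there is no internal proof to compare against. Your proof is correct in approach, and parts (i), (ii) and (iv) go through cleanly: the single-Weyl-orbit argument together with $W$-invariance of the form gives (i) and (ii), and your weight bookkeeping for (iv) is correct, including the use of $[\dChe_i,\dChf_j]=0$ for $i\neq j$ to relate the two sides of the last identity.

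The one genuine gap is precisely where you locate the crux of (iii). You assert that the propagated basis is well-defined because ``any two directed paths between the same pair of weights differ by square moves,'' but you only verify the \emph{local} diamond condition: if $c_i(\wt)=c_j(\wt)=1$ with $i\neq j$ then $a_{ij}=0$, the weight $\wt-\sdr_i-\sdr_j$ exists, and $[\dChf_i,\dChf_j]=0$. Local confluence is not immediately the same as the global statement that all maximal chains with common endpoints are connected by diamond flips; that global fact about minuscule weight posets is true and classical (they are distributive lattices, and a Jordan--H\"older-type argument for modular lattices applies), but it is not a one-line consequence of the diamond condition and should not be presented as such. Fortunately, what you actually need is the weaker claim that the vector $\dChf_{i_k}\cdots\dChf_{i_1}\wtv{\minwt}^+$ obtained from any maximal chain from $\minwt$ down to $\wt$ is independent of the chain, and this does follow from your diamond condition by a short induction on $\height(\minwt-\wt)$: if two chains to $\wt$ have distinct penultimate weights $\wt+\sdr_i\neq\wt+\sdr_j$, the (upward) diamond gives $\wt+\sdr_i+\sdr_j$ as a weight, the inductive hypothesis applies at that weight and at the two penultimate weights, and $[\dChf_i,\dChf_j]=0$ reconciles the two chains. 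Either spell out this induction or cite a reference for the distributive-lattice property of minuscule posets; everything else in the proposal is sound.
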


Combining this result with the definition of $\ds_i$ and $\bs_i$ in \eqref{eq:def_ds_and_bs}, we find
\begin{cor}\label{cor:Action_of_s_e_f}
With the notation of Theorem \ref{thm:Green_structure_minuscule_reps}, $s_i(\wt) = \wt-c_i(\wt)\,\sdr_i$ and
\ali{
\ds_i\cdot\wtvmu &= \left\{\begin{array}{ll}
\wtv{\wt+\sdr_i} = \dChe_i\cdot\wtvmu, & \text{if $c_i(\wt)=-1$,}\\
\wtvmu, & \text{if $c_i(\wt)=\hphantom{+}0$},\\
-\wtv{\wt-\sdr_i} = -\dChf_i\cdot\wtvmu, & \text{if $c_i(\wt)=+1$,}
\end{array}\right. \\
\bs_i\cdot\wtvmu &= \left\{\begin{array}{ll}
-\wtv{\wt+\sdr_i} = -\dChe_i\cdot\wtvmu, & \text{if $c_i(\wt)=-1$,}\\
\wtvmu, & \text{if $c_i(\wt)=\hphantom{+}0$,}\\
\wtv{\wt-\sdr_i} = \dChf_i\cdot\wtvmu, & \text{if $c_i(\wt)=+1$.}
\end{array}\right.
\intertext{Conversely, we have}
\dChe_i\cdot\wtvmu &= \left\{\begin{array}{ll}
\wtv{\wt+\sdr_i} = \ds_i\cdot\wtvmu = -\bs_i\cdot\wtvmu,  & \text{if $c_i(\wt)=-1$,}\\
0, & \text{otherwise,}
\end{array}\right. \\
\dChf_i\cdot\wtvmu &= \left\{\begin{array}{ll}
\wtv{\wt-\sdr_i} = \bs_i\cdot\wtvmu = -\ds_i\cdot\wtvmu,  & \text{if $c_i(\wt)=+1$,}\\
0, & \text{otherwise.}
\end{array}\right.
}
\end{cor}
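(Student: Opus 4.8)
The statement to prove is Corollary~\ref{cor:Action_of_s_e_f}, which unpacks the definitions $\ds_i = \dx_i(1)\dy_i(-1)\dx_i(1)$ and $\bs_i = \dx_i(-1)\dy_i(1)\dx_i(-1)$ in terms of the explicit weight-basis action from Theorem~\ref{thm:Green_structure_minuscule_reps}.

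The plan is to compute directly. First I would record the formula $s_i(\mu^\vee) = \mu^\vee - c_i(\mu^\vee)\,\alpha_i^\vee$: this is just the standard action of a simple reflection on the weight lattice, $s_i(\lambda) = \lambda - \langle\lambda,\alpha_i\rangle\alpha_i^\vee$ — wait, more precisely $s_i(\mu) = \mu - \frac{2(\mu,\alpha_i)}{(\alpha_i,\alpha_i)}\alpha_i = \mu - c_i(\mu)\alpha_i$, and since in the minuscule setting we have identified things appropriately, this matches the claim. Next, the main computation: fix a weight $\mu^\vee \in M(\lambda^\vee)$ and split into the three cases $c_i(\mu^\vee) \in \{-1,0,+1\}$, which by Theorem~\ref{thm:Green_structure_minuscule_reps}(i) are the only possibilities.

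In each case I would apply the three operators $\dx_i(1) = \exp(\dChe_i)$, $\dy_i(-1) = \exp(-\dChf_i)$, $\dx_i(1)$ in turn to $\wtvmu$. The key simplification is Theorem~\ref{thm:Green_structure_minuscule_reps}(iv): $(\dChe_i)^2 = 0 = (\dChf_i)^2$ on the whole representation, so each exponential truncates, $\exp(\pm\dChe_i) = 1 \pm \dChe_i$ and $\exp(\pm\dChf_i) = 1 \pm \dChf_i$. Then I just track how $\dChe_i, \dChf_i$ move $\wtvmu$ among the one-dimensional weight spaces using part (iii). For instance, if $c_i(\mu^\vee) = -1$, then $\dChe_i\cdot\wtvmu = \wtv{\mu^\vee+\alpha_i^\vee}$ and $\dChf_i\cdot\wtvmu = 0$; note $c_i(\mu^\vee + \alpha_i^\vee) = c_i(\mu^\vee) + 2 = +1$, so $\dChf_i$ acts nontrivially on $\wtv{\mu^\vee+\alpha_i^\vee}$ sending it back, and $\dChe_i$ kills it. Carefully composing $(1+\dChe_i)(1-\dChf_i)(1+\dChe_i)\wtvmu$ gives $\wtv{\mu^\vee+\alpha_i^\vee}$; similarly the $c_i = 0$ case gives $\wtvmu$ (both $\dChe_i$ and $\dChf_i$ annihilate it, using part (iii)), and the $c_i = +1$ case gives $-\wtv{\mu^\vee - \alpha_i^\vee}$. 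The computation for $\bs_i = \ds_i^{-1}$ is identical with the roles of $\dChe_i \leftrightarrow \dChf_i$ and signs of parameters swapped, yielding the stated signs; alternatively one observes $\bs_i = \ds_i^{-1}$ and inverts the already-computed permutation-with-signs. The ``conversely'' part is then immediate by reading the first two displays backwards: when $c_i(\mu^\vee) = -1$ we solve $\dChe_i\cdot\wtvmu = \ds_i\cdot\wtvmu = -\bs_i\cdot\wtvmu$, and when $c_i(\mu^\vee) \ne -1$ part (iii) already says $\dChe_i\cdot\wtvmu = 0$, and symmetrically for $\dChf_i$.

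I do not expect a serious obstacle here — this is a routine verification, and the only thing to be careful about is bookkeeping the signs through the three-fold composition and making sure the intermediate weights $\mu^\vee \pm \alpha_i^\vee$ have the $c_i$-values that make the truncated exponentials behave as claimed (which follows since $c_i(\mu^\vee \pm \alpha_i^\vee) = c_i(\mu^\vee) \pm 2$). If anything, the mild subtlety is confirming that the $c_i = 0$ case genuinely forces \emph{both} $\dChe_i\cdot\wtvmu = 0$ and $\dChf_i\cdot\wtvmu = 0$, which is exactly the content of part (iii) of Theorem~\ref{thm:Green_structure_minuscule_reps} in the case $c_i(\mu^\vee) = 0$.
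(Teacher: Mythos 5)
Your proof is correct and is precisely the routine verification the paper has in mind: the paper omits the computation entirely, stating only that the corollary follows by ``combining'' Theorem~\ref{thm:Green_structure_minuscule_reps} with the definitions in~\eqref{eq:def_ds_and_bs}, and your truncation $\dx_i(a)=1+a\,\dChe_i$, $\dy_i(a)=1+a\,\dChf_i$ via part~(iv), followed by casewise bookkeeping using parts~(i) and~(iii), is exactly that combination carried out explicitly. The sign tracking in the $c_i(\wt)=\pm1$ cases and the observation that $c_i(\wt\pm\sdr_i)=c_i(\wt)\pm2$ are all correct.
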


With these general facts about the structure of minuscule representations, we return to defining the Pl\"ucker coordinates on $\cmX$. Recall that by assumption $\dfwt[k]$ is minuscule; this implies that the fundamental weight $-\wo\cdot\dfwt[k]$ is minuscule as well:
\[
\llan -\wo\cdot\dfwt[k],\rt\rran=\llan -\wo\cdot\dfwt[k],-\wo\cdot\rt'\rran = \llan\dfwt[k],\rt'\rran\in\{-1,0,+1\} 
\]
(where $\rt\in\roots$ and $\rt'=-\wo\cdot\rt$), compare \eqref{eq:minuscule_coweight}. Note that $-\wo\cdot\dfwt[k]$ equals $\dfwt[\DynkinSymmetry(k)]$ for $\DynkinSymmetry:\{1,\ldots,n\}\to\{1,\ldots,n\}$ the symmetry involution of the Dynkin diagram of $\udG$ (note that $\DynkinSymmetry$ is trivial in the cases of type $\LGB_n$, $\LGC_n$, $\LGD_{2n}$ and $\LGE_7$). We will use the minuscule representation $\PluckerRep=\PluckerRepAlt$ to define the Pl\"ucker coordinates as follows.

By Theorem \ref{thm:Green_structure_minuscule_reps}, there is a natural weight basis for $\PluckerRep$, which we denote by $\{v_i\}$. Writing $v_0$ for the basis vector of \emph{lowest} weight (i.e.~of weight $-\dfwt[k]$), the dual representation $\PluckerDualRep$ has highest weight vector $v_0^*$ (of weight $\dfwt[k]$), where $\{v_i^*\}$ is the dual basis. Note that $\PluckerDualRep$ is a \emph{right}-representation with action $v_i^*\cdot g = \bigl(v\mapsto v_i^*(g\cdot v)\bigr)$, and that $[v_0^*]\cdot\udP=[v_0^*]\in\PPluckerDualRep$. Hence, we obtain an embedding $\cmX\hookrightarrow\PPluckerDualRep$ given by $\udP g\mapsto[v_0^*\cdot g]$, using the fact that $\udP$ and $\{\dy_k(a)~|~a\in\C\}$ generate $\udG$. This induces the coordinates:

%
\begin{df}\label{df:Plucker_coordinates}
For any weight basis vector $v_i$ as above and $g\in\udG$, define $\p_i:\udG\to\C$ by
\begin{equation}
\p_i(g) = (v_0^*\cdot g)(v_i) = v_{0}^*(g \cdot v_i).
\label{eq:Generalized-Plucker-coordinates}
\end{equation}
These maps induce projective coordinates on $\cmX$ called \emph{(generalized) Pl\"ucker coordinates}.
\end{df}
We will abuse notation and call the maps $\p_i:\udG\to\C$ Pl\"ucker coordinates as well.

\subsubsection{The coordinate ring of $\dunipP=(\dunimP)^\T\subset\udG$.}
Recall from equation \eqref{eq:unique_decomps_unipotents} that we defined $\dunimP=\dunim\cap\dborelm\dwPinv\dborelm$. In Proposition 8.5 of \cite{GLS_Kac_Moody_groups_and_cluster_algebras}, the coordinate rings of these \emph{unipotent cells} have been described using a dual PBW basis compatible with a reduced expression. However, the result is formulated using cells of $\dunip$ instead of $\dunim$: we will have to use the \emph{transposition anti-automorphism} $\cdot^\T$ to translate their results to cells of $\dunim$. Recall that this anti-automorphism is given by mapping $\dx_i(a)$ to $\dy_i(a)$ and vice versa, while acting trivially on $\udtorus$. Note that $\bs_i^\T=\ds_i=\bs_i^{-1}$. 

We would like to emphasize that the results in \cite{GLS_Kac_Moody_groups_and_cluster_algebras} hold for general unipotent cells, but we will only consider the case of $\dunimP$ for $\P=\P_k$ with $\fwt[k]$ cominuscule here.

Writing $\dunipP$ for the transpose of $\dunimP$, we clearly have
\begin{equation}
\dunipP=(\dunimP)^\T = \dunip\cap\dborelp\dwP\dborelp.
\label{eq:dunipP}
\end{equation}
The coordinates are defined on $\dunipP$ using the embedding of $\dunip$ in the completed universal enveloping algebra $\dCUEAp$ of $\dup$. Namely, they form a subset of a basis for the \emph{graded dual} of $\dCUEAp$. As the universal enveloping algebra is in particular a infinite-dimensional vector space, we need to be more careful in defining the dual. We will make use of the Chevalley generators $(\dChe_1,\dChf_1,\dChh_1,\ldots,\dChe_n,\dChf_n,\dChh_n)$. 

Consider the Cartan (or root space) decomposition
\[
\dg = \dcartan\oplus\bigoplus_{\drt\in\droots} \dudrt,
\]
where for $\drt\in\pdroots$ a positive root we have that $\dudrt[+\drt]\subset\dup$ and $\dudrt[-\drt]\subset\dum$ are one-dimensional subspaces (see for example \cite{Humphreys_Lie_Algebras}, Section 8.5). Clearly, $\dudrt[+\sdr_i]=\C\dChe_i$ is spanned by the corresponding Chevalley generator, $\dudrt[+s_i(\sdr_j)]=\C[\dChe_i,\dChe_j]$ is spanned by the commutator of the generators if $(s_is_j)^2\neq 1$, and so on. 

Now, consider the \emph{positive root lattice} $\Nzero\sdroots\cong\Nzero^n$, i.e.~all non-negative integral linear combinations of the simple roots $\sdr_i$. For $\drt=\sum_{i=1}^n d_i\sdr_i\in\Nzero\sdroots$, denote by $\dCUEAp(\drt)$ the subspace spanned by those pure tensors $\dChe_{i_1}\cdots\dChe_{i_j}$ such that exactly $d_i$ of the factors have index $i$. For example, $\dCUEAp(\sdr_i) = \C\dChe_i=\dudrt[+\sdr_i]$ for $\sdr_i\in\sdroots$, and $\dCUEAp(2\sdr_1+\sdr_2) = \C\dChe_1\dChe_1\dChe_2\op\C\dChe_1\dChe_2\dChe_1\op\C\dChe_2\dChe_1\dChe_1$. (If $2\sdr_1+\sdr_2\in\pdroots$, note that $\dudrt[2\sdr_1+\sdr_2]=\C[\dChe_1,[\dChe_1,\dChe_2]]\subset\dCUEAp(2\sdr_1+\sdr_2)$ .) This endows $\dCUEAp$ with an $\Nzero\sdroots$-grading that naturally extends the root space decomposition of $\dup$ (i.e.~$\dudrt[+\drt]\subset\dCUEAp(\drt)$ for $\drt\in\pdroots\subset\Nzero\sdroots$). The graded dual is defined as the direct sum of the duals of these finite-dimensional subspaces,
\[
\dCUEApDual = \bigoplus_{\drt\in\Nzero\sdroots} \bigl(\dCUEAp(\drt)\bigr)^*.
\]

To define a basis on this dual, we start by recalling that we fixed a reduced expression for the minimal representative $\wP\in\cosets$ of the coset $\wo\weylp$ and denoted its sequence of indices by $(r_i)_{i=1}^{\ellwP}$ where $\ellwP=\ell(\wP)$, see equation \eqref{eq:Fixed_reduced_expression_for_wP_and_wop}. As a Weyl group element, $\wP$ maps a set $\wPpdroots=\{\wPdrt(j)~|~j=1,\ldots,\ellwP\}\subset\pdroots$ of positive roots to negative roots. It is well known that these are given by
\begin{equation}
\wPdrt(1)=\sdr_{r_{\ellwP}} \qand \wPdrt(j)=s_{r_{\ellwP}}\cdots s_{r_{\ellwP-j+2}}(\sdr_{r_{\ellwP-j+1}}) \quad\text{for $j\in\{2,\ldots,\ellwP\}$,}
\label{eq:wP_positive_roots_mapped_to_negative_roots}
\end{equation}
(for example, this follows from \cite{Humphreys_Lie_Algebras}, Section 10.2). Note that $s_{r_{\ellwP}}\cdots s_{r_{\ellwP-j+2}}$ are the first $j-1$ factors of the reduced expression for $\wPinv$ obtained by reversing the simple reflections in \eqref{eq:Fixed_reduced_expression_for_wP_and_wop}. Since $\wo=\wP\wop$, it is easy to see that $\pdroots=\wPpdroots\sqcup\pdrootsP$, where $\pdrootsP$ is the set of positive (co-) roots mapped to negative ones by $\wop$. We also remark that both $\wPpdroots$ and $\pdrootsP$ are \emph{bracket closed} subsets: if $\drt,\drtb\in\wPpdroots$ and $\drt+\drtb\in\pdroots$ then $\drt+\drtb\in\wPpdroots$, and the same holds for roots in $\pdrootsP$. (This follows from the linearity of action of the Weyl group elements on roots.) See also \cite{GLS_Kac_Moody_groups_and_cluster_algebras}, Section 4.3, and the references therein.

For each of the positive roots $\wPdrt(m)$, $m\in\{1,\ldots,\ell\}$, we choose a basis element $\wPdChe(m)$ for $\dudrt[+\wPdrt(m)]$. In particular, we take $\wPdChe(1)=\dChe_k$, as $\wPdrt(1)=\sdr_k$.
\begin{rem}\label{rem:GLS_dum_basis}
In \cite{GLS_Kac_Moody_groups_and_cluster_algebras}, the basis elements for the one-dimensional subspaces $\dudrt[+\wPdrt(m)]$ are not defined more explicitly, nor will we need an explicit choice of basis vector in the following. However, it is easy to define a set of basis elements $\wPdChe(m)\in\dudrt[+\wPdrt(m)]$ for $m\in\{1,\ldots,\ell\}$ using the following algorithm: consider the defining expression of $\wPdrt(m)$ in \eqref{eq:wP_positive_roots_mapped_to_negative_roots}; remove all simple reflections acting trivially; replace each remaining simple reflection $s_i$ by the corresponding $\ad(\dChe_i)$; and finally replace $\sdr_i$ with the corresponding $\dChe_i$.
\end{rem}
Now, we complete the elements $\{\wPdChe(m)~|~m = 1,\ldots,\ellwP\}$ to a basis of $\dup$ by choosing generating elements in $\dudrt[+\drt]$ for $\drt\in\pdroots\setminus\wPpdroots=\pdrootsP$. To simplify notation, we will denote these additional elements by $\wPdChe(m)$ for $m\in\{\ellwP+1,\ldots,\ellwP+\ellwop\}$, where $\ellwP+\ellwop=\ellwo=\ell(\wo)$ is the number of positive roots. Here we will also assume that the Chevalley generators $\dChe_i$ are part of this basis. With a basis for $\dup$, we can in turn define a PBW basis for $\dCUEAp$, the elements of which we denote by
\begin{equation}
\wPdChe(\bm) = \bigr(\wPdChe(1)\bigr)^{m_1}\bigl(\wPdChe(2)\bigr)^{m_2}\cdots\bigl(\wPdChe(\ellwo)\bigr)^{m_\ellwo}, 
\label{eq:PBW_basis_elements}
\end{equation}
for $\bm=(m_1,\ldots,m_{\ellwo})\in\Nzero^{\ellwo}$. As each $\wPdChe(m)$ is homogeneous with respect to the grading on $\dCUEAp$ (this follows from Remark \ref{rem:GLS_dum_basis}), the PBW basis elements are homogeneous as well. 

We can now define the dual PBW basis for $\dCUEApDual$ by taking the finite-dimensional vector space dual bases and extending the functionals trivially on the complement. We denote these dual basis elements $\pGLS_\bm$, so that $\pGLS_\bm\bigl(\wPdChe(\bn)\bigr)$ equals $1$ if $\bm=\bn\in\Nzero^\ellwo$ and $0$ otherwise. Whenever $\bm=(0,\ldots,0,1,0,\ldots,0)$ with a $1$ at the $m$th position, we simply write $\pGLS_m=\pGLS_\bm$; indeed, in this case we have $\wPdChe(\bm)=\wPdChe(m)$. This basis is called the \emph{dual PBW basis compatible with the reduced expression for $\wP$}. Using the natural embedding of $\dunip$ into $\dCUEAp$, we can consider the maps $\pGLS_m$ on $\dunip$ as well.

From the results in \cite{GLS_Kac_Moody_groups_and_cluster_algebras}, it follows that the coordinate ring of $\dunipP$ is generated by the dual basis elements $\pGLS_m$ for $m\in\{1,\ldots,\ellwP\}$ where $\ellwP=\ell(\wP)$. However, to fully express the coordinate ring, we also need to introduce a number of \emph{generalized minors} in the sense of Fomin and Zelevinsky \cite{Fomin_Zelevinsky_Double_Bruhat_Cells_and_Total_Positivity}. Recall that the highest weight space of any highest weight representation is one-dimensional. Thus, taking a highest weight vector $\hwt$, we can define the projection $\dfwtrep\to\C\hwt$ (parallel to a basis of weight vectors). We denote the coefficient $c$ such that a vector $v\in\dfwtrep$ projects to $c\hwt$ by
\begin{equation}
\bigl\lan v,\hwt\bigr\ran=c.
\label{eq:coefficient_after_projection_to_highest_weight_space}
\end{equation}
Note that the highest weight representation $\dfwtrep$ is implicit in the notation. Using the action of $\udG$ on these representations, we obtain maps $\udG\to\C$ given by $g\mapsto\lan g\cdot\hwt,\hwt\ran$. For $w_1,w_2\in\weyl$, the generalized minors are given in Definition 1.4 of \cite{Fomin_Zelevinsky_Double_Bruhat_Cells_and_Total_Positivity} as
\begin{equation}
\minor_{w_1(\dfwt),w_2(\dfwt)}(g) = 
\Bigl\lan (\bw_1)^{-1}g\bw_2\cdot\hwt,\hwt\Bigr\ran.
\label{eq:FZ_gen_minor}
\end{equation}
(\cite{Fomin_Zelevinsky_Double_Bruhat_Cells_and_Total_Positivity} gives a different formula, but their formula can easily be seen to agree with this expression. See for example Proposition 7.2 of \cite{GLS_Kac_Moody_groups_and_cluster_algebras}.) In particular, we will be concerned with the following minors:
\begin{equation}
\minor_{\dfwt,\wPinv(\dfwt)}(g) = \Bigl\lan g\dwPinv\cdot\hwt,\hwt\Bigr\ran.
\label{eq:GLS_def_of_minor}
\end{equation}
Note that we wrote $\dwPinv = \overline{\wPinv}$ to simplify the expression; it has the reverse reduced expression compared to $\wP$. We can now state the result:
\begin{prop}[{\cite[Proposition 8.5]{GLS_Kac_Moody_groups_and_cluster_algebras}}]\label{prop:GLS_coord_ring_dunipP}
The cell $\dunipP=(\dunimP)^\T\subset\udG$, see equation \eqref{eq:dunipP}, is an affine variety with coordinate ring isomorphic to
\begin{equation}
\C[\pGLS_{1},\ldots,\pGLS_{\ellwP}][\minor_{\dfwt[1],\wPinv(\dfwt[1])}^{-1},\ldots,\minor_{\dfwt[n],\wPinv(\dfwt[n])}^{-1}].
\label{eq:GLS_expression_for_coordinate_ring_of_dunimP}
\end{equation}
\end{prop}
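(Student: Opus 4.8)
\emph{Proof proposal.} This proposition is, up to a translation of conventions, \cite[Proposition~8.5]{GLS_Kac_Moody_groups_and_cluster_algebras}, and the plan is to deduce it from there. Recall that \cite{GLS_Kac_Moody_groups_and_cluster_algebras} provide two facts. First, the ``partial'' unipotent subgroup $N(\wP)\subset\dunip$ with Lie algebra $\bigoplus_{\drt\in\wPpdroots}\dudrt[+\drt]$, spanned by the $\wPdChe(m)$ for $m=1,\dots,\ellwP$, has coordinate ring the polynomial ring $\C[\pGLS_1,\dots,\pGLS_{\ellwP}]$; indeed, by the PBW theorem the map $(\pGLS_1,\dots,\pGLS_{\ellwP})\colon N(\wP)\to\bA^{\ellwP}$ is an isomorphism of varieties, since writing an element of $N(\wP)$ in the exponential coordinates attached to the reduced expression fixed in \eqref{eq:Fixed_reduced_expression_for_wP_and_wop} one checks that the dual PBW functionals of \eqref{eq:PBW_basis_elements} recover exactly these coordinates in degree one. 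Second, $\dunipP=\dunip\cap\dborelp\dwP\dborelp$ is precisely the locus in $N(\wP)$ on which all of the generalized minors $\minor_{\dfwt[i],\wPinv(\dfwt[i])}$, $i=1,\dots,n$, of \eqref{eq:GLS_def_of_minor} are nonzero, i.e.\ the principal open subvariety of $N(\wP)\cong\bA^{\ellwP}$ where $\prod_{i=1}^n\minor_{\dfwt[i],\wPinv(\dfwt[i])}$ is invertible. Combining the two, $\dunipP$ is affine with coordinate ring the localization $\C[\pGLS_1,\dots,\pGLS_{\ellwP}][\minor_{\dfwt[1],\wPinv(\dfwt[1])}^{-1},\dots,\minor_{\dfwt[n],\wPinv(\dfwt[n])}^{-1}]$, as claimed.

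The actual work is a careful matching of conventions. First I would check that the cell $\dunipP$ of \eqref{eq:dunipP} is the one to which \cite{GLS_Kac_Moody_groups_and_cluster_algebras} assign the ring above: their results are stated for the upper unipotent group with a Bruhat cell whose normalization may involve $w^{-1}$, so one must keep track that our $\wP$ is the \emph{minimal} representative in $\cosets$ of $\wo\weylp$, that the PBW basis \eqref{eq:PBW_basis_elements} is built from the reduced expression \eqref{eq:Fixed_reduced_expression_for_wP_and_wop} (so that ``the dual PBW basis compatible with the reduced expression for $\wP$'' really is the present $\{\pGLS_m\}$), and that the inverted minors indexed by all of $\{1,\dots,n\}$ form the correct (frozen) collection. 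Second, I would confirm that the generalized minors $\minor_{w_1(\dfwt),w_2(\dfwt)}$ of \eqref{eq:FZ_gen_minor} coincide with those used in \cite{GLS_Kac_Moody_groups_and_cluster_algebras}: this is the remark following \eqref{eq:FZ_gen_minor} together with Proposition~7.2 of \cite{GLS_Kac_Moody_groups_and_cluster_algebras}, which is also what reduces \eqref{eq:FZ_gen_minor} to \eqref{eq:GLS_def_of_minor} in the case $w_1=1$, $w_2=\wPinv$ via $\dwPinv=\overline{\wPinv}$. Finally, because $\dG$ is of finite type $\LGE_n$, the pro-unipotent and completion subtleties of the Kac--Moody framework of \cite{GLS_Kac_Moody_groups_and_cluster_algebras} do not intervene: $\dunip$ is an ordinary unipotent algebraic group, $\dCUEAp$ may be replaced by $\dUEAp$, and $\dCUEApDual$ is an ordinary graded dual.

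I expect the only genuine difficulty to be this bookkeeping: reconciling left/right coset conventions and the reversed reduced expression \eqref{eq:Fixed_reduced_expression_for_wo} with those of \cite{GLS_Kac_Moody_groups_and_cluster_algebras}, and making sure the list of minors is complete and that no degeneracy (for instance $\minor_{\dfwt[i],\wPinv(\dfwt[i])}$ reducing to a monomial in the $\pGLS_m$ for some $i\in\ParabolicIndices$) affects the statement. Should one want a self-contained argument instead of a citation, one would reprove the two inputs in this special case: the polynomial-ring description of $\C[N(\wP)]$ from the PBW theorem applied to the $\Nzero\sdroots$-grading of $\dUEAp$ and its graded dual, and the identification of $\dunipP$ as the stated principal open subset by combining the factorization $\dunip=N(\wP)\cdot\bigl(\dunip\cap\dwP\dunip\dwPinv\bigr)$ with the characterization of $\dborelp\dwP\dborelp\cap\dunip$ by nonvanishing of the $\minor_{\dfwt[i],\wPinv(\dfwt[i])}$; I would not carry this out, as it essentially reconstructs \cite[\S8]{GLS_Kac_Moody_groups_and_cluster_algebras}.
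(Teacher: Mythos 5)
The paper does not actually prove this proposition; it states it verbatim as a citation to \cite[Proposition~8.5]{GLS_Kac_Moody_groups_and_cluster_algebras}, with the surrounding text (dual PBW basis compatible with the fixed reduced expression, generalized minors, transposition anti-automorphism) being precisely the convention-setup needed to make that citation meaningful. Your proposal takes the same route—invoke GLS and reconcile conventions—and your outline of the two inputs (polynomial ring from the dual PBW functionals, localization at the frozen minors) is an accurate paraphrase of what GLS do; the bookkeeping concerns you flag (left/right cosets, reversed reduced expression, completeness of the minor list, disappearance of pro-unipotent subtleties in finite type) are exactly the points one should check.

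One small imprecision worth flagging: the sentence ``$\dunipP$ is precisely the locus in $N(\wP)$ on which all of the generalized minors are nonzero'' suggests an inclusion $\dunipP\subset N(\wP)$, which does not hold. (Already for $\SL_3$ and $w=s_1s_2$: the element $x_1(t_1)x_2(t_2)$ lies in the cell but not in $N(w)$, since its $(2,3)$-entry $t_2$ is nonzero.) What is true—and what GLS prove—is that the functions $\pGLS_1,\dots,\pGLS_{\ellwP}$ give an isomorphism $N(\wP)\cong\bA^{\ellwP}$ and an open embedding of $\dunipP$ into the same $\bA^{\ellwP}$, with image the complement of the zero loci of the minors. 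The factorization $\dunip=N(\wP)\cdot(\dunip\cap\dwP\dunip\dwPinv)$ you invoke at the end is the mechanism behind this, so you clearly have the right picture in mind; just phrase it as an isomorphism onto a principal open subset of $\bA^{\ellwP}$ rather than an inclusion of subvarieties of $\dunip$.
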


\subsection{The exceptional cominuscule family}\label{sec:ECHS}
Of the exceptional types, only $\LGE_6$ and $\LGE_7$ allow cominuscule weights. There are two cominuscule weights in type $\LGE_6$, $\fwt[1]$ and $\fwt[6]$, although the homogeneous spaces $\Exc_6^\SC/\P_1$ and $\Exc_6^\SC/\P_6$ are isomorphic. There is only one cominuscule weight in type $\LGE_7$, namely $\fwt[7]$. Thus, in practice, we only need to consider two exceptional cominuscule homogeneous spaces: the Cayley plane $\cayley=\ECHS[6]$ and the Freudenthal variety $\freudenthal$. Whenever we consider both varieties at the same time, we will call them ``cominuscule $\ECHS$'' (we are adding the adjective ``cominuscule'' to exclude the \emph{quasi-cominuscule} $\ECHS[8]$).

We follow the labeling on the Dynkin diagrams used by \cite{Bourbaki}:
\[
\Exc_6:\quad
\begin{tikzpicture}[scale=.6,style=very thick,baseline=10em]
	\draw (0,6) -- (4,6); \draw (2,6) -- (2,7);
	\draw[fill=black]
	(0,6) circle (.15)
	(1,6) circle (.15)
	(2,6) circle (.15)
	(2,7) circle (.15)
	(3,6) circle (.15);
	\draw[fill=white]
	(4,6) circle (.15);
	\node at (0,6)[below=1pt]{\tiny$1$};
	\node at (2,7)[right=1pt]{\tiny$2$};
	\node at (1,6)[below=1pt]{\tiny$3$};
	\node at (2,6)[below=1pt]{\tiny$4$};
	\node at (3,6)[below=1pt]{\tiny$5$};
	\node at (4,6)[below=1pt]{\tiny$6$};
\end{tikzpicture}
\qquad\qquad
\Exc_7:\quad
\begin{tikzpicture}[scale=.6,style=very thick,baseline=10em]
	\draw (0,6) -- (5,6); \draw (2,6) -- (2,7);
	\draw[fill=black]
	(0,6) circle (.15)
	(1,6) circle (.15)
	(2,6) circle (.15)
	(2,7) circle (.15)
	(3,6) circle (.15)
	(4,6) circle (.15);
	\draw[fill=white]
	(5,6) circle (.15);
	\node at (0,6)[below=1pt]{\tiny$1$};
	\node at (2,7)[right=1pt]{\tiny$2$};
	\node at (1,6)[below=1pt]{\tiny$3$};
	\node at (2,6)[below=1pt]{\tiny$4$};
	\node at (3,6)[below=1pt]{\tiny$5$};
	\node at (4,6)[below=1pt]{\tiny$6$};
	\node at (5,6)[below=1pt]{\tiny$7$};
\end{tikzpicture}
\]
The Cayley plane and the Freudenthal variety are similar enough to allow for the same approach, but they do have some notable differences. For example, the symmetry involution $\DynkinSymmetry$ is trivial for type $\LGE_7$ while it is the obvious permutation for type $\Exc_6$. Note that this implies that $\dfwt[\DynkinSymmetry(6)]=\dfwt[1]$ for $\Exc_6$ and $\dfwt[\DynkinSymmetry(7)]=\dfwt[7]$ for $\Exc_7$.

In fact, we will need very few geometric properties of the exceptional cominuscule homogeneous spaces other than the structure of their cohomologies. The cohomology rings, and indeed the (small) quantum cohomology rings, of both varieties are already known; see \cite{CMP_Quantum_cohomology_of_minuscule_homogeneous_spaces} for presentations of both as well as for further references. 

\subsubsection{The Cayley plane.}
The exceptional cominuscule variety $\ECHS[6]$ is often denoted by $\cayley$ as it can be interpreted as the variety of (complex) octonion lines in three-dimensional (complex) octonion space. However, we will not use this description here and simply consider the Cayley plane as a homogeneous space of type $\LGE_6$. The geometry and the Chow ring (and thus the cohomology) of $\cayley$ is discussed in \cite{Iliev_Manivel_Chow_ring_of_the_Cayley_plane}\footnote{Note that a different labeling of the Dynkin diagram is used in \cite{Iliev_Manivel_Chow_ring_of_the_Cayley_plane}.}.

For the Cayley plane $\X=\ECHS[6]$, the embedding of the Langlands dual homogeneous space is $\cmX\hookrightarrow\Pdfwtrepdual[1]$. Since the fundamental weight representation $\dfwtrep[1]$ is minuscule, the Weyl group acts transitively on the weights. Thus, we can define a graph that has the weights of $\dfwtrep[1]$ as vertices and has an edge labeled $i$ between two weights if $s_i$ maps one weight to the other (and vice versa). The graph we obtain is the following:\vspace{-.5em}
\begin{equation}
\begin{tikzpicture}[rotate=90,scale=.7,style=very thick,baseline=0.25em]
	\draw 
	(0,-8)--(0,-5)--(-2,-3)--(0,-1)--(-1,0)--(0,1)--(-2,3)--(0,5)--(0,8)
	(0,-5)--(1,-4)--(0,-3)--(3,0)--(0,3)--(1,4)--(0,5)
	(-1,-4)--(0,-3)--(-1,-2) (-1,4)--(0,3)--(-1,2)
	(1,2)--(0,1)--(2,-1) (1,-2)--(0,-1)--(2,1);
	\draw[black, fill=black] 
	(0,-8) circle (.15) 
	(0,-7) circle (.15)
	(0,-6) circle (.15)
	(0,-5) circle (.15)
	(1,-4) circle (.15)
	(-1,-4) circle (.15)
	(0,-3) circle (.15)
	(-2,-3) circle (.15)
	(1,-2) circle (.15)
	(-1,-2) circle (.15)
	(2,-1) circle (.15)
	(0,-1) circle (.15)
	(3,0) circle (.15)
	(1,0) circle (.15)
	(-1,0) circle (.15)
	(2,1) circle (.15)
	(0,1) circle (.15)
	(1,2) circle (.15)
	(-1,2) circle (.15)
	(0,3) circle (.15)
	(-2,3) circle (.15)
	(1,4) circle (.15)
	(-1,4) circle (.15)
	(0,5) circle (.15)
	(0,6) circle (.15)
	(0,7) circle (.15)
	(0,8) circle (.15);
	\node at (0,-7.5)[below=-2pt]{\scriptsize 6};
	\node at (0,-6.5)[below=-2pt]{\scriptsize 5};
	\node at (0,-5.5)[below=-2pt]{\scriptsize 4};
	\node at (0.5,-4.5)[below left=-3pt]{\scriptsize 2};
	\node at (-0.5,-4.5)[below right=-3pt]{\scriptsize 3};
	\node at (0.5,-3.5)[below right=-3pt]{\scriptsize 3};
	\node at (-0.5,-3.5)[below left=-3pt]{\scriptsize 2};
	\node at (-1.5,-3.5)[below right=-3pt]{\scriptsize 1};
	\node at (0.5,-2.5)[below left=-3pt]{\scriptsize 4};
	\node at (-0.5,-2.5)[below right=-3pt]{\scriptsize 1};
	\node at (-1.5,-2.5)[below left=-3pt]{\scriptsize 2};
	\node at (1.5,-1.5)[below left=-3pt]{\scriptsize 5};
	\node at (0.5,-1.5)[below right=-3pt]{\scriptsize 1};
	\node at (-0.5,-1.5)[below left=-3pt]{\scriptsize 4};
	\node at (2.5,-0.5)[below left=-3pt]{\scriptsize 6};
	\node at (1.5,-0.5)[below right=-3pt]{\scriptsize 1};
	\node at (0.5,-0.5)[below left=-3pt]{\scriptsize 5};
	\node at (-0.5,-0.5)[below right=-3pt]{\scriptsize 3};
	\node at (2.5,0.5)[below right=-3pt]{\scriptsize 1};
	\node at (1.5,0.5)[below left=-3pt]{\scriptsize 6};
	\node at (0.5,0.5)[below right=-3pt]{\scriptsize 3};
	\node at (-0.5,0.5)[below left=-3pt]{\scriptsize 5};
	\node at (1.5,1.5)[below right=-3pt]{\scriptsize 3};
	\node at (0.5,1.5)[below left=-3pt]{\scriptsize 6};
	\node at (-0.5,1.5)[below right=-3pt]{\scriptsize 4};
	\node at (0.5,2.5)[below right=-3pt]{\scriptsize 4};
	\node at (-0.5,2.5)[below left=-3pt]{\scriptsize 6};
	\node at (-1.5,2.5)[below right=-3pt]{\scriptsize 2};
	\node at (0.5,3.5)[below left=-3pt]{\scriptsize 5};
	\node at (-0.5,3.5)[below right=-3pt]{\scriptsize 2};
	\node at (-1.5,3.5)[below left=-3pt]{\scriptsize 6};
	\node at (0.5,4.5)[below right=-3pt]{\scriptsize 2};
	\node at (-0.5,4.5)[below left=-3pt]{\scriptsize 5};
	\node at (0,5.5)[below=-2pt]{\scriptsize 4};
	\node at (0,6.5)[below=-2pt]{\scriptsize 3};
	\node at (0,7.5)[below=-2pt]{\scriptsize 1};
	\node at (0,-8)[above=2pt]{\tiny$v_0$};
	\node at (0,-7)[above=2pt]{\tiny$v_1$};
	\node at (0,-6)[above=2pt]{\tiny$v_2$};
	\node at (0,-5)[above=2pt]{\tiny$v_3$};
	\node at (1,-4)[above=2pt]{\tiny$v_4'$};
	\node at (-1,-4)[above=2pt]{\tiny$v_4''$};
	\node at (0,-3)[above=2pt]{\tiny$v_5'$};
	\node at (-2,-3)[above=2pt]{\tiny$v_5''$};
	\node at (1,-2)[above=2pt]{\tiny$v_6'$};
	\node at (-1,-2)[above=2pt]{\tiny$v_6''$};
	\node at (2,-1)[above=2pt]{\tiny$v_7'$};
	\node at (0,-1)[above=2pt]{\tiny$v_7''$};
	\node at (3,0)[above=2pt]{\tiny$v_8$};
	\node at (1,0)[above=2pt]{\tiny$v_8'$};
	\node at (-1,0)[above=2pt]{\tiny$v_8''$};
	\node at (2,1)[above=2pt]{\tiny$v_9'$};
	\node at (0,1)[above=2pt]{\tiny$v_9''$};
	\node at (1,2)[above=2pt]{\tiny$v_{10}'$};
	\node at (-1,2)[above=2pt]{\tiny$v_{10}''$};
	\node at (0,3)[above=2pt]{\tiny$v_{11}'$};
	\node at (-2,3)[above=2pt]{\tiny$v_{11}''$};
	\node at (1,4)[above=2pt]{\tiny$v_{12}'$};
	\node at (-1,4)[above=2pt]{\tiny$v_{12}''$};
	\node at (0,5)[above=2pt]{\tiny$v_{13}$};	
	\node at (0,6)[above=2pt]{\tiny$v_{14}$};
	\node at (0,7)[above=2pt]{\tiny$v_{15}$};
	\node at (0,8)[above=2pt]{\tiny$v_{16}$};
\end{tikzpicture}
\label{eq:Cayley-Hasse-diagram}
\end{equation}
Here, we have set the highest weight space at the left and denoted a choice of highest weight vector by $v_{16}$. Applying equation \eqref{eq:Generalized-Plucker-coordinates} to each of the basis vectors $v_i^{(j)}$ gives the full set of Pl\"ucker coordinates. 

Using Corollary \ref{cor:Action_of_s_e_f}, we see that the action of the representatives $\ds_i,\bs_i\in\udG$ can be directly related to the action of $\dChe_i,\dChf_i\in\dg$. Here, applied to $\dfwtrep[1]$, this translates as follows: an edge marked $i$ between two vectors $v$ (on the left) and $w$ (on the right) means that $\bs_i\cdot v=\dChf_i\cdot v = w$ and $\ds_i\cdot w=\dChe_i\cdot w = v$.

The diagram for $\dfwtrep[6]$ is in fact the same, however, now the highest weight space is on the right, and the actions of $\bs_i$ and $\ds_i$ go in the opposite direction. (In other words, exchange the position of $v$ and $w$ in the above.)

More importantly, this graph is identical to the \emph{Hasse diagram} for the Cayley plane: the Hasse diagram has as vertices the Schubert classes, and edges mark inclusions. The identification between the graph above and the Hasse diagram is obtained by replacing the weight vector $v_i^{(j)}$ with the Schubert class $\scs_i^{(j)}$, where we use the labeling of Schubert classes given in \cite{CMP_Quantum_cohomology_of_minuscule_homogeneous_spaces}, Section 2.3.\footnote{Note that \cite{CMP_Quantum_cohomology_of_minuscule_homogeneous_spaces} consider $\cayley$ as $\Exc_6^{\SC}/\P_1$, so the diagram would have the reverse labeling of edges.} This correspondence is implied by the geometric Satake correspondence, which we mentioned at the start of this section. Indeed, we have chosen the labeling for the basis of the representation in such a way that these match up with the labeling for the Schubert classes of \cite{CMP_Quantum_cohomology_of_minuscule_homogeneous_spaces}. Note that the fundamental class $[\X]=\scs_0=1\in H^0(\X)$ of the Cayley plane is on the right, and $\scs_1$ is the class of a hyperplane. Considering the diagram in this way, an edge $i$ from $\scs_{w_1}$ (on the left) to $\scs_{w_2}$ (on the right) denotes that $w_2=s_i w_1$. (Here $\scs_w$ is the class of the Schubert variety $\overline{\borelp w\P/\P}$.)

Now, the interpretation of \eqref{eq:Cayley-Hasse-diagram} as the Hasse diagram indicates a final interpretation: we can interpret the diagram as the partial ordering of the elements of $\cosets$ under the Bruhat order given by $v\le w$ if there is a $u\in\weyl$ such that $w=uv$ and $\ell(w)=\ell(u)+\ell(v)$. Thus, any path from the left to the right in the diagram in \eqref{eq:Cayley-Hasse-diagram} gives a reduced expression for the minimal coset representative $\wP$ of $\wo\weylp$; we will fix the following:
\begin{equation}
\wP = s_1s_3s_4s_2s_5s_4s_3s_1s_6s_5s_4s_3s_2s_4s_5s_6.
\label{eq:CayleyRedExp_wP}
\end{equation}
Whenever we consider the Cayley plane, the sequence $(r_i)_{i=1}^{16}$ will denote the sequence of indices of this reduced expression by, i.e.~$r_1=1$, $r_2=3$ and so on.


\subsubsection{The Freudenthal variety.} The Freudenthal variety is also related to (complex) octonions, namely it is the closed orbit of $\Exc_7$ on the \emph{Zorn algebra}, see Section 2.3 of \cite{CMP_Quantum_cohomology_of_minuscule_homogeneous_spaces}. The most extensive analysis of the Freudenthal variety can be found in \cite{Freudenthal_Lie_groups_in_the_foundations_of_geometry}.

For the Freudenthal variety, the Langlands dual homogeneous space $\cmX$ is naturally embedded as $\cmX\hookrightarrow\Pdfwtrepdual[7]$. The minuscule fundamental weight representation $\dfwtrep[7]$ has the following weight structure:\vspace{-1em}
\begin{equation}\hspace{-1.65em}
\begin{tikzpicture}[rotate=90,scale=.56,style=very thick,baseline=0.25em]
	\draw 
	(0,-9)--(0,-5)--(-2,-3)--(0,-1)--(-1,0)--(0,1)--(-2,3)--(0,5)--(0,6)--(-3,9)--(0,12)--(-1,13)--(0,14)--(0,18)
	(0,-5)--(1,-4)--(0,-3)--(3,0)--(0,3)--(1,4)--(0,5)
	(-1,-4)--(0,-3)--(-1,-2) (-1,4)--(0,3)--(-1,2)
	(1,2)--(0,1)--(2,-1) (1,-2)--(0,-1)--(2,1)
	(0,14)--(2,12)--(0,10)--(1,9)--(0,8)--(2,6)--(0,4)--(0,3)
	(3,0)--(4,1)--(1,4)--(1,5)--(0,6)--(1,7)
	(-3,9)--(-4,8)--(-1,5)--(-1,4)
	(1,13)--(0,12)--(1,11) (-1,11)--(0,10)--(-1,9)--(0,8)--(-2,6)
	(-2,10)--(-1,9)--(-3,7) (0,6)--(-1,5)--(0,4)
	(2,1)--(3,2) (1,2)--(2,3);
	\draw[black, fill=black] 
	(0,-9) circle (.15)
	(0,-8) circle (.15) 
	(0,-7) circle (.15)
	(0,-6) circle (.15)
	(0,-5) circle (.15)
	(1,-4) circle (.15)
	(-1,-4) circle (.15)
	(0,-3) circle (.15)
	(-2,-3) circle (.15)
	(1,-2) circle (.15)
	(-1,-2) circle (.15)
	(2,-1) circle (.15)
	(0,-1) circle (.15)
	(3,0) circle (.15)
	(1,0) circle (.15)
	(-1,0) circle (.15)
	(4,1) circle (.15)
	(2,1) circle (.15)
	(0,1) circle (.15)
	(3,2) circle (.15)
	(1,2) circle (.15)
	(-1,2) circle (.15)
	(2,3) circle (.15)
	(0,3) circle (.15)
	(-2,3) circle (.15)
	(1,4) circle (.15)
	(0,4) circle (.15)
	(-1,4) circle (.15)
	(1,5) circle (.15)
	(0,5) circle (.15)
	(-1,5) circle (.15)
	(2,6) circle (.15)
	(0,6) circle (.15)
	(-2,6) circle (.15)
	(1,7) circle (.15)
	(-1,7) circle (.15)
	(-3,7) circle (.15)
	(0,8) circle (.15)
	(-2,8) circle (.15)
	(-4,8) circle (.15)
	(1,9) circle (.15)
	(-1,9) circle (.15)
	(-3,9) circle (.15)
	(0,10) circle (.15)
	(-2,10) circle (.15)
	(1,11) circle (.15)
	(-1,11) circle (.15)
	(2,12) circle (.15)
	(0,12) circle (.15)
	(1,13) circle (.15)
	(-1,13) circle (.15)
	(0,14) circle (.15)
	(0,15) circle (.15)
	(0,16) circle (.15)
	(0,17) circle (.15)
	(0,18) circle (.15);
	\node at (0,-8.5)[below=-2pt]{\scriptsize 7};
	\node at (0,-7.5)[below=-2pt]{\scriptsize 6};
	\node at (0,-6.5)[below=-2pt]{\scriptsize 5};
	\node at (0,-5.5)[below=-2pt]{\scriptsize 4};
	\node at (0.5,-4.5)[below left=-3pt]{\scriptsize 2};
	\node at (-0.5,-4.5)[below right=-3pt]{\scriptsize 3};
	\node at (0.5,-3.5)[below right=-3pt]{\scriptsize 3};
	\node at (-0.5,-3.5)[below left=-3pt]{\scriptsize 2};
	\node at (-1.5,-3.5)[below right=-3pt]{\scriptsize 1};
	\node at (0.5,-2.5)[below left=-3pt]{\scriptsize 4};
	\node at (-0.5,-2.5)[below right=-3pt]{\scriptsize 1};
	\node at (-1.5,-2.5)[below left=-3pt]{\scriptsize 2};
	\node at (1.5,-1.5)[below left=-3pt]{\scriptsize 5};
	\node at (0.5,-1.5)[below right=-3pt]{\scriptsize 1};
	\node at (-0.5,-1.5)[below left=-3pt]{\scriptsize 4};
	\node at (2.5,-0.5)[below left=-3pt]{\scriptsize 6};
	\node at (1.5,-0.5)[below right=-3pt]{\scriptsize 1};
	\node at (0.5,-0.5)[below left=-3pt]{\scriptsize 5};
	\node at (-0.5,-0.5)[below right=-3pt]{\scriptsize 3};
	\node at (3.5,0.5)[below left=-3pt]{\scriptsize 7};
	\node at (2.5,0.5)[below right=-3pt]{\scriptsize 1};
	\node at (1.5,0.5)[below left=-3pt]{\scriptsize 6};
	\node at (0.5,0.5)[below right=-3pt]{\scriptsize 3};
	\node at (-0.5,0.5)[below left=-3pt]{\scriptsize 5};
	\node at (3.5,1.5)[below right=-3pt]{\scriptsize 1};
	\node at (2.5,1.5)[below left=-3pt]{\scriptsize 7};
	\node at (1.5,1.5)[below right=-3pt]{\scriptsize 3};
	\node at (0.5,1.5)[below left=-3pt]{\scriptsize 6};
	\node at (-0.5,1.5)[below right=-3pt]{\scriptsize 4};
	\node at (2.5,2.5)[below right=-3pt]{\scriptsize 3};
	\node at (1.5,2.5)[below left=-3pt]{\scriptsize 7};
	\node at (0.5,2.5)[below right=-3pt]{\scriptsize 4};
	\node at (-0.5,2.5)[below left=-3pt]{\scriptsize 6};
	\node at (-1.5,2.5)[below right=-3pt]{\scriptsize 2};
	\node at (1.5,3.5)[below right=-3pt]{\scriptsize 4};
	\node at (-1.5,3.5)[below left=-3pt]{\scriptsize 6};
	\node at (-1,4.5)[below=-2pt]{\scriptsize 5};
	\node at (1.5,5.5)[below left=-3pt]{\scriptsize 6};
	\node at (-1.5,5.5)[below right=-3pt]{\scriptsize 4};
	\node at (1.5,6.5)[below right=-3pt]{\scriptsize 2};
	\node at (0.5,6.5)[below left=-3pt]{\scriptsize 6};
	\node at (-0.5,6.5)[below right=-3pt]{\scriptsize 4};
	\node at (-1.5,6.5)[below left=-3pt]{\scriptsize 7};
	\node at (-2.5,6.5)[below right=-3pt]{\scriptsize 3};
	\node at (0.5,7.5)[below right=-3pt]{\scriptsize 4};
	\node at (-0.5,7.5)[below left=-3pt]{\scriptsize 6};
	\node at (-1.5,7.5)[below right=-3pt]{\scriptsize 3};
	\node at (-2.5,7.5)[below left=-3pt]{\scriptsize 7};
	\node at (-3.5,7.5)[below right=-3pt]{\scriptsize 1};
	\node at (0.5,8.5)[below left=-3pt]{\scriptsize 5};
	\node at (-0.5,8.5)[below right=-3pt]{\scriptsize 3};
	\node at (-1.5,8.5)[below left=-3pt]{\scriptsize 6};
	\node at (-2.5,8.5)[below right=-3pt]{\scriptsize 1};
	\node at (-3.5,8.5)[below left=-3pt]{\scriptsize 7};
	\node at (0.5,9.5)[below right=-3pt]{\scriptsize 3};
	\node at (-0.5,9.5)[below left=-3pt]{\scriptsize 5};
	\node at (-1.5,9.5)[below right=-3pt]{\scriptsize 1};
	\node at (-2.5,9.5)[below left=-3pt]{\scriptsize 6};
	\node at (0.5,10.5)[below left=-3pt]{\scriptsize 4};
	\node at (-0.5,10.5)[below right=-3pt]{\scriptsize 1};
	\node at (-1.5,10.5)[below left=-3pt]{\scriptsize 5};
	\node at (1.5,11.5)[below left=-3pt]{\scriptsize 2};
	\node at (0.5,11.5)[below right=-3pt]{\scriptsize 1};
	\node at (-0.5,11.5)[below left=-3pt]{\scriptsize 4};
	\node at (1.5,12.5)[below right=-3pt]{\scriptsize 1};
	\node at (0.5,12.5)[below left=-3pt]{\scriptsize 2};
	\node at (-0.5,12.5)[below right=-3pt]{\scriptsize 3};
	\node at (0.5,13.5)[below right=-3pt]{\scriptsize 3};
	\node at (-0.5,13.5)[below left=-3pt]{\scriptsize 2};
	\node at (0,14.5)[below=-2pt]{\scriptsize 4};
	\node at (0,15.5)[below=-2pt]{\scriptsize 5};
	\node at (0,16.5)[below=-2pt]{\scriptsize 6};
	\node at (0,17.5)[below=-2pt]{\scriptsize 7};
	\node at (0,-9)[above=2pt]{\tiny$v_0$};
	\node at (0,-8)[above=2pt]{\tiny$v_1$};
	\node at (0,-7)[above=2pt]{\tiny$v_2$};
	\node at (0,-6)[above=2pt]{\tiny$v_3$};
	\node at (0,-5)[above=2pt]{\tiny$v_4$};
	\node at (1,-4)[above=2pt]{\tiny$v_5'$};
	\node at (-1,-4)[above=2pt]{\tiny$v_5''$};
	\node at (0,-3)[above=2pt]{\tiny$v_6'$};
	\node at (-2,-3)[above=2pt]{\tiny$v_6''$};
	\node at (1,-2)[above=2pt]{\tiny$v_7'$};
	\node at (-1,-2)[above=2pt]{\tiny$v_7''$};
	\node at (2,-1)[above=2pt]{\tiny$v_8'$};
	\node at (0,-1)[above=2pt]{\tiny$v_8''$};
	\node at (3,0)[above=2pt]{\tiny$v_9$};
	\node at (1,0)[above=2pt]{\tiny$v_9'$};
	\node at (-1,0)[above=2pt]{\tiny$v_9''$};
	\node at (4,1)[above=2pt]{\tiny$v_{10}$};
	\node at (2,1)[above=2pt]{\tiny$v_{10}'$};
	\node at (0,1)[above=2pt]{\tiny$v_{10}''$};
	\node at (3,2)[above=2pt]{\tiny$v_{11}$};
	\node at (1,2)[above=2pt]{\tiny$v_{11}'$};
	\node at (-1,2)[above=2pt]{\tiny$v_{11}''$};
	\node at (2,3)[above=2pt]{\tiny$v_{12}$};
	\node at (0,3)[above=2pt]{\tiny$v_{12}'$};
	\node at (-2,3)[above=2pt]{\tiny$v_{12}''$};
	\node at (1,4)[above=2pt]{\tiny$v_{13}$};
	\node at (0,4)[above=-.5pt]{\tiny$v_{13}'$};
	\node at (-1,4)[above=0.5pt]{\tiny$v_{13}''$};
	\node at (1,5)[above=2pt]{\tiny$v_{14}''$};
	\node at (0,5)[above=-1pt]{\tiny$v_{14}'$};
	\node at (-1,5)[above=1pt]{\tiny$v_{14}$};
	\node at (2,6)[above=2pt]{\tiny$v_{15}''$};
	\node at (0,6)[above=2pt]{\tiny$v_{15}'$};
	\node at (-2,6)[above=2pt]{\tiny$v_{15}$};
	\node at (1,7)[above=2pt]{\tiny$v_{16}''$};
	\node at (-1,7)[above=2pt]{\tiny$v_{16}'$};
	\node at (-3,7)[above=2pt]{\tiny$v_{16}$};
	\node at (0,8)[above=2pt]{\tiny$v_{17}''$};
	\node at (-2,8)[above=2pt]{\tiny$v_{17}'$};
	\node at (-4,8)[above=2pt]{\tiny$v_{17}$};
	\node at (1,9)[above=2pt]{\tiny$v_{18}''$};
	\node at (-1,9)[above=2pt]{\tiny$v_{18}'$};
	\node at (-3,9)[above=2pt]{\tiny$v_{18}$};
	\node at (0,10)[above=2pt]{\tiny$v_{19}''$};
	\node at (-2,10)[above=2pt]{\tiny$v_{19}'$};
	\node at (1,11)[above=2pt]{\tiny$v_{20}''$};
	\node at (-1,11)[above=2pt]{\tiny$v_{20}'$};
	\node at (2,12)[above=2pt]{\tiny$v_{21}''$};
	\node at (0,12)[above=2pt]{\tiny$v_{21}'$};
	\node at (1,13)[above=2pt]{\tiny$v_{22}''$};
	\node at (-1,13)[above=2pt]{\tiny$v_{22}'$};
	\node at (0,14)[above=2pt]{\tiny$v_{23}$};
	\node at (0,15)[above=2pt]{\tiny$v_{24}$};
	\node at (0,16)[above=2pt]{\tiny$v_{25}$};
	\node at (0,17)[above=2pt]{\tiny$v_{26}$};
	\node at (0,18)[above=2pt]{\tiny$v_{27}$};
\end{tikzpicture}
\label{eq:Freudenthal-Hasse-diagram}
\end{equation}
Here we used the same conventions as for the diagram in \eqref{eq:Cayley-Hasse-diagram}. Note that the center consists of the following commutative cube:\vspace{-1em}
\[
\begin{tikzpicture}[rotate=90,scale=.8,style=very thick,baseline=0.25em]
	\draw 
	(0,3) -- (1,4) -- (1,5) -- (0,6) -- (-1,5) -- (-1,4) -- (0,3) -- (0,4) -- (1,5)
	(0,4) -- (-1,5) (0,6) -- (0,5) -- (1,4)  (0,5) -- (-1,4);
	\draw[black, fill=black] 
	(0,3) circle (.15)
	(1,4) circle (.15)
	(0,4) circle (.15)
	(-1,4) circle (.15)
	(1,5) circle (.15)
	(0,5) circle (.15)
	(-1,5) circle (.15)
	(0,6) circle (.15);
	\node at (0.5,3.5)[above right=-3pt]{\scriptsize 7};
	\node at (0,3.5)[below =-2.5pt]{\scriptsize 5};
	\node at (-0.5,3.5)[below right=-3pt]{\scriptsize 2};
	\node at (1,4.5)[above=-2pt]{\scriptsize 5};
	\node at (0.15,4.625){\scriptsize 2};
	\node at (0.15,4.375){\scriptsize 7};
	\node at (-0.15,4.625){\scriptsize 7};
	\node at (-0.15,4.375){\scriptsize 2};
	\node at (-1,4.5)[below=-2pt]{\scriptsize 5};
	\node at (0.5,5.5)[above left=-3pt]{\scriptsize 2};
	\node at (0,5.5)[below=-2.5pt]{\scriptsize 5};
	\node at (-0.5,5.5)[below left=-3pt]{\scriptsize 7};
	\node at (0,3)[right=2pt]{\tiny$v_{12}'$};
	\node at (1,4)[above=2pt]{\tiny$v_{13}$};
	\node at (0,4)[above=2pt]{\tiny$v_{13}'$};
	\node at (-1,4)[below=1pt]{\tiny$v_{13}''$};
	\node at (1,5)[above=2pt]{\tiny$v_{14}''$};
	\node at (0,5)[above=2pt]{\tiny$v_{14}'$};
	\node at (-1,5)[below=4pt]{\tiny$v_{14}$};
	\node at (0,6)[left=2pt]{\tiny$v_{15}'$};
\end{tikzpicture}
\]
with all southwest-northeast edges marked $2$, all west-east edges marked $5$ and all northwest-southeast edges marked $7$. We have again used the same labeling for the basis vectors as used for the Schubert classes of the cohomology of $\ECHS[7]$ in \cite{CMP_Quantum_cohomology_of_minuscule_homogeneous_spaces} in accordance with the geometric Satake correspondence.

We have the same interpretations of diagram \eqref{eq:Freudenthal-Hasse-diagram} as the actions of $\dChe_i$ and $\dChf_i$ on $\dfwtrep[1]$, as the Hasse diagram for the cohomology, and as the partial order on the elements of $\cosets$ under the Bruhat order. Although any path from left to right in \eqref{eq:Freudenthal-Hasse-diagram} gives a reduced expression for $\wP$, we fix the following:
\begin{equation}
\wP = s_7s_6s_5s_4s_3s_2s_4s_5s_6s_1s_3s_4s_2s_5s_7s_4s_3s_1s_6s_5s_4s_2s_3s_4s_5s_6s_7.
\label{eq:E7P7_wP_reduced_expression}
\end{equation}
Note that we have $\wPinv=\wP$, which did not hold in the case of the Cayley plane.


\section{The canonical models for cominuscule \texorpdfstring{$\ECHS$}{E\_n\textasciicircum{}sc/P\_n}}\label{sec:Canonical_LG_models_for_ECHS}
In this section we will define \emph{canonical mirror models} $(\mX_\can,\pot_\can)$ for the Cayley plane and the Freudenthal variety in terms of Pl\"ucker coordinates. (The terminology ``canonical'' follows that of \cite{Pech_Rietsch_Williams_Quadrics}.) In addition to the Pl\"ucker coordinate expressions, our main results are:

\begin{thm}\label{thm:ExcFam_CanAndLie_Vars}
For cominuscule $\ECHS$, the variety $\mX_\can$ is isomorphic to the open Richardson variety $\mX_\Lie=\dRichard_{\wop,\wo}$.
\end{thm}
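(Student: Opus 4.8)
The plan is to exhibit $\mX_\can$ explicitly as an open subset of $\cmX = \udP\backslash\udG$ and then identify that open subset with the open Richardson variety $\dRichard_{\wop,\wo}$. First I would make precise the definition of $\mX_\can$: it is $\cmX \setminus D$, where $D$ is the anticanonical divisor cut out by the vanishing of the Pl\"ucker coordinates appearing as denominators in $\pot_\can$ (namely $\p_0, \p_8, \p_{16}$ and the various $q_{12}, q_{16}, \ldots$ for the Cayley plane, and the analogous list for the Freudenthal variety). So the first concrete step is to show that the locus where all these denominators are nonzero is precisely the image of the open Richardson cell under the embedding $\cmX \hookrightarrow \PPluckerDualRep$, $\udP g \mapsto [v_0^* \cdot g]$.

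The natural route is via the decomposition variety $\decomps$ and the isomorphism $\RisoZ \colon \mX_\Lie \times \invdtorus \xrightarrow{\sim} \decomps$ from \eqref{eq:Decomposition_Isomorphism}, together with the transpose/unipotent-cell description in Proposition \ref{prop:GLS_coord_ring_dunipP}. Concretely: the open Richardson $\dRichard_{\wop,\wo} \subset \udG/\udborelm$ is closely related to the unipotent cell $\dunipP = \dunip \cap \dborelp \dwP \dborelp$ whose coordinate ring is $\C[\pGLS_1, \ldots, \pGLS_{\ellwP}]$ localized at the generalized minors $\minor_{\dfwt[i], \wPinv(\dfwt[i])}$. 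On the other hand, composing the embedding $\cmX \hookrightarrow \PPluckerDualRep$ with the unipotent parametrization $\udP \backslash \udG \supset (\text{open cell}) \cong \dunimP$ (or its transpose), one computes that the Pl\"ucker coordinate $\p_0$ is, up to scalar, exactly the minor $\minor_{\dfwt[k], \wPinv(\dfwt[k])}$ restricted to this cell, and more generally that the denominator functions of $\pot_\can$ correspond precisely to the $n$ localizing minors $\minor_{\dfwt[i], \wPinv(\dfwt[i])}$ (possibly after grouping Pl\"ucker coordinates into the polynomials $q_i$, which is where the combinatorics of the minuscule weight diagrams \eqref{eq:Cayley-Hasse-diagram} and \eqref{eq:Freudenthal-Hasse-diagram} enters). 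Granting this, inverting the denominators of $\pot_\can$ on $\cmX$ is the same operation as inverting those $n$ minors on the unipotent cell, so $\mX_\can \cong \dunipP$ localized $\cong \mX_\Lie$ via $\RisoZ$ and the transpose anti-automorphism $\cdot^\T$.

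More carefully, the steps I would carry out in order are: (1) recall from \eqref{eq:Richardson_var}–\eqref{eq:decomps} and Lemma \ref{lem:z_has_unique_decomposition} that $\mX_\Lie$ is isomorphic, via the projection $z = u_+ t \bwop u_- \mapsto u_-$ (and using $\RisoZ$), to an open subset of $\dunimP$, equivalently after transpose to an open subset of $\dunipP$; (2) use Proposition \ref{prop:GLS_coord_ring_dunipP} to identify $\C[\mX_\Lie \times \invdtorus]$ (or the relevant factor) with $\C[\pGLS_1, \ldots, \pGLS_{\ellwP}]$ localized at the $\minor_{\dfwt[i],\wPinv(\dfwt[i])}$; (3) for the other side, unwind Definition \ref{df:Plucker_coordinates}: for $g$ in the open cell write $g = \dunimP$-representative times $\udP$-part, and express each relevant $\p_i(g)$ as a polynomial in the $\pGLS_m$ using the weight-diagram combinatorics and Theorem \ref{thm:Green_structure_minuscule_reps} / Corollary \ref{cor:Action_of_s_e_f}; (4) show the denominator functions $\{\p_0, \p_8, \ldots\}$ (resp.\ $\{\p_0,\ldots,p_{27}\}$) correspond exactly to $\{\minor_{\dfwt[i],\wPinv(\dfwt[i])} : i = 1,\ldots,n\}$; (5) conclude the localized coordinate rings agree, hence the varieties are isomorphic, and check the isomorphism is induced by (the restriction of) the natural maps so that it is compatible with all structure needed for Theorem \ref{thm:ExcFam_CanAndLie_LGmodels}.

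The main obstacle is step (3)–(4): there is no uniform closed formula available for how a generalized minor $\minor_{\dfwt[i],\wPinv(\dfwt[i])}$ decomposes into Pl\"ucker coordinates, and indeed the paper itself flags this ("the only obstruction to presenting canonical models \ldots\ type-independently is expressing the generalized minors \ldots\ in terms of Pl\"ucker coordinates"). For the exceptional types this must be done by an explicit, case-by-case computation — essentially running Algorithm \ref{alg:subduction-for-gen-minors} — on the $27$-dimensional and $56$-dimensional minuscule representations, tracking which monomials in Pl\"ucker coordinates appear in each $q_i$. The cubic terms for the Cayley plane and the quartic plus two cubic terms for the Freudenthal variety are precisely the symptom that these minors are not linear in the $\p_i$, so the bulk of the real work — and the part that does not generalize for free — is verifying these polynomial identities for the two exceptional cases. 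Once those identities are in hand, the isomorphism of varieties follows formally from matching localized coordinate rings via Proposition \ref{prop:GLS_coord_ring_dunipP}, the isomorphism $\RisoZ$, and the transpose anti-automorphism.
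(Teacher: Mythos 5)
Your proposal follows essentially the same route as the paper: pass through the unipotent cell $\dunimP$ (equivalently $\dunipP = (\dunimP)^\T$) via the decomposition isomorphism $\RisoZ$, use the GLS description of $\C[\dunipP]$ from Proposition~\ref{prop:GLS_coord_ring_dunipP}, identify the generators $\pGLS_m$ with Pl\"ucker coordinates via the weight-diagram combinatorics (Proposition~\ref{prop:Criterion_Plucker_coordinates_coincide_with_GLS}), identify the localizing minors $\minor_{\dfwt[i],\wPinv(\dfwt[i])}$ with the denominator polynomials $q_i$ by case-by-case torus expansion, and conclude that $\pi:\dunimP\to\mX_\can$ is an isomorphism. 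You also correctly identify the central obstacle: the minor-to-Pl\"ucker decompositions have no type-independent closed form and must be computed.

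Two points where your sketch is off and would need correction in a careful write-up. First, $\varphi:\mX_\Lie\to\dunimP$ is an isomorphism onto \emph{all} of $\dunimP$ (via Lemma~\ref{lem:z_has_unique_decomposition} and $\RisoZ$), not onto an open subset; the openness is already baked into the definition of $\dunimP$ as the unipotent cell. Second, and more substantively, your claim that ``$\p_0$ is, up to scalar, exactly the minor $\minor_{\dfwt[k],\wPinv(\dfwt[k])}$'' is false: $\p_0$ is \emph{identically $1$} on $\dunimP$ (it is the lowest-weight matrix coefficient, and lower-triangular unipotents fix the lowest weight vector), whereas $\minor_{\dfwt[k],\wPinv(\dfwt[k])}$ is a nontrivial function — for the Cayley plane it equals $\p_8$ (Lemma~\ref{lem:CayleyPlane_minors_coincide_with_denominators_of_potential}). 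There are $n+1$ denominators in $\pot_\can$, not $n$: the $n$ GLS localizing minors match $\{\p_8,\p_{16},q_{12},q_{16},q_{20},q_{24}\}$ (resp.\ the seven $q_i$ and $\p_{27}$), while the extra condition $\p_0\neq 0$ is what restricts to the big Bruhat cell so that the quotient map $\pi$ lands in an affine chart where the $\dunimP$-parametrization makes sense. Finally, when you say the isomorphism ``follows formally from matching localized coordinate rings,'' the paper is more careful: it verifies surjectivity of $\pi^*$ by lifting the generators of $\C[\dunimP]$ (which are polynomials in Pl\"ucker coordinates and their inverses, all regular on $\mX_\can$) and injectivity by noting $\pi$ is an open map between irreducible affine varieties of equal dimension, hence dominant. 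Your formulation glosses over the fact that the Pl\"ucker coordinates generate only a subring of $\C[\cmX]$ a priori, so one does need this dominance argument rather than a purely formal ring identification.
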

\begin{thm}\label{thm:ExcFam_CanAndLie_LGmodels}
For cominuscule $\ECHS$, the pull-back of $\pot_\can$ under the isomorphism of Theorem \ref{thm:ExcFam_CanAndLie_Vars} equals $\pot_\Lie$. In other words, the Landau-Ginzburg model $(\mX_\can,\pot_\can)$ is isomorphic to Rietsch's Lie-theoretic Landau-Ginzburg model $(\mX_\Lie,\pot_\Lie)$.
\end{thm}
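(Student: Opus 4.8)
The plan is to verify the identity $\pot_\can\circ\iota=\pot_\Lie$, where $\iota\colon\mX_\Lie\overset{\sim}{\longrightarrow}\mX_\can$ denotes the isomorphism of Theorem \ref{thm:ExcFam_CanAndLie_Vars}, by restricting both sides to a distinguished dense open torus and comparing Laurent polynomials. Recall that by \eqref{eq:Decomposition_Isomorphism} and Lemma \ref{lem:z_has_unique_decomposition} a point of $\mX_\Lie\times\invdtorus$ corresponds to a unique decomposition $z=u_+t\bwop u_-$ with $u_-\in\dunimP$; the locus $\RisoZ^{-1}(\opendecomps)$ where $u_-$ lies in the torus $\opendunim$ of \eqref{eq:df_opendunim} is open and dense, and is coordinatized by $(a_1,\dots,a_\ell)\in(\C^*)^\ell$ together with the parameter $q=\sdr_k(t)$, $t\in\invdtorus$. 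Since $\pot_\Lie$ is regular on $\mX_\Lie\times\invdtorus$ (Theorem \ref{thm:Lie-theoretic_LG_model}) and, because $\mX_\can=\cmX\setminus D$ is by construction the locus on which the denominators of $\pot_\can$ are invertible, $\pot_\can\circ\iota$ is regular there as well, it suffices to prove that the two functions agree on this dense torus.

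On the torus, $\pot_\Lie$ is the explicit Laurent polynomial $\sum_{i=1}^\ell a_i+q\bigl(\sum_{(i_j)\in\wPrimeSubExp}\prod_j a_{i_j}\bigr)\big/\prod_{i=1}^\ell a_i$ of Proposition \ref{prop:LP_LG_model}, so it remains to expand $\pot_\can$ in the coordinates $a_i$. For this we express each Plücker coordinate $\p_i$, and then each polynomial $q_i$ built from them in Sections \ref{sec:cayley-potential} and \ref{sec:freudenthal-potential}, as an explicit polynomial in the $a_i$, via Algorithm \ref{alg:Plucker_torus_expansion}. The mechanism is that the relevant representation $\PluckerRep$ is minuscule, so by Theorem \ref{thm:Green_structure_minuscule_reps}(iv) we have $\dy_r(a)=\exp(a\dChf_r)=1+a\dChf_r$ on it; hence evaluating $\p_i$ at the point parametrized by $u_-=\dy_{r_\ell}(a_\ell)\cdots\dy_{r_1}(a_1)$ amounts to expanding $\prod_{m}(1+a_{r_m}\dChf_{r_m})\cdot v_i$ and reading off the coefficient of the lowest weight vector $v_0$. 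By Theorem \ref{thm:Green_structure_minuscule_reps}(iii) this coefficient is precisely the sum, over all directed paths from the weight of $v_i$ to that of $v_0$ in the weight diagram \eqref{eq:Cayley-Hasse-diagram} (resp.\ \eqref{eq:Freudenthal-Hasse-diagram}) that occur as subwords of the fixed reduced word \eqref{eq:CayleyRedExp_wP} (resp.\ \eqref{eq:E7P7_wP_reduced_expression}), of the corresponding monomial in the $a$'s; in particular the top Plücker coordinate ($\p_{16}$ for the Cayley plane, $p_{27}$ for the Freudenthal variety) equals $\prod_{i=1}^\ell a_i$.

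Substituting these expansions into the displayed formula for $\pot_\can$, each non-quantum summand $\p_{j'}/\p_j$ or $q_{j'}/q_j$ collapses: by the specific numerators and denominators chosen in Sections \ref{sec:cayley-potential} and \ref{sec:freudenthal-potential} the ratio of path-sums simplifies to a sum $\sum_{i\in S_\bullet}a_i$ over an explicit subset $S_\bullet\subset\{1,\dots,\ell\}$ (equivalently, this is the quantum-Pieri identity recorded in Remark \ref{rem:quantum-pieri}, where the numerator equals $\sigma_1\star(\text{denominator})$ up to the degree factor), and the subsets attached to the non-quantum terms form a partition of $\{1,\dots,\ell\}$. Simultaneously the quantum term $q\,\p_5''/\p_{16}$ (resp.\ $q\,p_{10}/p_{27}$) reproduces $q\bigl(\sum_{(i_j)\in\wPrimeSubExp}\prod_j a_{i_j}\bigr)\big/\prod_{i=1}^\ell a_i$, once the weight of the quantum numerator is identified with the weight-diagram vertex at level $\ell(\wPrime)$ and its path-sum is matched with the reduced subexpressions of $\wPrime=\wP(\wPPrime)^{-1}$. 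Summing over all terms recovers the Laurent polynomial of Proposition \ref{prop:LP_LG_model}, so $\pot_\can\circ\iota=\pot_\Lie$ on the dense torus, and hence everywhere.

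The main obstacle is this collapsing step: it is not a formal manipulation but a type-by-type verification built on the explicit structure of the minuscule representations $\dfwtrep[1]$ of $\LGE_6$ and $\dfwtrep[7]$ of $\LGE_7$ — one must know the polynomials $q_i$, compute the path-sums for the relevant weights (a large but finite amount of combinatorial bookkeeping, which is where computer algebra enters), and check the cancellations. A secondary point, properly belonging to Theorem \ref{thm:ExcFam_CanAndLie_Vars} but implicitly used here, is that $D$ must be exactly the vanishing locus of the denominators of $\pot_\can$ together with the frozen generalized minors of Proposition \ref{prop:GLS_coord_ring_dunipP}, so that $\iota$ genuinely identifies $\mX_\can$ with $\mX_\Lie$ and both superpotentials are simultaneously regular on it.
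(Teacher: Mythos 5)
Your proposal is correct and follows essentially the same route as the paper: restrict to the dense Lusztig torus $\opendunim\times\invdtorus$, invoke the Laurent-polynomial form of $\pot_\Lie$ from Proposition~\ref{prop:LP_LG_model}, expand the Pl\"ucker expressions via Algorithm~\ref{alg:Plucker_torus_expansion}, verify (with computer assistance) that each non-quantum term of $\pot_\can$ collapses to a subset sum of the $a_i$'s partitioning $\{1,\dots,\ell\}$ while the quantum term matches, and then extend by density and irreducibility. Your observation that the collapsing is the Pieri identity of Remark~\ref{rem:quantum-pieri} is a nice conceptual gloss, but in the paper the collapse is simply checked directly rather than deduced from Pieri, so the logical structure is identical.
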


We follow the general approach as used in \cite{Pech_Rietsch_Lagrangian_Grassmannians,Pech_Rietsch_Williams_Quadrics}. In particular, we will use a presentation of the coordinate ring of $\dunimP$ derived from the results in \cite{GLS_Kac_Moody_groups_and_cluster_algebras} (which we introduced in Section \ref{sec:Plucker_coords_and_ring_of_dunimP}) and express it in terms of Pl\"ucker coordinates using representation theory. However, the varieties considered in these references are homogeneous under $\Sp_{2n}$ and $\Spin_{2n}$, and the representation theory of these groups is well-established (although by no means trivial). For the groups of type $\LGE_6$ and $\LGE_7$, the representations have a more complicated structure. Moreover, in contrast with \cite{Pech_Rietsch_Lagrangian_Grassmannians,Pech_Rietsch_Williams_Quadrics}, we will perform most calculations restricted to the open, dense torus $\opendunim$ (or more precisely: its transpose) to simplify the computations. This method is in fact similar to the approach in \cite{Pech_Rietsch_Odd_Quadrics}, where the analogous torus is considered for odd-dimensional quadrics; however, the representation-theoretic considerations there are significantly more straightforward compared to the methods that had to be developed for the exceptional cominuscule family.

We will denote the homogeneous polynomials appearing in the numerators and denominators of our superpotentials $\pot_\can$ by $q_i$; this should not cause any confusion with quantum parameters as cominuscule homogeneous spaces always have $\rk_\Z H_2(X,\Z)=1$ and thus also only a single quantum parameter, which we denote by $q$.

\subsection{The canonical model for the Cayley plane.}
\label{sec:cayley-potential}
The homogeneous polynomials in Pl\"ucker coordinates appearing in the denominators are the following:
\begin{equation}
\begin{aligned}
q_{12} &= \p_1\p_{11}''-\p_0\p_{12}'',\qquad q_{16} = \p_7'\p_9'-\p_8\p_8', \qquad q_{20} = \p_5''\p_{15}-\p_4''\p_{16}, \\
q_{24} &= \p_0(\p_{11}'\p_{13}-\p_{12}'\p_{12}'')-\p_1(\p_{10}''\p_{13}-\p_{11}''\p_{12}')+\p_2(\p_{10}''\p_{12}''-\p_{11}'\p_{11}'');
\end{aligned}\label{eq:CayleyPlane_denominators_of_potential}
\end{equation}
and the following will be numerators:
\begin{equation}
\begin{aligned}
q_{13} &= \p_2\p_{11}''-\p_0\p_{13},\qquad q_{17} = \p_7'\p_{10}'-\p_8\p_9'', \qquad q_{21} = \p_6''\p_{15}-\p_5'\p_{16}, \\
q_{25} &= \p_0\p_{11}'\p_{14}-\p_1\p_{10}''\p_{14}+\p_3(\p_{10}''\p_{12}''-\p_{11}'\p_{11}'').
\end{aligned}
\label{eq:CayleyPlane_numerators_of_potential}
\end{equation}

Using these coordinates, define the following rational map $\cmX=\udP\backslash\udG\to\C$, called the \emph{canonical superpotential}:
\begin{align}
\pot_\can ={}& \frac{\p_1}{\p_0} + \frac{\p_9'}{\p_8} 
+ \frac{q_{13}}{q_{12}} 
+ \frac{q_{17}}{q_{16}}
+ \frac{q_{21}}{q_{20}} 
+ \frac{q_{25}}{q_{24}}
+ q\frac{\p_5''}{\p_{16}}.
\label{eq:pot-for-cayley}
\end{align}

The denominators of the rational map of equation \eqref{eq:pot-for-cayley} define principal divisors on $\cmX$ which we will denote as follows:
\ali{
D_0^{(1)} &= \{\p_0=0\},\quad D_8^{(1)} = \{\p_8=0\},\quad D_{16}^{(1)} = \{p_{16}=0\}, \\
D_{12}^{(2)} &= \{q_{12} =0\},~\quad D_{16}^{(2)} = \{q_{16} = 0\},\quad D_{20}^{(2)} = \{q_{20}=0\}, \\
D_{24}^{(3)} &= \{q_{24} = 0\}.
}
Take $D=\sum D_{i}^{(j)}$ and let $\mX_\can=\cmX\setminus D$ be the complement of this anticanonical divisor. The restriction of $\pot_\can$ to $\mX_\can$ is regular and will also be denoted by $\pot_\can$. The pair $(\mX_\can,\pot_\can)$ forms the canonical LG model for the Cayley plane.

\subsection{The canonical model for the Freudenthal variety.}
\label{sec:freudenthal-potential}
The canonical superpotential for the Freudenthal variety will have the following denominators:
\begin{equation}
\begin{aligned}
q_{18} &= \p_1\p_{17}-\p_0\p_{18}, \qquad  q_{36} = \p_{10}\p_{26}-\p_{9}\p_{27}, \\
q_{27} &= \p_{0}\p_{27}-\p_{1}\p_{26}+\p_{2}\p_{25}-\p_{3}\p_{24}+\p_{4}\p_{23}-\p_{5}''\p_{22}''+\p_{6}''\p_{21}'', \\
q_{45} &= (\p_{9}\p_{11}-\p_{10}\p_{10}')\p_{25}-(\p_{8}'\p_{11}-\p_{9}'\p_{10})\p_{26}+(\p_{8}'\p_{10}'-\p_{9}\p_{9}')\p_{27}, \\
q_{36}' &= (\p_{0}\p_{14}'-\p_{1}\p_{13}''+\p_{2}\p_{12}'')\p_{22}'+(-\p_{0}\p_{13}+\p_{1}\p_{12}'-\p_{2}\p_{11}'')\p_{23} \\
&\quad+(\p_{0}\p_{12}-\p_{1}\p_{11}'+\p_{2}\p_{10}'')\p_{24}-\p_{2}\p_{9}''\p_{25}+\p_{1}\p_{9}''\p_{26}-\p_{0}\p_{9}''\p_{27}, \\
q_{54} &= (\p_{5}''\p_{7}''-\p_{6}'\p_{6}'')(\p_{20}''\p_{22}''-\p_{21}'\p_{21}'')-(\p_{4}\p_{7}''-\p_{5}'\p_{6}'')(\p_{20}''\p_{23}-\p_{21}''\p_{22}') \\
&\quad+(\p_{4}\p_{6}'-\p_{5}'\p_{5}'')(\p_{21}'\p_{23}-\p_{22}'\p_{22}'') + q_{27}'q_{27}'',
\end{aligned}
\label{eq:Freudenthal_denominators_of_potential}
\end{equation}
where
\ali{
q_{27}'&= -\p_{0}\p_{27}+\p_{1}\p_{26}-\p_{2}\p_{25}+\p_{3}\p_{24}, \\
q_{27}'' &= -\p_{6}''\p_{21}''-\p_{7}'\p_{20}'+\p_{7}''\p_{20}''+\p_{8}'\p_{19}'-\p_{9}\p_{18}+\p_{10}\p_{17};
}
and the following numerators:
\begin{equation}
\begin{aligned}
q_{19} &= \p_2\p_{17}-\p_0\p_{19}', \qquad 
q_{28} = \p_{5}'\p_{23}-\p_{6}'\p_{22}''+\p_{7}''\p_{21}'', \qquad 
q_{37} = \p_{11}\p_{26}-\p_{10}'\p_{27}, \\
q_{46} &= (\p_{9}\p_{12}-\p_{10}\p_{11}')\p_{25}+(-\p_{8}'\p_{12}+\p_{10}\p_{10}'')\p_{26}+(\p_{8}'\p_{11}'-\p_{9}\p_{10}'')\p_{27}, \\
q_{37}' &=(\p_{0}\p_{15}'-\p_{1}\p_{14}+\p_{3}\p_{12}'')\p_{22}' + (-\p_{0}\p_{14}''+\p_{1}\p_{13}'-\p_{3}\p_{11}'')\p_{23} \\
&\quad+\p_{3}\p_{10}''\p_{24}+ (\p_{0}\p_{12}-\p_{1}\p_{11}'-\p_{3}\p_{9}'')\p_{25} +\p_{1}\p_{10}''\p_{26}-\p_{0}\p_{10}''\p_{27}, \\
q_{55} &=(\p_{5}''\p_{8}''-\p_{6}''\p_{7}')(\p_{20}''\p_{22}''-\p_{21}'\p_{21}'')-\p_{4}\p_{8}''(\p_{20}''\p_{23}-\p_{21}''\p_{22}')+\p_{4}\p_{7}'(\p_{21}'\p_{23}-\p_{22}'\p_{22}'') \\
&\quad+ \p_{5}'\p_{24}(-\p_{5}''\p_{21}'+\p_{6}''\p_{20}'')+\p_{4}\p_{24}(-\p_{4}\p_{23}+\p_{5}'\p_{22}'+\p_{5}''\p_{22}''-\p_{6}''\p_{21}'') \\
&\quad+ (\p_{4}\p_{24}-\p_{7}'\p_{21}'+\p_{8}''\p_{20}'')(-\p_{0}\p_{27}+\p_{1}\p_{26}-\p_{2}\p_{25}+\p_{3}\p_{24}).
\end{aligned}
\label{eq:Freudenthal_numerators_of_potential}
\end{equation}
The \emph{canonical superpotential} is the following map $\cmX\to\C$:
\begin{align}
\pot_\can = \frac{\p_1}{\p_0} + \frac{q_{19}}{q_{18}} + \frac{q_{28}}{q_{27}} + \frac{q_{37}}{q_{36}} +\frac{q_{37}'}{q_{36}'} + \frac{q_{46}}{q_{45}} + \frac{q_{55}}{q_{54}} + q\frac{p_{10}}{p_{27}}
\end{align}
The sum of the loci where the denominators of this superpotential are zero forms an anticanonical divisor $D$; we will write $\mX_\can = \cmX\setminus D$ for its complement. We will also write $\pot_\can$ for the regular restriction of the superpotential to $\mX_\can$. The pair $(\mX_\can,\pot_\can)$ is the canonical LG model for the Freudenthal variety.

\begin{rem}  \label{rem:quantum-pieri}
We observed that the numerators and denominators in our superpotentials are related in the quantum cohomology ring of $\X$ as follows. After identifying Pl\"ucker coordinates with Schubert classes via the geometric Satake correspondence, we may interpret the numerator and denominator of each term in our superpotentials as elements of the quantum cohomology ring by interpreting all products as quantum products. We observe after these identifications that for each term in our superpotentials, its numerator equals the quantum product of its denominator with the hyperplane class $\sigma_1$, scaled by the degree of the denominator (or numerator) as a polynomial in Pl\"ucker coordinates. For example, for our Cayley plane superpotential in equation \eqref{eq:pot-for-cayley}, we compute that $q_{25}=3\sigma_1*q_{24}$ where all products (including those in the Pl\"ucker coordinate expressions of $q_{24}$ and $q_{25}$) are interpreted as quantum products. 

In order to verify this relationship between the numerators and denominators of the superpotential, we used the description of the quantum cohomology ring for the Cayley plane presented in Proposition 3 and Theorem 31 of \cite{CMP_Quantum_cohomology_of_minuscule_homogeneous_spaces} as well as the description of the quantum cohomology ring of the Freudenthal variety using Theorems 6 and 34 of \cite{CMP_Quantum_cohomology_of_minuscule_homogeneous_spaces}. 

Similar statements have appeared in type $\LGA$ in e.g.~\cite{Marsh_Rietsch_Grassmannians,Kalashnikov_Plucker_coord_mirror_type_A_flag}, and in type $B$ in \cite[Section 9]{Pech_Rietsch_Odd_Quadrics}. This scaling by degree was not observed in the type-$\LGA$ Grassmannians $\Gr(k,n)$ because there each term was a ratio of degree-$1$ expressions in the Pl\"ucker coordinates, while the type $B$ superpotential had one term ($W_{m-1}$ in the notation of \cite{Pech_Rietsch_Odd_Quadrics}) which contributed only $p_1$, despite being a ratio of degree-$2$ polynomials. Furthermore, in type $C$, \cite{Pech_Rietsch_Lagrangian_Grassmannians} observed that each term of their superpotential contributes only one hyperplane class $p_1$. These observations suggest the existence of a general construction of canonical Pl\"ucker coordinate superpotentials for certain homogeneous spaces, and we are investigating this topic in an upcoming article.
\end{rem}




\subsection{Overview of the isomorphisms}\label{sec:Mirror_model_isomorphisms}
The proof of Theorem \ref{thm:ExcFam_CanAndLie_Vars} requires an auxiliary variety, $\dunimP$, which we defined in Subsection \ref{sec:Lie_theoretic_Mirror}. Namely, we will construct the isomorphism as the following composition of maps:
\begin{equation}
\mX_\Lie \overset\varphi\longrightarrow\dunimP \overset\pi\longrightarrow \cmX\supset\mX_\can.
\label{eq:ExcFam_OverviewOfIsomorphism}
\end{equation}
Recall from equation \eqref{eq:unique_decomps_unipotents} that
\[
\dunimP = \dunim\cap \dborelp\bwop\bwo\dborelp.
\]
The map $\pi:\dunimP\to\cmX$ is the restriction of the quotient map $\udG\to\udP\backslash\udG=\cmX$ to~$\dunimP$. (Note that we identify $\dunim\subset\dG$ and its universal cover in $\udG$.)

The map $\varphi:\mX_\Lie\overset\sim\longrightarrow\dunimP$ is an isomorphism that cannot be expressed so explicitly. It is given by mapping $u_+\bwop\dborelm\in\mX_\Lie$ (with $u_+\in\dunip$) to the unique $u_-\in\dunimP$ such that $u_+\bwop u_-\in\decomps$. Here, the variety $\decomps$ was defined in equation \eqref{eq:decomps} as
\[
\decomps = \dborelm\bwo^{-1}\cap\dunip\invdtorus\bwop\dunim~\subset~\dG,
\]
and we noted in Lemma \ref{lem:z_has_unique_decomposition} that each $z\in\decomps$ has a unique decomposition as $z=u_+t\bwop u_-$ with $u_+\in\dunip$, $t\in\invdtorus$ and $u_-\in\dunimP$. The map $\varphi$ can also be obtained by composing the isomorphism $\RisoZ:\mX_\Lie\times\invdtorus\to\decomps$ of equation \eqref{eq:Decomposition_Isomorphism} mentioned in Remark \ref{rem:Decomposition_Isomorphism} with the isomorphism $\decomps\to\dunimP\times\invdtorus$ sending $z$ to the unique $(u_-,t)$ such that $z=u_+t\bwop u_-$ for some $u_+\in\dunip$, and then restricting the resulting map $\mX_\Lie\times\invdtorus\to\dunimP\times\invdtorus$ to the unit element in $\invdtorus$.

To show that $\pi$ is an isomorphism between $\dunimP$ and $\mX_\can\subset\cmX$ we use the description of the coordinate ring of $\dunipP=(\dunimP)^\T$ given by \cite{GLS_Kac_Moody_groups_and_cluster_algebras}, which we discussed in Subsection \ref{sec:Plucker_coords_and_ring_of_dunimP}. The first step for this is to express the coordinate ring in terms of the Pl\"ucker coordinates.

It turns out that there is an easy criterion for a generalized Pl\"ucker coordinate $\p_i$ defined in equation \eqref{eq:Generalized-Plucker-coordinates} to coincide with a coordinate $\pGLS_m$ used by \cite{GLS_Kac_Moody_groups_and_cluster_algebras} under transposition. Recall from Subsection \ref{sec:Plucker_coords_and_ring_of_dunimP} that $\pGLS_m$ was defined as the dual map associated to a basis element $\wPdChe(m)$ for $\dudrt[-\wPdrt(m)]$, where $\wPdrt(m)$ is the $m$th positive root mapped to a negative root by $\wP$, see equation \eqref{eq:wP_positive_roots_mapped_to_negative_roots}. 
\begin{rem}
This criterion holds for general cominuscule homogeneous spaces, and we will prove it in this context. Thus, until the end of this subsection, $\X=\G/\P_k$ refers to a general cominuscule homogeneous space with the $k$th vertex of the Dynkin diagram corresponding to a cominuscule fundamental weight.
\end{rem}

We first need the following characterization of the set $\wPpdroots=\{\wPdrt(m)~|~m=1,\ldots,\ell\}$, and of its complement $\pdrootsP$ which coincides with the positive roots mapped to negative roots by $\wop$. This characterization is a classical result, although not readily available in literature, so we will include its proof using fundamental results for the reader's convenience. It can also easily be obtained from the bijection between Weyl group elements and inversion sets of roots, see e.g.~Section 3.1 of \cite{BCMP_Chevalley_formula_for_equivariant_quantum_K-theory_of_com_vars}.
\begin{lem}\label{lem:characterization_of_wP_and_P_positive_coroots}
Every positive root $\drt=\sum_{i=1}^n d_i\sdr_i \in \pdroots=\wPpdroots\sqcup\pdrootsP$ has $d_k\in\{0,1\}$. Moreover, $\drt\in\pdrootsP$ if and only if $d_k=0$; and $\drt\in\wPpdroots$ if and only if $d_k=1$.
\end{lem}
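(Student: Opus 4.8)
The plan is to prove the three assertions by first establishing the coefficient bound $d_k\le 1$, then identifying $\pdrootsP$ with the positive roots having $d_k=0$, and finally deducing the description of $\wPpdroots$ from the disjoint decomposition $\pdroots=\wPpdroots\sqcup\pdrootsP$ recorded above.

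\emph{The coefficient bound.} First I would show that every root $\drt=\sum_{i=1}^n d_i\sdr_i$ of $\dG$ has $d_k\in\{-1,0,1\}$; this is the only point where the cominuscule hypothesis enters. For the exceptional family $\dG=\Exc_n^{\AD}$ is simply-laced, so the coroot system $\droots$ is canonically identified with $\roots$ via $\sr\mapsto\sr^\vee$, matching $\sdr_i$ with $\sr_i$ and hence preserving the coordinates in the respective bases of simple roots. It therefore suffices to check that every root of $\G$ has $\sr_k$-coefficient in $\{-1,0,1\}$, and this is exactly the condition defining when $\X=\G/\P_k$ is cominuscule: by definition $\dfwt[k]$ is minuscule, i.e.\ $\llan\dfwt[k],\sr\rran\in\{-1,0,1\}$ for all $\sr\in\roots$, and since $\llan\dfwt[k],\sr_j\rran=\de_{kj}$ this pairing equals the $\sr_k$-coefficient of $\sr$. (Concretely, one may instead note that every positive root is dominated coefficientwise by the highest root, so it is enough that $\sdr_k$ occur there with coefficient $1$: this holds for $k\in\{1,6\}$ in $\sr_1+2\sr_2+2\sr_3+3\sr_4+2\sr_5+\sr_6$ for $\Exc_6$, and for $k=7$ in $2\sr_1+2\sr_2+3\sr_3+4\sr_4+3\sr_5+2\sr_6+\sr_7$ for $\Exc_7$.) In particular every \emph{positive} root of $\dG$ has $d_k\in\{0,1\}$.

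\emph{Identifying $\pdrootsP$.} For the maximal parabolic $\P=\P_k$ one has $\weylp=\lan s_i\mid i\neq k\ran$, the Weyl group of the root subsystem of $\droots$ spanned by $\{\sdr_i\mid i\neq k\}$, whose positive roots are precisely the $\drt\in\pdroots$ with $d_k=0$. Its longest element $\wop$ sends each such root to a negative root while permuting the remaining positive roots of $\dG$ (those with $d_k\ge 1$), since a product of reflections $s_i$ with $i\ne k$ leaves the $\sdr_k$-coefficient unchanged and therefore cannot change the sign of a root with $d_k\ge 1$; see e.g.\ \cite{Bourbaki} or \cite{Humphreys_Lie_Algebras}, Section~10.3. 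Hence $\pdrootsP=\{\drt\in\pdroots\mid d_k=0\}$.

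\emph{Conclusion and main obstacle.} Combining the two steps: every $\drt\in\pdroots$ has $d_k\in\{0,1\}$, the ones with $d_k=0$ are exactly $\pdrootsP$, and since $\pdroots=\wPpdroots\sqcup\pdrootsP$ the ones with $d_k=1$ are exactly $\wPpdroots$. The only real obstacle is the coefficient bound, which is precisely where one passes from a condition on $\G$ (phrased via the coweight $\dfwt[k]$) to a statement about the roots of the Langlands dual $\dG$; for the exceptional family this is immediate from simply-lacedness (or from the one-line check on the highest root), so no finer representation-theoretic input is needed here.
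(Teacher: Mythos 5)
Your proof is correct. For the coefficient bound ($d_k\le 1$) both you and the paper ultimately invoke the same fact: $\dfwt[k]$ minuscule forces the $\sdr_k$-coefficient of the highest root of $\dG$ to be $1$, and every positive root is coefficientwise dominated by the highest root. Your simply-laced detour (identifying $\droots$ with $\roots$) is dispensable and only valid for $\LGE_n$, whereas the paper cites the Bourbaki equivalence directly, which is type-independent; since the paper explicitly claims the lemma for general cominuscule $\G/\P_k$, your restriction to the exceptional family is a genuine (if harmless for the paper's applications) weakening of the first step.

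Where your argument genuinely diverges is in identifying $\pdrootsP$ with $\{\drt\in\pdroots\mid d_k=0\}$. The paper writes a $d_k=0$ root $\drt$ as an increasing chain of partial sums, each a root (Humphreys, Cor.~10.2), and uses that $\pdrootsP$ contains the simple roots $\sdr_i$, $i\ne k$, and is bracket-closed, to conclude $\drt\in\pdrootsP$; for the converse it just appeals to $\pdrootsP$ being the positive system of the Dynkin subdiagram. You instead observe that $\wop$, being a word in $\{s_i\mid i\ne k\}$, preserves the $\sdr_k$-coefficient of every root, so it cannot flip the sign of a root with $d_k\ge 1$, while it \emph{does} flip every root with $d_k=0$ because such a root lives in the Levi subsystem where $\wop$ is the longest element. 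This is a cleaner and more direct characterization of $\pdrootsP$ that avoids both the partial-sum lemma and the bracket-closedness argument, and it gives both inclusions at once. Either route is fine; yours buys a shorter, more symmetric proof of the second step, while the paper's is marginally more general-purpose in the first step.
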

\begin{proof}
The fact that $\dfwt[k]$ is minuscule is well-known to be equivalent to the fact that the longest root $\drtzero\in\droots$ has coefficient $1$ in front of $\sdr_k$ in the decomposition of $\drtzero$ in terms of simple roots. (See e.g.~exercise VI.24(c) of \cite{Bourbaki}.) Thus, we know that $d_k\in\{0,1\}$.

Now, it is well known that there is a sequence of (not necessarily distinct) simple roots $(\sdr_{i_j})_{j=1}^h$, where $h=\height(\drt)=\sum_{i=1}^n d_i$ is the height of $\drt$, such that we have $\drt=\sum_{j=1}^h \sdr_{i_j}$ and such that for every $h'\le h$ the partial sum $\sum_{j=1}^{h'}\sdr_{i_j}\in\droots$ is a root as well. (See Corollary 10.2 of \cite{Humphreys_Lie_Algebras}.) 

Suppose $d_k=0$, then we clearly have that $i_j\neq k$ for all $j$. Since $\{\sdr_i~|~i\neq k\}\subset\pdrootsP$ and $\pdrootsP$ is bracket closed, every partial sum lies in $\pdrootsP$ and thus $\drt$ as well. Conversely, since $\pdrootsP$ can be considered as the positive roots of the root system with as Dynkin diagram the diagram of $\dG$ with the $k$th vertex removed, we have that $\drt\in\pdrootsP$ implies that $d_k=0$.

As $\wPpdroots$ is the complement of $\pdrootsP$ in $\pdroots$, the statement follows.
\end{proof}
Recall that we defined a $\Nzero\sdroots$-grading on the completed universal enveloping algebra $\dCUEAp$ of $\dup$ in Section \ref{sec:Plucker_coords_and_ring_of_dunimP}. Here, $\Nzero\sdroots\cong\Nzero^n$ is the positive root lattice of all non-negative integral linear combinations of the simple roots $\sdr_i$. We defined $\dCUEAp(\drt)\subset\dCUEAp$ as the subspace spanned by pure tensors $\dChe_{i_1}\cdots\dChe_{i_j}$ such that exactly $d_i$ of the factors have index $i$, where $\drt=\sum_{i=1}^n d_i\sdr_i\in\Nzero\sdroots$. We noted there that $\dudrt[+\drt]\subset\dCUEAp(\drt)$ for $\drt\in\pdroots$, so we directly conclude that:
\begin{cor}\label{cor:PBW_basis_elts_degrees}
For $j\in\{1,\ldots,\ellwo\}$, $\wPdChe(j)\in\dCUEAp(\wPdrt(j))$. Let $d_i\in\Nzero$ such that $\wPdrt(j) = \sum_{i=1}^n d_i\sdr_i$. If $j\in\{1,\ldots,\ellwP\}$, then $d_k=1$. Else, i.e.~$j\in\{\ellwP+1,\ldots,\ellwo\}$, we have $d_k=0$.
\end{cor}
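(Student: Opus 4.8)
The statement is essentially a bookkeeping consequence of Lemma~\ref{lem:characterization_of_wP_and_P_positive_coroots} together with the way the basis $\{\wPdChe(j)\}$ of $\dup$ was set up, so the plan is simply to assemble those two ingredients. First I would recall that for each $j\in\{1,\ldots,\ellwo\}$ the element $\wPdChe(j)$ is, by construction, a nonzero vector of the one-dimensional root space $\dudrt[+\wPdrt(j)]$: for $j\le\ellwP$ this is the definition made just after \eqref{eq:wP_positive_roots_mapped_to_negative_roots}, with $\wPdrt(j)\in\wPpdroots$; for $j\in\{\ellwP+1,\ldots,\ellwo\}$ this is the convention adopted when completing $\{\wPdChe(m)~|~m\le\ellwP\}$ to a basis of $\dup$, so that $\wPdrt(j)$ ranges over the roots in $\pdroots\setminus\wPpdroots=\pdrootsP$. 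Since the root space $\dudrt[+\drt]$ is contained in the graded piece $\dCUEAp(\drt)$ for every $\drt\in\pdroots\subset\Nzero\sdroots$ --- equivalently, since each $\wPdChe(j)$ is homogeneous of degree $\wPdrt(j)$ for the $\Nzero\sdroots$-grading, as follows from the explicit description in Remark~\ref{rem:GLS_dum_basis} --- we obtain $\wPdChe(j)\in\dCUEAp(\wPdrt(j))$ directly.

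For the assertion on the coefficient $d_k$, write $\wPdrt(j)=\sum_{i=1}^n d_i\sdr_i$. If $j\in\{1,\ldots,\ellwP\}$ then $\wPdrt(j)\in\wPpdroots$ by construction, so Lemma~\ref{lem:characterization_of_wP_and_P_positive_coroots} gives $d_k=1$; if $j\in\{\ellwP+1,\ldots,\ellwo\}$ then $\wPdrt(j)\in\pdroots\setminus\wPpdroots=\pdrootsP$, so the same lemma gives $d_k=0$. This is the whole argument.

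There is no genuine obstacle here; the one point that deserves a word of care is purely notational, namely that for $j>\ellwP$ the symbol $\wPdrt(j)$ is being used for the weight of the chosen basis vector $\wPdChe(j)$ rather than for a value of the formula \eqref{eq:wP_positive_roots_mapped_to_negative_roots}, so I would make sure that this extended usage is recorded before it is invoked. If one wishes to avoid even quoting Remark~\ref{rem:GLS_dum_basis}, one can instead observe directly that $\ad(\dChe_i)$ raises $\Nzero\sdroots$-degree by $\sdr_i$, so that any iterated bracket of Chevalley generators of the shape produced there lands in the graded component indexed by the sum of the corresponding simple roots, which by construction is $\wPdrt(j)$.
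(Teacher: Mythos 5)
Your argument is exactly what the paper intends: the containment $\wPdChe(j)\in\dCUEAp(\wPdrt(j))$ follows from $\dudrt[+\drt]\subset\dCUEAp(\drt)$, and the statement about $d_k$ is then an immediate application of Lemma~\ref{lem:characterization_of_wP_and_P_positive_coroots}, which is precisely how the paper presents it (``so we directly conclude''). Your extra remark about the notational extension of $\wPdrt(j)$ to $j>\ellwP$ is a reasonable clarification but not a gap.
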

We also need the following simple observation regarding the action of the elements of the completed universal enveloping algebra on $\PluckerRep$:
\begin{lem}\label{lem:action_of_graded_universal_enveloping_elements_on_weight_vectors}
If $u\in\dCUEAp(\drt)$ for $\drt\in\Nzero\sdroots$ and $v\in\PluckerRep$ a vector of weight $\mu$, then $u\cdot v$ is either a vector of weight $\mu+\drt$ or the zero vector. 
\end{lem}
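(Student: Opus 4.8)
The plan is to reduce the statement to the action of a single Chevalley generator and then iterate. First I would unwind the definition of the $\Nzero\sdroots$-grading on $\dCUEAp$: the homogeneous component $\dCUEAp(\drt)$ is spanned by the pure tensors $\dChe_{i_1}\cdots\dChe_{i_j}$ in which exactly $d_i$ of the factors carry the index $i$, where $\drt=\sum_{i=1}^n d_i\sdr_i$, so that necessarily $j=\sum_{i=1}^n d_i$. There are only finitely many such monomials, so $\dCUEAp(\drt)$ is finite-dimensional and the completion plays no role; moreover, by linearity of the action of $\dCUEAp$ on $\PluckerRep$ it suffices to prove the claim when $u$ is one such pure tensor.

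For the single-generator step, I would use that $\dChe_i$ spans the root space $\dudrt[+\sdr_i]$, so that $[\dChh,\dChe_i]=\sdr_i(\dChh)\,\dChe_i$ for every $\dChh\in\dcartan$; hence if $v$ has weight $\mu$, then $\dChh\cdot(\dChe_i\cdot v)=(\mu+\sdr_i)(\dChh)\,(\dChe_i\cdot v)$, so $\dChe_i\cdot v$ lies in the $(\mu+\sdr_i)$-weight space of $\PluckerRep$ and is in particular either zero or a vector of weight $\mu+\sdr_i$. (In the present minuscule setting this is also immediate from the explicit formulas in Theorem \ref{thm:Green_structure_minuscule_reps}(iii).) Applying the factors of $\dChe_{i_1}\cdots\dChe_{i_j}$ to $v$ one at a time from right to left, each successive step either returns the zero vector --- in which case $\dChe_{i_1}\cdots\dChe_{i_j}\cdot v=0$ and there is nothing to prove --- or raises the current weight by the corresponding simple root. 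Since precisely $d_i$ of the indices $i_1,\ldots,i_j$ equal $i$, the accumulated shift is $\sum_{i=1}^n d_i\sdr_i=\drt$, and therefore $\dChe_{i_1}\cdots\dChe_{i_j}\cdot v$ is either $0$ or a vector of weight $\mu+\drt$.

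Finally, for a general $u\in\dCUEAp(\drt)$, writing $u$ as a linear combination of such pure tensors shows that $u\cdot v$ lies in the $(\mu+\drt)$-weight space of $\PluckerRep$, which is a linear subspace; hence $u\cdot v$ is either the zero vector or a nonzero vector of weight $\mu+\drt$, as claimed. I expect no genuine obstacle here: the statement is a routine consequence of the compatibility of the $\Nzero\sdroots$-grading on $\dCUEAp$ with the root-space decomposition of $\dup$. The only points requiring a moment's care are checking that $\dCUEAp(\drt)$ is genuinely finite-dimensional, so that the completion is irrelevant, and that the weight shift induced by a pure tensor does not depend on the order of its factors; both are immediate from the definitions.
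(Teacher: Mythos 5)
Your proof is correct and follows essentially the same route as the paper's: reduce to pure tensors by linearity and finite-dimensionality of $\dCUEAp(\drt)$, verify the claim for a single Chevalley generator, then iterate. The paper dispatches the single-generator step by citing Theorem~\ref{thm:Green_structure_minuscule_reps} directly, whereas you additionally give the more general argument via $[\dChh,\dChe_i]=\sdr_i(\dChh)\,\dChe_i$ (which applies to any weight module, not just minuscule ones); this is a harmless strengthening but does not change the structure of the proof.
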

\begin{proof}
By Theorem \ref{thm:Green_structure_minuscule_reps}, the statement holds for the Chevalley generators $\dChe_i\in\dCUEAp(\sdr_i)$. By definition $u\in\dCUEAp(\drt)$ for $\drt=\sum_{i=1}^n d_i\sdr_i$ can be decomposed into a linear combination of pure tensors in which each $\dChe_i$ appears $d_i$ times. The statement now follows by induction.
\end{proof}
Now, recall that the Pl\"ucker coordinates $\p_i:g\mapsto v_0^*(g\cdot v_i)$ are defined using the minuscule representation $\PluckerRep$ and its dual, where $-\wo\cdot\dfwt[k]=-\wP\cdot\dfwt[k] = \dfwt[\DynkinSymmetry(k)]$; see the discussion of Pl\"ucker coordinates of Section \ref{sec:Plucker_coords_and_ring_of_dunimP}. The criterion relating Pl\"ucker coordinates to the dual PBW basis defined in \cite{GLS_Kac_Moody_groups_and_cluster_algebras} is given as follows:
\begin{prop}\label{prop:Criterion_Plucker_coordinates_coincide_with_GLS}
If $v_i$ is of weight $-\dfwt[k]+\drt(v_i)$ and $\drt(v_i)=\wPdrt(m)\in\wPpdroots$, then $u\mapsto\p_i(u^\T)$ coincides up to a constant with $\pGLS_m$ on $\dunipP$.
\end{prop}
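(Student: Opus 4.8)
The plan is to compute $\p_i(u^\T)$ by expanding $u$ in the dual PBW basis, applying the transposition anti-automorphism $\cdot^\T$, and projecting onto the lowest weight vector $v_0$. For $u\in\dunipP\subset\dunip\subset\dCUEAp$ write $u=\sum_{\bm}\pGLS_\bm(u)\,\wPdChe(\bm)$, with $\wPdChe(\bm)$ the PBW monomials of \eqref{eq:PBW_basis_elements}; this is a finite sum inside each graded piece $\dCUEAp(\drt)$. Since $\cdot^\T$ is an anti-automorphism carrying the homogeneous element $\wPdChe(j)\in\dudrt[+\wPdrt(j)]$ into the one-dimensional space $\dudrt[-\wPdrt(j)]$ and reversing products, applying it and then acting on the finite-dimensional representation $\PluckerRep$ yields the finite sum
\[
\p_i(u^\T)=v_0^*\bigl(u^\T\cdot v_i\bigr)=\sum_{\bm}\pGLS_\bm(u)\;v_0^*\bigl(\wPdChe(\bm)^\T\cdot v_i\bigr).
\]
I then want to show that the only $\bm$ contributing is the unit vector at position $m$ (for which $\pGLS_\bm=\pGLS_m$), and that its coefficient $c:=v_0^*\bigl(\wPdChe(m)^\T\cdot v_i\bigr)$ is nonzero; the proposition then holds with constant $c$, not only on $\dunipP$ but on all of $\dunip$.

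The sum is cut down in three steps. First, by the transpose of Lemma \ref{lem:action_of_graded_universal_enveloping_elements_on_weight_vectors}, $\wPdChe(\bm)^\T\cdot v_i$ is $0$ or has weight $-\dfwt[k]+\wPdrt(m)-\sum_j m_j\wPdrt(j)$, so the term vanishes unless $\sum_j m_j\wPdrt(j)=\wPdrt(m)$. Second, since $\PluckerRep$ is minuscule every $\mathfrak{sl}_2$-triple attached to a root acts on it with weights in $\{-1,0,1\}$ (cf.\ Theorem \ref{thm:Green_structure_minuscule_reps}), hence every root vector squares to zero on $\PluckerRep$; as $\wPdChe(\bm)^\T$ contains the block $\bigl(\wPdChe(j)^\T\bigr)^{m_j}$ with $\wPdChe(j)^\T$ a root vector, the term vanishes unless every $m_j\in\{0,1\}$. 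Third, reading off the $\sdr_k$-coefficient in $\sum_{m_j=1}\wPdrt(j)=\wPdrt(m)$ and using Corollary \ref{cor:PBW_basis_elts_degrees} (the root $\wPdrt(j)$ has $\sdr_k$-coefficient $1$ for $j\le\ellwP$ and $0$ for $j>\ellwP$) together with $\wPdrt(m)\in\wPpdroots$, exactly one index $j_0\in\{1,\dots,\ellwP\}$ has $m_{j_0}=1$; hence $j_0=\min\{j:m_j\neq 0\}$, so $\wPdChe(j_0)$ is the leftmost factor of the ordered product $\wPdChe(\bm)$ and $\wPdChe(j_0)^\T$ is the rightmost factor of $\wPdChe(\bm)^\T$, i.e.\ the first operator to act on $v_i$.

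It remains to analyze the two possibilities. If $\bm$ is the unit vector at $m$: then $\wPdChe(m)^\T\cdot v_i$ has weight $-\dfwt[k]$, hence equals $c\,v_0$; since $v_0$ is the lowest weight and $-\dfwt[k]+\wPdrt(m)$ (the weight of $v_i$) is also a weight, both lie in the $\wPdrt(m)$-string through $v_0$, which therefore has length exactly $2$, so $\langle -\dfwt[k]+\wPdrt(m),\wPdrt(m)^\vee\rangle=1>0$ and the lowering operator $\wPdChe(m)^\T$ does not kill $v_i$, giving $c\neq 0$. If $\bm$ is not the unit vector at $m$: then $\{j:m_j\neq 0\}$ has at least two elements and $j_0\neq m$ (otherwise $\wPdrt(m)$ would be a sum of $\wPdrt(m)$ with further positive roots); the first operator acting on $v_i$ is $\wPdChe(j_0)^\T$, and were it nonzero on $v_i$ the result would be a nonzero vector of weight $-\dfwt[k]+\bigl(\wPdrt(m)-\wPdrt(j_0)\bigr)$ with $\wPdrt(m)-\wPdrt(j_0)\in\Nzero\pdrootsP$ nonzero, i.e.\ a weight $\neq-\dfwt[k]$ whose difference from $-\dfwt[k]$ has $\sdr_k$-coefficient $0$. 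This is impossible, because for cominuscule $\ECHS$ the lowest weight $-\dfwt[k]$ is the unique weight $\mu$ of $\PluckerRep$ for which $\mu+\dfwt[k]$ has vanishing $\sdr_k$-coefficient (visible from the Hasse diagrams \eqref{eq:Cayley-Hasse-diagram} and \eqref{eq:Freudenthal-Hasse-diagram}, where the only edge incident to $v_0$ carries the label $k$). Hence $\wPdChe(j_0)^\T\cdot v_i=0$, this term vanishes, and we conclude $\p_i(u^\T)=c\,\pGLS_m(u)$ with $c\in\C^*$, in particular on $\dunipP$.

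The main obstacle I anticipate is the bookkeeping in the last two steps: one must carefully match the order of the PBW monomial $\wPdChe(\bm)$ with the reversal produced by $\cdot^\T$ in order to see which lowering operator hits $v_i$ first, and then combine the length-at-most-two property of root strings in minuscule representations with the cominuscule fact that $v_0$ is the unique weight of vanishing $\sdr_k$-height to force the unwanted terms to vanish and the surviving coefficient to be nonzero. Everything else — the PBW expansion in the completion, its finiteness on $\PluckerRep$, and the grading compatibility — is formal.
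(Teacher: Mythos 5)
Your argument is correct and runs parallel to the paper's, but the key vanishing step is established by a different mechanism. Both proofs use the grading to reduce to PBW monomials $\wPdChe(\bm)\in\dCUEAp(\wPdrt(m))$, and both use Corollary \ref{cor:PBW_basis_elts_degrees} to conclude that exactly one $m_j$ with $j\le\ellwP$ is nonzero. The divergence is in how the unwanted terms are killed. The paper \emph{switches direction}: using the equivalence ``$\dChe_j$ maps $v$ to $v'$ iff $\dChf_j$ maps $v'$ to $v$'' it reduces to showing $\wPdChe(\bm)\cdot v_0=0$, and the kill comes from the rightmost factor $\wPdChe(j)$ with $j=\max\{j:m_j\ne0\}>\ellwP$, which by Corollary \ref{cor:PBW_basis_elts_degrees} is a linear combination of monomials in $\dChe_{i'}$, $i'\ne k$, each of which annihilates $v_0$ by Theorem \ref{thm:Green_structure_minuscule_reps}. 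You instead stay in the $v_i\to v_0$ direction and argue that the first operator $\wPdChe(j_0)^\T$ (with $j_0=\min\{j:m_j\ne0\}$) would land in a weight space $-\dfwt[k]+(\wPdrt(m)-\wPdrt(j_0))$ that does not occur in $\PluckerRep$, because $-\dfwt[k]$ is the unique weight $\mu$ with $\mu+\dfwt[k]$ of vanishing $\sdr_k$-coefficient. That fact is correct and does hold for all cominuscule $\G/\P_k$ (every $\mu>-\dfwt[k]$ passes through the unique atom $-\dfwt[k]+\sdr_k$), but by citing the Hasse diagrams \eqref{eq:Cayley-Hasse-diagram} and \eqref{eq:Freudenthal-Hasse-diagram} you have effectively restricted your proof to the exceptional family, whereas the paper's argument (and the proposition itself, per the remark preceding it) covers all cominuscule homogeneous spaces; you should state and prove the ``unique weight'' fact in general rather than appeal to the figures. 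Your extra step showing $m_j\in\{0,1\}$ via vanishing squares of root vectors is harmless but not needed (the paper only uses $\sum_{j\le\ellwP}m_j=1$ and never constrains the $m_j$ for $j>\ellwP$), and your $\mathfrak{sl}_2$-string argument for $c\ne0$ is a valid, somewhat more quantitative alternative to the paper's appeal to irreducibility.
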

\begin{proof}
Let $u\in\dunipP\hookrightarrow\dCUEAp$ and consider the decomposition of $u$ with respect to the PBW basis. If the coefficient in front of $\wPdChe(m)$ is $c$, then we have by definition that $\pGLS_m(u)=c$.

On the other hand, $\p_i(u^\T)$ is the coefficient in front of $v_0$ of the projection of $u^\T\cdot v_i$ to $\C v_0\subset\PluckerRep$. It suffices to show that $\wPdChe(m)^\T$ is the only transposed PBW basis element mapping $v_i$ to a non-zero multiple of $v_0$. Equivalently, we need to show that $\wPdChe(m)$ is the only PBW basis element mapping $v_0$ to a non-zero multiple of $v_i$. One can see this equivalence as follows: For Chevalley generators, Theorem \ref{thm:Green_structure_minuscule_reps} implies that $\dChe_j$ maps $v$ to $v'$ if and only if $(\dChe_i)^\T=\dChf_j$ maps $v'$ to $v$; by induction, the same holds for all pure tensors; finally, each PBW basis element is a linear combination of these pure tensors, so that the equivalence follows.

By Lemma \ref{lem:action_of_graded_universal_enveloping_elements_on_weight_vectors}, only the elements of $\dCUEAp\bigl(\wPdrt(m)\bigr)$ map $v_0$ to $\C v_i$. Thus, we can restrict ourselves to PBW basis elements $\wPdChe(\bm)$ lying in $\dCUEAp\bigl(\wPdrt(m)\bigr)$. Now, $\wPdChe(m)\in\dudrt[+\wPdrt(m)]\subset\dCUEAp\bigl(\wPdrt(m)\bigr)$. Moreover, Lemma \ref{lem:characterization_of_wP_and_P_positive_coroots} implies that $d_k=1$ in the decomposition $\wPdrt(m) = \sum_{i=1}^n d_i\sdr_i$.

First, let us consider $\wPdChe(\bm)\in\dCUEAp\bigl(\wPdrt(m)\bigr)$ not equal to $\wPdChe(m)$. Since for $j\neq m$ we have $\wPdChe(j)\notin\dCUEAp\bigl(\wPdrt(m)\bigr)$, this assumption means that we are considering those $\wPdChe(\bm)$ with $\bm=(m_1,m_2,\ldots,m_\ellwo)$ satisfying $\sum_{j=1}^{\ellwo} m_j\ge2$. Corollary \ref{cor:PBW_basis_elts_degrees} implies that of the coefficients $m_1,\ldots,m_\ellwP$ (where $\ell=\ell(\wP)$) exactly one is non-zero, and that coefficient equals $1$, as these are the only coefficients contributing to $d_k=1$. Thus, at least one of $m_{\ellwP+1},\ldots,m_{\ellwo}$ is non-zero; let $j\in\{\ellwP+1,\ldots,\ellwo\}$ be the largest index such that $m_j\neq0$. Recalling the definition of the PBW basis elements from equation \eqref{eq:PBW_basis_elements}, we find that $\wPdChe(\bm)\cdot v_0= \bigl(\wPdChe(1)\bigr)^{m_1}\cdots\bigl(\wPdChe(j)\bigr)^{m_{j}}\cdot v_0$. However, $\wPdChe(j)\cdot v_0=0$: we have by Corollary \ref{cor:PBW_basis_elts_degrees} that $\wPdChe(j)\in\dCUEAp\bigl(\wPdrt(m)\bigr)$ where $\wPdrt=\sum_{j'=1}^n d_{j'}'\sdr_i$ with $d_k'=0$, which implies that $\wPdChe(j)$ can be written as a linear combination of pure tensors involving the Chevalley generators $\dChe_{i'}$ for $i'\neq k$, but by Theorem \ref{thm:Green_structure_minuscule_reps} we know that $\dChe_{i'}\cdot v_0=0$ for $i'\neq k$.

Thus, it remains to show that $\wPdChe(m)\cdot v_0$ is a non-zero multiple of $v_i$. Suppose it acts trivially, then we find that $\C v_i\cap\dCUEAp v_0 = \{0\}$, which contradicts the irreducibility of $\PluckerRep$.
\end{proof}
Using this criterion, we will be able to identify all of the generators of the coordinate ring of $\dunimP$ given by \cite{GLS_Kac_Moody_groups_and_cluster_algebras} with generalized Pl\"ucker coordinates.

Moreover, we can identify the generalized minors defined in \eqref{eq:GLS_def_of_minor} with the denomi\-nators of the superpotentials for the cominuscule exceptional homogeneous spaces as given in \eqref{eq:CayleyPlane_denominators_of_potential} and \eqref{eq:Freudenthal_denominators_of_potential}. We will identify these functions by checking that their expressions in terms of the coordinates $\{a_i\}_{i=1}^\ell$ on the algebraic torus $\opendunim$ of \eqref{eq:df_opendunim} agree.

\subsection{The coordinate ring of \texorpdfstring{$\dunimP$}{U\_-\textasciicircum{}P} for the Cayley plane}\label{sec:coord_ring_dunimP_E6}
Recall the conventions for the Cayley plane $\ECHS[6]$ from Section \ref{sec:ECHS}. In particular, we have $\ellwP=\ell(\wP)=16$. Applying this to the coordinate ring of $\dunipP$ described in Proposition \ref{prop:GLS_coord_ring_dunipP} (see Proposition 8.5 of \cite{GLS_Kac_Moody_groups_and_cluster_algebras}), we find that it is generated by the dual PBW elements $\pGLS_m$, $m\in\{1,\ldots,16\}$, and localized at the generalized minors $\minor_{\dfwt[j],\wPinv(\dfwt[j])}$, $j\in\{1,\ldots,6\}$.
Using the criterion in Proposition \ref{prop:Criterion_Plucker_coordinates_coincide_with_GLS}, we find that the generators $\pGLS_m$ can be expressed in terms of Pl\"ucker coordinates as follows:
\begin{lem}\label{lem:CayleyPlane_Plucker_coordinates_coincide_with_GLS}
On $\dunipP\subset\udG$, we have the following identifications:
\begin{equation}
\begin{tikzpicture}[rotate=90,scale=.7,style=very thick,baseline=0.25em]
	\draw 
	(0,-8)--(0,-5)--(-2,-3)--(0,-1)--(-1,0)--(0,1)--(-2,3)
	(0,-5)--(1,-4)--(0,-3)--(2,-1) 
	(-1,-4)--(0,-3)--(-1,-2) 
	(0,1)--(2,-1)
	(1,-2)--(0,-1)--(1,0)
	;
	\draw[dotted]
	(2,-1) -- (2.5,-0.5)
	(1,0) -- (1.5,0.5)
	(0,1) -- (0.5,1.5)
	(-1,2) -- (-0.5,2.5)
	(-2,3) -- (-1.5,3.5)
	;
	\draw[black, fill=black] 
	(0,-8) circle (.15) 
	(0,-7) circle (.15)
	(0,-6) circle (.15)
	(0,-5) circle (.15)
	(1,-4) circle (.15)
	(-1,-4) circle (.15)
	(0,-3) circle (.15)
	(-2,-3) circle (.15)
	(1,-2) circle (.15)
	(-1,-2) circle (.15)
	(2,-1) circle (.15)
	(0,-1) circle (.15)
	(1,0) circle (.15)
	(-1,0) circle (.15)
	(0,1) circle (.15)
	(-1,2) circle (.15)
	(-2,3) circle (.15)
	;
	\node at (0,-8)[below=2pt]{\tiny$\p_0$};
	\node at (0,-7)[below=2pt]{\tiny$\p_1$};
	\node at (0,-6)[below=2pt]{\tiny$\p_2$};
	\node at (0,-5)[below=2pt]{\tiny$\p_3$};
	\node at (1,-4)[below=2pt]{\tiny$\p_4'$};
	\node at (-1,-4)[below=2pt]{\tiny$\p_4''$};
	\node at (0,-3)[below=2pt]{\tiny$\p_5'$};
	\node at (-2,-3)[below=2pt]{\tiny$\p_5''$};
	\node at (1,-2)[below=2pt]{\tiny$\p_6'$};
	\node at (-1,-2)[below=2pt]{\tiny$\p_6''$};
	\node at (2,-1)[below=2pt]{\tiny$\p_7'$};
	\node at (0,-1)[below=2pt]{\tiny$\p_7''$};
	\node at (1,0)[below=2pt]{\tiny$\p_8'$};
	\node at (-1,0)[below=2pt]{\tiny$\p_8''$};
	\node at (0,1)[below=2pt]{\tiny$\p_9''$};
	\node at (-1,2)[below=2pt]{\tiny$\p_{10}''$};
	\node at (-2,3)[below=2pt]{\tiny$\p_{11}''$};
	\node at (0,-8)[above=2pt]{\tiny$1$};
	\node at (0,-7)[above=2pt]{\tiny$\pGLS_1\hspace{-1em}$};
	\node at (0,-6)[above=2pt]{\tiny$\pGLS_2\hspace{-1em}$};
	\node at (0,-5)[above=2pt]{\tiny$\pGLS_3\hspace{-1em}$};
	\node at (1,-4)[above=2pt]{\tiny$\pGLS_4\hspace{-1em}$};
	\node at (-1,-4)[above=2pt]{\tiny$\pGLS_5\hspace{-1em}$};
	\node at (0,-3)[above=2pt]{\tiny$\pGLS_6\hspace{-1em}$};
	\node at (-2,-3)[above=2pt]{\tiny$\pGLS_9\hspace{-1em}$};
	\node at (1,-2)[above=2pt]{\tiny$\pGLS_7\hspace{-1em}$};
	\node at (-1,-2)[above=2pt]{\tiny$\pGLS_{10}\hspace{-1em}$};
	\node at (2,-1)[above=2pt]{\tiny$\pGLS_8\hspace{-1em}$};
	\node at (0,-1)[above=2pt]{\tiny$\pGLS_{11}\hspace{-1em}$};
	\node at (1,0)[above=2pt]{\tiny$\pGLS_{12}\hspace{-1em}$};
	\node at (-1,0)[above=2pt]{\tiny$\pGLS_{13}\hspace{-1em}$};
	\node at (0,1)[above=2pt]{\tiny$\pGLS_{14}\hspace{-1em}$};
	\node at (-1,2)[above=2pt]{\tiny$\pGLS_{15}\hspace{-1em}$};
	\node at (-2,3)[above=2pt]{\tiny$\pGLS_{16}\hspace{-1em}$};
\end{tikzpicture}
\label{eq:Cayley-Plucker-and-GLS-diagram}
\end{equation}
Here, a vertex marked with $\p_i^{(j)}$ below and $\pGLS_{m}$ above denotes that $\p_i^{(j)}(u_+^\T)$ equals $\pGLS_m(u_+)$ up to a constant for $u_+\in\dunimP$.
\end{lem}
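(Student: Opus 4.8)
The plan is to deduce every identification in \eqref{eq:Cayley-Plucker-and-GLS-diagram} from the criterion of Proposition \ref{prop:Criterion_Plucker_coordinates_coincide_with_GLS}, applied once for each of the sixteen generators $\pGLS_m$. For the Cayley plane $\X=\ECHS[6]=\Exc_6^\SC/\P_6$ the minuscule representation in play is $\PluckerRep=\PluckerRepAlt=\dfwtrep[1]$, whose weight structure is the diagram \eqref{eq:Cayley-Hasse-diagram}: the lowest weight vector is $v_0$, of weight $-\dfwt[6]$, and by Theorem \ref{thm:Green_structure_minuscule_reps}(ii) every weight vector $v_i^{(j)}$ carries a single weight, namely $-\dfwt[6]+\ga_i^{(j)}$, where $\ga_i^{(j)}$ is the sum of the simple roots $\sdr_\ell$ labelling the edges along any directed path from $v_0$ to $v_i^{(j)}$ in \eqref{eq:Cayley-Hasse-diagram} (path-independence follows from the multiplicity-one statement). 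Proposition \ref{prop:Criterion_Plucker_coordinates_coincide_with_GLS} says precisely that if $\ga_i^{(j)}=\wPdrt(m)$, the $m$-th positive root sent by $\wP$ to a negative root, then $u\mapsto\p_i^{(j)}(u^\T)$ agrees with $\pGLS_m$ up to a nonzero scalar on $\dunipP$.

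So what remains is to compute the ordered list $\wPdrt(1),\dots,\wPdrt(16)$ and to identify, for each $m$, the vertex of \eqref{eq:Cayley-Hasse-diagram} of weight $-\dfwt[6]+\wPdrt(m)$. The roots come from the fixed reduced expression $\wP=s_1s_3s_4s_2s_5s_4s_3s_1s_6s_5s_4s_3s_2s_4s_5s_6$ of \eqref{eq:CayleyRedExp_wP} via formula \eqref{eq:wP_positive_roots_mapped_to_negative_roots}: with $(r_1,\dots,r_{16})$ its index sequence, $\wPdrt(1)=\sdr_{r_{16}}=\sdr_6$ and $\wPdrt(j)=s_{r_{16}}\cdots s_{r_{18-j}}(\sdr_{r_{17-j}})$ for $j\ge 2$. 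Each such root lies in $\wPpdroots$, which by Lemma \ref{lem:characterization_of_wP_and_P_positive_coroots} is exactly the set of positive roots of $\Exc_6$ with coefficient $1$ on $\sdr_6$; the corresponding weight vector $v_{i(m)}^{(j(m))}$ of weight $-\dfwt[6]+\wPdrt(m)$ exists (it is $\wPdChe(m)\cdot v_0$, as in the proof of Proposition \ref{prop:Criterion_Plucker_coordinates_coincide_with_GLS}) and is unique by Theorem \ref{thm:Green_structure_minuscule_reps}(ii), so the sixteen weights $-\dfwt[6]+\wPdrt(m)$ pick out the sixteen vertices of the subdiagram displayed in \eqref{eq:Cayley-Plucker-and-GLS-diagram}. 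Carrying out the sixteen root computations and reading off the corresponding vertices then yields exactly the labelling asserted there; for instance $\wPdrt(1)=\sdr_6$ gives $v_1$, hence $\pGLS_1$ is identified with $\p_1$, while $\wPdrt(9)=\sdr_1+\sdr_3+\sdr_4+\sdr_5+\sdr_6$ gives $v_5''$, hence $\pGLS_9$ is identified with $\p_5''$. These computations are elementary root-system arithmetic and are readily checked by computer.

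The content of the lemma is therefore entirely this bookkeeping: beyond Proposition \ref{prop:Criterion_Plucker_coordinates_coincide_with_GLS} there is nothing conceptual, but the matching has to be done separately for each $m$, because the indices $\pGLS_1,\pGLS_2,\dots$ do not march monotonically up a single chain of \eqref{eq:Cayley-Hasse-diagram}---e.g.~$\pGLS_9$ sits strictly below $\pGLS_7$ and $\pGLS_8$ there---so both the reflection products defining the $\wPdrt(m)$ and the edge labels of \eqref{eq:Cayley-Hasse-diagram} have to be tracked carefully. The same scheme handles the Freudenthal variety, now using the reduced expression \eqref{eq:E7P7_wP_reduced_expression} together with the weight diagram \eqref{eq:Freudenthal-Hasse-diagram}.
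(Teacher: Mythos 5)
Your proposal is correct and follows essentially the same route as the paper's proof: compute the weights of the basis vectors of $V(\dfwt[1])$ from the Hasse-type diagram, compute the roots $\wPdrt(m)$ from the fixed reduced expression for $\wP$ via equation \eqref{eq:wP_positive_roots_mapped_to_negative_roots}, match them, and invoke Proposition \ref{prop:Criterion_Plucker_coordinates_coincide_with_GLS}. The only thing worth adding is that the diagram also asserts $\p_0(u_+^\T)\equiv1$, which falls outside the criterion since $\p_0$ corresponds to the lowest-weight vector $v_0$ (whose weight $-\dfwt[6]$ is not of the form $-\dfwt[6]+\wPdrt(m)$); this follows immediately because $u_+^\T\in\dunim$ fixes $v_0$.
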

For example, we have that $\p_7''(u_+^\T)$ equals $\pGLS_{11}(u_+)$ up to a constant for $u_+\in\dunipP$. In particular, the rightmost vertex tells us that $\p_0(u_+^\T)$ is constant, and it is easy to see that this constant is $1$. Note that the diagram of equation \eqref{eq:Cayley-Plucker-and-GLS-diagram} is a subdiagram of the one in equation \eqref{eq:Cayley-Hasse-diagram}. Upon removing the rightmost vertex, this subdiagram is in fact isomorphic to the Hasse diagram of $\OG(5,10)=\PSO_{10}/\P_5=\LGD_5^\SC/\P_5$. This variety is also known as a \emph{spinor variety}. See \cite[Section 4]{Iliev_Manivel_Chow_ring_of_the_Cayley_plane}, for its role in the geometry of the Cayley plane. The appearance of this variety is not too much of a surprise, as we obtain the Dynkin diagram of type $\LGD_5$ upon removing the sixth vertex from the diagram of type $\LGE_6$; in other words $\weylp$ is of type $\LGD_5$.
\begin{proof}
We can compute the weights of the minuscule fundamental weight representation $\PluckerRep[6]$ directly using diagram \eqref{eq:Cayley-Hasse-diagram} and Theorem \ref{thm:Green_structure_minuscule_reps}. On the other hand, the positive roots $\wPdrt(m)$, $m\in\{1,\ldots,16\}$ are straightforward to compute using the expression in equation \eqref{eq:wP_positive_roots_mapped_to_negative_roots}. A comparison of the results gives the following equalities:
\[
\hspace{-2em}\begin{array}{llll}
\drt(v_1)=\wPdrt(1) & 
\drt(v_2)=\wPdrt(2) &
\drt(v_3)=\wPdrt(3) & 
\drt(v_4')=\wPdrt(4) \\
\drt(v_4'')=\wPdrt(5) & 
\drt(v_5')=\wPdrt(6) &
\drt(v_5'')=\wPdrt(9) & 
\drt(v_6')=\wPdrt(7) \\
\drt(v_6'')=\wPdrt(10) & 
\drt(v_7')=\wPdrt(8) &
\drt(v_7'')=\wPdrt(11) & 
\drt(v_8')=\wPdrt(12) \\
\drt(v_8'')=\wPdrt(13) & 
\drt(v_9'')=\wPdrt(14) &
\drt(v_{10}'')=\wPdrt(15) & 
\drt(v_{11}'')=\wPdrt(16)
\end{array}
\]
Using Proposition \ref{prop:Criterion_Plucker_coordinates_coincide_with_GLS}, we find that $\p_i^{(j)}(u_+^\T)$ equals $\pGLS_m(u_+)$ up to a constant as indicated in \eqref{eq:Cayley-Plucker-and-GLS-diagram} for $u_+\in\dunipP$. The only exception is the identification $\p_0(u_-) = 1$, but this identity is obvious from the fact that $u_+^\T\in\dunim$ is a lower unipotent element and thus acts trivially on the lowest weight vector $v_0$.
\end{proof}
To simplify expressions in the following, we will refer to the Pl\"ucker coordinates appearing in the above as
\begin{equation}
\p_{\drt} = \p_i^{(j)} \text{ when $\drt=\drt(v_i^{(j)})\in\wPpdroots$.}
\label{eq:root_notation_for_Plucker_coords}
\end{equation}
Note that the set $\{\p_{\drt}~|~\drt\in\wPpdroots\}$ consists of the Pl\"ucker coordinates appearing in Lemma \ref{lem:CayleyPlane_Plucker_coordinates_coincide_with_GLS} (it does not include $\p_0$). Moreover, we will write $\p_{\drt}^\T:\dunipP\to\C$ for the map $u_+\mapsto\p_{\drt}(u_+^\T)$.

\begin{lem}\label{lem:CayleyPlane_minors_coincide_with_denominators_of_potential}
The generalized minors defined in equation \eqref{eq:GLS_def_of_minor} can be expressed on $\dunipP$ in terms of Pl\"ucker coordinates as
\begin{equation}
\begin{array}{lll}
\minor_{\dfwt[1],\wPinv(\dfwt[1])}(u_+) = \p_{16}(u_+^\T), & 
\minor_{\dfwt[2],\wPinv(\dfwt[2])}(u_+) = q_{12}(u_+^\T), \\ 
\minor_{\dfwt[3],\wPinv(\dfwt[3])}(u_+) = q_{20}(u_+^\T), &
\minor_{\dfwt[4],\wPinv(\dfwt[4])}(u_+) = q_{24}(u_+^\T), \\
\minor_{\dfwt[5],\wPinv(\dfwt[5])}(u_+) = q_{16}(u_+^\T), &
\minor_{\dfwt[6],\wPinv(\dfwt[6])}(u_+) = \p_{8}(u_+^\T),
\end{array}
\label{eq:CayleyPlane_minors_and_plucker_expressions}
\end{equation}
where the expressions $q_i$ in Pl\"ucker coordinates were defined in \eqref{eq:CayleyPlane_denominators_of_potential}.
\end{lem}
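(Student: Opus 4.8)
The plan is to verify each of the six claimed identities on the dense open subtorus $\opendunip = (\opendunim)^\T \subset \dunipP$ and then conclude by density. Both sides of each identity are regular functions on the affine variety $\dunipP$: the left-hand sides are restrictions of generalized minors, which are regular on all of $\udG$, and the right-hand sides are polynomials in the functions $u_+ \mapsto \p_i(u_+^\T)$, which are matrix coefficients of the finite-dimensional representation $\PluckerRep$ and hence regular (by Lemma \ref{lem:CayleyPlane_Plucker_coordinates_coincide_with_GLS} the Plücker coordinates indexed by $\wPpdroots$ are moreover, up to scalars, the generators $\pGLS_m$ of $\C[\dunipP]$). Since $\opendunip$ is open and dense in $\dunipP$ (Lemma 4.8(ii) of \cite{Pech_Rietsch_Lagrangian_Grassmannians}, or Lemma 5.2 of \cite{Spacek_LP_LG_models}), it suffices to compare the two sides as functions on $\opendunip$, which I would do by expanding both in the torus coordinates $a_1, \ldots, a_{16}$ attached to the reduced expression \eqref{eq:CayleyRedExp_wP}; this is exactly what Algorithms \ref{alg:torus-expansion} and \ref{alg:Plucker_torus_expansion} are built to do.

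For the right-hand sides, a generic point of $\opendunip$ is $u_+ = \dx_{r_1}(a_1) \cdots \dx_{r_{16}}(a_{16})$ with $u_+^\T = \dy_{r_{16}}(a_{16}) \cdots \dy_{r_1}(a_1) \in \opendunim$. Acting on the $27$-dimensional minuscule representation $\PluckerRep$, each factor $\dy_{r_j}(a_j)$ acts as $1 + a_j \dChf_{r_j}$, since $(\dChf_i)^2$ acts as zero there by Theorem \ref{thm:Green_structure_minuscule_reps}, and the $\dChf_i$-action on the weight basis is read off the Hasse diagram \eqref{eq:Cayley-Hasse-diagram} via Corollary \ref{cor:Action_of_s_e_f}. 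This expresses every $\p_i(u_+^\T) = v_0^*(u_+^\T \cdot v_i)$ as an explicit polynomial in the $a_j$, and substituting these into $\p_{16}$, $\p_8$ and into the polynomials $q_{12}, q_{16}, q_{20}, q_{24}$ of \eqref{eq:CayleyPlane_denominators_of_potential} produces the right-hand side expansions.

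For the left-hand sides I would evaluate $\minor_{\dfwt[j], \wPinv(\dfwt[j])}(u_+) = \bigl\langle u_+ \dwPinv \cdot \hwt[j], \hwt[j] \bigr\rangle$ as a function of the $a_j$ via Algorithm \ref{alg:torus-expansion}. When $j \in \{1, 6\}$ the ambient representation $\dfwtrep[j]$ is minuscule and the computation is transparent: $\dwPinv \cdot \hwt[j]$ is $\pm$ the extremal weight vector of weight $\wPinv \cdot \dfwt[j]$, acting by $u_+ = \prod_{i}(1 + a_i \dChe_{r_i})$ and projecting onto $\hwt[j]$ identifies the minor with $\p_{16}^\T$, respectively $\p_8^\T$, up to a scalar that a single normalization check pins to $1$. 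When $j \in \{2, 3, 4, 5\}$ the representation $\dfwtrep[j]$ is not minuscule; although $\dwPinv \cdot \hwt[j]$ is still $\pm$ the extremal weight vector of weight $\wPinv \cdot \dfwt[j]$, the subsequent action of $u_+$ now involves higher-multiplicity weight spaces, so I would either realize $\dfwtrep[j]$ inside a wedge or tensor power of $\PluckerRep$ — so that the minor becomes an explicit polynomial in the matrix coefficients of $\PluckerRep$, i.e.\ in Plücker coordinates — or carry out the weight-space bookkeeping directly, and in either case obtain the expansion in the $a_j$. Comparing term by term with the right-hand side expansions yields the six identities. The main obstacle is precisely this non-minuscule case: unlike the $\Sp_{2n}$ and $\Spin_{2n}$ computations of \cite{Pech_Rietsch_Lagrangian_Grassmannians, Pech_Rietsch_Williams_Quadrics}, the relevant fundamental representations of $\LGE_6$ — the adjoint $\dfwtrep[2]$, the $351$-dimensional $\dfwtrep[3]$ and $\dfwtrep[5]$, and the $2925$-dimensional $\dfwtrep[4]$ — have weight multiplicities greater than one, so the bookkeeping is heavy and I would carry it out with computer assistance.
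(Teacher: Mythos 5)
Your overall strategy matches the paper's: restrict to the dense torus $\opendunip$, expand both sides in the torus coordinates $a_1,\ldots,a_{16}$, compare, and extend by density and irreducibility. The Plücker coordinate (right-hand) side is handled exactly as in the paper. The treatment of the minuscule cases $j\in\{1,6\}$ is also as in the paper.

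Where you diverge is in the non-minuscule cases $j\in\{2,3,4,5\}$, and here you miss the key shortcut the paper uses: Theorem 1.10 of Fomin--Zelevinsky asserts that on a double Bruhat cell the toric coordinates are related to the generalized minors by an invertible \emph{monomial} transformation, so $\minor_{\dfwt[j],\wPinv(\dfwt[j])}$ restricted to $\opendunip$ is a single Laurent monomial in the $a_i$. The paper then just has to identify which single term of the expansion of $u_+$ cancels $\dwPinv\cdot\hwt[j]$, which it does by a short weight-combinatorics argument about which indices $r_j$ can appear and in what order. Your proposed alternatives --- embedding $\dfwtrep[j]$ into $\bigwedge^{\!k}\PluckerRep$ or $\PluckerRep^{\otimes k}$, or direct weight-space bookkeeping with multiplicities --- would work but forgo this simplification and therefore require tracking \emph{all} contributing terms in (potentially) high-multiplicity weight spaces, not just the unique surviving one. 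That is why you end up predicting a much heavier computation than the paper actually needs for the Cayley plane. One minor inaccuracy: for non-minuscule $\dfwtrep[j]$, $\dwPinv\cdot\hwt[j]$ is not merely $\pm$ the extremal weight vector of weight $\wPinv(\dfwt[j])$; it is that vector scaled by a product of inverse factorials coming from the formula $\bs_i\cdot\wtvmu=\tfrac1{m!}(\dChf_i)^m\cdot\wtvmu$, and those scalars must be tracked --- or one can instead verify the whole identity up to an overall constant and fix the constant at a single point.
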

\begin{proof}
We will begin by considering the generalized minors. The action of $\dwPinv$ on the highest weight vector $\hwt[j]$ of $\dfwtrep[j]$ is straightforward to calculate using the action of $\wPinv$ on the weight $\dfwt[j]$ and the following equality:
\begin{equation}
\bs_i\cdot\wtvmu = \tfrac1{m!}(\dChf_i)^m\cdot\wtvmu \quad\text{when $s_i(\wt)=\wt-m\sdr_i$,}
\label{eq:CayleyPlane_action_of_si}
\end{equation}
where $\wtvmu\in\dfwtrep[j]$ is assumed to be of weight $\wt$. As the expression can get quite large, we will use the following abbreviation:
\begin{equation}
(\dChf_{i_1})^{m_1}(\dChf_{i_2})^{m_2}\cdots = \dChf_{i_1^{m_1}i_2^{m_2}\cdots}
\label{eq:Abbreviation_of_products_of_Chevalley_generators}
\end{equation}
These calculations yield (cf.~equation \eqref{eq:CayleyRedExp_wP}):
\[\hspace{-.75em}
\begin{array}{ll}
\dwPinv\cdot\hwt[1] = \dChf_{6542345613452431}\cdot\hwt[1], &
\dwPinv\cdot\hwt[2] = \tfrac12\dChf_{6^2 5423451342}\cdot\hwt[2], \\
\dwPinv\cdot\hwt[3] = \tfrac1{2^5}\dChf_{6^2 5^2 4^2 23^2 4561^2 345243}\cdot\hwt[3], &
\dwPinv\cdot\hwt[4] = \tfrac1{2^8}\tfrac1{3^2}\dChf_{6^3 5^3 4^2 2^2 3^2 4^2 561^2 3^2 4524}\cdot\hwt[4], \\
\dwPinv\cdot\hwt[5] = \tfrac1{2^4}\dChf_{6^2 5^2 4^2 2^2 34561345}\cdot\hwt[5], &
\dwPinv\cdot\hwt[6] = \dChf_{65423456}\cdot\hwt[6].	
\end{array}
\]
We now need to find the coefficient of $\hwt[j]$ in the product $u_+\dwPinv\cdot\hwt[j]$ to calculate the minors.

Instead of calculating these coefficients for general $u_+\in\dunipP$, we will restrict to the open, dense algebraic torus $\opendunip\subset\dunipP$ that is the image of $\opendunim$ under the transposition isomorphism $\cdot^\T:\dunimP\to\dunipP$. Recall from \eqref{eq:df_opendunim} that the elements of $\opendunim$ have a specific decomposition; after transposing these elements, we find that every $u_+\in\opendunip$ has a decomposition of the form
\begin{equation}
u_+ = \dx_{r_{1}}(a_{1})\dx_{r_{2}}(a_{2})\cdots\dx_{r_{16}}(a_{16}), \quad\text{where $a_i\in\C^*$.}
\label{eq:elements_of_dunipP}
\end{equation}

Note that the sequence $(r_1,r_2,\ldots,r_{16})=(1,3,4,2,5,4,3,1,6,5,4,3,2,4,5,6)$ is the same as that of the fixed reduced expression for $\wP$ (see \eqref{eq:CayleyRedExp_wP}), and thus the inverse sequence compared to that of $\wPinv$. 

Since $\dx_i(a)$ acts on $v\in\dfwtrep[j]$ as
\begin{equation}
\dx_i(a)\cdot v = \bigl(1+a\,\dChe_i+\tfrac12a^2(\dChe_i)^2+\tfrac1{3!}a^3(\dChe_i)^3+\ldots\bigr)\cdot v,
\label{eq:action_of_one-parameter_subgroups_on_representation}
\end{equation}
we need to find all terms of $u_+$ that cancel each the factors $\dChf_i$ of $\dwPinv\cdot\hwt[j]$. However, Theorem 1.10 of \cite{Fomin_Zelevinsky_Double_Bruhat_Cells_and_Total_Positivity} tells us that the toric coordinates are related to the generalized minors by an invertible monomial transformation; in other words, there is exactly one term in the action of $u_+$ that cancels out the $\dChf_i$. In the following, we will argue that this term is the one whose $\dChe_i$ have the opposite sequence of indices (counting doubles and triples).

First, consider $u_+\dwPinv\cdot\hwt[1]=u_+\dChf_{6542345613452431}\cdot\hwt[1]$. As $\dwPinv\cdot\hwt[1]$ acts with 16 factors $\dChf_i$, lowering the weight by subtracting 16 simple roots, we need to act with 16 factors $\dChe_i$ to get back to a vector of the correct weight. However, since the representation in question is minuscule, we know that $(\dChe_i)^2$ acts trivially on the representations for all $i\in\{1,\ldots,6\}$. (See Theorem \ref{thm:Green_structure_minuscule_reps}.) Thus, the decomposition of $u_+$ in equation \eqref{eq:elements_of_dunipP} only has one term with 16 factors acting non-trivially: 
\[
\left(\tprod[_{j=1}^{16}] a_j\right)\,\dChe_{1342543165432456}.
\]

For $u_+\dwPinv\cdot\hwt[6] = u_+\dChf_{65423456}\cdot\hwt[6]$, the computation is simplified by the Hasse diagram in \eqref{eq:Cayley-Hasse-diagram}: we find $\dwPinv\cdot\hwt[6]=v_8$. (Note that in interpreting diagram \eqref{eq:Cayley-Hasse-diagram} as the weight spaces of the minuscule representation $\dfwtrep[6]$, we have put the highest weight vector $\hwt[6]$ on the right and denoted it $v_0$, which is the opposite of the way we interpreted it for $\dfwtrep[1]$.) The diagram shows that both $\dChe_{65423456}$ and $\dChe_{65432456}$ map $v_8$ back to $v_0=\hwt[6]$. However, the decomposition of $u_+$ show that only the second occurs, so the only term in $u_+$ acting non-trivially is
\[
\left(\tprod[_{j=9}^{16}]a_j\right)\dChe_{65432456}.
\]

The arguments for $u_+\dwPinv\cdot\hwt[2] = u_+\,\tfrac12\dChf_{6^2 5423451342}\cdot\hwt[2]$ are more involved. First, note that $s_i(\dfwt[2])=\dfwt[2]-\de_{i2}\sdr_2$ implies that of $\dfwt[2]-\sdr_i$ only $i=2$ is a weight occurring in the representation $\dfwtrep[2]$. Moreover, it tells us that $\dfwt[2]-2\sdr_2$ is \emph{not} a weight of the representation. Thus, the last factor in any term of $u_+$ acting non-trivially must be $\dChe_2$. There are only two indices $j$ such that $r_j=2$ in the decomposition in \eqref{eq:elements_of_dunipP}, namely $j=4$ and $j=13$. However, only the $j=4$ factor $\dx_2(a_i)$ can contribute a factor $\dChe_2$ non-trivially: we will need the second factor to cancel out the second factor $\dChf_2$ in $\dwPinv\cdot\hwt[2]$. Thus, we reduced $u_+\dwPinv\cdot\hwt[2]$ to
\[
a_{4}\,\dChe_2\,\dx_{r_5}(a_{5})\dx_{r_6}(a_{6})\cdots\dx_{r_{16}}(a_{16})\,\dwPinv\cdot\hwt[2].
\]
For the second step, we have $s_i(\dfwt[2]-\sdr_2)=\dfwt[2]-\sdr_2-\de_{i4}\sdr_4$ for $i\neq2$ ($s_2$ will raise the weight back to $\dfwt[2]$), so that the next factor has to be $\dChe_4$. Now, $r_j=4$ for $j=3,6,11,14$, but we need $j>5$ by the above. Moreover, if we take $j=11$ or $j=14$ we will not have sufficient factors to reach $\hwt[2]$ from $\dwPinv\cdot\hwt[2]$ (we need ten more factors). Thus, we reduced $u_+\dwPinv\cdot\hwt[2]$ to
\[
a_{4}a_{6}\,\dChe_{24}\,\dx_{r_7}(a_{7})\dx_{r_8}(a_8)\cdots\dx_{r_{16}}(a_{16})\,\dwPinv\cdot\hwt[2].
\]
In the third step, we have $s_i(\dfwt[2]-\sdr_2-\sdr_4)=\dfwt[2]-\sdr_2-\de_{i3}\sdr_3-\sdr_4-\de_{i5}\sdr_5$ for $i\neq4$, so there are two ``paths'': using an $\dChe_3$ or an $\dChe_5$. For the latter path, we are looking for a $j$ with $r_j=5$ and $j>6$. This means that $j=10$ or $j=15$, but this would give a problem: as $\dwPinv\cdot\hwt[2]=\tfrac12\dChf_{6^2 5423451342}\cdot\hwt[2]$, we need at least one factor $\dChe_1$, but $r_j\neq1$ for $j>10$. Thus, we need to take the path with $\dChe_3$. We find that $j=7$ gives the only possible factor $\dChe_3$ using an argument similar to the second step. Continuing to apply similar arguments, we conclude that only the term
\[
\tfrac12 a_4a_6a_7a_8a_{10}a_{11}a_{12}a_{13}a_{14}a_{15}(a_{16})^2\,\dChe_{24315432456^2}
\]
of $u_+$ gives a non-trivial contribution. One can easily see that
\[
\tfrac12 \,\dChe_{24315432456^2}\cdot\tfrac12\dChf_{6^2 5423451342}\cdot\hwt[2] = \hwt[2], 
\]
so that
\[
\minor_{\dfwt[2],\wPinv(\dfwt[2])}(u_+) = a_4a_6a_7a_8a_{10}a_{11}a_{12}a_{13}a_{14}a_{15}a_{16}^2.
\]

Analogous arguments to those used in the previous three calculations imply that $u_+\dwPinv\cdot\hwt[i]$ for $i=3,4,5$ only have the claimed contribution as well. Altogether, we find for $u_+\in\opendunip$
\ali{
\minor_{\dfwt[1],\wPinv(\dfwt[1])}(u_+) &= a_1a_2a_3a_4a_5a_6a_7a_8a_9a_{10}a_{11}a_{12}a_{13}a_{14}a_{15}a_{16}, \\
\minor_{\dfwt[2],\wPinv(\dfwt[2])}(u_+) &= a_4a_6a_7a_8a_{10}a_{11}a_{12}a_{13}a_{14}a_{15}a_{16}^2, \\
\minor_{\dfwt[3],\wPinv(\dfwt[3])}(u_+) &= a_2a_3a_4a_5a_6a_7a_8^2a_9a_{10}a_{11}a_{12}^2a_{13}a_{14}^2a_{15}^2a_{16}^2, \\
\minor_{\dfwt[4],\wPinv(\dfwt[4])}(u_+) &= a_3a_4a_5a_6a_7^2a_8^2a_9a_{10}a_{11}^2a_{12}^2a_{13}^2a_{14}^2a_{15}^3a_{16}^3, \\
\minor_{\dfwt[5],\wPinv(\dfwt[5])}(u_+) &= a_5a_6a_7a_8a_9a_{10}a_{11}a_{12}a_{13}^2a_{14}^2a_{15}^2a_{16}^2, \\
\minor_{\dfwt[6],\wPinv(\dfwt[6])}(u_+) &= a_9a_{10}a_{11}a_{12}a_{13}a_{14}a_{15}a_{16}.
}

To prove the equalities in \eqref{eq:CayleyPlane_minors_and_plucker_expressions}, the next step is to evaluate the Pl\"ucker coordinate expressions (defined in equation \eqref{eq:CayleyPlane_denominators_of_potential}) on the transpose of $u_+\in\opendunip$. We will find that they have the same monomial expressions in the toric coordinates $\{a_i\}$ of $\opendunip$. Recall from Section \ref{sec:Plucker_coords_and_ring_of_dunimP} that the generalized Pl\"ucker coordinates are defined on the fundamental weight representation $\dfwtrep[1]$, which is minuscule. As we saw in the above, this simplifies calculations considerably.

Recall from equation \eqref{eq:Generalized-Plucker-coordinates} that $\p_i(g) = v_{0}^*(g \cdot v_i)$, where the $v_i$ have been defined using diagram \eqref{eq:Cayley-Hasse-diagram}. Thus, we need to find the terms of $u_+^\T\cdot v_i$ that are multiples of $v_0$. As $\dfwtrep[1]$ is minuscule, we know that each $\dy_i(a_j)$ in the decomposition of $u_+^\T\in\opendunim$ (see \eqref{eq:df_opendunim}) acts as $1+a_j\dChf_i$ on the representation. Now, every path from $v_i$ to $v_0$ in diagram \eqref{eq:Cayley-Hasse-diagram} gives a sequence $(b_1,\ldots,b_i)$ of indices such that $\bs_{b_1}\cdots\bs_{b_i}\cdot v_i=v_0$. By Corollary \ref{cor:Action_of_s_e_f}, we find that each $\bs_{b_j}$ acts as $\dChf_{b_j}$, so that $\dChf_{b_1}\cdots\dChf_{b_i}\cdot v_i=v_0$. Thus, for each sequence of indices $(b_1,\ldots,b_i)$ obtained from a path from $v_i$ to $v_0$, the term $\dChf_{b_1}\cdots\dChf_{b_i}$ in $u_+^\T$ will contribute its coefficient to $p_i^{(j)}(u_+^\T)$. 

To find these coefficients, apply the following algorithm: determine all the paths $(b_1,\ldots,b_i)$ from $v_i$ to $v_0$ in diagram \eqref{eq:Cayley-Hasse-diagram}; find all the subexpressions of the form $s_{b_1}\cdots s_{b_i}$ in $\wPinv$; for each of the subexpressions, take the respective coefficients in the decomposition of $u_+^\T\in\opendunim$. Note that this algorithm is reminiscent of the methods in \cite{Spacek_LP_LG_models}; we can use the result in Proposition 8.11 (applied to $\wPinv$; in other words flipping all the arrows) there to obtain these subexpressions combinatorially.

Let us illustrate with $p_7'(u_+^\T)$ for $u_+^\T\in\opendunim$. Using diagram \eqref{eq:Cayley-Hasse-diagram} we find that there are two paths: $(6,5,4,2,3,4,5)$ and $(6,5,4,3,2,4,5)$. The first gives the following subexpressions:
\ali{
&\textcolor{red}{\underline{\boldsymbol{s_6s_5s_4s_2s_3s_4s_5}}}s_6s_1s_3s_4s_5s_2s_4s_3s_1, \quad 
\textcolor{red}{\underline{\boldsymbol{s_6s_5s_4s_2s_3s_4}}}s_5s_6s_1s_3s_4\textcolor{red}{\underline{\boldsymbol{s_5}}}s_2s_4s_3s_1, \\
&\textcolor{red}{\underline{\boldsymbol{s_6s_5s_4s_2s_3}}}s_4s_5s_6s_1s_3\textcolor{red}{\underline{\boldsymbol{s_4s_5}}}s_2s_4s_3s_1, \quad
\textcolor{red}{\underline{\boldsymbol{s_6s_5s_4s_2}}}s_3s_4s_5s_6s_1\textcolor{red}{\underline{\boldsymbol{s_3s_4s_5}}}s_2s_4s_3s_1,
}
The second does not give any subexpressions in $\wPinv$. We now compare it with the decomposition of $u_+^\T$ as
\ali{
u_+^\T&=\dy_{6}(a_{16})\dy_{5}(a_{15})\dy_{4}(a_{14})\dy_{2}(a_{13})\dy_{3}(a_{12})\dy_{4}(a_{11})\dy_{5}(a_{10})\dy_{6}(a_{9}) \\
&\qquad\cdot\dy_{1}(a_{8})\dy_{3}(a_{7})\dy_{4}(a_{6})\dy_{5}(a_{5})\dy_{2}(a_{4})\dy_{4}(a_{3})\dy_{3}(a_{2})\dy_{1}(a_{1}).
}
to find the contributing terms (in order of the subexpressions):
\ali{
p_7'(u_+^\T) &=  a_{10}a_{11}a_{12}a_{13}a_{14}a_{15}a_{16} + a_5a_{11}a_{12}a_{13}a_{14}a_{15}a_{16} \\
&\qquad + a_5a_6a_{12}a_{13}a_{14}a_{15}a_{16} + a_5a_6a_7a_{13}a_{14}a_{15}a_{16} \\
&= \bigl((a_5+a_{10})a_{11}a_{12}+a_5a_6(a_7+a_{12})\bigr)a_{13}a_{14}a_{15}a_{16}.
}

Once we obtained all toric expressions for the Pl\"ucker coordinates, calculating the expressions of equation \eqref{eq:CayleyPlane_denominators_of_potential} is straightforward. We find the following for $u_+\in\opendunip$
\ali{
p_8(u_+^\T) &= a_9a_{10}a_{11}a_{12}a_{13}a_{14}a_{15}a_{16}, \\
p_{16}(u_+^\T) &= a_1a_2a_3a_4a_5a_6a_7a_8a_9a_{10}a_{11}a_{12}a_{13}a_{14}a_{15}a_{16}, \\
q_{12}(u_+^\T) &= a_{4}a_{6}a_{7}a_8a_{10}a_{11}a_{12}a_{13}a_{14}a_{15}a_{16}^2, \\
q_{16}(u_+^\T) &= a_5a_6a_7a_8a_9a_{10}a_{11}a_{12}a_{13}^2a_{14}^2a_{15}^2a_{16}^2, \\
q_{20}(u_+^\T) &= a_2a_3a_4a_5a_6a_7a_8^2a_9a_{10}a_{11}a_{12}^2a_{13}a_{14}^2a_{15}^2a_{16}^2, \\
q_{24}(u_+^\T) &= a_3a_4a_5a_6a_7^2a_8^2a_9a_{10}a_{11}^2a_{12}^2a_{13}^2a_{14}^2a_{15}^3a_{16}^3.
}
Comparing these with the expressions for the minors, we conclude that the equalities in \eqref{eq:CayleyPlane_minors_and_plucker_expressions} hold, at least for $u_+\in\opendunip$. However, $\opendunip\subset\dunipP$ is open and dense (this follows from $\opendunim\subset\dunimP$ as discussed in Section \ref{sec:Lie_theoretic_Mirror}) and $\dunipP$ is irreducible, so these equalities extend to all of $\dunimP$.
\end{proof}

We will need to calculate local expressions for generalized minors on the torus $\opendunip$ again for the superpotential of the Freudenthal variety, as well for the cluster structure of the mirrors of both varieties, so we will summarize the algorithm performed in the proof above of Lemma \ref{lem:CayleyPlane_minors_coincide_with_denominators_of_potential}, for ease of reference. We will need only generalized minors of the form
\[
\minor_{\dfwt[i],w(\dfwt[i])}(u_+)=\Bigl \lan u_+\bw\cdot \hwt,\hwt\Bigr \ran,
\]
where $w\in\weyl$, though the general form is entirely analogous.

\begin{algo}\phantomsection\label{alg:torus-expansion}
  \begin{enumerate}
  \item In the weight space for the irreducible representation $V(\dfwt[i])$ with highest weight $\dfwt[i]$, find the weight $w(\dfwt[i])$ by acting on $\dfwt[i]$ with the Weyl group element $w=s_{i_1}\dots s_{i_k}$. As shown in the proof above, this corresponds to acting on $\hwt$ with the element $\frac{1}{m_1!}\dots\frac{1}{m_k!}\dChf_{i_1^{m_1}\dots i_k^{m_k}}$. 
  \item In order to find the coefficient of $\hwt$ in $u_+\bw\cdot \hwt$, we must find those terms in $u_+$ which act on $\bw\cdot\hwt$ by sending it to a multiple of $\hwt$. To do this, first find all sequences $(i_1,\dots, i_k)$ such that $\dChe_{i_1\dots i_k}(\bw\cdot\hwt)$ is a multiple of $\hwt$. (We refer to these sequences as \emph{paths} in the weight space of $V(\dfwt[i])$ from $w(\dfwt[i])$ to $\dfwt[i]$.)
  \item For each such $\dChe_{i_1,\dots, i_k}$, determine its coefficient in $u_+$. One way to do this is to expand $u_+$ as a product of the $\dx_{r_i}(a_i)=(1+a_i\dChe_{r_i}+\dots)$ and to collect terms by the index sequence of $\dChe_{i_1\dots i_k}$.
  \item The coefficient of each such term of $u_+$ above gives a polynomial in the torus coordinates, and the sum of all these is the local expression of $\minor_{\dfwt[i],w(\dfwt[i])}(u_+)$ for $u_+\in\opendunip$.
  \end{enumerate}
  While calculating the superpotential of the Freudenthal variety and the cluster structure for its mirror, we found that the algorithm is performed much more efficiently by a computer algebra program when steps (2) and (3) are performed simultaneously: to be more precise, while searching for paths from $w(\dfwt[i])$ to $\dfwt[i]$ as in step (2), we discard a partial sequence that does not appear in the expansion of $u_+$.
\end{algo}
In Appendix \ref{sec:expansion-appendix}, we give an example of performing this algorithm for the Freudenthal variety.

Note that the calculation of Pl\"ucker coordinates in the proof of Lemma \ref{lem:CayleyPlane_minors_coincide_with_denominators_of_potential} above is actually a simplification (and translation in terms of combinatorial objects) of the same algorithm. This simplification is mostly due to the fact that Pl\"ucker coordinates are defined on \emph{minuscule} highest weight representations. Since the algorithm works for the Pl\"ucker coordinates of any minuscule homogeneous space, we will summarize it as well:
\begin{algo}\phantomsection\label{alg:Plucker_torus_expansion}\begin{enumerate}
	\item For a given Pl\"ucker coordinate $\p_i$ on a \emph{minuscule} homogeneous space $\dP\backslash\dG$, find the corresponding vertex $v_i$ in the weight diagram of the corresponding minuscule representation.
	\item Determine all the paths $(b_1,\ldots,b_i)$ in the weight diagram from $v_i$ to $v_0$, the lowest weight vector.
	\item Find all the subexpressions of the form $s_{b_1}\cdots s_{b_i}$ in $\wPinv$, using Proposition 8.11 of \cite{Spacek_LP_LG_models} (replacing $\wPrime$ by $s_{b_1}\cdots s_{b_i}$ and $\wP$ by $\wPinv$).
	\item For each of the subexpressions, take the respective coefficients in the decomposition of $u_+^\T\in\opendunim$, noting that at most linear contributions occur.
\end{enumerate}
\end{algo}

Recalling that we have written $\p_i^\T:u_+\mapsto\p_i(u_+^\T)$ for $i=8,16$ and $q_i^\T:u_+\mapsto q_i(u_+^\T)$ for $i=12,16,20,24$, we conclude the following by combining Lemmas \ref{lem:CayleyPlane_Plucker_coordinates_coincide_with_GLS} and \ref{lem:CayleyPlane_minors_coincide_with_denominators_of_potential} with Proposition \ref{prop:GLS_coord_ring_dunipP} giving the expression for the coordinate ring of $\dunipP$:
\begin{cor}\label{cor:CayleyPlane_coordinate_ring_of_dunimP}
The coordinate ring of the affine variety $\dunipP$ is given by
\[
\C[\dunipP] = \C[\p_{\drt}^\T\,|\,\drt\in\wPpdroots][(\p_8^\T)^{-1},(\p_{16}^\T)^{-1},(q_{12}^\T)^{-1},(q_{16}^\T)^{-1},(q_{20}^\T)^{-1},(q_{24}^\T)^{-1}].
\]
This implies that the coordinate ring of the affine variety $\dunimP=(\dunipP)^\T$ is given by
\[
\C[\dunimP]=\C[\p_{\drt}\,|\,\drt\in\wPpdroots][\p_8^{-1},\p_{16}^{-1},q_{12}^{-1},q_{16}^{-1},q_{20}^{-1},q_{24}^{-1}].
\]
This in turn implies that $\pi(\dunimP)\subseteq\mX_\can$.
\end{cor}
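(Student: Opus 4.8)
The plan is to assemble the statement from three ingredients that are already in hand: the presentation of $\C[\dunipP]$ in Proposition \ref{prop:GLS_coord_ring_dunipP}, the dictionary $\pGLS_m\leftrightarrow\p_\drt^\T$ of Lemma \ref{lem:CayleyPlane_Plucker_coordinates_coincide_with_GLS}, and the identification of the six relevant generalized minors with $\p_8^\T,\p_{16}^\T,q_{12}^\T,q_{16}^\T,q_{20}^\T,q_{24}^\T$ from Lemma \ref{lem:CayleyPlane_minors_coincide_with_denominators_of_potential}. No genuinely new computation should be required. First I would specialize Proposition \ref{prop:GLS_coord_ring_dunipP} to the Cayley plane, where $\ellwP=\ell(\wP)=16$ and the rank is $6$: this writes $\C[\dunipP]$ as the polynomial ring $\C[\pGLS_1,\dots,\pGLS_{16}]$ localized at the six minors $\minor_{\dfwt[j],\wPinv(\dfwt[j])}$, $j=1,\dots,6$. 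Next, Lemma \ref{lem:CayleyPlane_Plucker_coordinates_coincide_with_GLS} gives for each $m\in\{1,\dots,16\}$ a nonzero scalar $c_m$ with $\pGLS_m=c_m\,\p_{\drt(m)}^\T$, where $m\mapsto\drt(m)$ is the bijection $\{1,\dots,16\}\to\wPpdroots$ recorded in diagram \eqref{eq:Cayley-Plucker-and-GLS-diagram}; since the $c_m$ are units, the polynomial subrings $\C[\pGLS_1,\dots,\pGLS_{16}]$ and $\C[\p_\drt^\T\mid\drt\in\wPpdroots]$ are literally equal. Finally, Lemma \ref{lem:CayleyPlane_minors_coincide_with_denominators_of_potential} says that, as regular functions on $\dunipP$, the six minors coincide with $\p_{16}^\T$, $q_{12}^\T$, $q_{20}^\T$, $q_{24}^\T$, $q_{16}^\T$, $\p_8^\T$; hence localizing at the minors is the same as inverting these six functions, which is the first displayed equality.

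For the second displayed equality I would transport along the transposition isomorphism $(\cdot)^\T\colon\dunimP\overset{\sim}{\longrightarrow}\dunipP$, which is an isomorphism of affine varieties by the very definition $\dunipP=(\dunimP)^\T$. Since $(u_-^\T)^\T=u_-$, its comorphism carries $\p_\drt^\T\mapsto\p_\drt|_{\dunimP}$ and likewise $q_i^\T\mapsto q_i|_{\dunimP}$ and $\p_i^\T\mapsto\p_i|_{\dunimP}$; applying it term by term to the first equality yields the stated presentation of $\C[\dunimP]$.

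For the inclusion $\pi(\dunimP)\subseteq\mX_\can$, recall that $\pi$ sends $u_-$ to $[v_0^*\cdot u_-]\in\cmX\subset\Pdfwtrepdual[1]$; since a lower unipotent element fixes the lowest weight vector $v_0$ we have $\p_0(u_-)=1$, so $\pi(u_-)$ lies in the affine chart $\{\p_0\neq0\}$ and there the numbers $\p_i(u_-)$ and $q_i(u_-)$ are its honest affine coordinates. From the second equality the functions $\p_8,\p_{16},q_{12},q_{16},q_{20},q_{24}$ are units in $\C[\dunimP]$, hence nowhere zero on $\dunimP$, and together with $\p_0\equiv1$ this shows that $\pi(u_-)$ avoids every component of $D=D_0^{(1)}+D_8^{(1)}+D_{16}^{(1)}+D_{12}^{(2)}+D_{16}^{(2)}+D_{20}^{(2)}+D_{24}^{(3)}$, i.e.\ $\pi(u_-)\in\mX_\can$.

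The whole argument is essentially bookkeeping, so I expect no serious obstacle; the only places demanding a little care are matching the two bijections correctly --- the sixteen dual PBW generators with $\wPpdroots$, and the six minors with the six denominators $\p_8,\p_{16},q_{12},q_{16},q_{20},q_{24}$ --- and keeping the projective-versus-affine distinction straight in the last step, so that ``$q_i$ is a unit on $\dunimP$'' genuinely yields ``$\pi(\dunimP)$ misses $\{q_i=0\}$''. The normalization $\p_0|_{\dunimP}\equiv1$ is exactly what makes that last point harmless.
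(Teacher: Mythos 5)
Your proof proposal follows exactly the route the paper takes: specialize Proposition~\ref{prop:GLS_coord_ring_dunipP}, substitute via Lemmas~\ref{lem:CayleyPlane_Plucker_coordinates_coincide_with_GLS} and~\ref{lem:CayleyPlane_minors_coincide_with_denominators_of_potential}, transport along $(\cdot)^\T$, and observe that the invertibility of the six localized functions together with $\p_0|_{\dunimP}\equiv 1$ forces $\pi(\dunimP)$ to avoid the anticanonical divisor $D$. The bookkeeping and the final observation are both correct, so the argument is sound and matches the paper's intended reasoning.
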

Recall that the generating Pl\"ucker coordinates are the ones appearing in Lemma \ref{lem:CayleyPlane_Plucker_coordinates_coincide_with_GLS}. and that the (generalized) Pl\"ucker coordinates are defined in equation \eqref{eq:Generalized-Plucker-coordinates}. The quadratic and cubic expressions $q_i$ are given in equation \eqref{eq:CayleyPlane_denominators_of_potential}.

\begin{rem}\label{rem:CayleyPlane_inverted_Plucker_and_generators}
Note that $\p_8$, $\p_{16}$, $q_{12}$, $q_{16}$, $q_{20}$ and $q_{24}$ are not a priori generated by the generators of the coordinate ring. 
Recall from Corollary \ref{cor:CayleyPlane_coordinate_ring_of_dunimP} that the coordinate ring is generated by the Pl\"ucker coordinates $\p_{\drt}$ for $\drt\in\wPpdroots$, i.e.
\[
\p_1,\p_2,\p_3,\p_4',\p_4'',\p_5',\p_5'',\p_6',\p_6'',\p_7',\p_7'',\p_8',\p_8'',\p_9'',\p_{10}'',\p_{11}''.
\]
(Note that $\p_0\equiv1$ on $\dunim$.) However, it turns out that there are ``Pl\"ucker relations'' expressing the remaining Pl\"ucker coordinates in terms of these. We have found these using the toric expressions for the Pl\"ucker coordinates restricted to $\opendunim$; as $\opendunim\subset\dunimP$ is dense and $\dunimP$ is irreducible, this suffices. The relations are given as follows:
\[
\begin{array}{lcl}
\p_8 = \p_1\p_7'-\p_2\p_6'+\p_3\p_5'-\p_4'\p_4'' ,&&
\p_9' = \p_1\p_8'-\p_2\p_7''+\p_3\p_6''-\p_4'\p_5'' ,\\
\p_{10}'= \p_1\p_9''-\p_2\p_8''+\p_4''\p_6''-\p_5'\p_5'' ,&&
\p_{11}'= \p_1\p_{10}''-\p_3\p_8''+\p_4''\p_7''-\p_5''\p_6' ,\\
\p_{12}'=\p_2\p_{10}''-\p_3\p_9''+\p_4''\p_8'-\p_5''\p_7',&&
\p_{12}''=\p_1\p_{11}''-\p_4'\p_8''+\p_5'\p_7''-\p_6'\p_6'',\\                                                               
\p_{13}= \p_2\p_{11}''-\p_4'\p_9''+\p_5'\p_8'-\p_6''\p_7',&&
\p_{14}= \p_3\p_{11}''-\p_4'\p_{10}''+\p_6'\p_8'-\p_7'\p_7'',\\
\p_{15}= \p_4''\p_{11}''-\p_5'\p_{10}''+\p_6'\p_9''-\p_7'\p_8'',&&
\p_{16}= \p_5''\p_{11}''-\p_6''\p_{10}''+\p_7''\p_9''-\p_8'\p_8''.
\end{array}
\]
Thus, the localized expressions are in fact expressible in terms of the generators.
\end{rem}

\subsection{The coordinate ring of \texorpdfstring{$\dunimP$}{U\_-\textasciicircum{}P} for the Freudenthal variety}\label{sec:coord_ring_dunimP_E7}
The approach for the Freudenthal variety $\ECHS[7]$ is analogous to the Cayley plane. 
Recall the from Section \ref{sec:ECHS} that $\ellwP=\ell(\wP)=27$. Thus, Proposition \ref{prop:GLS_coord_ring_dunipP} (see Proposition 8.5 of \cite{GLS_Kac_Moody_groups_and_cluster_algebras}) tells us that the coordinate ring of $\dunipP$ is generated by the dual PBW elements $\pGLS_m$, $m\in\{1,\ldots,27\}$, and localized at the generalized minors $\minor_{\dfwt[j],\wPinv(\dfwt[j])}$, $j\in\{1,\ldots,7\}$. The generators are expressed as follows:

\begin{lem}\label{lem:Freudenthal_Plucker_coordinates_coincide_with_GLS}
On $\dunipP=(\dunimP)^\T\subset\udG$, we have the following identifications:
\begin{equation}
\begin{tikzpicture}[rotate=90,scale=.7,style=very thick,baseline=0.25em]
	\draw 
	(0,-9)--(0,-5)--(-2,-3)--(0,-1)--(-1,0)--(0,1)--(-2,3)--(-1,4)
	(0,-5)--(1,-4)--(0,-3)--(3,0)--(0,3)
	(-1,-4)--(0,-3)--(-1,-2) (-1,4)--(0,3)--(-1,2)
	(1,2)--(0,1)--(2,-1) (1,-2)--(0,-1)--(2,1)
	(0,4)--(0,3)
	(-4,8)--(-1,5)--(-1,4)
	(-1,5)--(0,4);
	\draw[dotted]
	(-1,4) -- (-.67,4.33)
	(3.5,0.5)--(3,0)
	(2.5,1.5)--(2,1)
	(1.5,2.5)--(1,2)
	(0.5,3.5)--(0,3)
	(0.5,4.5)--(0,4)
	(-0.5,5.5)--(-1,5)
	(-1.5,6.5)--(-2,6)
	(-2.5,7.5)--(-3,7)
	(-3.5,8.5)--(-4,8);
	\draw[black, fill=black] 
	(0,-9) circle (.15)
	(0,-8) circle (.15) 
	(0,-7) circle (.15)
	(0,-6) circle (.15)
	(0,-5) circle (.15)
	(1,-4) circle (.15)
	(-1,-4) circle (.15)
	(0,-3) circle (.15)
	(-2,-3) circle (.15)
	(1,-2) circle (.15)
	(-1,-2) circle (.15)
	(2,-1) circle (.15)
	(0,-1) circle (.15)
	(3,0) circle (.15)
	(1,0) circle (.15)
	(-1,0) circle (.15)
	(2,1) circle (.15)
	(0,1) circle (.15)
	(1,2) circle (.15)
	(-1,2) circle (.15)
	(0,3) circle (.15)
	(-2,3) circle (.15)
	(0,4) circle (.15)
	(-1,4) circle (.15)
	(-1,5) circle (.15)
	(-2,6) circle (.15)
	(-3,7) circle (.15)
	(-4,8) circle (.15);
	\node at (0,-9)[above=2pt]{\tiny$1$};
	\node at (0,-9)[below=2pt]{\tiny$\p_0$};
	\node at (0,-8)[above=2pt]{\tiny$\pGLS_1\hspace{-1em}$};
	\node at (0,-8)[below=2pt]{\tiny$\p_1$};
	\node at (0,-7)[above=2pt]{\tiny$\pGLS_2\hspace{-1em}$};
	\node at (0,-7)[below=2pt]{\tiny$\p_2$};
	\node at (0,-6)[above=2pt]{\tiny$\pGLS_3\hspace{-1em}$};
	\node at (0,-6)[below=2pt]{\tiny$\p_3$};
	\node at (0,-5)[above=2pt]{\tiny$\pGLS_4\hspace{-1em}$};
	\node at (0,-5)[below=2pt]{\tiny$\p_4$};
	\node at (1,-4)[above=2pt]{\tiny$\pGLS_6\hspace{-1em}$};
	\node at (1,-4)[below=2pt]{\tiny$\p_5'$};
	\node at (-1,-4)[above=2pt]{\tiny$\pGLS_5\hspace{-1em}$};
	\node at (-1,-4)[below=2pt]{\tiny$\p_5''$};
	\node at (0,-3)[above=2pt]{\tiny$\pGLS_7\hspace{-1em}$};
	\node at (0,-3)[below=2pt]{\tiny$\p_6'$};
	\node at (-2,-3)[above=2pt]{\tiny$\pGLS_{10}\hspace{-1em}$};
	\node at (-2,-3)[below=2pt]{\tiny$\p_6''$};
	\node at (1,-2)[above=2pt]{\tiny$\pGLS_8\hspace{-1em}$};
	\node at (1,-2)[below=2pt]{\tiny$\p_7'$};
	\node at (-1,-2)[above=2pt]{\tiny$\pGLS_{11}\hspace{-1em}$};
	\node at (-1,-2)[below=2pt]{\tiny$\p_7''$};
	\node at (2,-1)[above=2pt]{\tiny$\pGLS_9\hspace{-1em}$};
	\node at (2,-1)[below=2pt]{\tiny$\p_8'$};
	\node at (0,-1)[above=2pt]{\tiny$\pGLS_{12}\hspace{-1em}$};
	\node at (0,-1)[below=2pt]{\tiny$\p_8''$};
	\node at (3,0)[above=2pt]{\tiny$\pGLS_{13}\hspace{-1em}$};
	\node at (3,0)[below=2pt]{\tiny$\p_9$};
	\node at (1,0)[above=2pt]{\tiny$\pGLS_{14}\hspace{-1em}$};
	\node at (1,0)[below=2pt]{\tiny$\p_9'$};
	\node at (-1,0)[above=2pt]{\tiny$\pGLS_{15}\hspace{-1em}$};
	\node at (-1,0)[below=2pt]{\tiny$\p_9''$};
	\node at (2,1)[above=2pt]{\tiny$\pGLS_{19}\hspace{-1em}$};
	\node at (2,1)[below=2pt]{\tiny$\p_{10}'$};
	\node at (0,1)[above=2pt]{\tiny$\pGLS_{16}\hspace{-1em}$};
	\node at (0,1)[below=2pt]{\tiny$\p_{10}''$};
	\node at (1,2)[above=2pt]{\tiny$\pGLS_{20}\hspace{-1em}$};
	\node at (1,2)[below=2pt]{\tiny$\p_{11}'$};
	\node at (-1,2)[above=2pt]{\tiny$\pGLS_{17}\hspace{-1em}$};
	\node at (-1,2)[below=2pt]{\tiny$\p_{11}''$};
	\node at (0,3)[above=2pt]{\tiny$\pGLS_{21}\hspace{-1em}$};
	\node at (0,3)[below=2pt]{\tiny$\p_{12}'$};
	\node at (-2,3)[above=2pt]{\tiny$\pGLS_{18}\hspace{-1em}$};
	\node at (-2,3)[below=2pt]{\tiny$\p_{12}''$};
	\node at (0,4)[above=2pt]{\tiny$\pGLS_{22}\hspace{-1em}$};
	\node at (0,4)[left=2pt]{\tiny$\p_{13}'$};
	\node at (-1,4)[right=2pt]{\tiny$\pGLS_{23}\hspace{-1em}$};
	\node at (-1,4)[below=2pt]{\tiny$\p_{13}''$};
	\node at (-1,5)[above=2pt]{\tiny$\pGLS_{24}\hspace{-1em}$};
	\node at (-1,5)[below=2pt]{\tiny$\p_{14}$};
	\node at (-2,6)[above=2pt]{\tiny$\pGLS_{25}\hspace{-1em}$};
	\node at (-2,6)[below=2pt]{\tiny$\p_{15}$};
	\node at (-3,7)[above=2pt]{\tiny$\pGLS_{26}\hspace{-1em}$};
	\node at (-3,7)[below=2pt]{\tiny$\p_{16}$};
	\node at (-4,8)[above=2pt]{\tiny$\pGLS_{27}\hspace{-1em}$};
	\node at (-4,8)[below=2pt]{\tiny$\p_{17}$};
\end{tikzpicture}
\label{eq:Freudenthal-Plucker-and-GLS-diagram}
\vspace{-1em}\end{equation}
Here, a vertex marked with $\p_i^{(j)}$ below (or left) and $\pGLS_{m}$ above (or right) denotes that $\p_i^{(j)}(u_+^\T)$ equals $\pGLS_m(u_+)$ up to a constant for $u_+\in\dunipP$. 
\end{lem}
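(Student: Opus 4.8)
The plan is to mirror the proof of Lemma \ref{lem:CayleyPlane_Plucker_coordinates_coincide_with_GLS} almost verbatim, since the setup for the Freudenthal variety is structurally identical to that of the Cayley plane. The key input is Proposition \ref{prop:Criterion_Plucker_coordinates_coincide_with_GLS}, which says that whenever a weight vector $v_i$ in the minuscule representation $\PluckerRep[7]$ has weight $-\dfwt[7]+\drt(v_i)$ with $\drt(v_i) = \wPdrt(m) \in \wPpdroots$, then $u_+ \mapsto \p_i(u_+^\T)$ agrees up to a nonzero scalar with $\pGLS_m$ on $\dunipP$. So the entire content of the lemma is the explicit identification $\drt(v_i^{(j)}) = \wPdrt(m)$ matching each labeled weight vector in diagram \eqref{eq:Freudenthal-Hasse-diagram} with the corresponding index $m \in \{1,\ldots,27\}$ coming from the fixed reduced expression \eqref{eq:E7P7_wP_reduced_expression} for $\wP$.

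First I would compute, from the fixed reduced expression $\wP = s_7s_6s_5s_4s_3s_2s_4s_5s_6s_1s_3s_4s_2s_5s_7s_4s_3s_1s_6s_5s_4s_2s_3s_4s_5s_6s_7$, the ordered list of positive roots $\wPdrt(1),\ldots,\wPdrt(27)$ using formula \eqref{eq:wP_positive_roots_mapped_to_negative_roots} (equivalently, inverting the sequence $(r_1,\ldots,r_{27})$ and applying the standard reflection recursion). In parallel, I would read off from diagram \eqref{eq:Freudenthal-Hasse-diagram} the weight $\drt(v_i^{(j)}) = \mu(v_i^{(j)}) + \dfwt[7]$ of each basis vector $v_i^{(j)}$ appearing in \eqref{eq:Freudenthal-Plucker-and-GLS-diagram}: since each edge labeled $\ell$ from $v$ (on the left) to $w$ (on the right) means $w$ has weight $\mu(v) - \sdr_\ell$, walking any path from $v_0$ to $v_i^{(j)}$ in the Hasse diagram expresses $\drt(v_i^{(j)})$ as an explicit sum of simple roots. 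Comparing the two lists produces a table of equalities $\drt(v_i^{(j)}) = \wPdrt(m)$, exactly analogous to the table in the proof of Lemma \ref{lem:CayleyPlane_Plucker_coordinates_coincide_with_GLS}; the assignment $\pGLS_m$ shown above each vertex in \eqref{eq:Freudenthal-Plucker-and-GLS-diagram} is precisely this matching. Note that since $\wPinv = \wP$ here (as remarked after \eqref{eq:E7P7_wP_reduced_expression}), one does not even need to reverse the expression. Finally, the identification $\p_0(u_+^\T) = 1$ is automatic: $u_+^\T \in \dunim$ is lower unipotent and fixes the lowest weight vector $v_0$.

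Applying Proposition \ref{prop:Criterion_Plucker_coordinates_coincide_with_GLS} to each row of this table yields $\p_i^{(j)}(u_+^\T) = (\text{const}) \cdot \pGLS_m(u_+)$ on $\dunipP$ for each of the $27$ relevant vertices, which is the assertion of the lemma. The main (and really only) obstacle is bookkeeping: one must carefully verify that diagram \eqref{eq:Freudenthal-Hasse-diagram} is internally consistent (i.e.\ that every closed loop, such as the commutative cube in the center with edges labeled $2,5,7$, is genuinely commutative so that the weight of each $v_i^{(j)}$ is path-independent), and that the subdiagram \eqref{eq:Freudenthal-Plucker-and-GLS-diagram} --- which should be isomorphic to the Hasse diagram of the $\LGE_7$-analogue $\weylp$-minuscule space (here $\weylp$ is of type $\LGD_6$, obtained by deleting the seventh node) --- matches the root computation index-by-index. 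Since there are $27$ vertices rather than $16$, the computation is longer than in the Cayley case, but no new ideas are required; in practice this is best verified by the computer algebra implementation mentioned in the acknowledgements.
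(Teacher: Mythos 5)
Your proposal is correct and follows the same route as the paper's (extremely terse) proof: compute the positive roots $\wPdrt(m)$ for $m\in\{1,\ldots,27\}$ from the fixed reduced expression via \eqref{eq:wP_positive_roots_mapped_to_negative_roots}, read off the weights $\drt(v_i^{(j)})$ from the Hasse diagram \eqref{eq:Freudenthal-Hasse-diagram}, match them, and apply Proposition \ref{prop:Criterion_Plucker_coordinates_coincide_with_GLS}. Two side remarks deserve correction, though neither affects the argument. First, the observation that since $\wPinv=\wP$ ``one does not even need to reverse the expression'' conflates equality of Weyl group elements with equality of reduced words: the chosen expression \eqref{eq:E7P7_wP_reduced_expression} is not palindromic (positions 13--15 read $2,5,7$ forward but $7,5,2$ in reverse), so the formula still uses the reversed index sequence as stated. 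Second, $\weylp$ for $\ECHS[7]$ is of type $\LGE_6$, not $\LGD_6$ (deleting vertex $7$ from the $\LGE_7$ Dynkin diagram leaves $\LGE_6$), and the paper notes after the lemma that the subdiagram minus its rightmost vertex is in fact isomorphic to the Hasse diagram of the Cayley plane.
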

Note that the diagram of equation \eqref{eq:Freudenthal-Plucker-and-GLS-diagram} is a subdiagram of the one in equation \eqref{eq:Freudenthal-Hasse-diagram}. Upon removing the rightmost vertex, this subdiagram is isomorphic to the Hasse diagram of the Cayley plane $\cayley$ (see equation \eqref{eq:Cayley-Hasse-diagram}). 
\begin{proof}
This is proven by computing the weight of each of the basis vectors $v_i^{(j)}$ and the positive roots $\wPdrt(m)$ that are mapped to negative roots by $\wP$ ($m\in\{1,\ldots,27\}$), and using Proposition \ref{prop:Criterion_Plucker_coordinates_coincide_with_GLS}. Compare Lemma \ref{lem:CayleyPlane_Plucker_coordinates_coincide_with_GLS}.
\end{proof}

\begin{lem}\label{lem:Freudenthal_minors_coincide_with_denominators_of_potential}
The generalized minors defined in equation \eqref{eq:GLS_def_of_minor} can be expressed on $\dunipP$ in terms of Pl\"ucker coordinates as
\begin{equation}
\begin{array}{lll}
\minor_{\dfwt[1],\wPinv(\dfwt[1])}(u_+) = q_{18}(u_+^\T), & 
\minor_{\dfwt[2],\wPinv(\dfwt[2])}(u_+) = q_{27}(u_+^\T), \\ 
\minor_{\dfwt[3],\wPinv(\dfwt[3])}(u_+) = q_{36}'(u_+^\T), &
\minor_{\dfwt[4],\wPinv(\dfwt[4])}(u_+) = q_{54}(u_+^\T), \\
\minor_{\dfwt[5],\wPinv(\dfwt[5])}(u_+) = q_{45}(u_+^\T), &
\minor_{\dfwt[6],\wPinv(\dfwt[6])}(u_+) = q_{36}(u_+^\T), \\
\minor_{\dfwt[7],\wPinv(\dfwt[7])}(u_+) = \p_{27}(u_+^\T),
\end{array}
\label{eq:Freudenthal_minors_and_plucker_expressions}
\end{equation}
where the expressions $q_i$ in Pl\"ucker coordinates were defined in \eqref{eq:Freudenthal_denominators_of_potential}.
\end{lem}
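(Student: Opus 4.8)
The plan is to follow the same strategy as in the proof of Lemma~\ref{lem:CayleyPlane_minors_coincide_with_denominators_of_potential}, using Algorithm~\ref{alg:torus-expansion} to compute local expressions for both sides of each claimed identity on the open dense torus $\opendunip\subset\dunipP$ and then extending by density and irreducibility. First I would recall that for the Freudenthal variety $\ellwP=\ell(\wP)=27$ and our fixed reduced expression is the one in equation~\eqref{eq:E7P7_wP_reduced_expression}, so every $u_+\in\opendunip$ decomposes as $u_+=\dx_{r_1}(a_1)\cdots\dx_{r_{27}}(a_{27})$ with $(r_1,\ldots,r_{27})$ the index sequence of \eqref{eq:E7P7_wP_reduced_expression} and $a_i\in\C^*$. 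Note that here $\wPinv=\wP$, so $\dwPinv=\dwP$ has the reversed reduced expression. For each $j\in\{1,\ldots,7\}$ I would compute $\dwPinv\cdot\hwt[j]$ in the fundamental representation $\dfwtrep[j]$ using the action of $\wPinv$ on $\dfwt[j]$ and equation~\eqref{eq:CayleyPlane_action_of_si} (i.e.\ $\bs_i\cdot\wtvmu=\tfrac1{m!}(\dChf_i)^m\cdot\wtvmu$ when $s_i(\wt)=\wt-m\sdr_i$), giving an explicit product of divided Chevalley generators applied to $\hwt[j]$.

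The core computation is step (2)--(3) of Algorithm~\ref{alg:torus-expansion}: for each $j$, find all paths $(i_1,\ldots,i_k)$ in the weight diagram \eqref{eq:Freudenthal-Hasse-diagram} such that $\dChe_{i_1\cdots i_k}(\dwPinv\cdot\hwt[j])$ is a multiple of $\hwt[j]$ \emph{and} such that the corresponding monomial actually appears in the expansion of $u_+$; for each surviving path, read off the product of torus coordinates $a_i$. Summing these monomials gives the local expression of $\minor_{\dfwt[j],\wPinv(\dfwt[j])}(u_+)$ on $\opendunip$. For $j=7$ this is essentially trivial (the minor equals a single Pl\"ucker coordinate $\p_{27}$), and for the other $j$ the minors will be monomials in the $a_i$ just as in the Cayley plane case, because the divided-power coefficients cancel exactly against the $\tfrac1{m!}$ factors in $\dwPinv\cdot\hwt[j]$ (this is guaranteed abstractly by Theorem~1.10 of \cite{Fomin_Zelevinsky_Double_Bruhat_Cells_and_Total_Positivity}, which tells us the minors and toric coordinates are related by an invertible monomial transformation, so there is exactly one surviving path). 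In parallel, I would expand each Pl\"ucker polynomial $q_{18},q_{27},q_{36}',q_{54},q_{45},q_{36}$ from \eqref{eq:Freudenthal_denominators_of_potential} and $\p_{27}$ in the torus coordinates using Algorithm~\ref{alg:Plucker_torus_expansion}: for each Pl\"ucker coordinate $\p_i^{(j)}$ appearing, list the paths from $v_i^{(j)}$ to $v_0$ in \eqref{eq:Freudenthal-Hasse-diagram}, match them against reduced subexpressions of $\wPinv=\wP$ via Proposition~8.11 of \cite{Spacek_LP_LG_models}, and collect the (at most linear) contributions; then substitute into the polynomial expressions and simplify. Comparing the two sides monomial-by-monomial establishes each identity on $\opendunip$, and since $\opendunip\subset\dunipP$ is open and dense and $\dunipP$ is irreducible, the identities extend to all of $\dunipP$.

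The main obstacle is the sheer size and bookkeeping of the computation for $j=3,4,5$ --- in particular $\minor_{\dfwt[4],\wPinv(\dfwt[4])}$, whose Pl\"ucker expression $q_{54}$ is a quartic with many terms, including the product $q_{27}'q_{27}''$ --- since $\dfwtrep[4]$ is a high-dimensional non-minuscule representation and $\dwPinv\cdot\hwt[4]$ involves a long word of divided powers. Unlike the minuscule Pl\"ucker representation $\dfwtrep[7]$, where $(\dChe_i)^2$ acts as zero and paths are easy to enumerate, here one must carefully track the $d$-complete poset structure of the weight diagram and repeatedly use the ``at each step there is a unique viable index'' argument (mirroring the Cayley-plane case: if a wrong branch is taken, one runs out of factors of the required index later in $\wPinv$ to reach $\hwt[j]$). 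This is exactly why the authors note in Algorithm~\ref{alg:torus-expansion} that steps (2) and (3) should be performed simultaneously by computer: the verification is done by the Sage/Macaulay2 code, and the proof proper only needs to record that the two monomial/polynomial expressions agree. Thus the ``hard part'' is not conceptual but computational, and the write-up will consist of stating the resulting local expressions for the seven minors (as monomials in $a_1,\ldots,a_{27}$, plus the trivial $\p_{27}$ case) and the matching expressions for $q_{18},\ldots,q_{36},\p_{27}$, then invoking density and irreducibility of $\dunipP$ exactly as at the end of the proof of Lemma~\ref{lem:CayleyPlane_minors_coincide_with_denominators_of_potential}.
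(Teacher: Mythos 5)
Your proposal is correct and follows exactly the same approach as the paper's proof: compute local expressions for the generalized minors on the open torus $\opendunip$ via Algorithm~\ref{alg:torus-expansion}, compute the torus expansions of $q_{18},\ldots,q_{36},\p_{27}$ via Algorithm~\ref{alg:Plucker_torus_expansion}, compare the resulting monomial/polynomial expressions, and extend to all of $\dunipP$ by density and irreducibility. The only thing the paper adds beyond what you outline is simply listing the seven Laurent-monomial expressions for the minors explicitly, which is the output of the computation you describe.
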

\begin{proof}
The arguments are analogous to the proof of Lemma \ref{lem:CayleyPlane_minors_coincide_with_denominators_of_potential}. Restricting to $u_+\in\opendunip = \{\dx_{r_{1}}(a_1)\dx_{r_{2}}(a_2)\cdots\dx_{r_{27}}(a_{27})~|~a_i\in\C^*\}$, where the sequence of indices equals $(r_1,r_2,\ldots,r_{27})=(7,6,5,4,3,2,4,5,6,1,3,4,2,5,7,4,3,1,6,5,4,2,3,4,5,6,7)$, we find using Algorithm \ref{alg:torus-expansion}
\ali{
\minor_{\dfwt[1],\wPinv(\dfwt[1])}(u_+) &=
\frac{a_{27}}{a_{15}}\bigl(\tprod[_{i=10}^{27}] a_i\bigr), \\
\minor_{\dfwt[2],\wPinv(\dfwt[2])}(u_+) &= 
\frac{a_{18}a_{23}a_{24}a_{25}a_{26}a_{27}}{a_{10}}\bigl(\tprod[_{i=6}^{27}]a_i\bigr), \\
\minor_{\dfwt[3],\wPinv(\dfwt[3])}(u_+) &= 
\frac{a_{26}a_{27}}{a_6a_{14}a_{15}a_{19}}\bigl(\tprod[_{i=13}^{27}]a_i\bigr)\bigl(\tprod[_{j=5}^{27}]a_j\bigr), \\
\minor_{\dfwt[4],\wPinv(\dfwt[4])}(u_+) &= 
\frac{a_{25}a_{26}a_{27}}{a_{10}a_{11}a_{19}a_{20}}\bigl(\tprod[_{i_1=17}^{27}]a_{i_1}\bigr)\bigl(\tprod[_{i_2=8}^{27}]a_{i_2}\bigr)\bigl(\tprod[_{i_3=4}^{27}]a_{i_3}\bigr), \\
\minor_{\dfwt[5],\wPinv(\dfwt[5])}(u_+) &= 
a_9a_{22}a_{24}a_{25}a_{26}a_{27}\bigl(\tprod[_{i_1=14}^{27}]a_{i_1}\bigr)\bigl(\tprod[_{i_2=3}^{27}]a_{i_2}\bigr), \\
\minor_{\dfwt[6],\wPinv(\dfwt[6])}(u_+) &= 
a_{15}\bigl(\tprod[_{i_1=19}^{27}]a_{i_1}\bigr)\bigl(\tprod[_{i_2=2}^{27}]a_{i_2}\bigr), \\
\minor_{\dfwt[7],\wPinv(\dfwt[7])}(u_+) &= \tprod[_{i=1}^{27}]a_i.
}

Using Algorithm \ref{alg:Plucker_torus_expansion} to determine expressions in toric coordinates for the homogeneous polynomials in Pl\"ucker coordinates, we find that the expressions coincide with those of the generalized minors above, so that we obtain the equalities in \eqref{eq:Freudenthal_minors_and_plucker_expressions} on $\opendunip$, and therefore on $\dunipP$ as well.
\end{proof}

Using the same notation as in \eqref{eq:root_notation_for_Plucker_coords}, Lemmas \ref{lem:Freudenthal_Plucker_coordinates_coincide_with_GLS}, \ref{lem:Freudenthal_minors_coincide_with_denominators_of_potential}, and Proposition \ref{prop:GLS_coord_ring_dunipP} imply:
\begin{cor}\label{cor:Freudenthal_coordinate_ring_of_dunimP}
The coordinate ring of the affine variety $\dunipP$ is given by:
\[
\C[\dunipP]=\C[\p_{\drt}^\T\,|\,\drt\in\wPpdroots][(\p_{27}^\T)^{-1}, (q_{18}^\T)^{-1},(q_{27}^\T)^{-1}, (q_{36}^\T)^{-1}, (q_{36}^{\prime\T})^{-1}, (q_{45}^\T)^{-1}, (q_{54}^\T)^{-1}].
\]
This implies that the coordinate ring of the affine variety $\dunimP=(\dunipP)^\T$ is given by
\[
\C[\dunimP]=\C[\p_{\drt}\,|\,\drt\in\wPpdroots][\p_{27}^{-1}, q_{18}^{-1},q_{27}^{-1}, q_{36}^{-1}, (q_{36}^{\prime})^{-1}, q_{45}^{-1}, q_{54}^{-1}].
\]
Consequently, $\pi(\dunimP)\subseteq\mX_\can$.
\end{cor}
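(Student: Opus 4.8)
The plan is to follow the proof of Corollary~\ref{cor:CayleyPlane_coordinate_ring_of_dunimP} line for line, now combining Proposition~\ref{prop:GLS_coord_ring_dunipP} with Lemmas~\ref{lem:Freudenthal_Plucker_coordinates_coincide_with_GLS} and~\ref{lem:Freudenthal_minors_coincide_with_denominators_of_potential} in place of the Cayley-plane statements. Since $\ell(\wP)=27$ for the Freudenthal variety, Proposition~\ref{prop:GLS_coord_ring_dunipP} presents $\C[\dunipP]$ as the polynomial ring $\C[\pGLS_1,\dots,\pGLS_{27}]$ localized at the seven generalized minors $\minor_{\dfwt[j],\wPinv(\dfwt[j])}$, $j\in\{1,\dots,7\}$. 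By Lemma~\ref{lem:Freudenthal_Plucker_coordinates_coincide_with_GLS}, for each $m\in\{1,\dots,27\}$ the dual PBW generator $\pGLS_m$ agrees up to a nonzero constant with a transposed Pl\"ucker coordinate $\p_\drt^\T$, $\drt\in\wPpdroots$, and the resulting assignment $m\mapsto\drt$ is a bijection onto $\wPpdroots$; since rescaling a generator by a unit does not change the algebra it generates, we may replace $\{\pGLS_m\}_{m=1}^{27}$ by $\{\p_\drt^\T\mid\drt\in\wPpdroots\}$ wholesale. By Lemma~\ref{lem:Freudenthal_minors_coincide_with_denominators_of_potential}, the seven localizing minors are, respectively, $q_{18}^\T$, $q_{27}^\T$, $(q_{36}')^\T$, $q_{54}^\T$, $q_{45}^\T$, $q_{36}^\T$ and $\p_{27}^\T$. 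Substituting these two identifications into the presentation of Proposition~\ref{prop:GLS_coord_ring_dunipP} yields the asserted presentation of $\C[\dunipP]$.

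Next I would obtain the description of $\C[\dunimP]$ by transporting this presentation along the transposition anti-automorphism $\cdot^\T$ of $\udG$. This map is an involution and restricts to an isomorphism of affine varieties $\dunimP\overset{\sim}{\longrightarrow}\dunipP$, so pre-composition induces an isomorphism $\C[\dunipP]\overset{\sim}{\longrightarrow}\C[\dunimP]$, $f\mapsto\bigl(u_-\mapsto f(u_-^\T)\bigr)$. By the very definition of the decorated functions (namely $\p_\drt^\T(g)=\p_\drt(g^\T)$ and $q_i^\T(g)=q_i(g^\T)$) this isomorphism carries $\p_\drt^\T$ to $\p_\drt$ and each $q_i^\T$ to $q_i$, viewed now as functions on $\dunimP$, which gives the claimed presentation of $\C[\dunimP]$.

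Finally, for the inclusion $\pi(\dunimP)\subseteq\mX_\can$, recall that $\pi$ is the restriction to $\dunimP$ of the quotient map $\udG\to\udP\backslash\udG=\cmX$ and that $\mX_\can=\cmX\setminus D$, where $D$ is the union of the vanishing loci of $\p_0$ and of the denominators $q_{18},q_{27},q_{36},q_{36}',q_{45},q_{54}$ and $\p_{27}$ of $\pot_\can$. For $u_-\in\dunimP$ the point $\pi(u_-)=\udP u_-$ of $\cmX$ has homogeneous coordinate vector $\bigl(\p_i(u_-)\bigr)_i$. Since $u_-\in\dunim$ fixes the lowest weight vector $v_0$ we have $\p_0(u_-)=v_0^*(u_-\cdot v_0)=1\neq0$, so $\pi(u_-)\notin\{\p_0=0\}$; and each of $q_{18},q_{27},q_{36},q_{36}',q_{45},q_{54},\p_{27}$ is a unit of $\C[\dunimP]$ by the formula just established, hence is nowhere zero on $\dunimP$, so $\pi(u_-)$ avoids each of the remaining components of $D$ as well. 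Hence $\pi(u_-)\notin D$, which is the assertion.

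The real content of the corollary lives entirely in Lemmas~\ref{lem:Freudenthal_Plucker_coordinates_coincide_with_GLS} and~\ref{lem:Freudenthal_minors_coincide_with_denominators_of_potential}, whose proofs rely on the representation-theoretic computations of Algorithms~\ref{alg:torus-expansion} and~\ref{alg:Plucker_torus_expansion} in the minuscule $\LGE_7$-module $\dfwtrep[7]$. Within the corollary itself the only points to verify are that the constants appearing in Lemma~\ref{lem:Freudenthal_Plucker_coordinates_coincide_with_GLS} are nonzero (so that they may be absorbed into the generators) and that in each numerator/denominator pair the $q_i$ are homogeneous of the same degree, so that ``$q_i=0$'' defines an honest divisor on $\cmX$; I do not anticipate any genuine obstacle.
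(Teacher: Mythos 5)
Your proof is correct and matches the paper's (largely implicit) argument: the paper deduces the presentation of $\C[\dunipP]$ by substituting the identifications of Lemmas~\ref{lem:Freudenthal_Plucker_coordinates_coincide_with_GLS} and~\ref{lem:Freudenthal_minors_coincide_with_denominators_of_potential} into Proposition~\ref{prop:GLS_coord_ring_dunipP}, then transports through the transposition anti-automorphism, exactly as you do, and the final containment $\pi(\dunimP)\subseteq\mX_\can$ follows for the same reason you give (the localizing elements are units, and $\p_0\equiv 1$ on $\dunim$). The two small verification points you flag at the end (nonvanishing of the constants, homogeneity of the $q_i$) are indeed routine and hold.
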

Recall that the (generalized) Pl\"ucker coordinates $\p_i^{(j)}$ are defined in equation \eqref{eq:Generalized-Plucker-coordinates} and the quadratic, cubic and quartic expressions $q_i$ are given in equation \eqref{eq:Freudenthal_denominators_of_potential}. 
\begin{rem}\label{rem:Freudenthal_inverted_Plucker_and_generators}
Just as in the Remark \ref{rem:CayleyPlane_inverted_Plucker_and_generators}, the localized expressions $\p_{27}$, $q_{18}$, $q_{36}$, $(q_{36}')$, $q_{45}$ and $q_{54}$ are not a priori generated by the generators of the coordinate ring. 
Recall that the generators were $\p_{\drt}$ for $\drt\in\wPpdroots$, i.e.
\ali{
&\p_1,\p_2,\p_3,\p_4,\p_5',\p_5'',\p_6',\p_6'',\p_7',\p_7'',\p_8',\p_8'',\p_9,\p_9',\p_9'',\p_{10}',\p_{10}'',\\
&\p_{11}',\p_{11}'',\p_{12}',\p_{12}'',\p_{13}',\p_{13}'',\p_{14},\p_{15},\p_{16},\p_{17}.
}
The remaining Pl\"ucker coordinates can be expressed using the following ``Pl\"ucker relations'':
\begingroup\allowdisplaybreaks\begin{align*}
\p_{10}&=		\p_{1}\p_{9}		-\p_{2}\p_{8}'			+\p_{3}\p_{7}'			-\p_{4}\p_{6}'				+\p_{5}'\p_{5}'', &
\p_{11}&=		\p_{1}\p_{10}'	-\p_{2}\p_{9}'			+\p_{3}\p_{8}''		-\p_{4}\p_{7}''				+\p_{5}'\p_{6}'',\\
\p_{12}&=		\p_{1}\p_{11}'	-\p_{2}\p_{10}''		+\p_{3}\p_{9}''		-\p_{5}''\p_{7}''			+\p_{6}'\p_{6}'',& 
\p_{13}&=		\p_{1}\p_{12}'	-\p_{2}\p_{11}''		+\p_{4}\p_{9}''		-\p_{5}''\p_{8}''			+\p_{6}''\p_{7}', \\
\p_{14}'&=	\p_{1}\p_{13}''	-\p_{2}\p_{12}''		+\p_{5}'\p_{9}''		-\p_{6}'\p_{8}''			+\p_{7}'\p_{7}'', &
\p_{14}''&=	\p_{1}\p_{13}'	-\p_{3}\p_{11}''		+\p_{4}\p_{10}''	-\p_{5}''\p_{9}'			+\p_{6}''\p_{8}', \\
\p_{15}'&=	\p_{1}\p_{14}	-\p_{3}\p_{12}''		+\p_{5}'\p_{10}''	-\p_{6}'\p_{9}'				+\p_{7}''\p_{8}', & 
\p_{15}''&=	\p_{2}\p_{13}'	-\p_{3}\p_{12}'		+\p_{4}\p_{11}'		-\p_{5}''\p_{10}'			+\p_{6}''\p_{9}, \\
\p_{16}'&=	\p_{1}\p_{15}	-\p_{4}\p_{12}''		+\p_{5}'\p_{11}''	-\p_{7}'\p_{9}'				+\p_{8}'\p_{8}'', & 
\p_{16}''&=	\p_{2}\p_{14}	-\p_{3}\p_{13}''		+\p_{5}'\p_{11}'	-\p_{6}'\p_{10}'			+\p_{7}''\p_{9} \\
\p_{17}'&=	\p_{1}\p_{16}	-\p_{5}''\p_{12}''	+\p_{6}'\p_{11}''	-\p_{7}'\p_{10}''			+\p_{8}'\p_{9}'', & 
\p_{17}''&=	\p_{2}\p_{15}	-\p_{4}\p_{13}''		+\p_{5}'\p_{12}'	-\p_{7}'\p_{10}'			+\p_{8}''\p_{9}, \\
\p_{18}&=		\p_{1}\p_{17}	-\p_{6}''\p_{12}''	+\p_{7}''\p_{11}''	-\p_{8}''\p_{10}''		+\p_{9}'\p_{9}'', & 
\p_{18}'&=	\p_{2}\p_{16}	-\p_{5}''\p_{13}''	+\p_{6}'\p_{12}'	-\p_{7}'\p_{11}'			+\p_{9}\p_{9}'', \\
\p_{18}''&=	\p_{3}\p_{15}	-\p_{4}\p_{14}			+\p_{5}'\p_{13}'	-\p_{8}'\p_{10}'			+\p_{9}\p_{9}', &
\p_{19}'&=	\p_{2}\p_{17}	-\p_{6}''\p_{13}''	+\p_{7}''\p_{12}'	-\p_{8}''\p_{11}'			+\p_{9}''\p_{10}', \\
\p_{19}''&=	\p_{3}\p_{16}	-\p_{5}''\p_{14}		+\p_{6}'\p_{13}'	-\p_{8}'\p_{11}'			+\p_{9}\p_{10}'', & 
\p_{20}'&=	\p_{3}\p_{17}	-\p_{6}''\p_{14}		+\p_{7}''\p_{13}'	-\p_{9}'\p_{11}'			+\p_{10}'\p_{10}'', \\
\p_{20}''&=	\p_{4}\p_{16}	-\p_{5}''\p_{15}		+\p_{7}'\p_{13}'	-\p_{8}'\p_{12}'			+\p_{9}\p_{11}'', &
\p_{21}'&=	\p_{4}\p_{17}	-\p_{6}''\p_{15}		+\p_{8}''\p_{13}'	-\p_{9}'\p_{12}'			+\p_{10}'\p_{11}'', \\
\p_{21}''&=	\p_{5}'\p_{16}	-\p_{6}'\p_{15}		+\p_{7}'\p_{14}		-\p_{8}'\p_{13}''			+\p_{9}\p_{12}'', &
\p_{22}'&=	\p_{5}''\p_{17}	-\p_{6}''\p_{16}		+\p_{9}''\p_{13}'	-\p_{10}''\p_{12}'		+\p_{11}'\p_{11}'', \\
\p_{22}''&=	\p_{5}'\p_{17}	-\p_{7}''\p_{15}		+\p_{8}''\p_{14}	-\p_{9}'\p_{13}''			+\p_{10}'\p_{12}'', &
\p_{23}&=		\p_{6}'\p_{17}	-\p_{7}''\p_{16}		+\p_{9}''\p_{14}	-\p_{10}''\p_{13}''	+\p_{11}'\p_{12}'', \\
\p_{24}&=		\p_{7}'\p_{17}	-\p_{8}''\p_{16}		+\p_{9}''\p_{15}	-\p_{11}''\p_{13}''	+\p_{12}'\p_{12}'', &
\p_{25}&=		\p_{8}'\p_{17}	-\p_{9}'\p_{16}		+\p_{10}''\p_{15}	-\p_{11}''\p_{14}		+\p_{12}''\p_{13}', \\
\p_{26}&=		\p_{9}\p_{17}	-\p_{10}'\p_{16}	+\p_{11}'\p_{15}	-\p_{12}'\p_{14}		+\p_{13}'\p_{13}''.
\hspace{-.8em}
\end{align*}\endgroup
We can express $\p_{27}$ in terms of the generating Pl\"ucker coordinates as well, but it is more conveniently expressed in terms of the other Pl\"ucker coordinates as follows:
\[
\p_{27}=\p_{1}\p_{26}-\p_{2}\p_{25}+\p_{3}\p_{24}-\p_{4}\p_{23}+\p_{5}'\p_{22}'+\p_{5}''\p_{22}''-\p_{6}'\p_{21}'+\p_{7}'\p_{20}'-\p_{8}'\p_{19}'+\p_{9}\p_{18}-\p_{10}\p_{17}.
\]
\end{rem}

\subsection
{Proof of the isomorphism of LG-models on cominuscule \texorpdfstring{$\ECHS$}{E\_n\textasciicircum{}sc/P\_n}}
\label{sec:Proof_of_isomorphisms_for_ECHS}
Recall that we want to show that $\pi:\dunimP\to\cmX$ is an isomorphism to $\mX_\can\subset\cmX$

\begin{rem}
The following proof follows the same structure as the proof of Theorem 3.9 in Section 8 of \cite{Pech_Rietsch_Lagrangian_Grassmannians}.
\end{rem}

\begin{proof}[Proof of Theorem \ref{thm:ExcFam_CanAndLie_Vars}]
Corollaries \ref{cor:CayleyPlane_coordinate_ring_of_dunimP} and \ref{cor:Freudenthal_coordinate_ring_of_dunimP} imply $\pi(\dunimP)\subseteq\mX_\can$ for cominuscule $\ECHS$.

To show that $\pi:\dunimP\to\mX_\can$ is an isomorphism, it suffices to show that $\pi^*:\C[\mX_\can]\to\C[\dunimP]$ is an isomorphism since both $\dunimP$ and $\mX_\can$ are affine varieties (for $\dunimP$, this follows from Proposition \ref{prop:GLS_coord_ring_dunipP}; for $\mX_\can$, this follows from the fact that it is the complement of an ample divisor). The fact that $\pi^*$ is injective follows from the fact that $\pi:\dunimP\to\mX_\can$ is dominant, which follows from the fact that $\dunimP$ and $\mX_\can$ have the same dimension and that $\pi$ is an open map (as it is the quotient map $u_-\mapsto \udP u_-$).

For surjectivity of $\pi^*:\C[\mX_\can]\to\C[\dunimP]$, it is straightforward to check that each of the generators of $\C[\dunimP]$ as given in Corollaries \ref{cor:CayleyPlane_coordinate_ring_of_dunimP} and \ref{cor:Freudenthal_coordinate_ring_of_dunimP} is a well-defined function on $\mX_\can$, and that each localized expression is non-zero on $\mX_\can$. This is clear from the fact that they are all generalized Pl\"ucker coordinates or expressions in these, and from the definition of $\mX_\can$.
\end{proof}
\begin{rem}
Note that this proof also shows that every coset in $\mX_\can\subset\udP\backslash\udG$ can uniquely be represented by an element of $\dunimP=\dunim\cap\dborelp\bwop\bwo\dborelp$.
\end{rem}

Now we know that $\pi:\dunimP\to\mX_\can$ is an isomorphism, showing that $\pi\circ\varphi$ is an isomorphism of Landau-Ginzburg models reduces to checking that $(\pi\circ\varphi)^*(\pot_\can) = \pot_\Lie$. 
\begin{rem}
We are abusing notation here: a more proper way to write this would be $\bigl((\pi\circ\varphi)\times\Id\bigr)^*(\pot_\can) = \pot_\Lie$, as $\pot_\can:\mX_\can\times\invdtorus\to\C$ and $\pot_\Lie:\mX_\Lie\times\invdtorus\to\C$. We will continue to use this notation as it should not give rise to ambiguities.
\end{rem}

The proof of Theorem \ref{thm:ExcFam_CanAndLie_LGmodels} is obtained by restricting to the torus $\opendunim\subset\dunimP$ of \eqref{eq:df_opendunim} and showing that $(\varphi^{-1})^*(\pot_\Lie) = \pi^*(\pot_\can)$ on $\opendunim\times\invdtorus$. This is most conveniently done separately for the Cayley plane and the Freudenthal variety:
\begin{proof}[Proof of Theorem \ref{thm:ExcFam_CanAndLie_LGmodels} for the Cayley plane] Applying the results of Proposition \ref{prop:LP_LG_model} to the Cayley plane, we see that $(\varphi^{-1})^*(\pot_\Lie)$ can be written as follows on $\opendunim\times\invdtorus$:
\[
[(\varphi^{-1})^*(\pot_\Lie)](u_-,t) = a_1+a_2+\ldots+a_{16} + q\frac{\sum_{(i_j)\in\wPrimeSubExp}\prod_{j=1}^{\ellwPrime} a_{i_j}}{a_1\cdots a_{16}},
\]
where $u_-=\dy_{r_{16}}(a_{16})\dy_{r_{15}}(a_{15})\cdots\dy_{r_1}(a_{1})$ and $q=\sdr_6(t)$. (Recall that $(r_1,\ldots,r_{16})$ is the sequence of indices in the reduced expression of $\wP$ in equation \eqref{eq:CayleyRedExp_wP}.) The set $\wPrimeSubExp$ consists of the reduced subexpressions of the element $\wPrime$ in the fixed reduced expression of $\wP$ as discussed above Propostion \ref{prop:LP_LG_model}.

On the other hand we need to calculate the terms of $\pot_\can$ as given in equation \eqref{eq:pot-for-cayley} applied to an element of $\opendunim$. 
We can use Algorithm \ref{alg:Plucker_torus_expansion} to calculate the toric expressions for the Pl\"ucker coordinates, followed by elementary operations on polynomials (addition, multiplication, and so on) that are somewhat cumbersome to find toric expressions for the terms of the superpotential; we used a computer algebra program and simply state the results here:
\ali{
\frac{\p_1}{\p_0} &= a_9 + a_{16}, \qquad 
\frac{\p_9'}{\p_8} = a_1+a_{8}, \qquad 
\frac{q_{13}}{q_{12}} = a_5+a_{10}+a_{15}, \\ 
\frac{q_{17}}{q_{16}} &= a_2+a_{7}+a_{12}, \qquad 
\frac{q_{21}}{q_{20}} = a_4+a_{13}, \qquad 
\frac{q_{25}}{q_{24}} = a_3+a_6+a_{11}+a_{14}, \\ 
q\frac{\p_5''}{\p_{16}} &= q\frac{\sum_{(i_j)\in\wPrimeSubExp}\prod_{j=1}^{\ellwPrime} a_{i_j}}{a_1\cdots a_{16}}, 
%
%
}
where we suppressed the arguments of the Pl\"ucker coordinates (they are all applied to $u_-$). Clearly, we have $(\varphi^{-1})^*(\pot_\Lie) = \pi^*(\pot_\can)$ on $\opendunim\times\invdtorus$, but since $\opendunim\subset\dunimP$ is open and dense and $\dunimP$ is irreducible, we conclude that the equality holds on all of $\dunimP\times\invdtorus$. The theorem now follows in the case of the Cayley plane.
\end{proof}

\begin{proof}[Proof of Theorem \ref{thm:ExcFam_CanAndLie_LGmodels} for the Freudenthal variety]
In this case, the results of Propositon \ref{prop:LP_LG_model} yield the following expression for $(\varphi^{-1})^*(\pot_\Lie)$ on $\opendunim\times\invdtorus$:
\[
[(\varphi^{-1})^*(\pot_\Lie)](u_-,t) = a_1+a_2+\ldots+a_{27} + q\frac{\sum_{(i_j)\in\wPrimeSubExp}\prod_{j=1}^{\ellwPrime} a_{i_j}}{a_1\cdots a_{27}},
\]
where $u_-=\dy_{r_{27}}(a_{27})\dy_{r_{26}}(a_{26})\cdots\dy_{r_1}(a_{1})$ and $q=\sdr_7(t)$. (Recall that $(r_1,\ldots,r_{27})$ is the sequence of indices in the reduced expression of $\wP$ in equation \eqref{eq:E7P7_wP_reduced_expression}.)

The terms of $\pot_\can$ applied to an element of $\opendunim$ can be computed using Algorithm \ref{alg:Plucker_torus_expansion} and a computer algebra program to be the following:
\ali{
\frac{\p_1}{\p_0} &= a_1+a_{15}+a_{27}, \qquad
\frac{q_{19}}{q_{18}} = a_2+a_9+a_{19}+a_{26}, \qquad
\frac{q_{28}}{q_{27}} = a_6+a_{13}+a_{22} \\
\frac{q_{37}}{q_{36}} &= a_{10}+a_{18}, \qquad
\frac{q_{37}'}{q_{36}'} = a_3+a_8+a_{14}+a_{20}+a_{25}, \qquad
\frac{q_{46}}{q_{45}} = a_5+a_{11}+a_{17}+a_{23} \\
\frac{q_{55}}{q_{54}} &= a_4+a_7+a_{12}+a_{16}+a_{21}+a_{24}, \qquad
q\frac{\p_{10}}{\p_{27}} = q\frac{\sum_{(i_j)\in\wPrimeSubExp}\prod_{j=1}^{\ellwPrime} a_{i_j}}{a_1\cdots a_{27}} 
}
Here all the Pl\"ucker coordinates are applied to $u_-\in\opendunim$, so this is suppressed. This shows that $(\varphi^{-1})^*(\pot_\Lie) = \pi^*(\pot_\can)$ on $\opendunim\times\invdtorus$, implying that the equality holds on $\dunimP\times\invdtorus$, as $\opendunim\subset\dunimP$ is open and dense and $\dunimP$ is irreducible.
\end{proof}
Thus, we have shown that the canonical models we defined in Section \ref{sec:Canonical_LG_models_for_ECHS} for the exceptional cominuscule homogeneous spaces are isomorphic to the Lie-theoretic Landau-Ginzburg models given in \cite{Rietsch_Mirror_Construction} (discussed in Section \ref{sec:Lie_theoretic_Mirror}). From this we conclude that the canonical models are indeed Landau-Ginzburg models, satisfying
\[
qH^*(\X)_\loc\cong \C[\X_\can\times\invdtorus]/\Jac{\can},
\]
where $\Jac{\can}$ is generated by the derivatives of $\pot_\can$ along $\X_\can$. Due to the geometric Satake correspondence, this isomorphism is given by associating to $\si_i\in H^*(\X)$ the Pl\"ucker coordinate $\p_i \in\C[\X_\can]$. Furthermore, this isomorphism of Landau-Ginzburg models implies in particular that our canonical models have the expected number of critical points, namely $\dim H^*(\X,\C)$, although the expressions of the superpotentials are too unwieldly to calculate them directly.

\section{Cluster structures on \texorpdfstring{$\cmX$}{X\textasciicircum{}v}}
\label{sec:cluster}

In this section we first review the cluster structures presented in \cite{GLS_partial_flag_varieties_and_preprojective_algebras} for homogeneous spaces of simply-laced type. We then give Pl\"ucker coordinate descriptions for these cluster structures using an algorithm we formulated for expressing generalized minors as polynomials in the Pl\"ucker coordinates. 

\subsection{Cluster algebras and varieties}
\label{sec:cluster-prelims}
A \emph{cluster structure} on a commutative algebra $A$ is a combinatorial presentation of $A$ in terms of \emph{seeds} and \emph{mutations}. Cluster algebras come in two types, $\ACS$-cluster algebras and $\XCS$-cluster algebras, differing in the definition of their mutations. We will only consider $\ACS$-cluster algebras here. To define seeds, we need to fix the field $\C(x_1,\dots, x_n)$ of rational functions in $n$ variables.

A seed $\seed$ of rank $l$ is a pair $\seed=(\bx,B)$ and an integer $m\ge l$, where $\bx\in \C(x_1,\dots, x_n)^m$ is a tuple of algebraically independent elements, and $B$ is an $m\times l$ integer matrix whose upper $l\times l$ submatrix $M$ is skew-symmetrizable, meaning that there is a diagonal matrix $D$ with positive integer entries on the diagonal such that $DM=-(DM)^T$. The elements of $\bx$ are called \emph{cluster variables}, and the first $l$ are called \emph{mutable} while the remaining $m-l$ are called \emph{frozen}. Furthermore, $B$ is called an \emph{exchange matrix}, and in the case that $D=I$, then we may think of $B$ as the adjacency matrix of a directed graph $Q$ known as a \emph{quiver}. Mutation is an operation on seeds that produces new seeds. For $1\le k\le l$, the mutation of $\seed$ in direction $k$ is a new seed $\seed'=(\mu_k(\bx),\mu_k(B))$. We will not work with mutations here, so we omit their full definition.


Given an \emph{initial seed} $\seed$, let $\ACVars$ denote the set of cluster variables appearing in any seed $\seed'$ reachable from $\seed$ by any (finite) sequence of $\ACS$-mutations. Then $A$ is an $\ACS$-cluster algebra if $A=\C[S_{\ACS}]$. Finally, an affine variety $X$ is an $\ACS$-cluster variety if $\C[X]$ is an $\ACS$-cluster algebra. To each frozen variable $x$, we associate a divisor $D_x=\{p\in X~|~ x(p)=0\}$, and to each seed $\seed$ we associate a torus $\CTorusSeed\hookrightarrow X$. The union of all tori $\CTorus=\bigcup_\seed \CTorusSeed$ is a dense, open subset of $X$ often called the \emph{open cluster variety}, and $\CTorus\cup (\bigcup_{x} D_x)$ is a \emph{(partial) compactification} of $\CTorus$, where the union is over the set of frozen variables. Roughly, \emph{cluster varieties} can be thought of as generalizations of toric varieties where the dense torus is replaced by a union of (infinitely many) \emph{cluster tori} identified along birational \emph{mutation} maps, and the torus-invariant divisors are replaced by divisors corresponding to \emph{frozen variables}.

For our purposes, we will need a concrete description of the cluster structure in terms of Pl\"ucker coordinates, and we will provide such descriptions for the cominuscule exceptional types in Section \ref{sec:cluster}. Cluster structures for homogeneous spaces $P^-\backslash G$ were given in \cite{GLS_partial_flag_varieties_and_preprojective_algebras} for $G$ of simply-laced Dynkin type in terms of generalized minors. Their general construction depends on a choice of Lie group $G$, parabolic subgroup $P$, and reduced expression for the longest word $\wo$ of the Weyl group $W$ of $G$, having as prefix a reduced expression for the longest word $w_P$ of the subgroup $W_{P}\subset W$. We present a simplified version of their construction for our particular cases of the Cayley plane and the Freudenthal variety. 

\subsection{Cluster structures on homogeneous spaces}\label{sec:GLS_Cluster_structure_on_HS}
This section follows Sections 9 and 10 of \cite{GLS_partial_flag_varieties_and_preprojective_algebras}. Recall our fixed reduced expression for $\wo=\wo^{-1}=\wop^{-1}\wPinv$ in equation \eqref{eq:Fixed_reduced_expression_for_wo}:
\[
\wo=(s_{r_{\ell_0}}\dots s_{r_{\ell+1}})(s_{r_\ell}\dots s_{r_1}),
\]
which has the reverse numbering of indices compared to \cite{GLS_partial_flag_varieties_and_preprojective_algebras}.
To each simple reflection $s_{r_i}$ in the reduced expression for $\wo$, we associate the generalized minor $\phiGLS(i,\dfwt[r_i])=\minor_{u_{\ge i}(\dfwt[r_i]),\wo(\dfwt[r_i])}$ where $u_{\ge i}=s_{r_{\ellwo}}\dots s_{r_i}$ and $\minor_{u(\dfwt),v(\dfwt)}$ is the generalized minor defined in equation \eqref{eq:FZ_gen_minor}. There are $n$ indices $r_i$ such that $\phiGLS(i,\dfwt[r_i])|_{\dunipP}\equiv1$: one for the last occurrence (from left to right in equation \eqref{eq:Fixed_reduced_expression_for_wo}, given by the formula $\min \{l\mid r_l=i\}$) of each simple transposition $s_i$ in our reduced expression for $\wo$. To see this, observe that for these indices we have that $u_{\ge i}(\dfwt[r_i])=\wo(\dfwt[r_i])$, which follows from the fact that the remaining simple transpositions in $\wo$ act trivially on the highest weight. Thus, we have for these last occurrences $r_i$ that
\[
\phiGLS(i,\dfwt[r_i])|_{\dunipP}=\Delta_{\wo(\dfwt[r_i]),\wo(\dfwt[r_i])}|_{\dunipP} =  {}\Bigr\lan(\overline{\wo})^{-1} g\, \overline{\wo}\cdot\hwt[r_i],\hwt[r_i] \Bigr\ran.
\]
In particular for any $g\in\opendunip=(\opendunim)^\T$ (see equation \eqref{eq:df_opendunim}), we have that
\[
\Bigr\lan(\overline{\wo})^{-1} g\, \overline{\wo}\cdot\hwt[r_i],\hwt[r_i] \Bigr\ran=1,
\]
obtained from the constant term after expanding of the decomposition of $g=\dx_{r_1}(a_1)\cdots\dx_{r_{\ellwP}}(a_{\ellwP})$. This equality extends to $\dunipP$ as $\opendunip\subset\dunipP$ dense, so that the desired equality holds. The $\ellwo-n$ indices for which $\phiGLS(i,\dfwt[r_i])|_{\dunipP}\not\equiv1$ are called $\wo$-exchangeable, and we denote the set of $\wo$-exchangeable indices by $\woExch$.

We now describe the generalized minors appearing in our initial seed. We have mutable variables given by $\phiGLS(i,\dfwt[r_i])$ for $i\in \{1,\dots, \ellwP\}\cap\woExch$. In order to define the frozen variables, we set $t_i=\min\{t~|~r_t=i, t>l\}$ unless $i\in I^P$, in which case we set $t_i=\ellwo+i$, $u_{\ge t_i}=e$, and $r_{t_i}=i$. Then we have frozen variables given by 
\[
\{\phiGLS(t_i,\dfwt[r_{t_i}])~|~1\le i\le n\}\cup \{\Delta_{\dfwt[i],\dfwt[i]}\mid i\in I^P\}.
\]
Note that in the case of the exceptional cominuscule family, $I^P=\{n\}$ is a singleton set. 

\begin{rem}
  Note that $\minor_{\dfwt[i],\dfwt[i]}|_{U_+^P}=1$ by an analogous computation to the one above for $\minor_{\wo(\dfwt[r_i]),\wo(\dfwt[r_i])}|_{U_+^P}=1$. In fact, the $\phiGLS|_{U_+^P}$ are the cluster variables for a cluster structure on $U_+^P$ (without any of the generalized minors $\minor_{\dfwt[i],\dfwt[i]}|_{U_+^P}$). However, we have for each $i$ that $\minor_{\dfwt[i],\dfwt[i]}\neq 1$ on $\cmX$, and we require these additional generalized minors as frozen variables for the cluster structures on $\cmX$ given in this section. 
\end{rem}

Finally, we define the quiver $\ClusterQuiver$ with vertex set
\[
V(\ClusterQuiver)=\bigl(\{1,2,\dots, \ellwP\}\cap e(\wo)\bigr)\cup \bigl(\{t_i ~|~ 1\le i\le n\}\cup \{\Delta_{\dfwt[i],\dfwt[i]}~|~ i\in\IP\}\bigr),
\]
where the last two sets of vertices are frozen. To describe the arrows in the quiver, we define $m^+=\max\{1\le l\le \ellwo~|~l<m,~r_l=r_m\}$ for notational convenience. We now describe when two vertices $l<m\in V(\ClusterQuiver)$ are connected by an arrow. Since we do not allow mutation at frozen vertices, we assume that at least one of $m,l$ is mutable, i.e. $\{m,l\}\cap (\{1,\dots, \ellwP\}\cap e(\wo))\neq\varnothing$. Then there is an arrow $m\rightarrow l$ if and only if $m^+=l$. On the other hand, there is an arrow $m\leftarrow l$ if and only if $l > m^+ > l^+$ and $a_{r_m,r_l}<0$, where $a_{ij}$ denotes the $(i,j)$-entry of the Cartan matrix of $\G$. The edges involving the remaining frozen vertices $\Delta_{\dfwt[i],\dfwt[i]}$ are determined through cluster category theory (see Theorem 10.2 of \cite{GLS_partial_flag_varieties_and_preprojective_algebras}). In our case, the extra vertex corresponding to $\Delta_{\dfwt[n],\dfwt[n]}$ has an arrow to the vertex labeled by $\p_1$ (which in each case corresponds to the generalized minor $\phiGLS(i,\dfwt[r_i])$ where $i$ is the second-smallest index such that $r_i=\DynkinSymmetry(n)$).

\subsection{Cluster algebras and mirror symmetry}
In the version of mirror symmetry we have been working with so far, the superpotentials $\pot_\can$ defined on the Langlands dual spaces encode enumerative information about the original spaces via their Jacobi rings. One of the main reuslts in \cite{Rietsch_Williams_NO_bodies_cluster_duality_and_mirror_symmetry_for_Grassmannians} provides a combinatorial interpretation of mirror symmetry using cluster structures on Grassmannians through two families of polytopes, \emph{Newton-Okounkov bodies} and \emph{superpotential polytopes}. Although it has not yet been studied, we expect that this statement holds for our homogeneous spaces as well, and we now give $\ACS$-cluster structures in terms of Pl\"ucker coordinates for the Langlands dual spaces (by giving an initial seed).

%
%
\begin{rem}\label{rmk: transposes}
  While \cite{GLS_partial_flag_varieties_and_preprojective_algebras} works with $P^-\backslash G$, where $P^-$ refers to an opposite parabolic subgroup, we prefer to work with $P\backslash G$. Because of this conventional difference, we worked with $u_-^T\in U_+^\circ=(U_-^\circ)^T$ in Lemma \ref{lem:CayleyPlane_minors_coincide_with_denominators_of_potential}. In this section, we will prefer to work directly on $U_-^\circ$ by using the ``transposes'' of the generalized minors, $\Delta^T_{w_1(\dfwt[i]),w_2(\dfwt[i])}(u_-)$. This corresponds in Algorithm \ref{alg:torus-expansion} to looking for paths in $V(\dfwt[i])$ from $w_1(\dfwt[i])$ to $w_2(\dfwt[i])$ (using $\dChf_i$'s), as opposed to paths from $w_2(\dfwt[i])$ to $w_1(\dfwt[i])$ (using $\dChe_i$'s), and replacing $u_+\in \{\dx_{r_1}(a_1)\dots \dx_{r_\ell}(a_\ell)\}$ with its transpose $u_+^\T\in\{\dy_{r_\ell}(a_\ell)\dots \dy_{r_1}(a_1)\}$. The rest of the computation is entirely analogous. With respect to the notation, the transpose of a generalized minor corresponds to switching the two subscripts and transposing the input. To be more consistent with our previous choice, we could also have worked in the representation with highest weight $\dfwt[\DynkinSymmetry(i)]= -w_0(\dfwt[i])$ instead:
\[
  \minor^T_{w_1(\dfwt[i]),w_2(\dfwt[i])}(g) = \minor^\T_{w_1\wo\cdot\wo(\dfwt[i]),w_2\wo\cdot\wo(\dfwt[i])}(g) = \minor_{w_2\wo(\dfwt[\DynkinSymmetry(i)]),w_1\wo(\dfwt[\DynkinSymmetry(i)])}(g^T).
\]
Note that the equality above is of polynomials in the torus coordinates. The form of generalized minor that will appear in our cluster variables is $\minor^\T_{u_{\ge i}(\dfwt[i]),\wo(\dfwt[i])}$. In the notation of Lemma \ref{lem:CayleyPlane_minors_coincide_with_denominators_of_potential}, these would be written as
\[
  \minor^T_{u_{\ge i}(\dfwt[i]),w_0(\dfwt[i])}(g)=\Delta_{\dfwt[\DynkinSymmetry(i)],u_{\ge i}w_0(\dfwt[\DynkinSymmetry(i)])}(g^T),
\]
and one can immediately check that the frozen variables introduced below are exactly equal to those computed in Lemma \ref{lem:CayleyPlane_minors_coincide_with_denominators_of_potential}. For example, if $u_{\ge i}=\wop$, then we have that
\[
  \minor^T_{\wop(\dfwt[i]),\wo(\dfwt[i])}(u_-)=\minor_{\dfwt[\DynkinSymmetry(i)],(w^P)^{-1}(\dfwt[\DynkinSymmetry(i)])}(u_-^T)
\]
which is exactly one of the denominators that we computed above.
\end{rem}

\subsection{The Cayley plane}

For the Cayley plane, we use the following reduced expression for the longest word $w_0$ of the Weyl group of $E_6$ to carry out the construction of an initial seed:
\[
\wo = (s_1s_2s_3s_1s_4s_3s_1s_2s_4s_3s_5s_4s_2s_3s_4\framebox{\!$s_5s_1s_3s_4s_2$\!})(s_6s_5s_4s_2s_3s_4s_5\underline{s_6}s_1s_3s_4\underline{s_5s_2s_4s_3s_1}).
\]
The last $16$ indices correspond to our previous choice of $(w^P)^{-1}$. We have boxed the indices $t_i$ for $1\le i<6$ and underlined the last occurrence of each $s_i$ ($1\le i\le 6$). Thus, the set of $\wo$-exchangeable indices is given by the complement of the underlined indices:
\[
\woExch=\{1,\dots, 36\}\setminus \{1,2,3,4,5,9\}.
\]
This gives the mutable cluster variables
\begin{equation}\label{eqn:clust-vars-E6}
  \begin{array}{lllll}
    \phiGLS(6,\dfwt[4]) & \phiGLS(7,\dfwt[3]) & \phiGLS(8,\dfwt[1]) & \phiGLS(10,\dfwt[5]) & \phiGLS(11,\dfwt[4]) \\
    \phiGLS(12,\dfwt[3]) & \phiGLS(13,\dfwt[2]) & \phiGLS(14,\dfwt[4]) & \phiGLS(15,\dfwt[5]) & \phiGLS(16,\dfwt[6])
	\end{array}
\end{equation}
and the frozen variables 
\begin{equation}\label{eqn:frozen-clust-vars-E6}
	\begin{array}{lllll}
    \phiGLS(t_1=20,\dfwt[1]) & \phiGLS(t_2=17,\dfwt[2]) & \phiGLS(t_3=19,\dfwt[3]) & \phiGLS(t_4=18,\dfwt[4]) \\ 
		\phiGLS(t_5=21,\dfwt[5]) & \phiGLS(t_6=42,\dfwt[6]) & \minor^T_{\dfwt[6],\dfwt[6]}.
  \end{array}
\end{equation}
For ease of notation, we define the following polynomials in Pl\"ucker coordinates, in addition to the ones appearing in \eqref{eq:CayleyPlane_denominators_of_potential} and \eqref{eq:CayleyPlane_numerators_of_potential}:
\begin{align*}
  q_8 &= p_1p_7'-p_0p_8, \quad
  q_8' = p_2p_6' - p_1p_7' + p_0p_8, \quad
  q_{10} = p_2p_8''-p_1p_9''+p_0p_{10}', \\
  q_{15}&=p_6'(p_1p_8' - p_0p_9') - p_7''(p_1p_7' + p_0p_8)
\end{align*}

\begin{lem}\label{lem:plucker-expressions-E6}
The mutable cluster variables in equation \eqref{eqn:clust-vars-E6} can be expressed on $U_-^\circ$ in terms of Pl\"ucker coordinates as:
  \begin{equation}
    \begin{array}{llll}
      \phiGLS(6,\dfwt[4]) = p_3, & \phiGLS(7,\dfwt[3]) = p_2, & \phiGLS(8,\dfwt[1]) = p_1, & \phiGLS(10,\dfwt[5]) = p_4'',\\
      \phiGLS(11,\dfwt[4]) = q_8', & \phiGLS(12,\dfwt[3]) = q_8, &  \phiGLS(13,\dfwt[2]) = p_4', & \phiGLS(14,\dfwt[4]) = q_{15}, \\
      
      \phiGLS(15,\dfwt[5]) = q_{10},&  \phiGLS(16,\dfwt[6]) = p_5'',
    \end{array}
  \end{equation}
whereas the frozen cluster variables in equation \eqref{eqn:frozen-clust-vars-E6} can be expressed as: 
  \begin{equation}
    \begin{array}{llll}
      \phiGLS(17,\dfwt[2]) = q_{12}, & \phiGLS(18,\dfwt[4]) = q_{24}, & \phiGLS(19,\dfwt[3]) = q_{16}, & \phiGLS(20,\dfwt[1]) = p_8, \\
      \phiGLS(21,\dfwt[5]) = q_{20}, & \phiGLS(42,\dfwt[6]) = p_{16}, & \minor_{\dfwt[6],\dfwt[6]} = p_0.
    \end{array}
  \end{equation}
\end{lem}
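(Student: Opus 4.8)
The strategy is to compute each of the generalized minors $\phiGLS(i,\dfwt[r_i])$ appearing in the mutable and frozen cluster variables as a Laurent monomial (or, after summing contributions, a polynomial) in the torus coordinates $a_1,\dots,a_{16}$ on $\opendunim$, exactly as in the proof of Lemma \ref{lem:CayleyPlane_minors_coincide_with_denominators_of_potential}, and then match these expressions against the toric expressions of the claimed Pl\"ucker-coordinate polynomials on the right-hand side. Once the two sides agree on the dense torus $\opendunim\subset\dunimP$, irreducibility of $\dunimP$ forces equality on all of $\dunimP$, hence on $U_-^\circ$. For the frozen variables, most of the identities have in fact already been established: by Remark \ref{rmk: transposes}, the frozen minors of the form $\minor^T_{\wop(\dfwt[i]),\wo(\dfwt[i])}$ coincide (as polynomials in the torus coordinates) with the minors $\minor_{\dfwt[\DynkinSymmetry(i)],\wPinv(\dfwt[\DynkinSymmetry(i)])}$ computed in Lemma \ref{lem:CayleyPlane_minors_coincide_with_denominators_of_potential}, so the identifications $\phiGLS(17,\dfwt[2])=q_{12}$, $\phiGLS(18,\dfwt[4])=q_{24}$, $\phiGLS(19,\dfwt[3])=q_{16}$, $\phiGLS(20,\dfwt[1])=\p_8$, $\phiGLS(21,\dfwt[5])=q_{20}$, $\phiGLS(-6,\dfwt[6])=\p_{16}$ follow from combining that lemma with the dictionary in Remark \ref{rmk: transposes} (one checks that each $u_{\ge t_i}$ for $i\in I_P$ indexes a coset whose minimal representative is $\wop$). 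The identity $\minor^T_{\dfwt[6],\dfwt[6]}=\p_0$ is immediate since $\p_0\equiv 1$ on $\dunim$ and $\minor_{\dfwt[6],\dfwt[6]}(u_-)$ is the coefficient of the highest weight vector in $u_-\cdot\hwt[6]$, which is $1$ for a lower-unipotent element.

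For the mutable variables the plan is as follows. First, using the definition $\phiGLS(i,\dfwt[r_i])=\minor_{u_{\ge i}(\dfwt[r_i]),\wo(\dfwt[r_i])}$ with $u_{\ge i}=s_{r_{\ellwo}}\cdots s_{r_i}$, and the reduced expression for $\wo$ displayed just above \eqref{eqn:clust-vars-E6}, I would compute the weights $u_{\ge i}(\dfwt[r_i])$ and $\wo(\dfwt[r_i])$ in the relevant minuscule (or small) fundamental representations $\dfwtrep[r_i]$. For $r_i\in\{1,6\}$ the representations are minuscule, so Algorithm \ref{alg:Plucker_torus_expansion} applies directly and the computation reduces to counting paths in the weight diagram \eqref{eq:Cayley-Hasse-diagram}; one expects $\phiGLS(8,\dfwt[1])=\p_1$ and $\phiGLS(16,\dfwt[6])=\p_5''$ to drop out essentially immediately, as these correspond to short paths near the extremal weights. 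For $r_i\in\{2,3,4,5\}$ one uses the general Algorithm \ref{alg:torus-expansion}: act on the highest weight vector $\hwt[r_i]$ by the appropriate product of divided powers of lowering generators to reach weight $u_{\ge i}(\dfwt[r_i])$, then enumerate the monomials in the expansion of $u_-^\T$ (equivalently, use the transpose formulation of Remark \ref{rmk: transposes} and work directly with $u_-$) that return the vector to weight $\wo(\dfwt[r_i])$. Summing the corresponding coefficients yields a polynomial in the $a_j$. In parallel, I would expand each of the target expressions $\p_3,\p_2,\p_1,\p_4'',q_8',q_8,\p_4',q_{15},q_{10},\p_5''$ in the $a_j$ using Algorithm \ref{alg:Plucker_torus_expansion} together with the Pl\"ucker relations from Remark \ref{rem:CayleyPlane_inverted_Plucker_and_generators} (which already give the toric expressions of $\p_8$, $\p_9'$, etc. needed to assemble $q_8$, $q_8'$, $q_{10}$, $q_{15}$). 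Comparing the two lists of polynomials term by term completes the proof.

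The main obstacle I anticipate is purely computational bookkeeping: correctly identifying the Weyl group element $u_{\ge i}$ for each $i$ from the long reduced word of $\wo$, and then carrying out the path enumeration in the weight diagrams of the non-minuscule fundamental representations $\dfwtrep[2],\dfwtrep[3],\dfwtrep[4],\dfwtrep[5]$ of $E_6$, where higher-dimensional weight spaces and divided-power subtleties (the factors $\tfrac1{m!}$ in \eqref{eq:CayleyPlane_action_of_si}) must be handled carefully. There is also a minor subtlety in matching normalizations: the criterion of Proposition \ref{prop:Criterion_Plucker_coordinates_coincide_with_GLS} identifies $\p_i$ with $\pGLS_m$ only up to a nonzero constant, and the minors $\phiGLS(i,\dfwt[r_i])$ carry specific normalizations coming from the choice of Chevalley basis; one must check that the constants all work out to $1$ (which is forced, since on $\opendunip$ every such minor has constant term equal to the product of the leading monomials, normalized to $1$ by the unipotent decomposition \eqref{eq:elements_of_dunipP}, matching the constant term of the corresponding Pl\"ucker polynomial). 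As in Lemmas \ref{lem:CayleyPlane_minors_coincide_with_denominators_of_potential} and \ref{lem:Freudenthal_minors_coincide_with_denominators_of_potential}, this verification is best delegated to a computer algebra system, and the proof in the text would simply record the resulting toric expressions and invoke density and irreducibility of $\dunimP$ to conclude.
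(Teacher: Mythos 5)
Your proposal follows the same approach as the paper's proof: compute the generalized minors $\phiGLS(i,\dfwt[r_i])$ in torus coordinates via Algorithm \ref{alg:torus-expansion} (with the direction change of Remark \ref{rmk: transposes}), compute the claimed Pl\"ucker-coordinate polynomials in torus coordinates via Algorithm \ref{alg:Plucker_torus_expansion}, and compare on the dense torus $\opendunim$. One small imprecision: the elements $u_{\ge t_i}$ for $i\notin\IP$ are proper suffixes of $\wop$ rather than ``cosets whose minimal representative is $\wop$''; what makes the frozen-variable identification work is that $u_{\ge t_i}(\dfwt[r_{t_i}])=\wop(\dfwt[r_{t_i}])$ because the omitted prefix of $\wop$ stabilizes $\dfwt[r_{t_i}]$, so the resulting minor agrees with the one from Lemma \ref{lem:CayleyPlane_minors_coincide_with_denominators_of_potential}.
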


\begin{proof}
The cluster variables, being generalized minors, are calculated using Algorithm \ref{alg:torus-expansion} (after applying the change of looking for paths going down from $u_{\ge k}(\dfwt[i])$ to $w_0(\dfwt[i])$ as mentioned in Remark \ref{rmk: transposes}), while we use Algorithm \ref{alg:Plucker_torus_expansion} to compute the torus expansions of the Pl\"ucker coordinate polynomials; afterwards, it is a straightforward check that each equality claimed above holds.
\end{proof}

We provide in Figure \ref{fig:E6-quiver} (left) the cluster quiver as defined by \cite{GLS_partial_flag_varieties_and_preprojective_algebras}, which we described in Subsection \ref{sec:GLS_Cluster_structure_on_HS}. We have labelled the vertices by the cluster variables above in Lemma \ref{lem:plucker-expressions-E6}. 

\begin{figure}[h!t]
  \begin{center}
    \begin{tikzpicture}[thick, scale=0.6, every node/.style={scale=0.6}]
      \tikzset{>=Latex}
      \tikzstyle{df}=[draw,fill]
      \coordinate (one) at (0,.5);
      \coordinate (two) at (3,.25);
      \coordinate (three) at (1,0.5);
      \coordinate (four) at (2,.5);
      \coordinate (five) at (4,.5);
      \coordinate (six) at (5,.5);

      \draw[help lines] 
      (one)--(0,11.5)
      (two)--(3,11.5)
      (three)--(1,11.5)
      (four)--(2,11.5)
      (five)--(4,11.5)
      (six)--(5,11.5);

      \draw[gray, ultra thick] 
      (one)--(three)--(four)--(five)--(six)
      (four)--(two);

      \draw[gray,fill] 
      (one) circle (3pt)
      (two) circle (3pt)
      (three) circle (3pt)
      (four) circle (3pt)
      (five) circle (3pt);

      \draw[gray,fill=white]
      (six) circle (3pt);

      \draw[ultra thin,blue] 
      (5,1) node[df](62){}
      (4,2) node[df](53){}
      (2,3) node[df](44){}
      (1,4) node[df](33){}
      (3,4) node[df](22){}
      (0,5) node[df](12){}
      (-1,12) node[df](00){};

      \draw[circle,ultra thin,black] 
      (2,5) node[df](43){}
      (1,6) node[df](32){}
      (4,6) node[df](52){}
      (2,7) node[df](42){}
      (5,7) node[df](61){}
      (3,8) node[df](21){}
      (4,8) node[df](51){}
      (2,9) node[df](41){}
      (1,10) node[df](31){}
      (0,11) node[df](11){};

      \node at (one) [gray,below = 3pt] {$1$};
      \node at (two) [gray,below = 3pt] {$2$};
      \node at (three) [gray,below = 3pt] {$3$};
      \node at (four) [gray,below = 3pt] {$4$};
      \node at (five) [gray,below = 3pt] {$5$};
      \node at (six) [gray,below = 3pt] {$6$};

      \draw
      (12) edge[->] (11)
      (22) edge[->] (21)
      (33) edge[->] (32)
      (32) edge[->] (31)
      (44) edge[->] (43)
      (43) edge[->] (42)
      (42) edge[->] (41)
      (53) edge[->] (52)
      (52) edge[->] (51)
      (62) edge[->] (61);


      \draw
      (11) edge[->] (32)
      (31) edge[->] (42)
      (21) edge[->] (43)
      (61) edge[->] (53)
      (52) edge[->] (44);

      \draw[black]
      (00) edge[->] (11)
      (41) edge[->] (21)
      (41) edge[->] (51)
      (51) edge[->] (42)
      (51) edge[->] (61)
      (42) edge[->] (32)
      (42) edge[->] (52)
      (32) edge[->] (12)
      (32) edge[->] (43)
      (43) edge[->] (33)
      (43) edge[->] (22);


      \draw
      (62) node[right=3pt] {$p_{16}$}
      (53) node[below=1pt,left=3pt] {$q_{20}$}
      (44) node[below=1pt,left=3pt] {$q_{24}$}
      (22) node[right=3pt] {$q_{12}$}
      (33) node[below=1pt, left=3pt] {$q_{16}$}
      (12) node[left=3pt] {$p_8$}
      (43) node[below=1pt,right=3pt] {$q_{15}$}
      (32) node[left=3pt] {$q_8$}
      (52) node[right=3pt] {$q_{10}$}
      (42) node[left=3pt] {$q_8'$}
      (61) node[right=3pt] {$p_5''$}
      (21) node[below=1pt,left=3pt] {$p_4'$}
      (51) node[right=3pt] {$p_4''$}
      (41) node[right=3pt] {$p_3$}
      (31) node[right=3pt] {$p_2$}
      (11) node[left=3pt] {$p_1$}
      (00) node[left=3pt] {$p_0$};
    \end{tikzpicture}
		\qquad
    \begin{tikzpicture}[thick, scale=0.6, every node/.style={scale=0.6}]
      \tikzset{>=Latex}
      \tikzstyle{df}=[draw,fill]
      \coordinate (one) at (0,.5);
      \coordinate (two) at (3,.25);
      \coordinate (three) at (1,0.5);
      \coordinate (four) at (2,.5);
      \coordinate (five) at (4,.5);
      \coordinate (six) at (5,.5);

      \draw[help lines] 
      (one)--(0,11.5)
      (two)--(3,11.5)
      (three)--(1,11.5)
      (four)--(2,11.5)
      (five)--(4,11.5)
      (six)--(5,11.5);

      \draw[gray, ultra thick] 
      (one)--(three)--(four)--(five)--(six)
      (four)--(two);

      \draw[gray,fill] 
      (one) circle (3pt)
      (two) circle (3pt)
      (three) circle (3pt)
      (four) circle (3pt)
      (five) circle (3pt);

      \draw[gray,fill=white]
      (six) circle (3pt);

      \draw[circle,ultra thin,black] 
      (5,1) node[df](62){}
      (4,2) node[df](53){}
      (2,3) node[df](44){}
      (1,4) node[df](33){}
      (3,4) node[df](22){}
      (0,5) node[df](12){}
      (2,5) node[df](43){}
      (1,6) node[df](32){}
      (4,6) node[df](52){}
      (2,7) node[df](42){}
      (5,7) node[df](61){}
      (3,8) node[df](21){}
      (4,8) node[df](51){}
      (2,9) node[df](41){}
      (1,10) node[df](31){}
      (0,11) node[df](11){};

      \node at (one) [gray,below = 3pt] {$1$};
      \node at (two) [gray,below = 3pt] {$2$};
      \node at (three) [gray,below = 3pt] {$3$};
      \node at (four) [gray,below = 3pt] {$4$};
      \node at (five) [gray,below = 3pt] {$5$};
      \node at (six) [gray,below = 3pt] {$6$};

      \draw[black] 
      (12) edge[->] (33)
      (33) edge[->] (44)
      (44) edge[->] (53)
      (53) edge[->] (62)
      (22) edge[->] (44)
      (41) edge[->] (21)
      (41) edge[->] (51)
      (51) edge[->] (42)
      (51) edge[->] (61)
      (42) edge[->] (32)
      (42) edge[->] (52)
      (32) edge[->] (12)
      (32) edge[->] (43)
      (43) edge[->] (33)
      (43) edge[->] (22)
      (11) edge[->] (31)
      (31) edge[->] (41)
      (21) edge[->] (42)
      (61) edge[->] (52)
      (52) edge[->] (43);
    \end{tikzpicture}
  \end{center}
  \caption{The cluster quiver (left) as defined in \cite{GLS_partial_flag_varieties_and_preprojective_algebras} and quiver (right) defined in \cite{CMP_Quantum_cohomology_of_minuscule_homogeneous_spaces} mentioned in Remark \ref{rem:GLS_and_CMP_quivers} for the Cayley plane. The frozen variables are blue squares in the cluster quiver.}
  \label{fig:E6-quiver}
\end{figure}
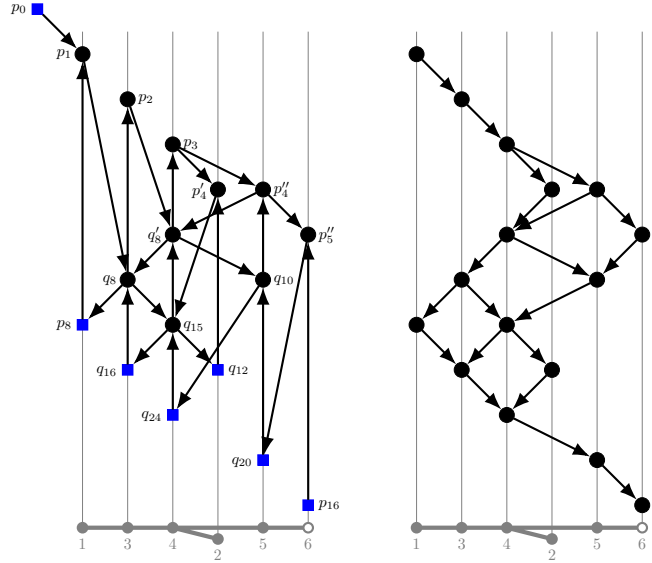
\begin{rem}\label{rem:GLS_and_CMP_quivers}
The cluster quiver is remarkably similar to a quiver defined in \cite{CMP_Quantum_cohomology_of_minuscule_homogeneous_spaces}, which we have drawn to the right in Figure \ref{fig:E6-quiver} for comparison, although this quiver is not known to be a cluster quiver. It is defined using a fixed reduced expression for $\wP$ (denoted $w_X$ there), having as vertices the simple reflections of the expression, and an arrow from each simple reflection to the first occurrence of each non-commuting simple reflection to the right of it. See Definition 2.1 of \cite{CMP_Quantum_cohomology_of_minuscule_homogeneous_spaces} for more details.

This quiver is shown to provide a combinatorial tool used to determine Poincar\'e dual Schubert classes that generalizes Young diagrams for Grassmannians to general (co-) minuscule homogeneous spaces. They also use the quivers to formulate a quantum Chevalley formula, and show that they can be used to determine ``higher quantum Poincar\'e duality'', as well as to determine the smallest power of the quantum parameter appearing in the quantum product of two Schubert classes. The quivers thus play an important role in the Schubert calculus of (co-) minuscule homogeneous spaces, but are not shown to be related to any cluster structure in \cite{CMP_Quantum_cohomology_of_minuscule_homogeneous_spaces}. Note that these quivers are also used in Algorithm \ref{alg:Plucker_torus_expansion} as well as in Corollary 8.12 of \cite{Spacek_LP_LG_models} as combinatorial tools to find subexpressions of a given Weyl group element inside the fixed reduced expression of another element.

We have formulated an algorithm to convert the quiver of \cite{CMP_Quantum_cohomology_of_minuscule_homogeneous_spaces} into the cluster quiver as defined in \cite{GLS_partial_flag_varieties_and_preprojective_algebras}, and we have verified the algorithm in the cases of the Cayley plane, the Freudenthal variety and a number of cases of Grassmannians and quadrics. We intend to verify this more generally and explore this relation between the cluster structure and quantum cohomology further.
\end{rem}

Although it is simple to check that the generalized minor is equal to the given Pl\"ucker coordinate expression restricted to $U_-^\circ$ using Algorithms \ref{alg:torus-expansion} and \ref{alg:Plucker_torus_expansion}, similar to the proof of Lemma \ref{lem:CayleyPlane_minors_coincide_with_denominators_of_potential}, it is difficult to obtain these Pl\"ucker coordinate expressions. We were able to perform the computations directly in this case using a computer algebra program, but the computations for the Freudenthal variety require a more sophisticated approach, which we describe in the following subsection.

\subsection{The Freudenthal variety}\label{sec:Cluster_struc_Freudenthal}
For the Freudenthal variety, we use the following reduced expression for the longest word $\wo$ of the Weyl group of $\LGE_7$ to construct an initial seed:
\begin{align*}  
  \wo=& s_1s_2s_3s_1s_4s_2s_3s_1s_4s_3s_5s_4s_2s_3s_1s_4s_3s_5s_4s_2s_6s_5s_4s_2s_3s_1s_4s_3s_5s_4\fbox{\!$s_2s_6s_5s_4s_3s_1$\!}\\
      &s_7s_6s_5s_4s_3s_2s_4s_5s_6s_1s_3s_4s_7s_5s_2s_4s_3\underline{s_1}s_6s_5s_4\underline{s_2s_3s_4s_5s_6s_7}.
\end{align*}

We have boxed the indices $t_i$ for $1\le i< 7$ and underlined the last occurrence of each $s_i$ ($1\le i\le 7$). Thus, the set of $w_0$-exchangeable indices is given by the complement of the underlined indices:
\[e(w_0)=\{1,\dots, 63\}\setminus \{1,2,3,4,5,6,10\}.\]
This gives the mutable cluster variables:
\begin{equation}\label{eqn:clust-vars-E7}
  \begin{array}{lllll}
    \phiGLS(7,\dfwt[4]) & \phiGLS(8,\dfwt[5]) & \phiGLS(9,\dfwt[6]) & \phiGLS(11,\dfwt[3]) & \phiGLS(12,\dfwt[4]) \\
    \phiGLS(13,\dfwt[2]) & \phiGLS(14,\dfwt[5]) & \phiGLS(15,\dfwt[7]) & \phiGLS(16,\dfwt[4]) & \phiGLS(17,\dfwt[3]) \\
    \phiGLS(18,\dfwt[1]) & \phiGLS(19,\dfwt[6]) & \phiGLS(20,\dfwt[5]) & \phiGLS(21,\dfwt[4]) & \phiGLS(22,\dfwt[2]) \\
    \phiGLS(23,\dfwt[3]) & \phiGLS(24,\dfwt[4]) & \phiGLS(25,\dfwt[5]) & \phiGLS(26,\dfwt[6]) & \phiGLS(27,\dfwt[7])
	\end{array}
\end{equation}
and the frozen variables: 
\begin{equation}\label{eqn:frozen-clust-vars-E7}
	\begin{array}{llll}
    \phiGLS(t_1=28,\dfwt[1]) & \phiGLS(t_2=33,\dfwt[2]) & \phiGLS(t_3=29,\dfwt[3]) & \phiGLS(t_4=30,\dfwt[4]) \\
		\phiGLS(t_5=31,\dfwt[5]) & \phiGLS(t_6=32,\dfwt[6]) & \phiGLS(t_7=70,\dfwt[7]) & \Delta^T_{\dfwt[7],\dfwt[7]}.
  \end{array}
\end{equation}
As before, we define the following polynomials in Pl\"ucker coordinates in addition to the ones appearing in \eqref{eq:Freudenthal_denominators_of_potential} and \eqref{eq:Freudenthal_numerators_of_potential}, for ease of notation:
\begingroup\allowdisplaybreaks\begin{align*}
  q_{10} &=p_1p_9 - p_{10}, \qquad
  q_{10}' =p_2p_8' - p_1p_9 + p_{10}, \qquad
  q_{10}'' =p_3p_7'-p_2p_8'+p_1p_9-p_{10},\\
  q_{12} &= p_5''p_7''-p_6'p_6'', \qquad
  q_{14} =p_2p_{12}'' - p_1p_{13}'' + p_{14}'\qquad 
  q_{19}' =p_9'(p_{10}-p_1p_9) - p_8'(p_{11}-p_1p_{10}') \\
  q_{20} &=p_5'p_{15}''-p_4p_{16}''+p_3p_{17}''-p_2p_{18}'', \qquad
  q_{23} =p_5''(p_{18} -p_1p_{17}) - p_6''(p_{17}'-p_1p_{16})\\
  q_{18}' &=p_2(p_5'p_{11}'' - p_4p_{12}'')+ p_1(p_4p_{13}''-p_5'p_{12}') + p_5'p_{13} - p_4p_{14}'\\
  q_{28}' &=(p_1p_{15}-p_{16}')(p_5''p_7''-p_6'p_6'') - (p_1p_{16}-p_{17}')(p_4p_7''-p_5'p_6'')
									+ (p_1p_{17}-p_{18})(p_4p_6'-p_5'p_5'')\\
  q_{30} &=p_{18}''(p_3p_9'' - p_2p_{10}'' + p_1p_{11}' - p_{12}) - p_{19}''(p_3p_8'' - p_2p_9' + p_1p_{10}' - p_{11})\\
									&\quad- p_{20}'(-p_3p_7' + p_2p_8' - p_1p_9 + p_{10})
											+ p_3q_{27}\\
  q_{40} &=(p_2p_{12}''-p_1p_{13}''+p_{14}')(p_6''p_{20}''-p_5''p_{21}'+p_4p_{22}')\\
           &\quad+(p_2p_{11}''-p_1p_{12}'+p_{13})(-p_5'p_{22}'+p_6'p_{21}'-p_7''p_{20}''+q_{27}'')
\end{align*}\endgroup

\begin{lem} \label{lem:plucker-expressions-E7}
  The mutable cluster variables in equation \eqref{eqn:clust-vars-E7} can be expressed on $U_+^\circ$ in terms of Pl\"ucker coordinates as:
  \begin{equation}
    \begin{array}{llll}
      \phiGLS(7,\dfwt[4]) = p_4, & \phiGLS(8,\dfwt[5]) = p_3, & \phiGLS(9,\dfwt[6]) = p_2, &\phiGLS(11,\dfwt[3]) = p_5'', \\
      \phiGLS(12,\dfwt[4]) = q_{10}'', & \phiGLS(13,\dfwt[2]) = p_5', & \phiGLS(14,\dfwt[5]) = q_{10}', &      \phiGLS(15,\dfwt[7]) = p_1, \\
      \phiGLS(16,\dfwt[4]) = q_{18}',& \phiGLS(17,\dfwt[3]) = q_{12}, & \phiGLS(18,\dfwt[1]) = p_6'', & \phiGLS(19,\dfwt[6]) = q_{10}, \\
      \phiGLS(20,\dfwt[5]) = q_{19}',& \phiGLS(21,\dfwt[4]) = q_{28}', & \phiGLS(22,\dfwt[2]) = q_{14}, &\phiGLS(23,\dfwt[3]) = q_{23}, \\
      \phiGLS(24,\dfwt[4]) = q_{40}, & \phiGLS(25,\dfwt[5]) = q_{30}, & \phiGLS(26,\dfwt[6]) = q_{20}, & \phiGLS(27,\dfwt[7]) = p_{10},
    \end{array}
  \end{equation}
  whereas the frozen cluster variables can be expressed as: 
  \begin{equation}
    \begin{array}{llll}
      \phiGLS(28,\dfwt[1]) = q_{18},& \phiGLS(29,\dfwt[3]) = q_{36}', &\phiGLS(30,\dfwt[4]) = q_{54}, & \phiGLS(31,\dfwt[5]) = q_{45}, \\
      \phiGLS(32,\dfwt[6]) = q_{36}, & \phiGLS(33,\dfwt[2]) = q_{27}, & \phiGLS(70,\dfwt[7]) = p_{27}, & \Delta^T_{\dfwt[7],\dfwt[7]} = p_0   
    \end{array}
  \end{equation}
\end{lem}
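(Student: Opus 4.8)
\textbf{Proof proposal for Lemma \ref{lem:plucker-expressions-E7}.}

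The plan is to mirror exactly the strategy used for the Cayley plane (Lemma \ref{lem:plucker-expressions-E6}), so that the proof is essentially a verification rather than a new argument, but to handle the increased size of the computations by combining the two relevant algorithms. The core observation is that all cluster variables in equations \eqref{eqn:clust-vars-E7} and \eqref{eqn:frozen-clust-vars-E7} are transposed generalized minors of the form $\minor^\T_{u_{\ge i}(\dfwt[r_i]),\wo(\dfwt[r_i])}$, and each of the candidate Pl\"ucker coordinate polynomials on the right-hand side of the asserted identities is either a generalized Pl\"ucker coordinate or one of the explicit homogeneous polynomials $q_j$ defined just above the statement (or in \eqref{eq:Freudenthal_denominators_of_potential}, \eqref{eq:Freudenthal_numerators_of_potential}). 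Since $\opendunim\subset\dunimP$ is open and dense (Lemma 5.2 of \cite{Spacek_LP_LG_models}, already invoked repeatedly) and $\dunimP$ is irreducible, it suffices to check each claimed equality after restriction to the torus $\opendunim$, where both sides become explicit Laurent monomials or polynomials in the torus coordinates $a_1,\dots,a_{27}$.

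Concretely, the steps are as follows. First, following Remark \ref{rmk: transposes}, replace each $\minor^\T_{u_{\ge i}(\dfwt[r_i]),\wo(\dfwt[r_i])}$ by the task of summing, over all paths in the weight diagram of $V(\dfwt[r_i])$ from $u_{\ge i}(\dfwt[r_i])$ down to $\wo(\dfwt[r_i])$ (applying the relevant $\dChf_j$'s), the monomial contributions from the decomposition $u_-=\dy_{r_{27}}(a_{27})\cdots\dy_{r_1}(a_1)$; this is Algorithm \ref{alg:torus-expansion} with the path direction reversed, and as noted there it is run most efficiently by pruning partial paths that do not occur in the expansion of $u_-$. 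Second, use Algorithm \ref{alg:Plucker_torus_expansion} to compute the torus expansions of all the Pl\"ucker coordinates $\p_i^{(j)}$ appearing on $\cmX$ for the Freudenthal variety --- here one benefits from minusculeness, so only linear contributions occur and the subexpression bookkeeping is controlled by Proposition 8.11 of \cite{Spacek_LP_LG_models} (applied with $\wPinv=\wP$, using $\wPinv=\wP$ as recorded after \eqref{eq:E7P7_wP_reduced_expression}). Third, substitute these torus expansions into the polynomial definitions of $q_{10},q_{10}',q_{10}'',q_{12},q_{14},q_{19}',q_{20},q_{23},q_{18}',q_{28}',q_{30},q_{40}$ and of $q_{18},q_{27},q_{36},q_{36}',q_{45},q_{54}$, and compare term-by-term with the outputs of the first step. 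For the frozen variables, the identification is already essentially contained in Lemma \ref{lem:Freudenthal_minors_coincide_with_denominators_of_potential} together with the translation via $\DynkinSymmetry=\mathrm{id}$ spelled out at the end of Remark \ref{rmk: transposes} (recall $-\wo\cdot\dfwt[7]=\dfwt[7]$, so no Dynkin symmetry intervenes), and the two trivial identifications $\phiGLS(-7,\dfwt[7])=\p_{27}$ and $\Delta^\T_{\dfwt[7],\dfwt[7]}=\p_0$ follow as before from the fact that a lower unipotent element acts trivially on the lowest weight vector $v_0$ (giving $\p_0\equiv 1$ on $\dunimP$, matching $\Delta^\T_{\dfwt[7],\dfwt[7]}|_{\opendunip}=1$).

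The main obstacle is purely computational scale rather than conceptual: the weight diagram of $V(\dfwt[7])$ has $56$ vertices and the reduced word for $\wo$ has length $63$, so enumerating all paths from $u_{\ge i}(\dfwt[7])$ to $\wo(\dfwt[7])$ and matching them against subexpressions is infeasible by hand. This is precisely why the combined pruning version of Algorithm \ref{alg:torus-expansion} is essential, and why we carry out these particular verifications with a computer algebra program (as acknowledged for the Cayley plane after Lemma \ref{lem:plucker-expressions-E6}); the code is available at the repository cited in the acknowledgements. A secondary, more delicate point is simply assembling the correct closed-form polynomials $q_j$ in the first place --- the torus expansions do not by themselves reveal a Pl\"ucker-coordinate expression --- but once the polynomials in the statement are fixed, checking the identities is the routine verification described above, and extending from $\opendunim$ to all of $\dunimP$ is immediate by density and irreducibility.
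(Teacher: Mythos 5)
Your proposal matches the paper's proof: the paper proves this lemma in one sentence by invoking the same approach as Lemma \ref{lem:plucker-expressions-E6} (Algorithm \ref{alg:torus-expansion} with the direction change of Remark \ref{rmk: transposes} for the cluster variables, Algorithm \ref{alg:Plucker_torus_expansion} for the Pl\"ucker polynomials, and verification of the resulting torus expansions on $\opendunim$). Your elaboration of the verification step, the density-and-irreducibility extension to $\dunimP$, the reduction of the frozen-variable cases to Lemma \ref{lem:Freudenthal_minors_coincide_with_denominators_of_potential}, and the observation that the nontrivial task is discovering the polynomials $q_j$ (the paper's Algorithm \ref{alg:subduction-for-gen-minors}) rather than verifying them, are all fully consistent with the paper's surrounding discussion.
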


\begin{proof}
The methods are the same as in the proof of Lemma \ref{lem:plucker-expressions-E6}: use Algorithm \ref{alg:torus-expansion} (with the direction change of Remark \ref{rmk: transposes}) for the cluster variables, and use Algorithm \ref{alg:Plucker_torus_expansion} for the Pl\"ucker coordinate polynomials, and verify that the equalities hold on $\opendunim$.
\end{proof}

In Figure \ref{fig:E7-quiver} we provide the quiver, with vertices labelled by the cluster variables above in Lemma \ref{lem:plucker-expressions-E7}. As we noted in Remark \ref{rem:GLS_and_CMP_quivers} before, the cluster quiver is remarkably similar to a quiver defined in \cite{CMP_Quantum_cohomology_of_minuscule_homogeneous_spaces}, which we have drawn to the right for comparison. 

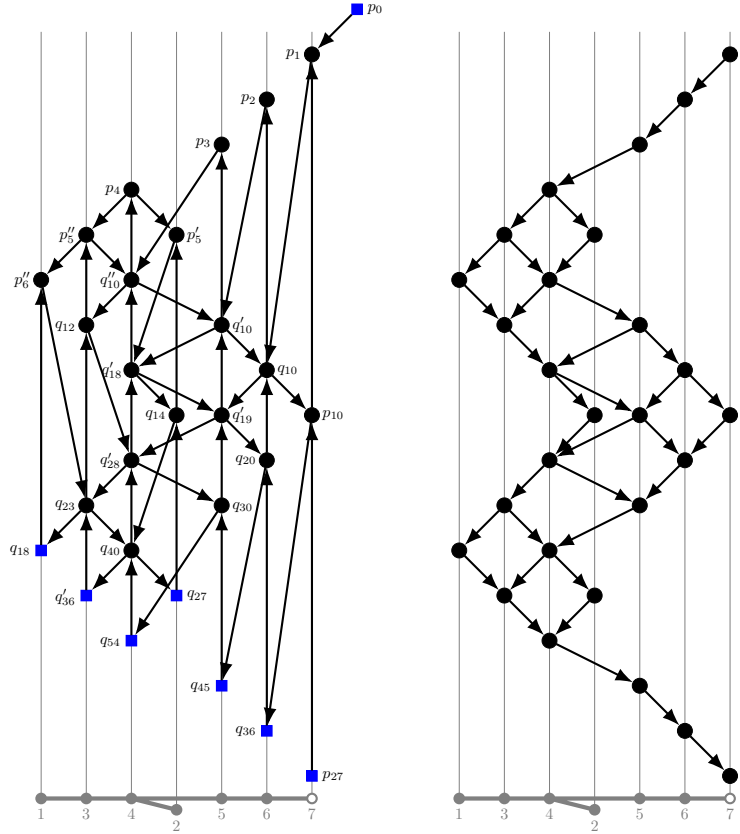
\begin{figure}[h!t]
  \begin{center}
    \begin{tikzpicture}[thick, scale=0.6, every node/.style={scale=0.6}]
      \tikzset{>=Latex}
      \tikzstyle{df}=[draw,fill]
      \coordinate (one) at (0,-.5);
      \coordinate (two) at (3,-.75);
      \coordinate (three) at (1,-0.5);
      \coordinate (four) at (2,-.5);
      \coordinate (five) at (4,-.5);
      \coordinate (six) at (5,-.5);
      \coordinate (seven) at (6,-.5);

      \draw[help lines] 
      (one)--(0,16.5)
      (two)--(3,16.5)
      (three)--(1,16.5)
      (four)--(2,16.5)
      (five)--(4,16.5)
      (six)--(5,16.5)
      (seven)--(6,16.5);

      \draw[gray, ultra thick] 
      (one)--(three)--(four)--(five)--(six)--(seven)
      (four)--(two);

      \draw[gray,fill] 
      (one) circle (3pt)
      (two) circle (3pt)
      (three) circle (3pt)
      (four) circle (3pt)
      (five) circle (3pt)
      (six) circle (3pt);

      \draw[gray,fill=white]
      (seven) circle (3pt);

      \draw[ultra thin,blue]
      (6,0) node[df](72){}
      (5,1) node[df](62){}
      (4,2) node[df](53){}
      (2,3) node[df](44){}
      (3,4) node[df](22){}
      (1,4) node[df](33){}
      (0,5) node[df](12){}
      (7,17) node[df](00){};

      \draw[circle,ultra thin,black] 
      (2,5) node[df](43){}
      (1,6) node[df](32){}
      (4,6) node[df](52){}
      (2,7) node[df](42){}
      (5,7) node[df](61){}
      (3,8) node[df](21){}
      (4,8) node[df](51){}
      (6,8) node[df](71){}
      (2,9) node[df](41){}
      (5,9) node[df](60){}
      (1,10) node[df](31){}
      (4,10) node[df](50){}
      (0,11) node[df](11){}
      (2,11) node[df](40){}
      (1,12) node[df](30){}
      (3,12) node[df](20){}
      (2,13) node[df](4-1){}
      (4,14) node[df](5-1){}
      (5,15) node[df](6-1){}
      (6,16) node[df](70){};

      \node at (one) [gray,below = 3pt] {$1$};
      \node at (two) [gray,below = 3pt] {$2$};
      \node at (three) [gray,below = 3pt] {$3$};
      \node at (four) [gray,below = 3pt] {$4$};
      \node at (five) [gray,below = 3pt] {$5$};
      \node at (six) [gray,below = 3pt] {$6$};
      \node at (seven) [gray,below = 3pt] {$7$};

      \draw
      (12) edge[->] (11)
      (22) edge[->] (21)
      (21) edge[->] (20)
      (33) edge[->] (32)
      (32) edge[->] (31)
      (31) edge[->] (30)
      (44) edge[->] (43)
      (43) edge[->] (42)
      (42) edge[->] (41)
      (41) edge[->] (40)
      (40) edge[->] (4-1)
      (53) edge[->] (52)
      (52) edge[->] (51)
      (51) edge[->] (50)
      (50) edge[->] (5-1)
      (62) edge[->] (61)
      (61) edge[->] (60)
      (60) edge[->] (6-1)
      (72) edge[->] (71)
      (71) edge[->] (70);


      \draw
      (70) edge[->] (60)
      (6-1) edge[->] (50)
      (5-1) edge[->] (40)
      (20) edge[->] (41)
      (11) edge[->] (32)
      (31) edge[->] (42)
      (21) edge[->] (43)
      (61) edge[->] (53)
      (52) edge[->] (44)
      (71) edge[->] (62);

      \draw[black]
      (4-1) edge[->] (20)
      (4-1) edge[->] (30)
      (30) edge[->] (11)
      (30) edge[->] (40)
      (40) edge[->] (31)
      (40) edge[->] (50)
      (50) edge[->] (41)
      (50) edge[->] (60)
      (60) edge[->] (51)
      (60) edge[->] (71)
      (41) edge[->] (21)
      (41) edge[->] (51)
      (51) edge[->] (42)
      (51) edge[->] (61)
      (42) edge[->] (32)
      (42) edge[->] (52)
      (32) edge[->] (12)
      (32) edge[->] (43)
      (43) edge[->] (33)
      (43) edge[->] (22)
      (00) edge[->] (70);


      \draw
      (72) node[right=3pt] {$p_{27}$}
      (71) node[right=3pt] {$p_{10}$}
      (70) node[left=3pt] {$p_1$}

      (62) node[left=3pt] {$q_{36}$}
      (61) node[left=3pt] {$q_{20}$}
      (60) node[right=3pt] {$q_{10}$}
      (6-1) node[left=3pt] {$p_2$}
      
      (53) node[below=1pt,left=3pt] {$q_{45}$}
      (52) node[below=1pt,right=3pt] {$q_{30}$}
      (51) node[right=3pt] {$q_{19}'$}
      (50) node[right=3pt] {$q_{10}'$}
      (5-1) node[above=1pt,left=3pt] {$p_3$}

      (44) node[below=1pt,left=3pt] {$q_{54}$}
      (43) node[left=3pt] {$q_{40}$}
      (42) node[left=3pt] {$q_{28}'$}
      (41) node[left=2pt] {$q_{18}'$}
      (40) node[left=3pt] {$q_{10}''$}
      (4-1) node[left=3pt] {$p_4$}

      (33) node[below=1pt, left=3pt] {$q_{36}'$}
      (32) node[left=3pt] {$q_{23}$}
      (31) node[below=1pt,left=3pt] {$q_{12}$}
      (30) node[left=3pt] {$p_5''$}
      
      (22) node[right=3pt] {$q_{27}$}
      (21) node[left=3pt] {$q_{14}$}
      (20) node[right=3pt] {$p_5'$}
      
      (12) node[left=3pt] {$q_{18}$}
      (11) node[left=3pt] {$p_6''$}

      (00) node[right=3pt] {$p_0$};
    \end{tikzpicture}
		\qquad
    \begin{tikzpicture}[thick, scale=0.6, every node/.style={scale=0.6}]
      \tikzset{>=Latex}
      \tikzstyle{df}=[draw,fill]
      \coordinate (one) at (0,-.5);
      \coordinate (two) at (3,-.75);
      \coordinate (three) at (1,-0.5);
      \coordinate (four) at (2,-.5);
      \coordinate (five) at (4,-.5);
      \coordinate (six) at (5,-.5);
      \coordinate (seven) at (6,-.5);

      \draw[help lines] 
      (one)--(0,16.5)
      (two)--(3,16.5)
      (three)--(1,16.5)
      (four)--(2,16.5)
      (five)--(4,16.5)
      (six)--(5,16.5)
      (seven)--(6,16.5);

      \draw[gray, ultra thick] 
      (one)--(three)--(four)--(five)--(six)--(seven)
      (four)--(two);

      \draw[gray,fill] 
      (one) circle (3pt)
      (two) circle (3pt)
      (three) circle (3pt)
      (four) circle (3pt)
      (five) circle (3pt)
      (six) circle (3pt);

      \draw[gray,fill=white]
      (seven) circle (3pt);

      \draw[circle,ultra thin,black]
      (6,0) node[df](72){}
      (5,1) node[df](62){}
      (4,2) node[df](53){}
      (2,3) node[df](44){}
      (3,4) node[df](22){}
      (1,4) node[df](33){}
      (0,5) node[df](12){} 
      (2,5) node[df](43){}
      (1,6) node[df](32){}
      (4,6) node[df](52){}
      (2,7) node[df](42){}
      (5,7) node[df](61){}
      (3,8) node[df](21){}
      (4,8) node[df](51){}
      (6,8) node[df](71){}
      (2,9) node[df](41){}
      (5,9) node[df](60){}
      (1,10) node[df](31){}
      (4,10) node[df](50){}
      (0,11) node[df](11){}
      (2,11) node[df](40){}
      (1,12) node[df](30){}
      (3,12) node[df](20){}
      (2,13) node[df](4-1){}
      (4,14) node[df](5-1){}
      (5,15) node[df](6-1){}
      (6,16) node[df](70){};

      \node at (one) [gray,below = 3pt] {$1$};
      \node at (two) [gray,below = 3pt] {$2$};
      \node at (three) [gray,below = 3pt] {$3$};
      \node at (four) [gray,below = 3pt] {$4$};
      \node at (five) [gray,below = 3pt] {$5$};
      \node at (six) [gray,below = 3pt] {$6$};
      \node at (seven) [gray,below = 3pt] {$7$};

      \draw 
      (12) edge[->] (33)
      (33) edge[->] (44)
      (44) edge[->] (53)
      (53) edge[->] (62)
      (62) edge[->] (72)
      (22) edge[->] (44)
      (4-1) edge[->] (20)
      (4-1) edge[->] (30)
      (30) edge[->] (11)
      (30) edge[->] (40)
      (40) edge[->] (31)
      (40) edge[->] (50)
      (50) edge[->] (41)
      (50) edge[->] (60)
      (60) edge[->] (51)
      (60) edge[->] (71)
      (41) edge[->] (21)
      (41) edge[->] (51)
      (51) edge[->] (42)
      (51) edge[->] (61)
      (42) edge[->] (32)
      (42) edge[->] (52)
      (32) edge[->] (12)
      (32) edge[->] (43)
      (43) edge[->] (33)
      (43) edge[->] (22)
      (70) edge[->] (6-1)
      (6-1) edge[->] (5-1)
      (5-1) edge[->] (4-1)
      (20) edge[->] (40)
      (11) edge[->] (31)
      (31) edge[->] (41)
      (21) edge[->] (42)
      (61) edge[->] (52)
      (52) edge[->] (43)
      (71) edge[->] (61);

    \end{tikzpicture}
  \end{center}
  \caption{The cluster quiver (left) as defined in \cite{GLS_partial_flag_varieties_and_preprojective_algebras} and quiver (right) defined in \cite{CMP_Quantum_cohomology_of_minuscule_homogeneous_spaces} mentioned in Remark \ref{rem:GLS_and_CMP_quivers} for the Freudenthal variety. The frozen variables are blue squares in the cluster quiver.}
  \label{fig:E7-quiver}
\end{figure}

It is straightforward to check that the generalized minor is equal to the given Pl\"ucker coordinate expression when restricted to $U_-^\circ$ using Algorithms \ref{alg:torus-expansion} and \ref{alg:Plucker_torus_expansion}. However, it is difficult to obtain the Pl\"ucker coordinate expressions themselves, and they are sufficiently complicated that we could not obtain them by brute force. Instead, we used the following algorithm. In a preprocessing step, we first compute the expansions of the Pl\"ucker coordinates in torus coordinates on $U_-^\circ$, yielding a list of homogeneous polynomials in the torus coordinates. Because we work in a minuscule representation, the highest power appearing in any Pl\"ucker coordinate expansion is $1$, see Theorem \ref{thm:Green_structure_minuscule_reps}.

\begin{algo}\phantomsection\label{alg:subduction-for-gen-minors}
  \begin{enumerate}
  \item For a given generalized minor $\phiGLS(i,\dfwt[r_i])$, first compute its expansion in torus coordinates using Algorithm \ref{alg:torus-expansion} with the direction change of Remark \ref{rmk: transposes}.
  \item Fix the monomial order $<$ on $\mathbb{C}[a_1,\dots, a_{\ell}]$ induced by the degree-lexicographic order using the ordering $a_1>a_2>\dots>a_{\ell}$ of the torus coordinates. 
  \item Let $L=\mathrm{in}_<(\phiGLS(i,\dfwt[r_i]))$ denote the minimal term of $\phiGLS(i,\dfwt[r_i])$ with respect to $<$. 
  \item Let $m$ denote the highest power of any torus coordinate appearing in $L$, and set $L_j$ to be the product of all torus coordinates appearing in $L$ with power at least $j$ for $1\le j\le m$.
  \item Next, let $p_{a_j}$ denote the Pl\"ucker coordinate with $L_j$ as leading term for $1\le j\le m$.
  \item If $\phiGLS(i,\dfwt[r_i])\pm \prod_{j=1}^m p_{a_j}\neq 0$ (the sign is chosen to cancel the leading term of $L$), then repeat the above steps (3)-(5) with the leading term of this new expression.
  \item Else if we obtain $0$, then we have found our desired Pl\"ucker coordinate expression.
  \end{enumerate}
\end{algo}

While it is not clear whether step (5) is always possible, we were able to perform it in every case that we computed. We now give an example of this reduction procedure for $\phiGLS(23,\dfwt[3])$. 

\begin{ex}
  \label{ex:subduction}
  Consider $f=\phiGLS(23,\dfwt[3])$. We compute its torus expansion (including only the last few terms for brevity, but see Appendix \ref{sec:expansion-appendix} for the full expansion):
  \begin{align*}
    &\dots+a_{5}a_{7}a_{13}a_{14}a_{16}a_{17}a_{18}^2a_{19}a_{20}a_{21}a_{22}a_{23}^2a_{24}^2a_{25}^2a_{26}^2a_{27}^3\\
    &+a_{5}a_{12}a_{13}a_{14}a_{16}a_{17}a_{18}^2a_{19}a_{20}a_{21}a_{22}a_{23}^2a_{24}^2a_{25}^2a_{26}^2a_{27}^3\\
    &+a_{11}a_{12}a_{13}a_{14}a_{16}a_{17}a_{18}^2a_{19}a_{20}a_{21}a_{22}a_{23}^2a_{24}^2a_{25}^2a_{26}^2a_{27}^3.
  \end{align*}
  Its $<$-minimal term is
  \[\mathrm{in}_<(\phiGLS(23,\dfwt[3]))=a_{11}a_{12}a_{13}a_{14}a_{16}a_{17}a_{18}^2a_{19}a_{20}a_{21}a_{22}a_{23}^2a_{24}^2a_{25}^2a_{26}^2a_{27}^3,\]
  which has highest exponent $m=3$. Thus we have
  \begin{align*}
    &L_1 = a_{27}\\
    &L_2 = a_{18}a_{23}a_{24}a_{25}a_{26}a_{27}\\
    &L_3 = a_{11}a_{12}a_{13}a_{14}a_{16}a_{17}a_{18}a_{19}a_{20}a_{21}a_{22}a_{23}a_{24}a_{25}a_{26}a_{27}
  \end{align*}
  These are the minimal terms of $p_1, p_6'',p_{16}$ respectively. $f-p_1p_6''p_{16}\neq 0$, and we find that its minimal term is
  \[-a_{10}a_{11}a_{12}a_{13}a_{14}a_{16}a_{17}a_{18}a_{19}a_{20}a_{21}a_{22}a_{23}^2a_{24}^2a_{25}^2a_{26}^2a_{27}^3,\]
  which has highest exponent $m=3$. Thus we have
  \begin{align*}
    &L_1 = a_{27}\\
    &L_2 = a_{23}a_{24}a_{25}a_{26}a_{27}\\
    &L_3 = a_{10}a_{11}a_{12}a_{13}a_{14}a_{16}a_{17}a_{18}a_{19}a_{20}a_{21}a_{22}a_{23}a_{24}a_{25}a_{26}a_{27}
  \end{align*}
  which are the minimal terms of $p_1,p_5'',p_{17}$ respectively. $f-p_1p_6''p_{16}+p_1p_5''p_{17}\neq 0$, and we find that its minimal term is
  \[-a_{11}a_{12}a_{13}a_{14}a_{15}a_{16}a_{17}a_{18}^2a_{19}a_{20}a_{21}a_{22}a_{23}^2a_{24}^2a_{25}^2a_{26}^2a_{27}^2,\]
  which has highest exponent $m=2$. Thus we have
  \begin{align*}
    &L_1=a_{18}a_{23}a_{24}a_{25}a_{26}a_{27}\\
    &L_2=a_{11}a_{12}a_{13}a_{14}a_{15}a_{16}a_{17}a_{18}a_{19}a_{20}a_{21}a_{22}a_{23}a_{24}a_{25}a_{26}a_{27}
  \end{align*}
  which are the minimal terms of $p_6'',p_{17}'$ respectively. $f-p_1p_6''p_{16}+p_1p_5''p_{17}+p_6''p_{17}'\neq 0$, and we find that its minimal term is
  \[a_{10}a_{11}a_{12}a_{13}a_{14}a_{15}a_{16}a_{17}a_{18}a_{19}a_{20}a_{21}a_{22}a_{23}^2a_{24}^2a_{25}^2a_{26}^2a_{27}^2\]
  which has highest exponent $m=2$. Thus we have
  \begin{align*}
    &L_1 = a_{23}a_{24}a_{25}a_{26}a_{27}\\
    &L_2 = a_{10}a_{11}a_{12}a_{13}a_{14}a_{15}a_{16}a_{17}a_{18}a_{19}a_{20}a_{21}a_{22}a_{23}a_{24}a_{25}a_{26}a_{27}
  \end{align*}
  which are the minimal terms of $p_5'',p_{18}$ respectively. Now $f-p_1p_6''p_{16}+p_1p_5''p_{17}+p_6''p_{17}'-p_5''p_{18}=0$, so we obtain the expression
  \[\phiGLS(23,\dfwt[3])=p_5''(p_{18}-p_1p_{17})-p_6''(p_{17}'-p_1p_{16})\]
  exactly as given in Lemma \ref{lem:plucker-expressions-E7}.
\end{ex}

\section{Valuations, Khovanskii bases, and Newton-Okounkov bodies}
\label{sec:khovanskii-basis}
Algorithm \ref{alg:subduction-for-gen-minors} is very reminiscent of the \emph{subduction algorithm} for \emph{Khovanskii bases}. In this section we will show that the Pl\"ucker coordinates form a Khovanskii basis for $\mathbb{C}[\cmX]$, and we then use this to give a \emph{Newton-Okounkov body} for $\cmX$. Throughout this section, we denote by $A$ a domain that is also a finitely generated algebra over an algebraically closed field $k$, and by $\Gamma\subset \mathbb{Q}^r$ a subgroup of the rational lattice with an ordering $\succ$ of its elements that respects the group operation. 

\begin{df}[\cite{Kaveh_Manon_Khovanskii_bases}, Definition 2.1]
  A \emph{valuation} is a function $\nu:A\setminus\{0\}\rightarrow \Gamma$ such that
  \begin{enumerate}
  \item For all $f,g\in A\setminus\{0\}$ with $f+g\neq0$, we have $\nu(f+g)\succeq \min_{\succ}(\nu(f),\nu(g))$. 
  \item For all $f,g\in A\setminus\{0\}$, we have $\nu(fg)=\nu(f)+\nu(g)$.
  \item For all $f\in A\setminus\{0\}$ and $0\neq c\in k$, we have $\nu(cf)=\nu(f)$.
  \end{enumerate}
  For $g\in\Gamma$, let $F_{\succeq g}$ be the vector space $\{f\in A\setminus \{0\}\mid \nu(f)\succeq g\}\cup \{0\}$, and define $F_{\succ g}$ analogously. We then define the \emph{associated graded algebra}
  \[\mathrm{gr}_\nu(A)=\bigoplus_{g\in\Gamma} F_{\succeq g}/F_{\succ g}.\]
  If for all $g\in \Gamma$, the quotient $F_{\succeq g}/F_{\succ g}$ is at most one-dimensional, then we say that $\nu$ has \emph{one-dimensional leaves}. 
\end{df}

The following lemma gives an important family of valuations with one-dimensional leaves.

\begin{lem}
  \label{lem:one-dim-leaves}
  Let $S\subset k[x_1,\dots, x_n]$ be a finitely generated subalgebra of a polynomial ring, and let $\nu$ be the valuation sending $f\in S$ to the exponent vector of its degree-lexicographically minimal term. Then $\nu$ has one-dimensional leaves. 
\end{lem}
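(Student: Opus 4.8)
The plan is to show directly that for every $g\in\Gamma$ (here $\Gamma=\Z^n$ with the degree-lexicographic order), the quotient $F_{\succeq g}/F_{\succ g}$ is at most one-dimensional by exhibiting an explicit spanning vector. First I would recall that, by definition, $\nu(f)$ is the exponent vector $\mathbf{a}\in\Z_{\ge0}^n$ such that $x^{\mathbf{a}}$ is the degree-lexicographically smallest monomial occurring (with nonzero coefficient) in $f$; one should check in passing that this is indeed a valuation in the sense of the preceding definition, which amounts to the standard facts that the smallest monomial of a product is the product of the smallest monomials (because $k[x_1,\dots,x_n]$ is a domain and a monomial order is compatible with multiplication) and that the smallest monomial of a sum is $\succeq$ the minimum of the two smallest monomials.

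Next, the main step: fix $g\in\Z^n$. If no element of $S$ has valuation exactly $g$, then $F_{\succeq g}=F_{\succ g}$ and the quotient is zero, so assume there is some $f_0\in S$ with $\nu(f_0)=g$, and let $c_0\in k^\times$ be the coefficient of $x^g$ in $f_0$. I claim $f_0$ spans $F_{\succeq g}/F_{\succ g}$. Indeed, take any $f\in F_{\succeq g}$ with $f\neq0$; then $\nu(f)\succeq g$. If $\nu(f)\succ g$ then $f\in F_{\succ g}$ and its class is zero. If $\nu(f)=g$, let $c\in k^\times$ be the coefficient of $x^g$ in $f$; then $f-\tfrac{c}{c_0}f_0$ has no $x^g$ term, and every other monomial of $f$ and of $f_0$ is $\succ x^g$, so either $f-\tfrac{c}{c_0}f_0=0$ or $\nu(f-\tfrac{c}{c_0}f_0)\succ g$, i.e.\ $f-\tfrac{c}{c_0}f_0\in F_{\succ g}$. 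In either case the class of $f$ equals $\tfrac{c}{c_0}$ times the class of $f_0$, so $F_{\succeq g}/F_{\succ g}$ is spanned by the single class of $f_0$ and is therefore at most one-dimensional. This holds for every $g$, so $\nu$ has one-dimensional leaves.

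The argument is essentially formal once the valuation axioms are in place, so I do not expect a genuine obstacle; the only point requiring a little care is confirming that the stated function really is a valuation — in particular property (2), that $\nu$ is additive under multiplication, which fails for a general term order only if the ground ring has zero divisors, but here $S$ sits inside a polynomial ring over a field and so is a domain, and a monomial order is multiplicative by definition. One should also note that $\nu$ takes values in $\Z_{\ge0}^n\subset\Z^n=\Gamma$, which is an ordered abelian group under degree-lexicographic order, so the setup of the definition applies. With these observations the lemma follows immediately from the spanning argument above.
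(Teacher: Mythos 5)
Your proposal is correct and follows essentially the same argument as the paper: both proofs reduce to the observation that if two elements of $S$ have valuation exactly $g$, then one can cancel the degree-lexicographic minimal term by subtracting an appropriate scalar multiple, landing in $F_{\succ g}$. The only cosmetic differences are that you fix a spanning representative $f_0$ rather than comparing two arbitrary elements $a,b$, and you spell out the (routine) verification that $\nu$ is a valuation, which the paper takes for granted.
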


\begin{proof}
  Let $g\in \mathbb{Z}^n$, and suppose $a,b\in F_{\succeq g}\setminus F_{\succ g}$. (If $a,b\in F_{\succ g}$, then $a,b=0$ in the quotient $F_{\succeq g}/F_{\succ g}$.) Then $\nu(a)=\nu(b)=g$, so the minimal terms of $a$ and $b$ have the same exponent vector. Thus, there is some constant $c\in k$ such that the minimal terms of the expansions of $a$ and $cb$ are equal, so that $\nu(a-cb)\succ g$, and hence $a-cb\in F_{\succ g}$. Hence, $a=cb$ in $F_{\succeq g}/F_{\succ g}$, and we conclude that $F_{\succeq g}/F_{\succ g}$ has dimension at most $1$.
\end{proof}

We next define \emph{Khovanskii bases} and the \emph{Newton-Okounkov body} associated to a valuation.

\begin{df}[\cite{Kaveh_Manon_Khovanskii_bases}, Definition 2.5]
  A subset $B\subset A$ is called a \emph{Khovanskii basis} for $A$ with respect to the valuation $\nu$ if the image of $B$ in $\mathrm{gr}_\nu(A)$ is a set of algebra generators for $\mathrm{gr}_\nu(A)$. Khovanskii bases give a \emph{subduction algorithm} for writing elements of $A$ as polynomials in the elements of $B$. 
\end{df}

\begin{df}[\cite{Kaveh_Manon_Khovanskii_bases}, Definition 2.21] \label{def:no-body}
  Let $A$ further be positively graded by $\mathbb{Z}_{>0}$. The \emph{Newton-Okounkov body} $\Delta(A,\nu)$ associated to $A$ and a valuation $\nu$ is the closed convex set
  \[
	\Delta(A,\nu)=\overline{\mathrm{conv}\!\left(\bigcup_{i>0}\tfrac1i{\nu(A_i\setminus\{0\})}\right)}
	\]
  where $A_i$ denotes the set of homogeneous elements of $A$ of degree $i$. 
\end{df}

Although the Newton-Okounkov body associated to a valuation $\nu$ is generally very far from being a polytope, the following proposition gives an important family of examples where $\Delta(A,\nu)$ is a polytope.

\begin{prop}[\cite{Kaveh_Manon_Khovanskii_bases}, Corollary 2.24, Remark 2.25]
  \label{prop:no-volume}
  Let $Y$ be a $d$-dimensional projective variety $Y$ embedded in some $\mathbb{P}^N$, and let $A$ denote the homogeneous coordinate ring of $Y$ with respect to this embedding. Let $\nu$ be a valuation on $A$ with one-dimensional leaves. Then $\deg(Y)=\mathrm{Vol}(\Delta(A,\nu))$, where $\deg(Y)$ is computed with respect to the projective embedding of $Y$ and $\mathrm{Vol}$ refers to the normalized lattice volume so that standard lattice $d$-simplex has volume $\frac{1}{d!}$. If furthermore $A$ has a finite Khovanskii basis with respect to $\nu$, then $\Delta(A,\nu)$ is a rational polytope and $\mathrm{Proj}(A)$ has a degeneration to a toric variety whose normalization is the toric variety associated to $\Delta(A,\nu)$.
\end{prop}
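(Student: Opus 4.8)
The plan is to deduce both assertions from the structure of the graded value semigroup of $\nu$ together with the asymptotic theory of semigroups of integral points. Throughout, write $S=S(A,\nu)=\{(\nu(f),i)\mid f\in A_i\setminus\{0\}\}\subset\Gamma\times\mathbb{Z}_{\ge0}$ for the graded value semigroup, and $S_i=\{g\mid(g,i)\in S\}$ for its degree-$i$ slice, so that $\Delta(A,\nu)$ is the closure of $\bigcup_{i>0}\tfrac1i S_i$ in $\Gamma_{\mathbb{R}}$. Since $A$ is a finitely generated domain and $\nu$ takes values in an ordered (hence torsion-free) abelian group, I would first reduce to $\Gamma=\mathbb{Z}^{N}$ with a total order refining the group order: replace $\Gamma$ by the subgroup generated by the values of a finite set of generators together with the degree direction, and order-embed this finitely generated totally ordered group into $(\mathbb{Z}^{N},<_{\mathrm{lex}})$. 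The hypothesis that $\nu$ has one-dimensional leaves (the situation singled out in Lemma~\ref{lem:one-dim-leaves}) says each leaf $F_{\succeq(g,i)}/F_{\succ(g,i)}$ is $0$ or one-dimensional, being one-dimensional exactly when $(g,i)\in S$; hence the associated graded algebra is the semigroup algebra, $\mathrm{gr}_\nu(A)\cong k[S]$, and in each degree $\dim_k A_i=\dim_k(\mathrm{gr}_\nu A)_i=\#S_i$.

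For the volume identity I would compare two asymptotics for $\dim_k A_i$. On one hand, $A$ is the homogeneous coordinate ring of $Y$, so $\dim_k A_i$ agrees for $i\gg0$ with the Hilbert polynomial of $Y$, whose leading term is $\tfrac{\deg(Y)}{d!}\,i^{d}$ with $d=\dim Y$. On the other hand, by the Kaveh--Khovanskii estimate on the growth of graded subsemigroups of $\mathbb{Z}^{N}\times\mathbb{Z}$ (the semigroup input underlying Okounkov's and Lazarsfeld--Mustaţă's construction of Newton--Okounkov bodies), $\#S_i$ is asymptotic to $\mathrm{Vol}(\Delta(A,\nu))\cdot i^{\dim\Delta}$, with $\mathrm{Vol}$ the lattice volume normalized as in the statement. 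Matching leading terms of the two counting functions forces $\dim\Delta(A,\nu)=d$ and $\deg(Y)=\mathrm{Vol}(\Delta(A,\nu))$. Along the way one checks that the group generated by $S$ has rank $d+1$ and that $\Delta(A,\nu)$ is full-dimensional in its affine hull; both follow from the polynomial growth of $\dim_k A_i$ in degree $d$.

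For the second assertion, suppose $B\subset A$ is a finite Khovanskii basis for $\nu$; after replacing each element by a degree-homogeneous part we may assume $B$ is homogeneous. By definition the images of $B$ generate $\mathrm{gr}_\nu(A)\cong k[S]$ as a $k$-algebra, and each such image is a scalar multiple of a monomial $[(\nu(b),\deg b)]$; hence $S$ is generated as a semigroup by the finite set $\{(\nu(b),\deg b)\mid b\in B\}$. Therefore $\mathrm{Cone}(S)\subseteq\mathbb{R}^{N}\times\mathbb{R}_{\ge0}$ is rational polyhedral, and $\Delta(A,\nu)$, its slice at height $1$, is a rational polytope. For the degeneration I would form the Rees-type algebra $\mathcal{R}=\bigoplus_{(g,i)}F_{\succeq(g,i)}\,t^{-(g,i)}$ attached to the $\Gamma\times\mathbb{Z}$-filtration (using the chosen embedding into $\mathbb{Z}^{N}$ to organize it as a $\mathbb{Z}_{\ge0}$-filtered family, as in the flat-family construction of Kaveh--Manon): finiteness of $B$ makes $\mathcal{R}$ a finitely generated flat $k[t]$-algebra with generic fibre $A$ and special fibre $\mathrm{gr}_\nu(A)=k[S]$. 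Taking $\mathrm{Proj}$ over $\mathbb{A}^1$ yields a flat family degenerating $Y=\mathrm{Proj}(A)$ to $\mathrm{Proj}(k[S])$, the (possibly non-normal) projective toric variety of $S$. Since the normalization of an affine semigroup algebra is the algebra of the saturation $\overline{S}=\mathrm{Cone}(S)\cap(\mathbb{Z}^{N}\times\mathbb{Z})$, and normalization commutes with $\mathrm{Proj}$ for graded domains, the normalization of $\mathrm{Proj}(k[S])$ is the normal projective toric variety determined by the normal fan of $\Delta(A,\nu)$, i.e.\ the toric variety associated to $\Delta(A,\nu)$.

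The main obstacle is the semigroup asymptotic $\#S_i\sim\mathrm{Vol}(\Delta)\,i^{\dim\Delta}$ and the accompanying dimension bookkeeping: this is where one genuinely needs that $S$ "fills up" its cone in the limit (the Kaveh--Khovanskii/Ok­ounkov estimate), and where the reduction to a finitely generated totally ordered value group must be executed carefully so the estimate applies verbatim and the normalization of $\mathrm{Vol}$ is the intended one. By contrast, the toric-degeneration half of the second assertion is comparatively formal once $S$ is known to be finitely generated; the one delicate point there is producing the flat $k[t]$-family, which again rests on finiteness of the Khovanskii basis to guarantee that the Rees algebra $\mathcal{R}$ is Noetherian and flat over $k[t]$ with the prescribed fibres.
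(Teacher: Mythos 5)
The paper does not prove this proposition; it is quoted verbatim from Kaveh--Manon (\cite{Kaveh_Manon_Khovanskii_bases}, Corollary 2.24 and Remark 2.25), so there is no internal proof to compare against. Your argument is, in effect, a correct reconstruction of the proof one finds in the cited source and in its precursor (Kaveh--Khovanskii on Newton--Okounkov bodies via semigroups of integral points): the one-dimensional-leaves hypothesis gives $\mathrm{gr}_\nu(A)\cong k[S]$, so the Hilbert function of $A$ coincides with the counting function $i\mapsto\#S_i$ of the graded value semigroup; matching the Hilbert-polynomial leading term $\deg(Y)/d!$ against the semigroup asymptotic $\#S_i\sim\mathrm{Vol}(\Delta)\,i^{\dim\Delta}$ yields the volume identity, and finiteness of the Khovanskii basis gives finite generation of $S$, a rational polyhedral cone, and the Rees-algebra degeneration to $\mathrm{Proj}(k[S])$ with normalization the toric variety of $\Delta$. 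You also correctly flag the two genuinely delicate points: (i) reducing the value group to a finitely generated subgroup order-embedded in $(\mathbb{Z}^N,<_{\mathrm{lex}})$ so that the Kaveh--Khovanskii volume estimate applies, and ensuring the lattice with respect to which $\mathrm{Vol}$ is computed is the group generated by $S$ (otherwise the normalization of the volume is off by an index factor); and (ii) the flatness of the Rees family over $k[t]$, which hinges on Noetherianity coming from the finite Khovanskii basis. Since the paper's role here is purely to invoke the result, there is no divergence to report; your proposal supplies precisely the argument the citation points to.
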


Now let $A=\mathbb{C}[\cmX]$ be the coordinate ring of the Cayley plane ($n=6$) or the Freudenthal variety ($n=7$), with $\mathbb{Z}_{>0}$-grading given by the total degree in Pl\"ucker coordinates. We use the degree-lexicographic term order $<$ on $\mathbb{C}[a_1,\dots, a_\ell]$ from Algorithm \ref{alg:subduction-for-gen-minors} to define a valuation $\nu_n:A\rightarrow \mathbb{Z}^\ell$ as follows. Given a polynomial $f$ in the Pl\"ucker coordinates, compute its expansion in the coordinates of $U_+^\circ$ using Algorithm \ref{alg:torus-expansion}, and define $\nu_n(f)$ to be the exponent vector of the degree-lexicographic minimal term $\mathrm{in}_<(f)$ of $f$. It is straightforward to check that $\nu_n$ is a valuation. Furthermore, because the image of the inclusion $A\hookrightarrow \mathbb{C}[a_1,\dots, a_\ell]$ induced by $U_-^\circ\hookrightarrow \cmX$ is a finitely generated subalgebra, Lemma \ref{lem:one-dim-leaves} implies that $\nu_n$ has one-dimensional leaves. Proposition \ref{prop:no-volume} thus shows that the normalized volume of $\Delta(A,\nu_n)$ is equal to the degree of $\cmX$. By Definition \ref{def:no-body}, the suitably scaled valuation of any homogeneous element of $A$ is contained in the Newton-Okounkov body $\Delta(A,\nu_n)$, and thus so is the convex hull of the set of suitably scaled valuations of any finite subset of homogeneous elements of $A$. In particular, if we can find $B\subset A$ such that 
\[
\delta(B,\nu_n)=\mathrm{conv}\!\left(\bigcup_{i>0}\tfrac1i{\nu_n(B\cap A_i\setminus\{0\})}\right)
\]
has normalized volume equal to the degree of $\cmX$, then $\delta(B,\nu_n)=\Delta(A,\nu_n)$.

\begin{prop}
  \label{prop:nobody}
  The Newton-Okounkov body $\Delta(A,\nu_n)$ is equal to the convex hull of the valuations of the Pl\"ucker coordinates on $\cmX$. 
\end{prop}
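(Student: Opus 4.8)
The plan is to apply the criterion stated at the end of the preceding discussion: since $\nu_n$ has one-dimensional leaves (Lemma~\ref{lem:one-dim-leaves}), Proposition~\ref{prop:no-volume} tells us that $\mathrm{Vol}(\Delta(A,\nu_n)) = \deg(\cmX)$, so it suffices to produce a finite set $B\subset A$ of homogeneous elements whose scaled valuations have convex hull $\delta(B,\nu_n)$ of normalized volume exactly $\deg(\cmX)$; then $\delta(B,\nu_n)=\Delta(A,\nu_n)$ automatically, because $\delta(B,\nu_n)\subseteq\Delta(A,\nu_n)$ always and two convex bodies one containing the other with equal volume must coincide. First I would take $B$ to be the set of all generalized Pl\"ucker coordinates $\{\p_i\}$ on $\cmX$ (each of degree $1$), so that $\delta(B,\nu_n) = \mathrm{conv}\{\nu_n(\p_i)\}$ is the convex hull of their valuation vectors in $\mathbb{Z}^\ell$.

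The computational heart is then: (i) compute the torus expansion $\p_i|_{\opendunim}$ for every Pl\"ucker coordinate using Algorithm~\ref{alg:Plucker_torus_expansion}, (ii) extract $\nu_n(\p_i)=\mathrm{in}_<(\p_i)$, the exponent vector of the degree-lexicographically minimal term, (iii) form the polytope $P := \mathrm{conv}\{\nu_n(\p_i)\}\subset\mathbb{R}^\ell$ (recorded as a lattice polytope via Polymake), and (iv) verify that $\mathrm{Vol}(P) = \deg(\cmX)$, where $\deg(\cayley)$ and $\deg(\freudenthal)$ are known integers (these are classical: $78$ for the Cayley plane and $13110$ for the Freudenthal variety, obtainable from the cohomology ring presentations in \cite{CMP_Quantum_cohomology_of_minuscule_homogeneous_spaces} or \cite{Iliev_Manivel_Chow_ring_of_the_Cayley_plane}, e.g.\ as $\int_X \sigma_1^{\dim X}$). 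Since the images of the generators $\nu_n(\p_i)$ all lie on an affine hyperplane (degree $1$), $P$ is effectively a polytope of the expected dimension $\dim\cmX$, so its normalized volume is well-defined and finite.

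Separately, I would record that the Pl\"ucker coordinates in fact form a Khovanskii basis for $A$ with respect to $\nu_n$ (this is the content of Proposition~\ref{prop:khovanskii}, proved by the subduction procedure of Algorithm~\ref{alg:subduction-for-gen-minors}); this is not logically needed for the equality of bodies, but it fits the narrative and also re-derives, via the second half of Proposition~\ref{prop:no-volume}, that $\Delta(A,\nu_n)$ is a rational polytope with a toric degeneration. Given the Khovanskii basis property, one has $\mathrm{gr}_{\nu_n}(A)$ generated by the images $\overline{\p_i}$, and the Newton--Okounkov body is the convex hull of the value semigroup generators, which after the degree-$1$ scaling is exactly $\mathrm{conv}\{\nu_n(\p_i)\}$; this gives a second, more conceptual route to the same conclusion.

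The main obstacle is step (iv): one must genuinely compute the normalized lattice volume of a lattice polytope in $\mathbb{R}^{16}$ (Cayley) or $\mathbb{R}^{27}$ (Freudenthal) with several dozen vertices, and match it against the independently-known degree. This is a finite but nontrivial polyhedral computation — triangulation and volume in moderately high dimension — which is exactly what Polymake is invoked for in the Acknowledgements; the delicacy is only in trusting the software output and in making sure the term order and the torus-coordinate conventions used in Algorithms~\ref{alg:torus-expansion}--\ref{alg:subduction-for-gen-minors} are applied consistently (cf.\ Remark~\ref{rmk: transposes}). Everything else — that $\nu_n$ is a valuation, that it has one-dimensional leaves, that $\delta(B,\nu_n)\subseteq\Delta(A,\nu_n)$ — is already established or immediate, so once the volume equality is confirmed the proposition follows at once.
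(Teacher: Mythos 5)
Your proof takes exactly the same route as the paper: establish one-dimensional leaves (Lemma~\ref{lem:one-dim-leaves}), use Proposition~\ref{prop:no-volume} to pin down the volume of $\Delta(A,\nu_n)$ as $\deg(\cmX)$, compute $\nu_n(\p_i)$ for each Pl\"ucker coordinate via the torus expansion, and verify in Polymake that the convex hull of these valuation vectors already has normalized volume $78$ (resp.\ $13110$), after which the containment $\delta(B,\nu_n)\subseteq\Delta(A,\nu_n)$ forces equality. This is precisely the paper's Appendix~\ref{appendix:vals} computation plus the argument laid out immediately before the proposition.

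One caution on your ``second, more conceptual route'': in this paper's logical development, Proposition~\ref{prop:khovanskii} is \emph{deduced from} Proposition~\ref{prop:nobody} (via projective normality together with the observation that the Newton--Okounkov body, once computed, contains no lattice points beyond its vertices), not established independently by the subduction algorithm. So invoking the Khovanskii-basis property as an alternative derivation of $\Delta(A,\nu_n)=\mathrm{conv}\{\nu_n(\p_i)\}$ would be circular here. Moreover, even granting a Khovanskii basis of degree-one elements, the Newton--Okounkov body need not equal the convex hull of the valuations of those generators unless one also knows the value semigroup is generated in degree one — which is again what the lattice-point check supplies. You correctly note the second route is not logically needed, and the main argument is sound.
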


\begin{proof}
  The proof is by direct computation. We give the valuations of Pl\"ucker coordinates in Appendix \ref{appendix:vals}, and we used Polymake \cite{polymake-2000,polymake-2017} to verify that their convex hulls have the correct volumes.

  For the Cayley plane ($n=6$), we obtain a $16$-dimensional polytope with volume $78$, which is equal to the degree of the Cayley plane. For the Freudenthal variety ($n=7$), we obtain a $27$-dimensional polytope with volume $13110$, which is equal to the degree of the Freudenthal variety. We conclude that in both cases, the respective Newton-Okounkov bodies are given by the convex hulls of the valuations of the respective Pl\"ucker coordinates.
\end{proof}

In particular, $\Delta(A,\nu_n)$ is a rational polytope (in fact a $0/1$-polytope, which follows from the fact that we used a minuscule representation) with vertices given by the valuations of Pl\"ucker coordinates. Moreover, we computed in each case that each polytope contains no lattice points other than its vertices. 

At this point, we would like to conclude that the Pl\"ucker coordinates form Khovanskii bases. However, the converse direction of Proposition \ref{prop:no-volume} requires some further assumptions, e.g.~as in Proposition 17.4 of \cite{Rietsch_Williams_NO_bodies_cluster_duality_and_mirror_symmetry_for_Grassmannians}. These are satisfied in our case because we work with the projectively normal minimal (Pl\"ucker) embeddings of homogeneous spaces $P\backslash G$ into the projectivizations of the corresponding irreducible representations $V(\dfwt[n])$, and because our Newton-Okounkov bodies do not contain any extra lattice points. We summarize this discussion in the following proposition.

\begin{prop}
  \label{prop:khovanskii}
  The Pl\"ucker coordinates give Khovanskii bases for the coordinate rings $\mathbb{C}[\cmX]$ with respect to the valuations $\nu_n$. 
\end{prop}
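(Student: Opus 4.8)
The plan is to deduce Proposition \ref{prop:khovanskii} from the preceding material by combining the converse direction of Proposition \ref{prop:no-volume} with the explicit volume computation of Proposition \ref{prop:nobody}, once the necessary hypotheses are in place. First I would record the setup: $A = \C[\cmX]$ is the homogeneous coordinate ring of $\cmX$ in its minimal (Pl\"ucker) embedding $\cmX\hookrightarrow\PPluckerDualRep[n]$, graded by total degree in the Pl\"ucker coordinates; $\nu_n$ is the valuation defined just above via expansion in torus coordinates on $\opendunip$ followed by taking the exponent vector of the degree-lexicographically minimal term. By Lemma \ref{lem:one-dim-leaves}, $\nu_n$ has one-dimensional leaves, since the image of $A$ under the torus-expansion map is a finitely generated subalgebra of a polynomial ring and $\nu_n$ is exactly the ``minimal term'' valuation on that subalgebra.

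Next I would invoke the numerical input. Let $B$ denote the set of Pl\"ucker coordinates. By Proposition \ref{prop:nobody}, the convex hull $\delta(B,\nu_n)$ of the (suitably scaled) valuations of the elements of $B$ equals the full Newton-Okounkov body $\Delta(A,\nu_n)$, and by the computations recorded in its proof (via Polymake) this body has normalized lattice volume equal to $\deg\cmX$ --- $78$ for the Cayley plane and $13110$ for the Freudenthal variety --- and moreover contains no lattice points other than its vertices, each of which is the valuation of a single Pl\"ucker coordinate. The image of $B$ in $\mathrm{gr}_{\nu_n}(A)$ therefore generates a subalgebra whose associated semigroup of values already fills out the full value semigroup of $A$ up to the cone spanned by the $\nu_n(p_i)$, and since $\Delta(A,\nu_n)$ has no non-vertex lattice points, this semigroup is saturated and generated in degree one by the $\nu_n(p_i)$. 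Consequently $\mathrm{gr}_{\nu_n}(A)$ is generated as an algebra by the images of the Pl\"ucker coordinates, which is exactly the statement that $B$ is a Khovanskii basis for $A$ with respect to $\nu_n$; this is the content of (the converse part of) Proposition \ref{prop:no-volume}, whose extra hypotheses --- projective normality of the minimal embedding of $\cmX=\dP_n\backslash\dG$, and absence of extra lattice points in the Newton-Okounkov body --- are met here, cf.~\cite[Proposition 17.4]{Rietsch_Williams_NO_bodies_cluster_duality_and_mirror_symmetry_for_Grassmannians}. Algorithm \ref{alg:subduction-for-gen-minors} can be cited as an explicit instance of the resulting subduction procedure, confirming that the reduction of a generalized minor against the Pl\"ucker coordinates indeed terminates.

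The main obstacle is verifying that the hypotheses needed for the converse direction genuinely hold, rather than the volume bookkeeping itself. Concretely, one needs: (i) that $\cmX$ in its Pl\"ucker embedding is projectively normal, so that $A$ really is the section ring of the very ample line bundle and $\deg\cmX = \mathrm{Vol}(\Delta(A,\nu_n))$ with the normalization used in Proposition \ref{prop:no-volume}; and (ii) that $\delta(B,\nu_n)$ and $\Delta(A,\nu_n)$ are not merely equal in volume but that equality of volumes forces $\mathrm{gr}_{\nu_n}(A)$ to be generated by $B$ --- this is where the ``no extra lattice points'' observation is essential, since it upgrades ``same volume'' to ``same (saturated) value semigroup''. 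For (i), projective normality of minimal embeddings of generalized flag varieties $\dP\backslash\dG$ into $\PS(V(\dfwt[n]))$ is classical; for (ii), I would appeal to the criterion in \cite[Section 17]{Rietsch_Williams_NO_bodies_cluster_duality_and_mirror_symmetry_for_Grassmannians} together with the explicit lattice-point count from the proof of Proposition \ref{prop:nobody}. Once these are in hand the proof is short, essentially the two sentences already written in the excerpt; the real work has been front-loaded into Propositions \ref{prop:nobody} and \ref{prop:no-volume} and the polytope computations of the appendices.
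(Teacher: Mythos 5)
Your proposal matches the paper's argument essentially verbatim: the paper likewise deduces the Khovanskii basis property from the converse direction of Proposition~\ref{prop:no-volume}, citing \cite[Proposition 17.4]{Rietsch_Williams_NO_bodies_cluster_duality_and_mirror_symmetry_for_Grassmannians} and verifying the same two hypotheses (projective normality of the minimal Pl\"ucker embedding and absence of extra lattice points in the Newton-Okounkov body), with the volume computation of Proposition~\ref{prop:nobody} supplying the numerical input. Your write-up is somewhat more explicit about why the lattice-point condition upgrades equality of volumes to equality of value semigroups, but this is elaboration rather than a different route.
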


\begin{rem}
  Even though the Pl\"ucker coordinates give a finite Khovanskii basis for $\mathbb{C}[\cmX]$ with respect to the valuation $\nu_n$, this is not sufficient to guarantee that Algorithm \ref{alg:subduction-for-gen-minors} terminates for any input. If this is the case, then in the language of Definition 3.8 and Section 5 of \cite{Kaveh_Manon_Khovanskii_bases}, $\nu_n$ is called a \emph{subductive valuation} and corresponds to a \emph{prime cone} of the associated tropical variety. While it would be interesting to investigate this further, we do not pursue this here.
\end{rem}

None of the above statements depended on the exceptional types in a crucial way. For any homogeneous space $P\backslash G$, we may choose a reduced expression for $w^P$ which thus gives a parametrization of the corresponding $U_+^\circ$. We then define a valuation $\nu$ on $A=\mathbb{C}[P\backslash G]$ by expanding any element in the coordinates of $U_+^\circ$ and taking its degree-lexicographic minimal term, and this valuation will have one-dimensional leaves by an analogous proof to the proof of Lemma \ref{lem:one-dim-leaves}. We have not verified it ourselves, but we believe the following statement holds \emph{mutatis mutandis}:

\begin{conj}
  \label{conj:nobody}
  Let $X=P\backslash G$ be a minuscule homogeneous space with coordinate ring $A$ and valuation $\nu$ associated to a choice of reduced expression for $w^P$ as defined above. Then the Pl\"ucker coordinates of $X$ form a Khovanskii basis for $A$, and the associated Newton-Okounkov body $\Delta(A,\nu)$ is a rational polytope which is equal to the convex hull of the valuations of Pl\"ucker coordinates. 
\end{conj}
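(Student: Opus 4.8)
The plan is to reproduce, type-independently, the three steps that established Lemma~\ref{lem:one-dim-leaves}, Proposition~\ref{prop:nobody}, and Proposition~\ref{prop:khovanskii} for the exceptional types, replacing the two computer verifications (the volume computation and the lattice-point count) by combinatorics of the minuscule poset. The one-dimensional leaves property is already type-free: it is Lemma~\ref{lem:one-dim-leaves} applied to the subalgebra of $\C[a_1,\dots,a_{\ell(\wP)}]$ obtained by restricting $A=\C[X]$ to the dense torus $\opendunim$ attached to the chosen reduced word for $\wP$, since the Pl\"ucker coordinates restrict there to honest polynomials (as computed by Algorithm~\ref{alg:Plucker_torus_expansion}). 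Granting this, the forward direction of Proposition~\ref{prop:no-volume} gives $\mathrm{Vol}(\Delta(A,\nu))=\deg X$, so it suffices to prove: (i) an explicit combinatorial formula for the valuations $\nu(\p_i)$ of the Pl\"ucker coordinates, exhibiting them as $0/1$-vectors in $\Z^{\ell(\wP)}$; and (ii) that the convex hull $\delta$ of these vectors has normalized lattice volume $\deg X$ and no lattice points beyond its vertices. Given (i) and (ii), the inclusion $\delta\subseteq\Delta(A,\nu)$ together with equality of volumes forces $\delta=\Delta(A,\nu)$, which is the Newton-Okounkov body claim of Conjecture~\ref{conj:nobody}; and the Khovanskii basis assertion then follows from the criterion used in the proof of Proposition~\ref{prop:khovanskii} (the minimal Pl\"ucker embedding of $X$ is projectively normal and $\delta$ has no extra lattice points, cf.~\cite[Proposition~17.4]{Rietsch_Williams_NO_bodies_cluster_duality_and_mirror_symmetry_for_Grassmannians}).

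For step (i), recall that for minuscule $X$ the weight diagram of $\dfwtrep[k]$ is the Hasse diagram of the distributive lattice $\Lambda$ of order ideals of a minuscule poset $\mathscr{P}$, and that by Theorem~\ref{thm:Green_structure_minuscule_reps} the Pl\"ucker coordinate $\p_i$ attached to a weight $\mu$ has, on $\opendunim$, a torus expansion that is a sum over the reduced subwords of $\wPinv$ realizing a path from $v_\mu$ down to the lowest weight vector $v_0$; equivalently, a sum over the saturated chains from $\mu$ to $\hat 0$ in $\Lambda$, i.e.\ the linear extensions of the order ideal of $\mathscr{P}$ corresponding to $\mu$. The valuation $\nu(\p_i)$ is the exponent vector of the degree-lexicographically smallest of these monomials, so step~(i) amounts to identifying that lex-minimal subword; its support should be a ``staircase'' set of positions in the fixed reduced word for $\wP$, read off combinatorially from $\mathscr{P}$. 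This is exactly the content of the Pl\"ucker-versus-GLS diagrams of Lemmas~\ref{lem:CayleyPlane_Plucker_coordinates_coincide_with_GLS} and \ref{lem:Freudenthal_Plucker_coordinates_coincide_with_GLS} in the exceptional cases, and in general is a bookkeeping argument using Proposition~\ref{prop:Criterion_Plucker_coordinates_coincide_with_GLS} and the structure of $\wPpdroots$.

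For step (ii), the natural target is to show, after identifying the $\ell(\wP)$ positions of the fixed reduced word for $\wP$ with the elements of $\mathscr{P}$, that the vectors $\nu(\p_i)$ are precisely the indicator vectors of the order filters of $\mathscr{P}$, so that $\delta$ is literally the order polytope $\mathcal{O}(\mathscr{P})$ (the set of order-preserving maps $\mathscr{P}\to[0,1]$). Its normalized lattice volume is the number $e(\mathscr{P})$ of linear extensions of $\mathscr{P}$, and since the hook-length formula for minuscule (more generally $d$-complete) posets gives $\deg X=e(\mathscr{P})$ \cite{Naruse_Okada_posets_equivariant_K_theory}, the volume computation follows; the lattice-point condition is then automatic, since $\mathcal{O}(\mathscr{P})$ is a $0/1$-polytope all of whose lattice points are vertices. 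A cleaner, more structural route to the same conclusions is to recall that standard monomial theory equips $A=\C[X]$ with a flat degeneration to the Hibi ring of the distributive lattice $\Lambda$, and to prove that the valuation $\nu$ attached to our reduced word realizes precisely this degeneration; then $\mathrm{gr}_\nu(A)$ is identified with the Hibi ring, which yields at once the Khovanskii basis property (one-dimensional leaves identify $\mathrm{gr}_\nu(A)$ with the value-semigroup algebra, which is then generated by the initial forms of the Pl\"ucker coordinates) and the description of $\Delta(A,\nu)$ as $\mathcal{O}(\mathscr{P})$.

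The hard part will be step (ii) in this second formulation: showing that the combinatorially defined $\nu$ --- built from a reduced word for $\wP$ and the torus $\opendunim$, not a priori from the Hibi degeneration --- actually induces the standard-monomial toric degeneration, equivalently that the degree-lexicographic term order of Algorithm~\ref{alg:subduction-for-gen-minors} is ``diagonal'' in the sense that the initial forms of the Pl\"ucker coordinates obey the Hibi straightening relations and generate the initial algebra. This requires controlling how the lex-minimal monomials of two Pl\"ucker coordinates combine under multiplication, which seems to hinge on the $d$-completeness (jeu-de-taquin) structure of $\mathscr{P}$ and will likely need a careful induction over $\mathscr{P}$, perhaps modelled on the excited-move combinatorics of \cite{Naruse_Okada_posets_equivariant_K_theory} and \cite{Spacek_LP_LG_models}. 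A secondary point worth flagging is that the conjecture is stated only for minuscule $X$: for cominuscule but non-minuscule spaces such as the odd-dimensional quadrics, $\dfwtrep[k]$ is no longer minuscule, exponents larger than $1$ appear in the torus expansions, and the clean $0/1$-polytope picture --- and hence this argument --- would have to be modified.
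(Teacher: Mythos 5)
This statement is a \emph{conjecture} which the paper explicitly does not prove: immediately before it the authors write ``We have not verified it ourselves, but we believe the following statement holds \emph{mutatis mutandis}.'' So there is no argument of record to compare against, and what you have produced should be read as a proof strategy. The central guess --- that the $0/1$-valuations $\nu(\p_i)$ are exactly the indicator vectors of the order filters of the minuscule heap $\mathscr{P}$ attached to $\wP$, so that $\Delta(A,\nu)$ is unimodularly equivalent to the order polytope $\mathcal{O}(\mathscr{P})$ --- is a good one. It is consistent with all the valuations tabulated in Appendix~\ref{appendix:vals}, it would turn the volume identity into the composition of Stanley's $\mathrm{Vol}(\mathcal{O}(\mathscr{P}))=e(\mathscr{P})$ with the classical minuscule hook-length formula $\deg X=e(\mathscr{P})$, and it would render the lattice-point count (which the paper checked by computer) automatic, since a $0/1$-polytope contains no non-vertex lattice points. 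Your reduction to the Khovanskii-basis claim via projective normality is also exactly the one the paper uses in Proposition~\ref{prop:khovanskii}.

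However, you understate the size of what remains open. Proposition~\ref{prop:Criterion_Plucker_coordinates_coincide_with_GLS} together with Lemmas~\ref{lem:CayleyPlane_Plucker_coordinates_coincide_with_GLS} and \ref{lem:Freudenthal_Plucker_coordinates_coincide_with_GLS} only match the $\ell(\wP)$ Pl\"ucker coordinates indexed by roots in $\wPpdroots$ with dual PBW generators; they say nothing about the valuations of the remaining Pl\"ucker coordinates, whose torus expansions are genuine multi-term polynomials (cf.\ Remarks~\ref{rem:CayleyPlane_inverted_Plucker_and_generators} and \ref{rem:Freudenthal_inverted_Plucker_and_generators}), so your step~(i) is not ``exactly the content of'' those lemmas plus bookkeeping. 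The real content of Conjecture~\ref{conj:nobody} is a combinatorial theorem about minuscule heaps which is not in this paper nor, to our knowledge, in the literature in this form: among all reduced subwords of $\wPinv$ realizing the coset representative in $\cosets$ attached to a given weight, the degree-lex-minimal one has support an order filter of $\mathscr{P}$, and these supports exhaust the filters as the weight varies. Proving that (or, in your second formulation, proving that $\nu$ realizes the Hibi/standard-monomial degeneration) \emph{is} the problem. Two smaller points: the conjecture fixes only ``a choice'' of reduced word for $\wP$, and distinct linear extensions of $\mathscr{P}$ give a priori different valuations, so you must also explain why all choices yield equivalent Newton--Okounkov bodies; and the identity $\deg X=e(\mathscr{P})$ goes back to Proctor (with the Grassmannian case classical), whereas \cite{Naruse_Okada_posets_equivariant_K_theory} is a $K$-theoretic skew hook formula and is not the natural reference for that step.
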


When $X$ is not minuscule, the Pl\"ucker coordinates are more difficult to understand, the valuations of the Pl\"ucker coordinates will no longer be $0/1$-vectors, and the Newton-Okounkov body may be less well-behaved. However, it would still be interesting to investigate to what extent the above conjecture holds. Moreover if $X$ is neither minuscule nor cominuscule, $w^P$ is no longer \emph{fully commutative}, so that different reduced expressions may not be related by swapping adjacent transpositions, and it would be interesting to investigate the relationship between the Newton-Okounkov bodies obtained for different reduced expressions, and whether there is any relation to cluster mutation. 
\bibliographystyle{amsalpha}
\bibliography{Biblio}
\addresseshere
\newpage
\appendix
\section{Torus expansion example}
\label{sec:expansion-appendix}

We give an example torus expansion computation for the Freudenthal variety, computing the expansion of the generalized minor $\minor_{\dfwt[3],u(\dfwt[3])}$, where $u=(w_0)_{\ge 23}w_0$ and $(w_0)_{\ge 23}=s_{\ellwo}\dots s_{23}$. We represent the $8645$-dimensional representation $V(\dfwt[3])$ as a subrepresentation of the alternating square of the adjoint representation $\bigwedge^{\!2} V(\dfwt[1])\cong V(\dfwt[3])\oplus V(\dfwt[1])$.

The action of $u$ on $\dfwt[3]$ gives the coweight vector $\wtu=(0,1,0,0,0,1,-3)$ (written in the basis of fundamental coweights), and the corresponding action on $\hwt[3]$ gives $\wtv{\wtu}\in V(\dfwt[3])$. In principle, we then compute all sequences $(i_1,\dots, i_k)$ such that $\dChe_{i_1,\dots, i_k}\cdot\wtv{\wtu}$ is a multiple of $\hwt[3]$, check which sequences appear in the expansion of $u_+$, and then collect the coefficients. However, there are too many sequences to compute directly and most of them do not appear in $u_+$. In our computations, we use this to facilitate computations, and we determine that only $43$ sequences have nonzero contributions (we suppress the commas and brackets, i.e.~we write $345\ldots$ instead of $(3,4,5,\ldots)$):
\[
\begin{array}{lll}
34561342574315423456677&
34561327443155423456677&
34561342743155423456677 \\
34561274331544234556677&
34561327431544234556677&
34561327443154234556677 \\
34561342743154234556677&
34561274331542344556677&
34561327431542344556677 \\
34562743115423344556677&
34561274315423344556677&
34561342543154234566777 \\
34561342431554234566777&
34561324431554234566777&
34513424316554234566777 \\
34513244316554234566777&
34132544316554234566777&
34561243315442345566777 \\
34561324315442345566777&
34512433165442345566777&
34125433165442345566777 \\
31425433165442345566777&
34513243165442345566777&
34132543165442345566777 \\
34561342431542345566777&
34561324431542345566777&
34513424316542345566777 \\
34513244316542345566777&
34132544316542345566777&
34561243315423445566777 \\
34561324315423445566777&
34512433165423445566777&
34125433165423445566777 \\
31425433165423445566777&
34513243165423445566777&
34132543165423445566777 \\
34562431154233445566777&
34561243154233445566777&
34524311654233445566777 \\
\!\framebox{\!$34254311654233445566777$\!}\!&
34512431654233445566777&
34125431654233445566777 \\
31425431654233445566777.
\end{array}
\]
As an example, we consider the sequence $(3,4,2,5,4,3,1,1,6,5,4,2,3,3,4,4,5,5,6,6,7,7,7)$, which we boxed above, to determine which monomials this sequence contributes to the torus expansion. Corresponding to our choice of $\wP$, we have $u_+$ of the form:
  \[
	\{\dx_{r_{1}}(a_1)\dx_{r_{2}}(a_2)\cdots\dx_{r_{27}}(a_{27})~|~a_i\in\C^*\},
	\]
  where we fixed $(r_1,r_2,\ldots,r_{27})=(7,6,5,4,3,2,4,5,6,1,3,4,2,5,7,4,3,1,6,5,4,2,3,4,5,6,7)$. From this we find by inspection one term of $u_+$ which sends $\wtv{\wtu}$ to $\hwt[3]$. The coefficient of this term is the following polynomial in torus coordinates:
  \begin{align*}
    &a_{5}a_{7}a_{13}a_{14}a_{16}a_{17}a_{18}^2a_{19}a_{20}a_{21}a_{22}a_{23}^2a_{24}^2a_{25}^2a_{26}^2a_{27}^3\\
    &+a_{5}a_{12}a_{13}a_{14}a_{16}a_{17}a_{18}^2a_{19}a_{20}a_{21}a_{22}a_{23}^2a_{24}^2a_{25}^2a_{26}^2a_{27}^3\\
    &+a_{11}a_{12}a_{13}a_{14}a_{16}a_{17}a_{18}^2a_{19}a_{20}a_{21}a_{22}a_{23}^2a_{24}^2a_{25}^2a_{26}^2a_{27}^3\\
    &=(a_5a_7+a_5a_{12}+a_{11}a_{12})a_{13}a_{14}a_{16}a_{17}a_{18}^2a_{19}a_{20}a_{21}a_{22}a_{23}^2a_{24}^2a_{25}^2a_{26}^2a_{27}^3.
  \end{align*}
  This polynomial is obtained as follows from $u_+$. First, we box those terms in the expansion of each $\dx_{r_i}(a_i)$ which must be taken to obtain the desired $\dChe_{i_1\dots i_k}$. 
  \begin{align*}
    &(\framebox{$1$}+a_{1}\dChe_7+\dots) (\framebox{$1$}+a_{2}\dChe_6+\dots) (\framebox{$1$}+a_{3}\dChe_5+\dots) (\framebox{$1$}+a_{4}\dChe_4+\dots) (1+a_{5}\dChe_3+\dots)\\
    &(\framebox{$1$}+a_{6}\dChe_2+\dots) (1+a_{7}\dChe_4+\dots) (\framebox{$1$}+a_{8}\dChe_5+\dots) (\framebox{$1$}+a_{9}\dChe_6+\dots) (\framebox{$1$}+a_{10}\dChe_1+\dots)\\
    &(1+a_{11}\dChe_3+\dots) (1+a_{12}\dChe_4+\dots)(1+\framebox{$a_{13}\dChe_2$}+\dots) (1+\framebox{$a_{14}\dChe_5$}+\dots) (\framebox{$1$}+a_{15}\dChe_7+\dots)\\
    &(1+\framebox{$a_{16}\dChe_4$}+\dots) (1+\framebox{$a_{17}\dChe_3$}+\dots) (1+a_{18}\dChe_1+\framebox{$\tfrac1{2!}{a_{18}^2(\dChe_1)^2}$}+\dots)\\
    &(1+\framebox{$a_{19}\dChe_6$}+\dots) (1+\framebox{$a_{20}\dChe_5$}+\dots)(1+\framebox{$a_{21}\dChe_4$}+\dots) (1+\framebox{$a_{22}\dChe_2$}+\dots)\\
    &(1+a_{23}\dChe_3+\framebox{$\tfrac1{2!}{a_{23}^2(\dChe_3)^2}$}+\dots)(1+a_{24}\dChe_4+\framebox{$\tfrac1{2!}{a_{24}^2(\dChe_4)^2}$}+\dots)\\
    &(1+a_{25}\dChe_5+\framebox{$\tfrac1{2!}{a_{25}^2(\dChe_5)^2}$}+\dots)(1+a_{26}\dChe_6+\framebox{$\tfrac1{2!}{a_{26}^2(\dChe_6)^2}$}+\dots)\\
    &(1+a_{27}\dChe_7+\tfrac1{2!}{a_{27}^2(\dChe_7)^2}+\framebox{$\tfrac1{3!}{a_{27}^3(\dChe_7)^3}$}+\dots)
  \end{align*}
  Removing the unused terms gives:
  \begin{align*}
    &(1)(1)(1)(1)(1+a_{5}\dChe_3+\dots)(1)(1+a_{7}\dChe_4+\dots)(1)(1)(1)(1+a_{11}\dChe_3+\dots)\\
    &(1+a_{12}\dChe_4+\dots)(a_{13}\dChe_2)(a_{14}\dChe_5)(1)(a_{16}\dChe_4) (a_{17}\dChe_3)\Bigl(\tfrac1{2!}{a_{18}^2(\dChe_1)^2}\Bigr)(a_{19}\dChe_6)(a_{20}\dChe_5)\\
    &(a_{21}\dChe_4)(a_{22}\dChe_2)\Bigl(\tfrac1{2!}{a_{23}^2(\dChe_3)^2}\Bigr)\Bigl(\tfrac1{2!}{a_{24}^2(\dChe_4)^2}\Bigr)\Bigl(\tfrac1{2!}{a_{25}^2(\dChe_5)^2}\Bigr)\Bigl(\tfrac1{2!}{a_{26}^2(\dChe_6)^2}\Bigr)\Bigl(\tfrac1{3!}{a_{27}^3(\dChe_7)^3}\Bigr),
  \end{align*}
  and we simplify to obtain:
  \begin{align*}
    &=(1+a_{5}\dChe_3+\dots)(1+a_{7}\dChe_4+\dots)(1+a_{11}\dChe_3+\dots)(1+a_{12}\dChe_4+\dots)\\
    &\left(\tfrac{1}{96}a_{13}a_{14}a_{16}a_{17}a_{18}^2a_{19}a_{20}a_{21}a_{22}a_{23}^2a_{24}^2a_{25}^2a_{26}^2a_{27}^3(\dChe_{25431^265423^24^25^26^27^3})\right).
  \end{align*}
  Expanding the first four terms above up to second order gives:
  \begin{align*}
    1&+(a_{5}+a_{11})\dChe_3+(a_{7}+a_{12})\dChe_4+(a_5a_{11})\dChe_{3^2}+(a_7a_{12})\dChe_{4^2}\\
     &+\tfrac{1}{4}\framebox{$\!(a_5a_7+a_5a_{12}+a_{11}a_{12})\!$}\dChe_{34}+\tfrac{1}{4}(a_7a_{11})\dChe_{43}+\dots,
  \end{align*}
  and we have boxed the relevant term in this last formula. Altogether, we obtain:
  \[
	\Bigl(\tfrac1{384}{a_5a_7+a_5a_{12}+a_{11}a_{12})a_{13}a_{14}a_{16}a_{17}a_{18}^2a_{19}a_{20}a_{21}a_{22}a_{23}^2a_{24}^2a_{25}^2a_{26}^2a_{27}^3}\Bigr)\dChe_{3425431^265423^24^25^26^27^3},
	\]
  and we compute $\frac{1}{384}\dChe_{3425431^265423^24^25^26^27^3}v_\varpi^+=\hwt[3]$, so that the contribution of this term is
  \begin{align*}
    &\Bigl\lan(a_5a_7+a_5a_{12}+a_{11}a_{12})a_{13}a_{14}a_{16}a_{17}a_{18}^2a_{19}a_{20}a_{21}a_{22}a_{23}^2a_{24}^2a_{25}^2a_{26}^2a_{27}^3 \hwt[3],\hwt[3] \Bigr\ran\\
    &=(a_5a_7+a_5a_{12}+a_{11}a_{12})a_{13}a_{14}a_{16}a_{17}a_{18}^2a_{19}a_{20}a_{21}a_{22}a_{23}^2a_{24}^2a_{25}^2a_{26}^2a_{27}^3.
  \end{align*}
  The remaining $42$ sequences yield analogous computations, though they only contribute one term each to the resulting polynomial which thus has in total 45 terms (the final three terms are the ones we computed explicitly above):

\begingroup\small
\[
  \begin{array}{l}
    a_{5}a_{7}a_{8}a_{9}a_{10}a_{11}a_{12}a_{13}a_{14}a_{15}a_{16}a_{17}a_{18}a_{20}a_{21}a_{22}a_{23}a_{24}a_{25}a_{26}^2a_{27}^2 \\
		{}+a_{5}a_{7}a_{8}a_{9}a_{10}a_{11}a_{12}a_{13}a_{15}a_{16}a_{17}a_{18}a_{20}^2a_{21}a_{22}a_{23}a_{24}a_{25}a_{26}^2a_{27}^2\\
    {}+a_{5}a_{7}a_{8}a_{9}a_{10}a_{11}a_{13}a_{15}a_{16}^2a_{17}a_{18}a_{20}^2a_{21}a_{22}a_{23}a_{24}a_{25}a_{26}^2a_{27}^2 \\
		{}+a_{5}a_{7}a_{8}a_{9}a_{10}a_{11}a_{12}a_{13}a_{15}a_{16}a_{17}a_{18}a_{20}a_{21}a_{22}a_{23}a_{24}a_{25}^2a_{26}^2a_{27}^2\\
    {}+a_{5}a_{7}a_{8}a_{9}a_{10}a_{11}a_{13}a_{15}a_{16}^2a_{17}a_{18}a_{20}a_{21}a_{22}a_{23}a_{24}a_{25}^2a_{26}^2a_{27}^2 \\
		{}+a_{5}a_{7}a_{8}a_{9}a_{10}a_{11}a_{13}a_{15}a_{16}a_{17}a_{18}a_{20}a_{21}^2a_{22}a_{23}a_{24}a_{25}^2a_{26}^2a_{27}^2\\
    {}+a_{5}a_{7}a_{8}a_{9}a_{10}a_{13}a_{15}a_{16}a_{17}^2a_{18}a_{20}a_{21}^2a_{22}a_{23}a_{24}a_{25}^2a_{26}^2a_{27}^2 \\
		{}+a_{5}a_{7}a_{8}a_{9}a_{10}a_{11}a_{13}a_{15}a_{16}a_{17}a_{18}a_{20}a_{21}a_{22}a_{23}a_{24}^2a_{25}^2a_{26}^2a_{27}^2\\
    {}+a_{5}a_{7}a_{8}a_{9}a_{10}a_{13}a_{15}a_{16}a_{17}^2a_{18}a_{20}a_{21}a_{22}a_{23}a_{24}^2a_{25}^2a_{26}^2a_{27}^2 \\
		{}+a_{5}a_{7}a_{8}a_{9}a_{10}a_{13}a_{15}a_{16}a_{17}a_{18}a_{20}a_{21}a_{22}a_{23}^2a_{24}^2a_{25}^2a_{26}^2a_{27}^2\\
    {}+a_{5}a_{7}a_{8}a_{9}a_{13}a_{15}a_{16}a_{17}a_{18}^2a_{20}a_{21}a_{22}a_{23}^2a_{24}^2a_{25}^2a_{26}^2a_{27}^2 \\
		{}+a_{5}a_{7}a_{8}a_{9}a_{10}a_{11}a_{12}a_{13}a_{14}a_{16}a_{17}a_{18}a_{20}a_{21}a_{22}a_{23}a_{24}a_{25}a_{26}^2a_{27}^3\\
    {}+a_{5}a_{7}a_{8}a_{9}a_{10}a_{11}a_{12}a_{13}a_{16}a_{17}a_{18}a_{20}^2a_{21}a_{22}a_{23}a_{24}a_{25}a_{26}^2a_{27}^3 \\
		{}+a_{5}a_{7}a_{8}a_{9}a_{10}a_{11}a_{13}a_{16}^2a_{17}a_{18}a_{20}^2a_{21}a_{22}a_{23}a_{24}a_{25}a_{26}^2a_{27}^3\\
    {}+a_{5}a_{7}a_{8}a_{10}a_{11}a_{12}a_{13}a_{16}a_{17}a_{18}a_{19}a_{20}^2a_{21}a_{22}a_{23}a_{24}a_{25}a_{26}^2a_{27}^3 \\
		{}+a_{5}a_{7}a_{8}a_{10}a_{11}a_{13}a_{16}^2a_{17}a_{18}a_{19}a_{20}^2a_{21}a_{22}a_{23}a_{24}a_{25}a_{26}^2a_{27}^3\\
    {}+a_{5}a_{7}a_{10}a_{11}a_{13}a_{14}a_{16}^2a_{17}a_{18}a_{19}a_{20}^2a_{21}a_{22}a_{23}a_{24}a_{25}a_{26}^2a_{27}^3 \\
		{}+a_{5}a_{7}a_{8}a_{9}a_{10}a_{11}a_{12}a_{13}a_{16}a_{17}a_{18}a_{20}a_{21}a_{22}a_{23}a_{24}a_{25}^2a_{26}^2a_{27}^3\\
    {}+a_{5}a_{7}a_{8}a_{9}a_{10}a_{11}a_{13}a_{16}^2a_{17}a_{18}a_{20}a_{21}a_{22}a_{23}a_{24}a_{25}^2a_{26}^2a_{27}^3 \\
		{}+a_{5}a_{7}a_{8}a_{10}a_{11}a_{12}a_{13}a_{16}a_{17}a_{18}a_{19}a_{20}a_{21}a_{22}a_{23}a_{24}a_{25}^2a_{26}^2a_{27}^3\\
    {}+a_{5}a_{7}a_{8}a_{10}a_{11}a_{13}a_{16}^2a_{17}a_{18}a_{19}a_{20}a_{21}a_{22}a_{23}a_{24}a_{25}^2a_{26}^2a_{27}^3 \\
		{}+a_{5}a_{7}a_{10}a_{11}a_{13}a_{14}a_{16}^2a_{17}a_{18}a_{19}a_{20}a_{21}a_{22}a_{23}a_{24}a_{25}^2a_{26}^2a_{27}^3\\
    {}+a_{5}a_{7}a_{8}a_{9}a_{10}a_{11}a_{13}a_{16}a_{17}a_{18}a_{20}a_{21}^2a_{22}a_{23}a_{24}a_{25}^2a_{26}^2a_{27}^3 \\
		{}+a_{5}a_{7}a_{8}a_{9}a_{10}a_{13}a_{16}a_{17}^2a_{18}a_{20}a_{21}^2a_{22}a_{23}a_{24}a_{25}^2a_{26}^2a_{27}^3\\
    {}+a_{5}a_{7}a_{8}a_{10}a_{11}a_{13}a_{16}a_{17}a_{18}a_{19}a_{20}a_{21}^2a_{22}a_{23}a_{24}a_{25}^2a_{26}^2a_{27}^3 \\
		{}+a_{5}a_{7}a_{10}a_{11}a_{13}a_{14}a_{16}a_{17}a_{18}a_{19}a_{20}a_{21}^2a_{22}a_{23}a_{24}a_{25}^2a_{26}^2a_{27}^3\\
    {}+a_{5}a_{7}a_{8}a_{10}a_{13}a_{16}a_{17}^2a_{18}a_{19}a_{20}a_{21}^2a_{22}a_{23}a_{24}a_{25}^2a_{26}^2a_{27}^3 \\
		{}+a_{5}a_{7}a_{10}a_{13}a_{14}a_{16}a_{17}^2a_{18}a_{19}a_{20}a_{21}^2a_{22}a_{23}a_{24}a_{25}^2a_{26}^2a_{27}^3\\
    {}+a_{5}a_{10}a_{12}a_{13}a_{14}a_{16}a_{17}^2a_{18}a_{19}a_{20}a_{21}^2a_{22}a_{23}a_{24}a_{25}^2a_{26}^2a_{27}^3 \\
		{}+a_{5}a_{7}a_{8}a_{9}a_{10}a_{11}a_{13}a_{16}a_{17}a_{18}a_{20}a_{21}a_{22}a_{23}a_{24}^2a_{25}^2a_{26}^2a_{27}^3\\
    {}+a_{5}a_{7}a_{8}a_{9}a_{10}a_{13}a_{16}a_{17}^2a_{18}a_{20}a_{21}a_{22}a_{23}a_{24}^2a_{25}^2a_{26}^2a_{27}^3 \\
		{}+a_{5}a_{7}a_{8}a_{10}a_{11}a_{13}a_{16}a_{17}a_{18}a_{19}a_{20}a_{21}a_{22}a_{23}a_{24}^2a_{25}^2a_{26}^2a_{27}^3\\
    {}+a_{5}a_{7}a_{10}a_{11}a_{13}a_{14}a_{16}a_{17}a_{18}a_{19}a_{20}a_{21}a_{22}a_{23}a_{24}^2a_{25}^2a_{26}^2a_{27}^3 \\
		{}+a_{5}a_{7}a_{8}a_{10}a_{13}a_{16}a_{17}^2a_{18}a_{19}a_{20}a_{21}a_{22}a_{23}a_{24}^2a_{25}^2a_{26}^2a_{27}^3\\
    {}+a_{5}a_{7}a_{10}a_{13}a_{14}a_{16}a_{17}^2a_{18}a_{19}a_{20}a_{21}a_{22}a_{23}a_{24}^2a_{25}^2a_{26}^2a_{27}^3 \\
		{}+a_{5}a_{10}a_{12}a_{13}a_{14}a_{16}a_{17}^2a_{18}a_{19}a_{20}a_{21}a_{22}a_{23}a_{24}^2a_{25}^2a_{26}^2a_{27}^3\\
    {}+a_{5}a_{7}a_{8}a_{9}a_{10}a_{13}a_{16}a_{17}a_{18}a_{20}a_{21}a_{22}a_{23}^2a_{24}^2a_{25}^2a_{26}^2a_{27}^3 \\
		{}+a_{5}a_{7}a_{8}a_{9}a_{13}a_{16}a_{17}a_{18}^2a_{20}a_{21}a_{22}a_{23}^2a_{24}^2a_{25}^2a_{26}^2a_{27}^3\\
    {}+a_{5}a_{7}a_{8}a_{10}a_{13}a_{16}a_{17}a_{18}a_{19}a_{20}a_{21}a_{22}a_{23}^2a_{24}^2a_{25}^2a_{26}^2a_{27}^3 \\
		{}+a_{5}a_{7}a_{10}a_{13}a_{14}a_{16}a_{17}a_{18}a_{19}a_{20}a_{21}a_{22}a_{23}^2a_{24}^2a_{25}^2a_{26}^2a_{27}^3\\
    {}+a_{5}a_{10}a_{12}a_{13}a_{14}a_{16}a_{17}a_{18}a_{19}a_{20}a_{21}a_{22}a_{23}^2a_{24}^2a_{25}^2a_{26}^2a_{27}^3 \\
		{}+a_{5}a_{7}a_{8}a_{13}a_{16}a_{17}a_{18}^2a_{19}a_{20}a_{21}a_{22}a_{23}^2a_{24}^2a_{25}^2a_{26}^2a_{27}^3\\
    {}+a_{5}a_{7}a_{13}a_{14}a_{16}a_{17}a_{18}^2a_{19}a_{20}a_{21}a_{22}a_{23}^2a_{24}^2a_{25}^2a_{26}^2a_{27}^3 \\
		{}+a_{5}a_{12}a_{13}a_{14}a_{16}a_{17}a_{18}^2a_{19}a_{20}a_{21}a_{22}a_{23}^2a_{24}^2a_{25}^2a_{26}^2a_{27}^3\\
    {}+a_{11}a_{12}a_{13}a_{14}a_{16}a_{17}a_{18}^2a_{19}a_{20}a_{21}a_{22}a_{23}^2a_{24}^2a_{25}^2a_{26}^2a_{27}^3.
  \end{array}
\]\endgroup

\section{Pl\"ucker coordinate valuations}
\label{appendix:vals}
Below are the valuations of the Pl\"ucker coordinates for the Cayley plane (we again write $001\ldots$ for $(0,0,1,\ldots)$):
\begingroup\small
\[
\begin{array}{lll}
\nu_6(p_{0\hphantom{1}}\!\!)\,\, = 0000000000000000, &
\nu_6(p_{1\hphantom{1}}\!\!)\,\, = 0000000000000001, &
\nu_6(p_{2\hphantom{1}}\!\!)\,\, = 0000000000000011, \\
\nu_6(p_{3\hphantom{1}}\!\!)\,\, = 0000000000000111, &
\nu_6(p_{4\hphantom{1}}'\!\!)\,\, = 0000000000001111, &
\nu_6(p_{4\hphantom{1}}''\!\!)\,\, = 0000000000010111, \\
\nu_6(p_{5\hphantom{1}}'\!\!)\,\, = 0000000000011111, &
\nu_6(p_{5\hphantom{1}}''\!\!)\,\, = 0000000100010111, &
\nu_6(p_{6\hphantom{1}}'\!\!)\,\, = 0000000000111111, \\
\nu_6(p_{6\hphantom{1}}''\!\!)\,\, = 0000000100011111, &
\nu_6(p_{7\hphantom{1}}'\!\!)\,\, = 0000000001111111, &
\nu_6(p_{7\hphantom{1}}''\!\!)\,\, = 0000000100111111, \\
\nu_6(p_{8\hphantom{1}}\!\!)\,\, = 0000000011111111, &
\nu_6(p_{8\hphantom{1}}'\!\!)\,\, = 0000000101111111, &
\nu_6(p_{8\hphantom{1}}''\!\!)\,\, = 0000001100111111, \\
\nu_6(p_{9\hphantom{1}}'\!\!)\,\, = 0000000111111111, &
\nu_6(p_{9\hphantom{1}}''\!\!)\,\, = 0000001101111111, &
\nu_6(p_{10}') = 0000001111111111, \\
\nu_6(p_{10}'') = 0000011101111111, &
\nu_6(p_{11}') = 0000011111111111, &
\nu_6(p_{11}'') = 0001011101111111, \\
\nu_6(p_{12}') = 0000111111111111, &
\nu_6(p_{12}'') = 0001011111111111, &
\nu_6(p_{13}) = 0001111111111111, \\
\nu_6(p_{14}) = 0011111111111111, &
\nu_6(p_{15}) = 0111111111111111, &
\nu_6(p_{16}) = 1111111111111111.
\end{array}
\]\endgroup
The convex hull of the above points forms $\Delta(\mathbb{C}[\cmX],\nu_6)$. We compute with Polymake \cite{polymake-2000,polymake-2017} that this is a $16$-dimensional polytope with normalized volume $78$ and f-vector:
\[
(27,297,1858,7598,21884,46415,74521,92095,88372,65979,38160,16900,5612,1349,221,22).
\]

Below are the valuations of the Pl\"ucker coordinates for the Freudenthal variety:
\begingroup\small
\[
\begin{array}{ll}
  \nu_7(p_{0\hphantom{1}}\!\!)\,\,=000000000000000000000000000, &
  \nu_7(p_{1\hphantom{1}}\!\!)\,\,=000000000000000000000000001, \\ 
  \nu_7(p_{2\hphantom{1}}\!\!)\,\,=000000000000000000000000011, & 
  \nu_7(p_{3\hphantom{1}}\!\!)\,\,=000000000000000000000000111, \\ 
  \nu_7(p_{4\hphantom{1}}\!\!)\,\,=000000000000000000000001111, & 
  \nu_7(p_{5\hphantom{1}}'\!\!)\,\,=000000000000000000000101111, \\ 
  \nu_7(p_{5\hphantom{1}}''\!\!)\,\,=000000000000000000000011111, & 
  \nu_7(p_{6\hphantom{1}}'\!\!)\,\,=000000000000000000000111111, \\ 
  \nu_7(p_{6\hphantom{1}}''\!\!)\,\,=000000000000000001000011111, & 
  \nu_7(p_{7\hphantom{1}}''\!\!)\,\,=000000000000000001000111111, \\ 
  \nu_7(p_{7\hphantom{1}}'\!\!)\,\,=000000000000000000001111111, & 
  \nu_7(p_{8\hphantom{1}}''\!\!)\,\,=000000000000000001001111111, \\ 
  \nu_7(p_{8\hphantom{1}}'\!\!)\,\,=000000000000000000011111111, & 
  \nu_7(p_{9\hphantom{1}}''\!\!)\,\,=000000000000000011001111111, \\ 
  \nu_7(p_{9\hphantom{1}}'\!\!)\,\,=000000000000000001011111111, & 
  \nu_7(p_{9\hphantom{1}}\!\!)\,\,=000000000000000000111111111, \\ 
  \nu_7(p_{10}'')=000000000000000011011111111, & 
  \nu_7(p_{10}')=000000000000000001111111111, \\ 
  \nu_7(p_{10})=000000000000100000111111111, & 
  \nu_7(p_{11}'')=000000000000000111011111111, \\ 
  \nu_7(p_{11}')=000000000000000011111111111, & 
  \nu_7(p_{11})=000000000000100001111111111, \\ 
  \nu_7(p_{12}'')=000000000000001111011111111, & 
  \nu_7(p_{12}')=000000000000000111111111111, \\ 
  \nu_7(p_{12})=000000000000100011111111111, & 
  \nu_7(p_{13}'')=000000000000001111111111111, \\ 
  \nu_7(p_{13}')=000000000000010111111111111, & 
  \nu_7(p_{13})=000000000000100111111111111, \\ 
  \nu_7(p_{14})=000000000000011111111111111, & 
  \nu_7(p_{14}')=000000000000101111111111111, \\ 
  \nu_7(p_{14}'')=000000000000110111111111111, & 
  \nu_7(p_{15})=000000000001011111111111111, \\ 
  \nu_7(p_{15}')=000000000000111111111111111, & 
  \nu_7(p_{15}'')=000000001000110111111111111, \\ 
  \nu_7(p_{16})=000000000011011111111111111, & 
  \nu_7(p_{16}')=000000000001111111111111111, \\
  \nu_7(p_{16}'')=000000001000111111111111111, & 
  \nu_7(p_{17})=000000000111011111111111111, \\
  \nu_7(p_{17}')=000000000011111111111111111, & 
  \nu_7(p_{17}'')=000000001001111111111111111, \\ 
  \nu_7(p_{18})=000000000111111111111111111, & 
  \nu_7(p_{18}')=000000001011111111111111111, \\ 
  \nu_7(p_{18}'')=000000011001111111111111111, & 
  \nu_7(p_{19}')=000000001111111111111111111, \\ 
  \nu_7(p_{19}'')=000000011011111111111111111, & 
  \nu_7(p_{20}')=000000011111111111111111111, \\ 
  \nu_7(p_{20}'')=000000111011111111111111111, & 
  \nu_7(p_{21}')=000000111111111111111111111, \\ 
  \nu_7(p_{21}'')=000001111011111111111111111, & 
  \nu_7(p_{22}'')=000001111111111111111111111, \\ 
  \nu_7(p_{22}')=000010111111111111111111111, & 
  \nu_7(p_{23})=000011111111111111111111111, \\ 
  \nu_7(p_{24})=000111111111111111111111111, & 
  \nu_7(p_{25})=001111111111111111111111111, \\ 
  \nu_7(p_{26})=011111111111111111111111111, & 
  \nu_7(p_{27})=111111111111111111111111111.
\end{array}
\]\endgroup

The convex hull of the above points forms $\Delta(\C[\cmX],\nu_7)$. We compute with Polymake (\cite{polymake-2000, polymake-2017}) that this is a $27$-dimensional polytope with volume $13110$. We were unable to compute the f-vector due to computational limitations (our computations exceeded 100GB of RAM).

\end{document}